\documentclass[11pt]{amsart}
\usepackage{graphicx,amsfonts,amssymb,amsmath,amsthm,url,
  verbatim,amscd}
\usepackage[dvipsnames]{xcolor}
\usepackage{enumerate}
\usepackage{pdfsync}
\usepackage{booktabs}
\usepackage{subcaption, diagbox} 
\usepackage[
  margin=1in
]{geometry}

\usepackage[hyperfootnotes=false, colorlinks, citecolor=RoyalBlue,
urlcolor=blue, linkcolor=blue ]{hyperref}

\theoremstyle{plain}
\newtheorem*{theorem*}{Theorem}
\newtheorem{theorem}  {Theorem}    [section]
\newtheorem{lemma}      [theorem]{Lemma}
\newtheorem{corollary}  [theorem]{Corollary}
\newtheorem{proposition}[theorem]{Proposition}
\newtheorem{conjecture} {Conjecture}

\newtheorem{remark}  [theorem] {Remark}
\theoremstyle{definition}

\allowdisplaybreaks
\renewcommand{\a}{{\mathfrak a}}
\renewcommand{\H}{\mathbb H}
\newcommand{\A}{{\mathbb A}}
\newcommand{\Q}{{\mathbb Q}}
\newcommand{\Z}{{\mathbb Z}}
\newcommand{\R}{{\mathbb R}}
\newcommand{\B}{{\mathcal B}}
\newcommand{\C}{{\mathbb C}}

\newcommand{\pr}{{\mathrm pr}}
\newcommand{\bs}{\backslash}

\newcommand{\p}{\mathfrak p}
\newcommand{\OF}{{\mathfrak o}}
\newcommand{\Ad}{{\rm Ad}}
\newcommand{\GL}{{\rm GL}}
\newcommand{\PGL}{{\rm PGL}}
\newcommand{\wald}{\mathrm{Wald}}
\newcommand{\SL}{{\rm SL}}

\newcommand{\SO}{{\rm SO}}

\newcommand{\GSp}{{\rm GSp}}
\newcommand{\Sp}{{\rm Sp}}
\newcommand{\diag}{\mathrm{diag}}
\newcommand{\PGSp}{{\rm PGSp}}
\newcommand{\meta}{\widetilde{\rm SL}}

\newcommand{\Tr}{\mathrm{Tr}}

\newcommand{\vol}{{\rm vol}}

\newcommand{\trace}{{\rm tr}}

\newcommand{\new}{{\rm new}}
\newcommand{\vl}{{\rm vol}}
\newcommand{\disc}{{\rm disc}}

\newcommand{\eps}{\varepsilon}

\newcommand{\mat}[4]{{\setlength{\arraycolsep}{0.5mm}\left[
\begin{smallmatrix}#1&#2\\#3&#4\end{smallmatrix}\right]}}
\newcommand{\forget}[1]{}

\def\qdots{\mathinner{\mkern1mu\raise0pt\vbox{\kern7pt\hbox{.}}\mkern2mu
\raise3.4pt\hbox{.}\mkern2mu\raise7pt\hbox{.}\mkern1mu}}

\newenvironment{bsmallmatrix}
{\left[\begin{smallmatrix}}{\end{smallmatrix}\right]}
\begin{document}

\title[Fourier--Jacobi periods]{An explicit refined Gan--Gross--Prasad identity for Fourier--Jacobi periods of degree 2 Siegel cusp forms}
\author{Biplab Paul}
\address{Department of Mathematics\\ IIT Bhubaneswar\\Argul, Khordha, Odisha 752051 \\India} 
\email{bpaul@iitbbs.ac.in}
\author{Ameya Pitale}
\address{Department of Mathematics\\University of Oklahoma\\Norman, OK 73019\\USA} 
\email{apitale@ou.edu}
\author{Abhishek Saha}
\address{School of Mathematical Sciences\\
  Queen Mary University of London\\
  London E1 4NS\\
  UK}
  \email{abhishek.saha@qmul.ac.uk}
  \author{Ralf Schmidt}
  \address{Department of Mathematics\\University of North Texas\\Denton, TX 76203\\USA} 
  \email{ralf.schmidt@unt.edu}

\date{\today}

\begin{abstract}We compute the local integrals appearing in the refined Gan–Gross–Prasad conjecture for Fourier–Jacobi periods of $\Sp_4$ in new ramified cases and use this to formulate an explicit conjectural identity relating Petersson norms of degree 2 Siegel cusp forms and associated half-integral weight forms.  We note  consequences of our identity for the growth of Petersson norms, the size of Fourier coefficients, and non-vanishing of central $L$-values.
\end{abstract}

\maketitle

\tableofcontents

\section{Introduction}

\subsection{Main results}
Let $F$ be a Siegel cusp form\footnote{For definitions and background on Siegel cusp forms, see~\cite{Klingen1990} or \cite{Pit19}.} of degree 2 and even weight $k$ for the group $\Sp_4(\Z)$. Then $F$ has a Fourier expansion $$F(Z)
=\sum_{S } a(F, S) e^{2 \pi i \trace(SZ)},$$ where the Fourier coefficients $a(F,S)$ are indexed by matrices $S$ of the form
\begin{equation}\label{e:matrixform}
 S=\mat{a}{b/2}{b/2}{c},\qquad a,b,c\in\Z, \qquad a>0, \qquad \disc(S) := b^2 - 4ac < 0.
 \end{equation}

For each $m>0$, we can construct a half-integral weight cusp form given by
\begin{equation}\label{fmdef}
f_m(z) = \sum_{n=1}^\infty  a(n) e^{2 \pi i n z}, \quad \text{where } \quad a(n) = \sum_{\substack{0 \le r \le 2m-1 \\ r^2 \equiv -n \pmod{4m}}} a\left(F, \mat{\frac{n+r^2}{4m}}{\frac{r}{2}}{\frac{r}{2}}{m} \right).
\end{equation}

It is known that $f_m \in S_{k-\frac{1}{2}}(\Gamma_0(4m))$ (see \cite[Theorem 5.6]{EZ85}). The half-integral weight forms $f_m$ are of particular interest for studying $F$, since their Fourier coefficients, as is evident from \eqref{fmdef}, are closely related to those of $F$.  Indeed, in the papers \cite{AS13, SS13, JLS23}, the forms $f_m$ were used to prove new results about sign-changes and non-vanishing of fundamental Fourier coefficients of $F$. It is therefore natural to ask about the relationship between the Petersson norms of $f_m$ and $F$.

If $F$ is a Hecke eigenform and a \emph{Saito--Kurokawa lift}, and $m=1$, then Kohnen--Skoruppa \cite{KS89} (see also  \cite{brown07})  established the identity\footnote{We denote the completed $L$-functions by $\Lambda(s,\cdot )$ and reserve $L(s,\cdot )$ for the finite part of the $L$-function. $L$-functions in this paper are normalized so that the global functional equation for the completed $L$-function relates $s \mapsto 1-s$.} 
\begin{equation}\label{e:SKid}\frac{\langle f_1, f_1 \rangle}{\langle F, F \rangle} = \frac{24 \pi^k}{\Gamma(k) L(\frac32, \pi_0)}, 
\end{equation} 
where $\pi_0$ is the automorphic representation of $\GL_2(\A)$ that lifts to the Saito--Kurokawa lift $F$. Analogous formulas in this Saito--Kurokawa case for
$\frac{\langle f_m, f_m \rangle}{\langle F, F \rangle}$ can likely be derived from \eqref{e:SKid} for all squarefree $m$, using existing expressions (e.g., \cite[Theorem 6.2]{EZ85}) for the $m$-th Fourier--Jacobi coefficient in terms of the first.

However, if $F$ is \emph{not} a Saito--Kurokawa lift, we are not aware of any previous results on the ratio  $\frac{\langle f_m, f_m \rangle}{\langle F, F \rangle}$ for any $m$. A key result of this paper is an exact conjectural formula for this ratio in terms of a sum of $L$-values when $m$ is odd and squarefree.

\begin{theorem}[See Corollary \ref{c:mainglobal}]\label{t:mainintro} 
Let $F$ be a Hecke eigenform in $S_k(\Sp_4(\Z))$ and assume that $k$ is even and $F$ is not a Saito--Kurokawa lift.   Assume the truth of the refined Gan--Gross--Prasad (GGP) conjecture for Fourier--Jacobi periods on $\Sp_4$ (Conjecture \ref{c:GGPconj} below). Then for each odd squarefree integer $m$, we have
 \begin{equation}\label{maineq}\frac{\langle f_m, f_m \rangle}{\langle F, F \rangle} = \frac{\pi^{k+5}}{3(2k-3) \Gamma(k)} \sum_{C|m}\sum_{f \in \B^{\new}_{2k-2}(C)} \frac{L(1/2, \pi_F\times\pi_f)}{L(1, \pi_F, \Ad)L(1, \pi_f, \Ad)} \cdot r_{f,m,C}.\end{equation}
Above, $\B^{\new}_{2k-2}(C)$ is an orthogonal Hecke basis of the space of holomorphic newforms on the upper half-plane of weight $2k-2$ for $\Gamma_0(C)$, $\pi_F$ is the automorphic representation of $\GSp_4(\A)$ associated to $F$, $\pi_f$ is the automorphic representation of $\GL_2(\A)$ associated to $f$, $L(s, \pi_F\times\pi_f)$ denotes the degree 10 $L$-function attached to the tensor product of the degree-five standard parameter of $\pi_F$ and the degree-two parameter of $\pi_f$, and the quantity $r_{f,m,C}$ is given by $$r_{f,m,C} = \prod_{p|\frac{m}{C}}
\frac{2(\alpha_p^{-1/2}+\alpha_p^{1/2})^2(\beta_p^{-1/2}+\beta_p^{1/2})^2}
{(p+1)(1+\delta_p p^{-1/2})(1+\delta_p^{-1}p^{-1/2})} \prod_{p|C}\frac{2}{p+1}$$ if the local Atkin--Lehner eigenvalue for $f$ equals 1 at all primes dividing $C$ and $r_{f,m,C}= 0$ otherwise. Here, for each prime not dividing $C$, $\alpha_p, \alpha_p^{-1}, \beta_p, \beta_p^{-1}$ are the Satake parameters of $\pi_F$ at $p$, and $\delta_p$, $\delta_p^{-1}$ are the Satake parameters of $\pi_f$ at $p$.
\end{theorem}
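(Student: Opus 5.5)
The plan is to realize $\langle f_m,f_m\rangle$ as a sum of squared Fourier--Jacobi periods and then apply Conjecture \ref{c:GGPconj} to each of them.

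\textbf{Step 1: reduce to periods.} The $m$-th Fourier--Jacobi coefficient $\phi_m$ of $F$ is a Jacobi cusp form of weight $k$ and index $m$. The theta decomposition (Eichler--Zagier) writes $\phi_m=\sum_{\mu \bmod 2m} h_\mu\,\theta_{m,\mu}$. Since $k$ is even, one has $h_\mu=h_{-\mu}$, and this identifies $f_m(z)=\sum_\mu h_\mu(4mz)$. The first task is therefore an exact identity
\[
\langle f_m,f_m\rangle=c_{m,k}\,\langle \phi_m,\phi_m\rangle ,
\]
with an explicit constant $c_{m,k}$ coming from volumes and from the level-raising substitution $z\mapsto 4mz$.

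\textbf{Step 2: spectrally expand $\phi_m$.} Next I expand $\phi_m$ in Hecke eigenforms of the Jacobi space. For $m$ odd and squarefree, the Skoruppa--Zagier / Gross--Kohnen--Zagier correspondence identifies $J_{k,m}^{\rm cusp}$ Hecke-equivariantly with $\bigoplus_{C\mid m}$ of newforms $f$ of weight $2k-2$ and level $C$. In this correspondence only forms whose Atkin--Lehner eigenvalues satisfy the appropriate sign condition occur, and oldforms appear with multiplicity indexed by $m/C$. This already explains the shape of \eqref{maineq}, including the vanishing of $r_{f,m,C}$ when the sign condition fails. Orthogonality gives
\[
\langle\phi_m,\phi_m\rangle=\sum_{C\mid m}\sum_{f}\frac{|\langle\phi_m,\Phi_f\rangle|^2}{\langle\Phi_f,\Phi_f\rangle}.
\]
Adelically, each numerator is a Fourier--Jacobi period $\int_{\Sp_2(\Q)\backslash\Sp_2(\A)}\int_{H(\Q)\backslash H(\A)} \varphi_F\,\overline{\varphi_f\,\theta_\psi}$ on the Jacobi group $\Sp_2\ltimes H$, pairing $\pi_F$ with $\widetilde\pi_f\otimes\omega_\psi$. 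Here $\pi_f$ is transported to the metaplectic side via Waldspurger, and $\psi$ has conductor $m$.

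\textbf{Step 3: apply the GGP conjecture.} By Conjecture \ref{c:GGPconj}, each squared period equals
\[
\frac{L(1/2,\pi_F\times\pi_f)}{L(1,\pi_F,\Ad)\,L(1,\pi_f,\Ad)}\prod_v I_v ,
\]
up to the standard $\zeta$-factors and the $|S_\pi|$ constant. Here $S_\pi$ is trivial because $F$ is not Saito--Kurokawa and is of full level, so it is tempered and stable. I then compute the local factors $I_v$ place by place:
\begin{itemize}
\item For $p\nmid m$, everything is unramified and $I_p=1$ by the unramified computation (Ichino--Ikeda style, as in Liu's formulation).
\item At $\infty$, the holomorphic discrete series of weight $(k,k)$ is paired with a weight-$(k-\tfrac12)$ vector on $\widetilde{\SL}_2(\R)$. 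This gives a Gamma-factor expression, which must combine with $c_{m,k}$ and with $\langle\Phi_f,\Phi_f\rangle$ (via Waldspurger/Kohnen--Zagier) to produce the factor $\pi^{k+5}/(3(2k-3)\Gamma(k))$.
\item For $p\mid C$, $\pi_{f,p}$ is a Steinberg twist, $\pi_{F,p}$ is unramified, and $\psi_p$ has conductor $p$. This should give $2/(p+1)$ when the Atkin--Lehner eigenvalue matches, and $0$ otherwise.
\item For $p\mid m/C$, $\pi_{f,p}$ is unramified but $\psi_p$ is ramified. The local integral then becomes a ratio of matrix-coefficient integrals, which I would evaluate using the explicit spherical function (Macdonald formula) for $\pi_{F,p}$ together with the Weil representation. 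After combining with the $m/C$ oldform multiplicity, the expected answer is the stated Satake expression
\[
\frac{2(\alpha_p^{-1/2}+\alpha_p^{1/2})^2(\beta_p^{-1/2}+\beta_p^{1/2})^2}{(p+1)(1+\delta_p p^{-1/2})(1+\delta_p^{-1}p^{-1/2})}.
\]
\end{itemize}

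\textbf{Step 4: assemble.} Finally, $\langle F,F\rangle$ enters through the normalization of $\varphi_F$. Multiplying all constants together yields \eqref{maineq}.

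\textbf{Main obstacle.} I expect the ramified local integrals at $p\mid m$ to be the hardest part. They require choosing explicit local vectors, namely the Jacobi newvector and the Weil-representation vector that matches $\theta_{m,\mu}$. Oldforms in the $p\mid m/C$ case must be summed over the whole oldspace, not a single vector. I would first treat this case by projecting onto the $K$-fixed line and reducing to a finite double-coset sum over $\Sp_2(\Z_p)$-orbits. Tracking the global normalizing constants exactly, without losing factors of $2$ or volumes, is the other delicate point.
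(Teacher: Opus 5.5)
Your Step 1 is false as soon as $m$ has two or more prime factors, and the rest of the reduction depends on it. For $p\mid m$, the Eichler--Zagier involution $W_p$ on $J_{k,m}$ sends $c(n,r)$ to $c(n',r_p)$. Here $r_p\equiv -r\pmod{2p}$, $r_p\equiv r\pmod{2m/p}$, and the discriminant is unchanged. Since $r\mapsto r_p$ permutes the square roots of $-N$ modulo $4m$, the map $\phi\mapsto\sum_\mu h_\mu(4m\tau)$ is $W_p$-invariant and kills $\phi-\phi|W_p$. So $\langle f_m,f_m\rangle$ only sees the projection of $\phi_m$ onto the joint $+1$-eigenspace of the $W_p$.

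For $m=pq$ one has $W_pW_q=W_m=\mathrm{id}$ ($k$ even). Hence $J_{k,m}$ splits orthogonally into the parts $W_p=W_q=\pm1$, and $\langle\phi_m,\phi_m\rangle$ counts both. The $-1$ part of $\phi_m$ vanishes only if $a\bigl(F,\mat{n}{r/2}{r/2}{m}\bigr)=a\bigl(F,\mat{n'}{r_p/2}{r_p/2}{m}\bigr)$ for all $n,r$. These matrices are in general $\GL_2(\Z)$-inequivalent, so this fails in general. So no identity $\langle f_m,f_m\rangle=c_{m,k}\langle\phi_m,\phi_m\rangle$ holds. It is true only for $m=1$ or $m$ prime, by Schur's lemma for the irreducible even Weil representation of $\SL_2(\mathbb F_p)$.

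Representation-theoretically, $f_m$ is a period against $\phi^{(m)}$, whose components $\mathbf 1_{p^{-1}\Z_p}$ at $p\mid m$ are even. But $J_{k,m}$ also contains forms that are odd at an even number of primes $p\mid m$. At those primes the metaplectic component is an odd Weil representation, the other member of the Waldspurger packet over the Steinberg-type $\pi_{0,p}$. These correspond to newforms with the opposite local Atkin--Lehner sign. Your Jacobi spectral expansion would therefore pick up terms, e.g.\ from level-$pq$ newforms with $w_p=w_q=-1$, that have no counterpart in \eqref{maineq}, where $r_{f,m,C}=0$ unless $w_p=1$ at every $p\mid C$. So the sign condition on $J_{k,m}$ does not explain the local vanishing of $r_{f,m,C}$. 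Likewise, the $2^{\omega(m/C)}$-fold oldform multiplicity belongs to Kohnen's plus space: in $J_{k,m}$ the $J(\Z_p)$-invariants of $\sigma_p\otimes\omega_{\psi_p^m}$ are at most one-dimensional.

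The paper avoids all of this by bypassing $J_{k,m}$. It expands $f_m$ directly in an orthogonal Hecke basis $\B_m$ of $S^+_{k-\frac12}(\Gamma_0(4m))$, built from orthogonal bases of $V_{\sigma_p}(\Gamma_0(p\Z_p),1)$. With the pure-tensor theta function $\Theta_{\psi^m}(\cdot,\phi^{(m)})$, each $\langle f_m,h\rangle$ is, up to explicit constants, a single period $\mathcal{FJ}_{\psi^m}(\Psi_F,\overline{\Lambda_{h'}},\phi^{(m)})$ (Proposition~\ref{p:classicaladelicperiods}). So Conjecture~\ref{c:GGPconj} applies verbatim; unlike Jacobi eigenforms, whose components at $p\mid 2m$ are not pure tensors, no non-factorizable local vectors occur. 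The sum over the local basis at $p\mid m$ is then exactly $\alpha^\#(\pi_{F,p},\overline{\sigma_p};m)$. Alternatively, you could replace $\phi_m$ by its projection onto the joint $+1$-eigenspace of the $W_p$.

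Two further gaps remain.
\begin{enumerate}
\item Your claim that the local factor is $1$ at every $p\nmid m$ overlooks $p=2$. There the metaplectic cover does not split over $\SL_2(\Z_2)$ and the data are not unramified in the sense of Xue's odd-residue-characteristic computation. With the plus-space vector and $\mathbf 1_{\frac12\Z_2}$ the factor is $\tfrac12$, so you would lose a factor of $2$.
\item The ramified evaluations at $p\mid m$, which are the real content of the theorem, are only asserted in your proposal. In the paper they require:
\begin{itemize}
\item twisting to $\psi$ and $\mathbf 1_{\OF}$, at the cost of conjugating $\Phi_0$ by $e_m$;
\item realizing $\sigma\otimes\omega_{\psi^{-1}}$ in a Jacobi principal series with $K_0^J(\p)$-fixed vectors;
\item in the special case, building the invariant inner product from the derivative of the intertwining operator, with Bernstein continuation to $s_0=\frac12$;
\item evaluating $I_0+I_1+I_2$ via the Knightly--Li Cartan criterion and the Macdonald formula.
\end{itemize}
\end{enumerate}
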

\begin{remark}
We note that $L(1, \pi_F, \Ad)$ is not zero by Theorem 5.2.1 of \cite{PSS14}.
\end{remark}

\begin{remark}
We can regard \eqref{maineq} as a generalization of \eqref{e:SKid}. In the case of Saito--Kurokawa lifts,  $L(s, \pi_F, \Ad)$ has a pole at $s=1$, and an analog of \eqref{maineq} for $m=1$ would reduce to a single nonzero term corresponding to the unique $f$ (which lifts to $F$) for which $L(s, \pi_F\times\pi_f)$ also has a pole at $s=1/2$. Simplifying the resulting expression gives us \eqref{e:SKid} up to a constant.
\end{remark}

Theorem \ref{t:mainintro} implies an essentially optimal upper bound for $\langle f_m, f_m \rangle$ under GRH.
\begin{corollary}\label{c:FJupper}
Let $F, m$ be as in Theorem \ref{t:mainintro}. Assume the refined GGP conjecture for Fourier--Jacobi periods on $\Sp_4$ and also assume the Generalized Riemann Hypothesis (GRH) for the $L$-functions appearing in Theorem \ref{t:mainintro}. Then $$\langle f_m, f_m \rangle \ll_{F, \epsilon} m^{\epsilon}.$$ 
\end{corollary}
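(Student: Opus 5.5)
Starting from Theorem \ref{t:mainintro}, we have to bound the right-hand side of \eqref{maineq} with $F$ and $k$ fixed. The prefactor $\pi^{k+5}/(3(2k-3)\Gamma(k))$ and $\langle F,F\rangle$ depend only on $F$, and so does $L(1,\pi_F,\Ad)^{-1}$, which is finite and nonzero by the preceding remark. What remains is
\[
\sum_{C\mid m}\ \sum_{f\in\B^{\new}_{2k-2}(C)} \frac{L(1/2,\pi_F\times\pi_f)}{L(1,\pi_f,\Ad)}\, |r_{f,m,C}|,
\]
and the plan is to bound each factor separately and then count the terms.

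\emph{The local factors $r_{f,m,C}$.} At primes $p\mid m/C$ the forms are unramified, and we use bounds toward Ramanujan. For $\pi_F$, which is not Saito--Kurokawa, Weissauer's theorem gives $|\alpha_p|=|\beta_p|=1$. For $f$, Deligne gives $|\delta_p|=1$. The numerator of the $p$-factor is then at most $2\cdot 4\cdot 4=32$. The denominator is at least $(p+1)(1-p^{-1/2})^2$, which is $\gg p$ uniformly in $p$ (check $p=3$ directly, since $m$ is odd). So each such factor is $\ll 1/p$ with an absolute constant $A$, and each factor $2/(p+1)$ at $p\mid C$ is $\le 2/p$. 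Hence
\[
|r_{f,m,C}|\le \prod_{p\mid m}\frac{A}{p}=\frac{A^{\omega(m)}}{m}\ll_\epsilon m^{-1+\epsilon}.
\]

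\emph{The $L$-values under GRH.} For $f$ of level $C\mid m$ and fixed weight, GRH gives the standard upper and lower bounds at the edge of the critical strip: $L(1,\pi_f,\Ad)\gg_\epsilon C^{-\epsilon}$. This lower bound alone even holds unconditionally, by Hoffstein--Lockhart together with Goldfeld--Hoffstein--Lieman. At the centre, GRH gives the Lindelöf bound: $L(1/2,\pi_F\times\pi_f)\ll_{F,\epsilon} C^{\epsilon}$, because the analytic conductor is $\ll_F C^{O(1)}$. Two points need care here. First, the finite $L$-function needs a zero-free region or Lindelöf-type input, and the Lindelöf consequence of GRH holds for any $L$-function with polynomial conductor, assuming it is entire with the standard analytic properties. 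Second, the Euler factors at $p\mid C$ are bounded (at least when $C$ is squarefree), so they contribute at most $O(1)^{\omega(C)}\ll m^\epsilon$.

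\emph{Counting.} $|\B^{\new}_{2k-2}(C)|\le \dim S_{2k-2}(\Gamma_0(C))\ll_k C\prod_{p\mid C}(1+1/p)\ll C^{1+\epsilon}$. Summing over $C\mid m$ gives
\[
\sum_{C\mid m} C^{1+\epsilon}\, m^{\epsilon}\, m^{-1+\epsilon}\ll d(m)\, m^{3\epsilon}\ll m^{4\epsilon}.
\]
Renaming $\epsilon$ finishes the proof.

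The main obstacle is to make the GRH input to the $L$-value bounds rigorous. This means knowing that $L(s,\pi_F\times\pi_f)$ is an automorphic (entire) $L$-function, which should follow from the transfer of $\pi_F$ to $\GL_4$ or $\GL_5$ together with Rankin--Selberg theory. It also means controlling its conductor polynomially in $C$ and applying the standard GRH-implies-Lindelöf argument with uniformity in $\pi_f$.
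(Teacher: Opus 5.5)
Your proposal is correct and follows the same route as the paper. You bound each $L$-ratio by $m^{\epsilon}$ under GRH, bound $r_{f,m,C}$ by $m^{-1+\epsilon}$ using temperedness, and count $\ll_k m$ newforms for each of the $d(m)$ divisors $C\mid m$. Your version fills in details the paper's three-line proof leaves implicit, namely the explicit bound on the local factor, the entirety of $L(s,\pi_F\times\pi_f)$ via the transfer to $\GL_5$, and polynomial control of the conductor for the Lindelöf step.
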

\begin{proof}
Under GRH, we can bound each $\frac{L(1/2, \pi_F\times\pi_f)}{L(1, \pi_F, \Ad)L(1, \pi_f, \Ad)}$ by $\ll_{k, \epsilon} m^{\epsilon}$. The cardinality of $\B^{\new}_{2k-2}(C)$ is $\ll_k m$ and each factor $r_{f, m,C}$ is bounded by $m^{-1+\epsilon}$. The proof now follows from \eqref{maineq}. Note that the dependencies on $k$ are absorbed by those on $F$.
\end{proof}

The above results have applications to upper bounds on Fourier coefficients of $F$ and to the non-vanishing of central $L$-values, see Section \ref{s:introglobal}.

\subsection{The refined Gan–Gross–Prasad conjectures for Fourier–Jacobi periods}
We now explain how Theorem \ref{t:mainintro} fits into the framework of the refined GGP conjectures for Fourier–Jacobi periods on symplectic groups formulated by Xue in \cite{HX17}.
Let $G$ denote the algebraic group $\Sp_4$, and let $H$ be the Heisenberg group:
$$
H := \left\{
\left[ \begin{smallmatrix}
1 &&& \mu\\
\lambda&1&\mu&\kappa \\
&&1&-\lambda\\
&&&1
\end{smallmatrix} \right]
\right\} \subset \Sp_4.
$$
We consider $\SL_2$ as a subgroup of $G$ via \begin{equation}
\SL_2 \ni \mat{a}{b}{c}{d}\longmapsto
\left[\begin{smallmatrix}
a &&b&\\
&1&&\\
c&&d&\\
&&&1
\end{smallmatrix}\right] \in \Sp_4.
\end{equation}
Let $\meta_2$ denote the metaplectic
double cover of $\SL_2$. Define $J := \SL_2 \cdot H \subset \Sp_4$ and
$\widetilde J := \meta_2 \ltimes H$. 

Let \(\A\) denote the ring of adeles of \(\Q\). We fix the standard additive character $
\psi=\prod_v \psi_v:\Q\backslash\A\longrightarrow \C^\times,
$
characterized by
\begin{equation}\label{psieq}
\psi_\infty(x)=e^{2\pi i x},
\qquad
\operatorname{cond}(\psi_p)=\Z_p
\quad (p<\infty).
\end{equation}
We equip \(\A\) with the Haar measure that is self-dual with respect to \(\psi\).
Concretely, this is the product of the local self-dual Haar measures:
Lebesgue measure \(dx_\infty\) on \(\R\), and, for every prime \(p\), the Haar
measure \(dx_p\) on \(\Q_p\) normalized by
$
\operatorname{vol}_{dx_p}(\Z_p)=1.
$

For each $m \in \Q^\times$, let $\psi^m$ be the non-trivial additive character $\Q \bs \A$ defined by $\psi^m(x) := \psi(mx)$, and let
\(\omega_{\psi^m}\) denote the Schr\"odinger--Weil representation of
$\widetilde{J}(\A) = \meta_2(\A) \ltimes H(\A)$
 realized on the Schwartz space $\mathcal S(\A)$. For $\phi \in \mathcal S(\A)$ and $r \in \widetilde{J}(\A)$,
we define the theta function
$$
\Theta_{\psi^m}(r, \phi) := \sum_{x \in \Q} \left(\omega_{\psi^m}(r)\phi\right)(x).
$$

Let $(\pi,V_\pi)$ be an irreducible, cuspidal, automorphic representation of $G(\A)$ and $(\sigma, V_\sigma)$ be an irreducible cuspidal genuine automorphic representation of $\meta_2(\A)$. For $\Psi \in V_\pi$, $\Lambda  \in V_\sigma$, $\phi \in \mathcal S(\A)$, and $m\in \Q^\times$, define the global Fourier--Jacobi period
\begin{equation}\label{e:deffjinto}
\mathcal{FJ}_{\psi^m}(\Psi, \Lambda, \phi)
~:=~
\int_{\SL_2(\Q)\backslash \SL_2(\A)}\int_{H(\Q)\backslash H(\A)}
\Psi(ng)\Lambda(g)\overline{\Theta_{\psi^m }(ng, \phi)}\,dn\,dg.
\end{equation}
Above, we fix the measure $dg$ to be the global \emph{Tamagawa measure} and we normalize the Haar measure on $H(\Q)\backslash H(\A)$ so that the total volume is 1. 
Fix local Haar measures $dn_v$ on $H(\Q_v)$ (resp.\ $dg_v$ on $\SL_2(\Q_v)$) so that the subgroups $H(\Z_p)$ and $\SL_2(\Z_p)$ have volume one at all finite primes $p$. At the archimedean place, we choose the measure on $H(\R)$ to be the usual Lebesgue measure. Note that $dn = \prod_v dn_v$. We normalize the Haar measure $dg_\infty$ on $\SL_2(\R)$ by $dg_\infty = y^{-2}dx dy dk_\infty$, where $g_\infty  = \mat{1}{x}{}{1}\mat{y^{1/2}}{}{}{y^{-1/2}}k_\infty$ is the Iwasawa decomposition, and $dk_\infty$ is the measure on $\SO(2)$ so that it has volume 1.  Define the constant $C_G$ by $dg= C_G \prod_v dg_v$.

By the tensor product theorem, $\pi\cong\otimes'\pi_v$ with irreducible, admissible representations $\pi_v$ of $G(\Q_v)$, and $\sigma\cong\otimes'\sigma_v$ with irreducible, admissible representations $\sigma_v$ of~$\meta_2(\Q_v)$. Assume that $\Psi$ corresponds to a pure tensor $\otimes\Psi_v$, and that $\Lambda$ corresponds to a pure tensor $\otimes\Lambda_v$. Assume further that $\phi=\otimes\phi_v$ with local Schwartz functions $\phi_v\in\mathcal{S}(\Q_v)$. For each place $v$, fix a $G(\Q_v)$-invariant Hermitian inner product on the space of $\pi_v$, and an $\meta_2(\Q_v)$-invariant Hermitian inner product on $\sigma_v$. For each place $v$, let us define the local integral
$$
\alpha_v(\Psi_v, \Lambda_v, \phi_v; \psi_v^m)
=
\int_{\SL_2(\Q_v)}\int_{H(\Q_v)}
\langle \pi_v(ng)\Psi_v, \Psi_v\rangle
\langle \sigma_v(g)\Lambda_v, \Lambda_v\rangle
\overline{\langle \omega_{\psi_v^m}(ng)\phi_v, \phi_v\rangle}
\,dn\,dg.
$$
Define the normalized local integrals
$$
\alpha_{v}^{\#}(\Psi_v, \Lambda_v, \phi_v; \psi_v^m)
:=
\frac{\alpha_v(\Psi_v, \Lambda_v, \phi_v; \psi_v^m)}
{\langle \Psi_v, \Psi_v\rangle \langle \Lambda_v, \Lambda_v\rangle \langle \phi_v, \phi_v\rangle}
\times
\left( \zeta_v(2)\zeta_v(4) \frac{L_{\psi_v^m}(\frac{1}{2}, \pi_v\times\sigma_v)}
{L(1, \pi_v, \Ad)L_{\psi_v^m}(1,\sigma_v, \Ad)} \right)^{-1}.
$$
Above, $L_{\psi_v^m}(\cdot)$ denotes the local $L$-functions defined using the Langlands parameters of the representations $\pi_v$ and $\sigma_v$, which in the case of $\sigma_v$ also depend on the character $\psi_v^m$ (see Section 11 of \cite{ggp}).

By work of Xue \cite{HX17} we have $\alpha_v^{\#}(\Psi_v, \Lambda_v, \phi_v; \psi_v^m) = 1$
for almost all places $v$.
Inspired by work of Ichino--Ikeda \cite{II10}, Xue \cite{HX17, HX18}
conjectured the following.
\begin{conjecture}[see (1.2) of \cite{HX18}]\label{c:GGPconj}
Assume that $\pi$ and $\sigma$ are tempered, and generic almost everywhere. Then
\begin{equation}\label{R-GGP}
\frac{|\mathcal{FJ}_{\psi^m}(\Psi, \Lambda, \phi)|^2}{\langle \Psi, \Psi\rangle
\langle \Lambda, \Lambda\rangle \langle \phi, \phi\rangle}
~=~
2^{-\beta}\xi(2)^2 \xi(4)C_G
\frac{\Lambda_{\psi^m}(1/2, \pi\times\sigma)}{\Lambda(1, \pi, \Ad)\Lambda_{\psi^m}(1, \sigma, \Ad)}
\times
\prod_v\alpha_v^{\#}(\Psi_v, \Lambda_v, \phi_v; \psi_v^m).
\end{equation}
Here the positive integer $\beta$ is $1$ if $\pi$ is of general type (so that its lift to $\GL_5(\A)$ is cuspidal) and $2$ if $\pi$ is endoscopic. The function $\xi(s)$ above is the completed Riemann zeta function.
\end{conjecture}

\begin{remark}
Suppose that the global Waldspurger lifting of $\sigma$ to $\PGL_2(\A)$ with respect to \(\psi^m\) exists, and let
$\pi_0=\wald_{\psi^m}(\sigma)$ denote this lift. Then we have $L_{\psi^m}(s, \pi\times\sigma) = L(s, \pi\times \pi_0)$, and $L_{\psi^m}(s, \sigma, \Ad) =  L(s, \pi_0, \Ad)$.
\end{remark}

\begin{remark}
In the setup in \cite{HX17, HX18}, the local Haar measures on $H(\Q_v)$ are taken to be self-dual with respect to $\psi_v^m$. In contrast, above, we chose the coordinate measures to be self-dual with respect to the standard character $\psi_v$. This does not affect Conjecture~\ref{c:GGPconj}. Indeed, changing $\psi_v$ to $\psi_v^m$ multiplies the self-dual measure on $H(\Q_v)\simeq\Q_v^3$ by $|m|_v^{3/2}$ and the measure in the Schwartz inner product by $|m|_v^{1/2}$. Thus the normalized local expression changes by $|m|_v$, whose global product is one.
\end{remark}

\subsection{Computing local integrals}\label{intro:local}
We now briefly describe our main local result.  Fix the following purely
local data, where for convenience we have dropped all subscripts.
\begin{enumerate}
\item A non-archimedean local field \(F\) of characteristic zero, with ring
of integers \(\OF\), maximal ideal \(\p\), uniformizer \(\varpi\), and residue
field of odd cardinality \(q\). Let $v$ be the normalized valuation on~$F$, so that~$v(\varpi)=1$. We fix an additive character \(\psi\) of
\(F\) with conductor \(\OF\), and normalize measures as in
Section~\ref{s:local}.

\item An element \(m\in F^\times\), and the character
\(\psi^m(x)=\psi(mx)\).  Put
\[
        \phi=\mathbf 1_{\OF},
        \qquad
        \phi^{(m)}(x)=\phi(mx).
\]

\item An irreducible, unramified, tempered representation \(\pi\) of
\(\GSp_4(F)\) with trivial central character\footnote{Although Conjecture~\ref{c:GGPconj} involves a representation $\pi_v$ of $\Sp_4(F)$, we may work here with a representation of $\GSp_4(F)$ whose restriction to $\Sp_4(F)$ contains $\pi_v$: the normalized matrix coefficients and the $L$-functions occurring in the local integral are all unchanged; see also the analogous global discussion in Section~\ref{s:classicalrep}.}, together with a spherical
vector \(v_0\).  Then $\pi$ is of the form $\chi_1\times\chi_2\rtimes\chi_0$ with unitary characters $\chi_0,\chi_1,\chi_2$ satisfying $\chi_0^2\chi_1\chi_2=1$. Let
\begin{equation}\label{alphabetaeq}
  \alpha=\chi_1(\varpi),
        \qquad
        \beta=\chi_2(\varpi).
\end{equation}
\item An irreducible, genuine, unitary, tempered representation \(\sigma\)
of \(\meta_2(F)\), which we assume is not an even Weil representation.
Let \(\sigma'=m^{-1}\cdot\sigma\) be the twist defined in
Section~\ref{s:local}.

\item The Heisenberg group \(H(F)\) and the Schr\"odinger--Weil
representation \(\omega_{\psi^m}\) of
\(\meta_2(F)\ltimes H(F)\), realized on \(\mathcal S(F)\).
\end{enumerate}

Let \(\Gamma^0(m)\subset\SL_2(\OF)\) be the subgroup whose upper-right
entry lies in \(m\OF\), and let \(\chi_m\) be the quadratic character of $F^\times$ given by $\chi_m(x)=(x,m)$, where $(\cdot,\cdot)$ is the local Hilbert symbol. We also use $\chi_m$ to denote the resulting character of $\Gamma^0(m)$ via its action on the top left entry.

Let \(B_{\sigma'}(m)\) be an orthogonal basis of the subspace of
\(\sigma'\) transforming under \(\Gamma^0(m)\) by \(\chi_m\) (see~\eqref{VGammachieq}), and put
\[
\alpha^\#(\pi,\sigma;m)
=
\sum_{\Lambda'\in B_{\sigma'}(m)}
\alpha^\#(v_0,\Lambda',\phi^{(m)};\psi^m),
\]
where the normalized local integral on the right was defined above.
The quantity $\alpha^\#(\pi,\sigma;m)$ is independent of the choice of the orthogonal basis and of
the normalizations of the invariant inner products.

When \(v(m)=0\), the above quantity can be nonzero only when \(\sigma\) is
unramified, and in that case Xue proved that
\(\alpha^\#(\pi,\sigma;m)=1\) \cite{HX17}.  Our main local result is an exact formula for the
case \(v(m)=1\).

The condition
\(V_{\sigma'}(\Gamma^0(m),\chi_m)\neq 0\) implies that
\(\sigma\) has a \(\Gamma_0(\p)\)-fixed vector.  Under the assumptions above,
there are two cases:
\begin{enumerate}
\item \(\sigma\simeq\tilde\pi(\chi)\), a metaplectic principal series representation, where \(\chi\) is a unitary, unramified
character of \(F^\times\).  In this case the relevant fixed subspace has
dimension two.
\item \(\sigma\simeq\tilde\sigma_\xi\), a metaplectic special representation, with
\(\xi\in\OF^\times\).  In this case the relevant fixed subspace has dimension one.
\end{enumerate}
Our main local result, Theorem~\ref{t:mainlocal}, gives the following explicit values. If \(\sigma\simeq\tilde\pi(\chi)\) and \(\delta=\chi(\varpi)\), then
\[
\alpha^\#(\pi,\sigma;m)
=
\frac{2}{q+1}
\frac{(\alpha^{-1/2}+\alpha^{1/2})^2
      (\beta^{-1/2}+\beta^{1/2})^2}
     {(1+\delta q^{-1/2})(1+\delta^{-1}q^{-1/2})}.
\]
If \(\sigma\simeq\tilde\sigma_\xi\) and
\(\eps_\sigma=(\varpi,\xi)\in\{\pm1\}\), then
\[
\alpha^\#(\pi,\sigma;m)
=
\begin{cases}
0&\eps_\sigma=1,\\[1ex]
\dfrac{2}{q+1}&\eps_\sigma=-1.
\end{cases}
\]

We briefly indicate the calculation.  A twisting argument first replaces
\(\psi^m\) and \(\phi^{(m)}\) by \(\psi\) and~\(\phi\), at the cost of
conjugating the spherical matrix coefficient of \(\pi\).  The resulting
period is then rewritten as an integral over the Jacobi group involving
the \(K_0^J(\p)\)-fixed matrix coefficients of
\(\sigma\otimes\omega_{\psi^{-1}}\).  In the unramified principal series
case there are two diagonal terms.  In the special case the fixed vector
is obtained from the kernel of a standard intertwining operator; its
invariant inner product is obtained from the derivative of that operator,
and the endpoint calculation is related to an absolutely convergent family
by Bernstein continuation.

In both cases the period reduces to the same integral (see~\eqref{Idef}).  Splitting
the central variable according to its valuation shows that only three
pieces, denoted \(I_0,I_1,I_2\), contribute.  We evaluate these pieces by
combining the Cartan decomposition criterion of
\cite[Proposition~3.2]{KnightlyLi2019} with the Macdonald formula for the
spherical matrix coefficient of \(\pi\), following the general strategy of
\cite{DPSS20}.  After summing the resulting geometric series and inserting
the local \(L\)-factors, the expressions simplify to the formulas above.
The detailed calculations occupy Section~\ref{s:local}.

\subsection{The global identity and some consequences}\label{s:introglobal}
Returning to Conjecture \ref{c:GGPconj}, we set $\Psi=\Psi_F$ to be the adelization of a
Hecke eigenform $F \in S_k(\Sp_4(\Z))$ (as before, we assume $k$ is even and $F$ is not a Saito--Kurokawa lift). We interpret the adelic global Fourier--Jacobi period, defined in \eqref{e:deffjinto}, classically in terms of Petersson inner products of half-integral weight forms. Precisely, for a Hecke eigenform $h \in S_{k-\frac12}^+(\Gamma_0(4m))$, we choose the global data so that the
left-hand side of \eqref{R-GGP} becomes a multiple of
\[
\frac{|\langle f_m,h\rangle|^2}{\langle F,F\rangle\langle h,h\rangle}.
\]
On the other hand, the local result described in the previous subsection, together with archimedean computations, allows us to write down the right hand side of \eqref{R-GGP} explicitly.

The Shimura--Waldspurger
correspondence, refined by Kohnen, relates $h$ to a Hecke eigenform~$f$ of weight $2k-2$ and level $m$; precisely, $\pi_f=\wald_\psi(\overline{\sigma_h})$ is the Waldspurger lifting of the contragredient of the automorphic representation $\sigma_h$ generated by the adelization of $h$. This leads to our main global theorem (Theorem \ref{t:global}), which expresses $\frac{|\langle f_m,h\rangle|^2}{\langle F,F\rangle\langle h,h\rangle}$ exactly in terms of $L$-values and local parameters.  In the special case that $h$ is a \emph{newform} of level $4m$ (so that $f$ is a newform of weight $2k-2$ and level $m$), the identity is as follows: 
\begin{equation}\label{e:mainidintro}\frac{|\langle f_m, h \rangle|^2}{\langle h, h \rangle\langle F, F \rangle} =  \frac{\pi^{k+5}}{3(2k-3) \Gamma(k)} \cdot \frac{L(1/2, \pi_F\times\pi_f)}{L(1, \pi_F, \Ad)L(1, \pi_f, \Ad)} \cdot \prod_{p|m}r_{f,p}\end{equation}
where $r_{f,p} = \frac{2}{p+1}$ if the local Atkin--Lehner eigenvalue of $f$ at $p$ equals 1 and $r_{f,p} = 0$ if this eigenvalue equals~$-1$.
A similar formula as above holds for oldforms $h$. By summing over an orthogonal Hecke basis of the relevant
half-integral weight space and applying Parseval, we deduce Theorem \ref{t:mainintro}.

We noted in Corollary \ref{c:FJupper} above that our results imply strong upper bounds on $\langle f_m, f_m \rangle$ under~GRH. It turns out that they also imply strong upper bounds on the size of the Fourier coefficients~$a(F,S)$. For simplicity, assume that $S$ is fundamental, i.e., $\disc(S)$ is a fundamental discriminant. A famously difficult conjecture of Resnikoff and Saldana \cite{res-sald} predicts that \begin{equation}\label{e:RSboundintro}|a(F,S)| \ll_{F,\epsilon} (\det S)^{\frac{k}2 -\frac{3}4 + \epsilon}.\end{equation}
Assuming GRH, it was proved in \cite[Theorem C]{JLS23} that \begin{equation}\label{e:JLSbound} |a(F,S)| \ll_{F, \epsilon} \frac{(\det S)^{\frac{k}2 - \frac{1}{2}}}{ (\log |(\det S)|)^{\frac18 - \epsilon}}.\end{equation}
This bound represents the limit of the current technology and assumes GRH; yet it still does not get us to the bound \eqref{e:RSboundintro}, making it clear how deep that conjecture is.

The best unconditional bound toward \eqref{e:RSboundintro} is due to Kohnen \cite{WK93} who proved that  $|a(F,S)| \ll_{F, \epsilon} (\det S)^{\frac{k}{2} - \frac{13}{36} + \epsilon}$. This follows from the following bound proved by Kohnen in the same paper:
\begin{equation}
|a(F,S)| \ll_{F, \epsilon} (\min S)^{5/18 + \epsilon} (\det S)^{\frac{k}{2} - \frac{1}{2} + \epsilon}
\end{equation}
where $\min S$ is the smallest positive integer represented by $S$.

Since $\disc(S)$ is a fundamental discriminant, $S$ is primitive, and therefore represents infinitely many primes \cite[Theorem 1 (i)]{iwanprime}. If $S$ represents 1, then put ${\min}_{\pr} S =1$, and otherwise, let ${\min}_{\pr} S$ denote the smallest odd prime represented by $S$. Assuming GRH and Conjecture \ref{c:GGPconj}, we prove the following bound:
\begin{equation}\label{e:introourRS}
|a(F,S)| \ll_{F, \epsilon} ({\min}_{\pr} S)^{1/2 + \epsilon} (\det S)^{\frac{k}{2} - \frac{3}{4} + \epsilon}
\end{equation}
This result follows from our main global theorem and Waldspurger's theorem. In fact, we prove a more precise version of \eqref{e:introourRS} where the implied constant is explicit; see \eqref{e:FCboundnew}. Note that \eqref{e:introourRS} is stronger than \eqref{e:JLSbound} when ${\min}_{\pr} S \le (\det S)^{1/2 - \delta}$ for some $\delta>0$. 

Theorem \ref{t:mainintro} also implies (assuming  Conjecture \ref{c:GGPconj}) a non-vanishing result for certain central $L$-values: for each odd squarefree $m$ such that $f_m \neq 0$, at least one of the $L(1/2, \pi_F\times\pi_f)$ on the right hand side of \eqref{maineq} must be non-zero. There are infinitely many such $m$; indeed, a positive density of odd primes have this property (see Remark \ref{r:nonvanishing}). We do not, however, attempt to obtain an effective bound for the smallest such $m$.\footnote{A recent paper of Manickam  \cite{manickam} claims to prove that $f_1 \neq 0$. Unfortunately, the argument appears to contain a gap, as we explain in Remark \ref{r:manickam}.}  

The Fourier coefficients of Siegel Hecke cusp forms of degree $n\ge 2$ are mysterious objects. For $n=2$, this paper provides one approach to studying their analytic properties, based on the refined GGP identity for Fourier--Jacobi periods.\footnote{While this identity remains conjectural at the time of writing, a proof of the analogous identity for unitary groups has recently been announced by Boisseau--Lu--Xue \cite{BLX26}.} Another approach, developed in \cite{DPSS20, CMS23}, uses the refined GGP identity for Bessel periods. However, the Bessel-period approach
relies on the accidental isomorphism
$\PGSp_4\simeq \SO_5$, so it is special to degree $2$.  By contrast, the Fourier--Jacobi framework is available for $\Sp_{2n}$ in every degree. It is thus natural to expect higher-rank analogues of the conjectural identity studied in this paper; working out their precise formulations and implications would be an important direction for future work.

\subsection{Declaration on the use of AI} During the preparation of Sections 2.4--2.6, OpenAI's ChatGPT 5.5 Pro and ChatGPT 5.6 Sol Pro were used interactively to explore ideas and assist with symbolic computations. The models were also used elsewhere in the paper to identify relevant literature, check calculations, detect possible errors, and proofread the manuscript. All AI-assisted material was carefully reviewed, verified and revised by the authors, who take full responsibility for the paper’s content.

\subsection{Acknowledgments}
A.S. acknowledges the support of the Engineering and Physical Sciences Research Council (grant number UKRI172). B.P. thanks INSPIRE, DST (Govt. of India) for research support.
\section{Local calculations}\label{s:local}
\subsection{Notation}
Let $F$ be a non-archimedean local field of characteristic zero, $\OF$ the ring of integers of $F$, and $\p$ the maximal ideal of $\OF$. Let $\varpi\in\OF$ be a fixed generator of $\p$, and $q=\#(\OF/\p)$ be the cardinality of the residue class field. The normalized valuation on $F$ is denoted by $v$, and the normalized absolute value by $|\cdot|$.  We fix a non-trivial character $\psi$ of $F$ with conductor $\OF$ once and for all. We let $(\cdot,\cdot)\colon F^\times\times F^\times\to\{\pm1\}$ denote the Hilbert symbol. For $m\in F^\times$, we let $\chi_m$ be the quadratic character of $F^\times$ given by~$\chi_m(x)=(x,m)$.

The Haar measure on~$F$ is normalized such that~$\OF$ has volume~$1$, and the Haar measure on~$F^\times$ is normalized such that~$\OF^\times$ has volume~$1-q^{-1}$.

For an ideal $\mathfrak{m}$ of $\OF$, let $\Gamma^0(\mathfrak{m})$ denote the subgroup of $\SL_2(F)$ consisting of the elements in $\SL_2(\OF)$ whose top right entry is in $\mathfrak{m}$, and let $\Gamma_0(\mathfrak{m})$ be the subgroup consisting of elements with bottom left entry in $\mathfrak{m}$. If $m\in\OF$, we sometimes write $\Gamma^0(m)$ instead of $\Gamma^0(m\OF)$, and $\Gamma_0(m)$ instead of~$\Gamma_0(m\OF)$.

Let $m \in F^\times$, and set $\psi^m(x) := \psi(mx)$. Note that the conductor of $\psi^m$ is $\p^{-v(m)}$. For now, $m$ is arbitrary, but we will assume later that $v(m)=1$.

Throughout Section~\ref{s:local}, we use a different matrix realization of the symplectic, Jacobi and Heisenberg groups than in the introduction. Precisely, let \[\GSp_4(F) = \{g\in\GL_4(F):\:{}^tgJ'g=\lambda(g) J',\:\lambda(g)\in F^\times\},\qquad J'=\left[\begin{smallmatrix}&&&1\\&&1\\&-1\\-1\end{smallmatrix}\right].
\]
Put $K=\GSp_4(\OF)$. We let $\Sp_4(F)$ denote the subgroup of $\GSp_4(F)$ for which $\lambda(g)=1$. We embed $\SL_2(F)$ inside $\Sp_4(F)$ by
\[
 \mat{a}{b}{c}{d} \mapsto \begin{bsmallmatrix}1\\&a&b\\&c&d\\&&&1\end{bsmallmatrix}.
\]
We let $w$ be the image of $\mat{}{1}{-1}{}$, i.e.,
\begin{equation}\label{weq}
  w=\begin{bsmallmatrix}1\\&&1\\&-1\\&&&1\end{bsmallmatrix}.
\end{equation}
Define the \emph{Heisenberg group}
\begin{equation}\label{Hdefe2}
   H(F)=\{\begin{bsmallmatrix}1&x&y&z\\&1&&y\\&&1&-x\\&&&1\end{bsmallmatrix}\colon x,y,z\in F\}.
\end{equation}
The semidirect product $J(F):=\SL_2(F)\ltimes H(F)$ is called the \emph{Jacobi group}. We normalize the Haar measure on $\SL_2(F)$ so that $\SL_2(\OF)$ has volume~$1$. Then, for any integrable function $\varphi$ on $\SL_2(F)$,
\begin{equation}\label{unrameq566}
 \int_{\SL_2(F)}\varphi(A)\,dA=
 \frac1{1-q^{-1}}\int_{F^\times}\int_F\int_{\SL_2(\OF)}
 \varphi(\mat{a}{}{}{a^{-1}}\mat{1}{x}{}{1}\kappa)\,d\kappa\,dx\,d^\times a.
\end{equation}
The Haar measure on $F$ leads to a Haar measure on $H(F)$. The Jacobi group carries the product measure.
\subsection{The metaplectic group and the Jacobi group}
Let $\meta_2(F)$ be the metaplectic group. As a set, it is $\SL_2(F)\times\{\pm1\}$, with multiplication defined by the modified Kubota cocycle, as in \cite{Gelbart1976} or \cite{Waldspurger1991}. For any $y \in F^\times$ and $g \in \SL_2(F)$, define
\begin{equation}\label{e:twist}
 g^y = \mat{y\vphantom{y^{-1}}}{}{}{1}g\mat{y^{-1}}{}{}{1}.
\end{equation}
This pulls back uniquely to an automorphism $g \mapsto g^y$  of $\meta_2(F)$.
If $(\sigma,V_\sigma)$ is a representation of $\meta_2(F)$, and if $m\in F^\times$, then the twisted representation $m\cdot\sigma$ is the representation of $\meta_2(F)$ on the same space $V_\sigma$ given by
\begin{equation}\label{msigmaeq}
 (m\cdot\sigma)(g)=\sigma(g^m).
\end{equation}
Smooth and admissible representations are defined as usual, and the twisting operation preserves these attributes. A representation $\sigma$ of $\meta_2(F)$ is called \emph{genuine} if the element $(1,-1)$ acts by multiplication by~$-1$. This property is also preserved by twisting.

Since we assume the residual characteristic of~$F$ is odd, the direct product $\SL_2(\OF)\times\{\pm1\}$ is a subgroup of~$\meta_2(F)$; see pages 17--19 of~\cite{Gelbart1976}. In particular, $\Gamma^0(m)\times\{\pm1\}$ and $\Gamma_0(m)\times\{\pm1\}$ are subgroups of $\meta_2(F)$.

Let $(\sigma,V_\sigma)$ be a smooth, genuine representation of~$\meta_2(F)$.
If $\Gamma$ is a congruence subgroup of~$\SL_2(\OF)$, and if $\chi$ is a character of $\Gamma$, we denote by $V_\sigma(\Gamma,\chi)$ the space of $v\in V_\sigma$ for which
\begin{equation}\label{VGammachieq}
 \sigma(\gamma,\varepsilon)v=\varepsilon\chi(\gamma)v\qquad\text{for all }\gamma\in\Gamma,\:\varepsilon\in\{\pm1\}.
\end{equation}
If $v(m)>0$, and if we view a character $\chi$ of $F^\times$ that is trivial on $1+ m \OF$ as a character of $\Gamma_0(m)$ (or $\Gamma^0(m)$) via $\chi(\mat{a}{b}{c}{d})=\chi(a)$, then it is easy to see that
\begin{equation}\label{VGamma0Gamma0eq}
 V_{\sigma}(\Gamma_0(m),\chi)=V_{m^{-1}\cdot\sigma}(\Gamma^0(m),\chi\chi_m).
\end{equation}
Let $\omega_\psi$ be the Weil representation for the character $\psi$ and the one-dimensional quadratic form $q(x)=x^2$; see page~223 of~\cite{Waldspurger1991}. Note that $\omega_{\psi^m}$ is the Weil representation for the character~$\psi$ and the quadratic form $q(x)=mx^2$. The space of $\omega_\psi$ is $\mathcal{S}(F)$, and the subspaces of even and odd Schwartz functions are invariant and irreducible. If $\mathcal{S}(F)$ is equipped with the inner product $\langle \phi, \phi' \rangle = \int_F \phi(x)\overline{\phi'(x)}\,dx$, then $\omega_\psi$ becomes a unitary representation. It has an extension, also unitary, to $\meta_2(F)\ltimes H(F)$, called the \emph{Schr\"odinger--Weil representation}; see \cite[Section~2.5]{BS98}. Sometimes it is more convenient to view $\omega_\psi$ as a projective representation of $J(F)=\SL_2(F)\ltimes H(F)$. For later use we note that if we define $\phi^{(m)}(x)=\phi(mx)$ for $m\in F^\times$, then
\begin{equation}\label{SWtwisteq}
 \omega_{\psi^m}(\begin{bsmallmatrix}1&xm^{-1}&y&zm^{-1}\\&1&&y\\&&1&-xm^{-1}\\&&&1\end{bsmallmatrix}\begin{bsmallmatrix}1\vphantom{m^{-1}}\\&a&bm\vphantom{m^{-1}}\\&cm^{-1}&d\\&&&1\end{bsmallmatrix})(\phi^{(m)})=\big(\omega_{\psi}(\begin{bsmallmatrix}1&x&y&z\\&1&&y\\&&1&-x\\&&&1\end{bsmallmatrix}\begin{bsmallmatrix}1\vphantom{m^{-1}}\\&a&b\vphantom{m^{-1}}\\&c\vphantom{m^{-1}}&d\\&&&1\end{bsmallmatrix})\phi\big)^{(m)}
\end{equation}
for $\phi\in\mathcal{S}(F)$.

For a character $\chi$ of $F^\times$, we define the metaplectic induced representation $\tilde\pi(\chi)$ as on page~225 of~\cite{Waldspurger1991}. Note that the definition depends on $\psi$, which we consider fixed and suppress from the notation. If $\chi^2\neq|\cdot|^{\pm1}$, then $\tilde\pi(\chi)$ is irreducible, and we call it a \emph{principal series representation}. If $\chi=|\cdot|^{1/2}\chi_\xi$ for some $\xi\in F^\times$, then $\tilde\pi(\chi)$ contains a unique irreducible subrepresentation $\tilde\sigma_\xi$, called a \emph{special representation}. The quotient $\tilde\pi(\chi)/\tilde\sigma_\xi$ is isomorphic to an even Weil representation.
\begin{lemma}\label{metasphericallemma}
 Let $(\sigma,V_\sigma)$ be an irreducible, admissible, genuine representation of $\meta_2(F)$.
 \begin{enumerate}
  \item Assume that $V_\sigma(\SL_2(\OF),1)\neq0$. Then either $\sigma$ is an even Weil representation, or $\sigma=\tilde\pi(\chi)$ with an unramified character~$\chi$ of $F^\times$ for which $\chi^2\neq|\cdot|^{\pm1}$. In the latter case
   \begin{equation}\label{metasphericallemmaeq1}
    \dim V_\sigma(\SL_2(\OF),1)=1,\qquad\dim V_\sigma(\Gamma_0(\p),1)=\dim V_\sigma(\Gamma^0(\p),1)=2.
   \end{equation}
  \item Assume that $V_\sigma(\SL_2(\OF),1)=0$ and $V_\sigma(\Gamma_0(\p),1)\neq0$. Then $\sigma=\tilde\sigma_\xi$ with $\xi\in\OF^\times$ and
   \begin{equation}\label{metasphericallemmaeq2}
    \dim V_\sigma(\Gamma_0(\p),1)=1.
   \end{equation}
 \end{enumerate}
\end{lemma}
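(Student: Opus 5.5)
The plan is to realize everything inside genuine principal series via the Iwasawa decomposition $\meta_2(F)=\tilde B(F)\cdot(\SL_2(\OF)\times\{\pm1\})$, and then compute fixed vectors by counting double cosets in the style of Casselman. The first step is a reduction. Any irreducible genuine admissible $\sigma$ with a nonzero vector in $V_\sigma(\Gamma_0(\p),1)$ (the condition in both parts) has a nonzero Jacquet module. Indeed, such a vector is fixed by the genuine lift of the Iwahori subgroup. The Borel--Casselman argument (Jacquet's lemma for Iwahori-fixed vectors, which applies verbatim to the metaplectic cover because the lift of $\Gamma_0(\p)$ splits for odd residue characteristic) shows that its image in the Jacquet module is nonzero. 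So $\sigma$ is not supercuspidal, and hence $\sigma$ embeds in some $\tilde\pi(\chi)$. By the classification recalled above (the Waldspurger/Gelbart description), $\sigma$ is therefore an irreducible principal series $\tilde\pi(\chi)$, a special representation $\tilde\sigma_\xi$, or an even Weil representation, which is the quotient of a reducible $\tilde\pi(\chi)$.

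The second step computes the relevant fixed spaces in $\tilde\pi(\chi)$. A vector in $\tilde\pi(\chi)$ fixed (genuinely) by $\Gamma$ is determined by its values on $\tilde B\backslash\meta_2/\Gamma$. For $\Gamma=\SL_2(\OF)$ there is one double coset, and for $\Gamma=\Gamma_0(\p)$ there are two, with representatives $1$ and $w$. On each representative, the function can be supported only if the genuine character $\chi\cdot(\text{Weil index factor})$ of $\tilde B$ is trivial on $\tilde B\cap \Gamma$ conjugated appropriately. With the normalization of $\tilde\pi(\chi)$ depending on $\psi$ with conductor $\OF$, the Weil-index factor $\gamma_\psi$ is trivial on $\OF^\times$ (odd residue characteristic). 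Hence the condition on each coset is exactly that $\chi$ be unramified. This gives dimension $1$ for $\SL_2(\OF)$ and dimension $2$ for $\Gamma_0(\p)$ when $\chi$ is unramified, and $0$ otherwise. The claim for $\Gamma^0(\p)$ follows by conjugating by $\mathrm{diag}(\varpi,1)$, which maps $\Gamma_0(\p)$ to $\Gamma^0(\p)$ compatibly with the metaplectic structure up to the twist in \eqref{e:twist}. Alternatively, one can simply repeat the coset count with representatives adapted to $\Gamma^0(\p)$.

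The third step handles the irreducible cases and the exact-sequence bookkeeping. If $\chi^2\ne|\cdot|^{\pm1}$, then $\sigma=\tilde\pi(\chi)$ and \eqref{metasphericallemmaeq1} is immediate from the second step. For part (1), if $\sigma$ has a spherical vector and is reducible-type, then $\chi=|\cdot|^{\pm1/2}\chi_\xi$ with $\chi$ unramified, so $\chi_\xi$ is unramified and $\xi\in\OF^\times$ up to squares. In $0\to\tilde\sigma_\xi\to\tilde\pi(\chi)\to\text{(even Weil)}\to0$, the even Weil representation $\omega_\psi^+$ or its twist has the spherical vector $\mathbf 1_{\OF}$. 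Taking invariants under a compact open group is exact, so the one-dimensional spherical space of $\tilde\pi(\chi)$ maps isomorphically onto that of the quotient, and $\tilde\sigma_\xi$ has no spherical vector. This proves part (1).

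For part (2), with $\sigma=\tilde\sigma_\xi$, the same exactness gives $\dim V_{\tilde\sigma_\xi}(\Gamma_0(\p),1)=2-\dim V_{\omega}(\Gamma_0(\p),1)$. So one must show that the even Weil quotient has exactly a one-dimensional $\Gamma_0(\p)$-fixed space. I expect this to be the main obstacle. The quotient must be identified correctly as $\omega_{\psi^{a}}^+$ for the appropriate $a$ (a unit or a unit times $\varpi$, depending on the sign conventions in $\chi=|\cdot|^{1/2}\chi_\xi$). One then computes its $\Gamma_0(\p)$-invariants directly on even Schwartz functions: invariance under the upper unipotent forces the support to be in $\OF$ (or $\p^{-1}$-type lattice), and the $K_0(\p)$-action via the Fourier transform then pins the space down to $\mathbf 1_{\OF}$. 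I would do this with the explicit formulas for $\omega_\psi$ on generators of $\Gamma_0(\p)$ and check that it is exactly one-dimensional. A complementary route is to note that when $\xi\notin\OF^\times\cdot F^{\times2}$ the character $\chi$ is ramified, so the second step gives no fixed vectors at all. This also forces $\xi\in\OF^\times$ in part (2).
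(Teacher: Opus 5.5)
Your argument is correct in substance, and it takes a different route from the paper. The paper gives no proof of its own; it only cites \cite[Section~5.3]{BS98}. Your proof is self-contained and has three steps:
\begin{itemize}
\item Borel--Casselman, to show that $\sigma$ is a constituent of a genuine principal series;
\item a count over $\tilde B\backslash\meta_2(F)/\Gamma$, using that the Weil index is trivial on $\OF^\times$, which gives dimensions $1$ and $2$ exactly when $\chi$ is unramified;
\item exactness of invariants, plus the spherical vector $\mathbf 1_{\OF}$ of the unramified even Weil constituent, to sort out the reducible $\tilde\pi(|\cdot|^{\pm1/2}\chi_\xi)$.
\end{itemize}
This buys transparency about exactly where odd $q$ and the conductor of $\psi$ enter.

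The one non-formal input is your first step. What you need is not the surjectivity half of Jacquet's lemma. You need the fact that a nonzero $\Gamma_0(\p)$-fixed vector survives in the Jacquet module. In the linear case this is Borel's theorem, and it rests on invertibility of $\mathbf 1_{ItI}$ in the Iwahori--Hecke algebra. The genuine analogue for $\meta_2(F)$ is true (e.g.\ via Savin's description of the genuine Iwahori--Hecke algebra), but it should be cited rather than described as carrying over verbatim.

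One sub-step is wrong as written: the passage from $\Gamma_0(\p)$ to $\Gamma^0(\p)$ by conjugating with $\diag(\varpi,1)$.
\begin{itemize}
\item That conjugation is the outer twist \eqref{e:twist}. It changes the representation, and it moves the canonical splitting by the quadratic character $\chi_\varpi$.
\item This is exactly \eqref{VGamma0Gamma0eq}: it identifies $V_\sigma(\Gamma_0(\p),1)$ with $V_{\varpi^{-1}\cdot\sigma}(\Gamma^0(\p),\chi_\varpi)$. For $\sigma=\tilde\pi(\chi)$ unramified, $\varpi^{-1}\cdot\sigma$ is a ramified principal series, so this says nothing about $V_\sigma(\Gamma^0(\p),1)$.
\item The fix: $\SL_2(\OF)\times\{\pm1\}$ is a subgroup and $w=\mat{0}{1}{-1}{0}$ satisfies $w\Gamma_0(\p)w^{-1}=\Gamma^0(\p)$. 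Hence $\sigma(w,1)$ maps $V_\sigma(\Gamma_0(\p),1)$ isomorphically onto $V_\sigma(\Gamma^0(\p),1)$. Your fallback coset count also works.
\end{itemize}

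The ``main obstacle'' you identify in part (2) is not needed. Exactness gives $\dim V_{\tilde\sigma_\xi}(\Gamma_0(\p),1)=2-\dim V_\omega(\Gamma_0(\p),1)\le 1$, because $\mathbf 1_{\OF}$ lies in the even Weil constituent $\omega$. The hypothesis $V_\sigma(\Gamma_0(\p),1)\neq0$ gives the reverse inequality. If you want the exact count anyway, it is two lines:
\begin{itemize}
\item $N(\OF)$-invariance forces support in $\OF$;
\item $\bar N(\p)$-invariance forces the Fourier transform to be supported in $\{y:\p y^2\subset\OF\}=\OF$;
\item so $f\in\C\mathbf 1_{\OF}$.
\end{itemize}

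Also drop the hedge that the Weil constituent might be $\omega^+_{\psi^{a}}$ with $v(a)=1$. Since $\chi$ is unramified, $\xi\in\OF^\times F^{\times 2}$, so the parameter is a unit. Your part (1) argument already depends on this. Indeed $\omega^+_{\psi^{\varpi u}}$ has no spherical vector at all: $N(\OF)$-invariance forces support in $\OF$, while $w$-invariance then forces invariance under $\p^{-1}$-translations.
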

\begin{proof}
See \cite[Section 5.3]{BS98}. 
\end{proof}

The ``Borel subgroup'' of the Jacobi group $J(F)$ is
    \begin{equation}\label{BJdefe2}
   B^J(F)=\{\begin{bsmallmatrix}1\\&a&b\\&&a^{-1}\\&&&1\end{bsmallmatrix}\begin{bsmallmatrix}1&&y&z\\&1&&y\\&&1&\\&&&1\end{bsmallmatrix}\colon a\in F^\times,\;b,y,z\in F\}.
  \end{equation}
For a character $\eta$ of $F^\times$, we let $\pi^J(\eta)$ be the principal series representation of $J(F)$ consisting of smooth functions $f\colon J(F)\to\C$ which are compactly supported modulo $B^J(F)$ (this is no longer automatic) with the transformation property
  \begin{equation}\label{piJdefeq}
   f(\begin{bsmallmatrix}1&&*&z\\&a&*&*\\&&a^{-1}\\&&&1
   \end{bsmallmatrix}g)=\psi^{-1}(z)\eta(a)|a|^{3/2}f(g)
  \end{equation}
for $a\in F^\times$, $z\in F$ and $g\in J(F)$. If $\sigma = \tilde\pi(\chi)$ is an unramified principal series representation for a unitary unramified character $\chi$,
then it follows from \cite[Theorem~5.4.2]{BS98} that
\begin{equation}\label{metaJpsisoeq}
 \tilde\pi(\chi)\otimes\omega_{\psi^{-1}}\cong\pi^J(\chi).
\end{equation}
In this case one can show with the help of \cite[Lemma~2.6.1]{Bump1997} that an invariant Hermitian inner product on the standard space of $\pi^J(\eta)$ is given by
\begin{equation}\label{piJinnerproducte1unram}
   \langle f,f'\rangle=\int\limits_{\SL_2(\OF)}\int\limits_F f(\begin{bsmallmatrix}1&u\\&1\\&&1&-u\\&&&1\end{bsmallmatrix}
   \kappa)\overline{f'(\begin{bsmallmatrix}1&u\\&1\\&&1&-u\\&&&1\end{bsmallmatrix}\kappa)}\,du\,d\kappa.
\end{equation}
The integral \eqref{piJinnerproducte1unram} is convergent because the $f, f'$ are compactly supported modulo $B^J(F)$. In general, when $\eta$ is not unitary, the formula \eqref{piJinnerproducte1unram} does not give an invariant inner product on $\pi^J(\eta)$.

Recall that if $\eta=|\cdot|^{1/2}\chi_\xi$, then $\tilde\pi(\eta)$ contains the special representation $\sigma=\tilde\sigma_\xi$ as a subrepresentation. In this case it follows from \eqref{metaJpsisoeq} that $\pi^J(\eta)$ contains a corresponding (irreducible) subrepresentation $\pi^J_\xi$, which we also call a special representation.

One can prove (see~\cite[Theorem 2.6.2]{BS98}) that every irreducible, admissible representation of $J(F)$ is of the form $\sigma\otimes\omega_{\psi^{-1}}$ for some irreducible, admissible representation $\sigma$ of $\meta_2(F)$. Here, we view both $\sigma$ and $\omega_{\psi^{-1}}$ as projective representations of $J(F)$; their cocycles cancel each other out in the tensor product so as to produce a true representation.
\subsection{The local Fourier--Jacobi integral and the main local theorem}
Let $\pi$ be an irreducible, unramified, tempered representation of $\GSp_4(F)$ with trivial central character. Then there exist unramified, unitary characters $\chi_0,\chi_1,\chi_2$ of $F^\times$ such that $\pi=\chi_1\times\chi_2\rtimes\chi_0$; see \cite{SallyTadic1993}. Let $\alpha=\chi_1(\varpi)$, $\beta=\chi_2(\varpi)$, $\gamma=\chi_0(\varpi)$. Since the central character of
$\pi$ is trivial,
$\alpha\beta\gamma^2=1.$ Then the degree-$5$ $L$-factor attached to $\pi$ is given by
\[
L(s,\pi,\rho_5)
=
\left((1-q^{-s})
(1-\alpha q^{-s})(1-\alpha^{-1}q^{-s})
(1-\beta q^{-s})(1-\beta^{-1}q^{-s})\right)^{-1}.
\]
Let $v_0$ be a (unique up to multiples) spherical (i.e., non-zero $K$-fixed) vector in $\pi$. Fix an invariant inner product on the space of~$\pi$.
Let $\Phi_0\colon\GSp_4(F)\to\C$ be the normalized spherical matrix coefficient, defined by
  \begin{equation}\label{Phi0efeq}
   \Phi_0(g)=\frac{\langle\pi(g)v_0,v_0\rangle}{\langle v_0,v_0\rangle}
  \end{equation}
for $g\in\GSp_4(F)$. Let $m$ be a non-zero element of~$\OF$. Define
\[
 \phi=\mathbf 1_{\OF}, \qquad \phi^{(m)}(x) = \phi(mx)= \begin{cases} 1 & \text{ if } x \in m^{-1}\OF,\\ 0 &\text{otherwise}.
 \end{cases}
\]
Let $(\sigma,V_\sigma)$ be an irreducible, admissible, genuine, unitary, tempered representation of~$\meta_2(F)$. Fix an invariant inner product $\langle \cdot, \cdot \rangle$ on $V_\sigma$. Let $\sigma' = m^{-1} \cdot \sigma$ be the twisted representation; see \eqref{msigmaeq}. Then $\langle \cdot, \cdot \rangle$ is also an invariant inner product for~$\sigma'$. For $\Lambda'$ in $V_{\sigma'}=V_\sigma$, consider the local quantity
\begin{equation}\label{e:deflocalintfirst}
\alpha(v_0, \Lambda', \phi^{(m)}; \psi^m)
=
\int_{\SL_2(F)}\int_{H(F)}
\Phi_0(ng)
\langle \sigma'(g)\Lambda', \Lambda'\rangle
\overline{\langle \omega_{\psi^m}(ng)\phi^{(m)}, \phi^{(m)}\rangle}
\,dn\,dg.
\end{equation}
Let $B_{\sigma'}(m)$ be an orthogonal basis for $V_{\sigma'}(\Gamma^0(m),\chi_m)$; see~\eqref{VGammachieq}. (If $m$ is a unit, we view $\chi_m$ as the trivial character of $\Gamma^0(m)=\SL_2(\OF)$.) We define
\begin{equation}\label{definitionalpha}
\alpha(\pi, \sigma; m)= \sum_{\Lambda' \in B_{\sigma'}(m)} \frac{\alpha(v_0, \Lambda', \phi^{(m)}; \psi^m)}{\langle \Lambda', \Lambda' \rangle \langle \phi^{(m)}, \phi^{(m)}\rangle}.\end{equation}
Note that $\alpha(\pi, \sigma; m)$ does not depend on the choice of the orthogonal basis or on the normalization of the inner products.
Define the normalized local factor \begin{equation}\label{e:mainlocaldef}\begin{split}\alpha^{\#}(\pi, \sigma; m)&= (1-q^{-2})(1-q^{-4}) \frac{L(1, \pi, \Ad)L_{\psi^m}(1,\sigma', \Ad)}{L_{\psi^m}(\frac{1}{2}, \pi\times\sigma')}  \alpha(\pi, \sigma; m) \\&= (1-q^{-2})(1-q^{-4}) \frac{L(1, \pi, \Ad)L_{\psi}(1,\sigma, \Ad)}{L_{\psi}(\frac{1}{2}, \pi\times\sigma)}  \alpha(\pi, \sigma; m).
 \end{split}
  \end{equation}
Here the local $L$-factors involving a metaplectic representation
are defined using its additive-character-dependent local
Langlands parameter (see Section 11 of \cite{ggp}). What it means is that
\begin{equation}\label{Lpsieq}
 L_\psi(s,\sigma,\Ad)=L(s,\tau,\Ad),\qquad L_{\psi}(s, \pi\times\sigma)=L(s, \pi\times\tau),
\end{equation}
where $\tau$ is the Waldspurger lift of $\sigma$ (see~\cite[Section~VI.2]{Waldspurger1991}). Note that $\tau$ is an irreducible, admissible representation of $\GL_2(F)$ with trivial central character depending on the choice of~$\psi$.

In the case $v(m)=0$, it is easy to see that
$\alpha^{\#}(\pi, \sigma; m)$ is non-zero only if $\sigma'$, and hence
$\sigma$, are unramified principal series representations, as otherwise
$V_{\sigma'}(\Gamma^0(m),\chi_m)=0$; see
Lemma~\ref{metasphericallemma}. In this case it was proved by Xue \cite{HX17} (assuming that $q$ is odd) that $\alpha^{\#}(\pi,\sigma;m)=1$.

So the natural next question is to compute  $\alpha^{\#}(\pi, \sigma; m)$ when $v(m)=1$. In this case, for $\alpha^{\#}(\pi, \sigma; m)$ to be non-zero, $B_{\sigma'}(m)$ has to be non-empty, i.e., $V_{\sigma'}(\Gamma^0(m),\chi_m)\neq0$. By \eqref{VGamma0Gamma0eq}, this is equivalent to $V_\sigma(\Gamma_0(m),1)\neq0$. Recall that $\sigma$ is tempered. We assume further that $\sigma$ is not an even Weil representation. Hence, by Lemma~\ref{metasphericallemma}, $\sigma$ is of one of the following types:
\begin{enumerate}
 \item (Unramified principal series representation) $\sigma \simeq \tilde\pi(\chi)$, where $\chi$ is a unitary unramified character of $F^\times$ and $\chi^2\neq|\cdot|^{\pm1}$.  In this case, $B_{\sigma'}(m)$ has two elements.
 \item (Special representation)   $\sigma \simeq \tilde\sigma_{\xi}$ with $\xi \in \OF^\times$.  In this case, $B_{\sigma'}(m)$ is a singleton set.
\end{enumerate}
Our main local result computes the local quantity $\alpha^{\#}(\pi, \sigma; m)$  in the above two cases. Recall that we assume that $q$ is odd.

\begin{theorem}\label{t:mainlocal}
 Let $v(m)=1$, and let the quantity $\alpha^{\#}(\pi, \sigma; m)$ be defined as in \eqref{e:mainlocaldef}.
\begin{enumerate}
\item Suppose that $\sigma\simeq\tilde\pi(\chi)$, where $\chi$ is a unitary
unramified character of $F^\times$, and put
$\delta=\chi(\varpi).$ Then $$\alpha^{\#}(\pi, \sigma; m) = \frac{2}{q+1}
\cdot
\frac{(\alpha^{-1/2}+\alpha^{1/2})^2(\beta^{-1/2}+\beta^{1/2})^2}
{(1+\delta q^{-1/2})(1+\delta^{-1}q^{-1/2})}.$$

\item Suppose that $\sigma\simeq\tilde\sigma_{\xi}$ is a special representation with
$\xi\in\OF^\times$, and put
$\eps_\sigma=(\varpi,\xi)\in\{\pm1\}$. Then \[
\alpha^\#(\pi,\sigma;m)
=
\begin{cases}
0,&\eps_\sigma=1,\\[1ex]
\dfrac{2}{q+1},&\eps_\sigma=-1.
\end{cases}
\]
\end{enumerate}
\end{theorem}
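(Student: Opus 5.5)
The plan follows the outline sketched in Section~\ref{intro:local}.

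\emph{Step 1: remove the twist by $m$.} Substitute $g\mapsto g^{m}$-type conjugation, and rescale the Heisenberg coordinates as in \eqref{SWtwisteq}, in the integral \eqref{e:deflocalintfirst}. This converts $\omega_{\psi^m}$ and $\phi^{(m)}$ into $\omega_\psi$ and $\phi=\mathbf 1_{\OF}$. It converts $\sigma'$ back into $\sigma$, and converts $\Lambda'\in V_{\sigma'}(\Gamma^0(m),\chi_m)$ into a vector of $V_\sigma(\Gamma_0(\p),1)$, using \eqref{VGamma0Gamma0eq}. The price is that $\Phi_0$ is replaced by $\Phi_0(d\,\cdot\,d^{-1})$ with $d=\diag(m,m,1,1)$-type (a $\GSp_4$ similitude of valuation one). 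I will then take complex conjugates and regard $\sigma\otimes\overline{\omega_\psi}=\sigma\otimes\omega_{\psi^{-1}}$ as a representation of $J(F)$. This turns the integral over $J(F)=\SL_2(F)\ltimes H(F)$ into the integral of the conjugated spherical function against a matrix coefficient of $\sigma\otimes\omega_{\psi^{-1}}$. That matrix coefficient is attached to vectors fixed by $K_0^J(\p)=\Gamma_0(\p)\ltimes H(\OF)$ (with the appropriate Heisenberg lattice).

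\emph{Step 2: realize the fixed vectors in $\pi^J(\eta)$.}

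In case (1), use \eqref{metaJpsisoeq}. The $K_0^J(\p)$-fixed subspace of $\pi^J(\chi)$ is two-dimensional, spanned by functions supported on the two double cosets $B^J\,K^J_0(\p)$ and $B^J w K^J_0(\p)$. These are orthogonal for \eqref{piJinnerproducte1unram}, so I compute them and their norms explicitly. The sum over $B_{\sigma'}(m)$ then becomes two ``diagonal'' terms.

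In case (2), $\eta=|\cdot|^{1/2}\chi_\xi$ is not unitary, so \eqref{piJinnerproducte1unram} is unavailable. Here I realize $\pi^J_\xi$ as the kernel of the standard intertwining operator $M(s)\colon \pi^J(|\cdot|^{s}\chi_\xi)\to\pi^J(|\cdot|^{-s}\chi_\xi)$ at $s=1/2$. The fixed vector is the corresponding combination $f_{s}$ of the two double-coset functions. Its invariant norm is given by $\langle M'(1/2)f,f\rangle$, via the derivative of $M$. To control the matrix-coefficient integral, I compute it for $\Re s$ in a range of absolute convergence and then continue analytically in $s$ (Bernstein continuation) to $s=1/2$. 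The sign $\eps_\sigma=(\varpi,\xi)$ enters through the Weil index and Gauss sums in $M$ on the $\Gamma_0(\p)$-vectors.

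\emph{Step 3: reduce to the integral $I$ and evaluate it.} In both cases, unfolding the matrix coefficient of $\pi^J(\eta)$ through the Iwasawa decomposition \eqref{unrameq566} reduces everything to a single integral $I$. This integral is over the torus $a$, the unipotent variable $x$, and the central variable $z$, of $\Phi_0$ at explicit elements of $\GSp_4(F)$, weighted by $\eta(a)|a|^{3/2}\psi^{-1}(z)$ and characteristic functions. I will split $z$ by valuation. The character sum over $z$ kills all but $v(z)\ge -2$ or so, which leaves three pieces $I_0,I_1,I_2$. For each piece I will determine the double coset $K\,\diag(\varpi^{i},\varpi^{j},\ldots)K$ of the element using the Cartan criterion of \cite[Proposition~3.2]{KnightlyLi2019}, and insert Macdonald's formula for $\Phi_0$ in terms of $\alpha,\beta$. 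Summing the resulting geometric series in $v(a)$ gives rational functions in $\alpha,\beta,\delta,q^{-1/2}$. Finally, I multiply by $(1-q^{-2})(1-q^{-4})L(1,\pi,\Ad)L(1,\tau,\Ad)/L(1/2,\pi\times\tau)$ and simplify.

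\emph{Main obstacle.} I expect two hard points. The first is the simplification in Step~3: the huge rational function must collapse to $\frac{2}{q+1}(\alpha^{-1/2}+\alpha^{1/2})^2(\beta^{-1/2}+\beta^{1/2})^2/((1+\delta q^{-1/2})(1+\delta^{-1}q^{-1/2}))$. The second is the special case, where getting the norm via $M'(1/2)$ and the sign $\eps_\sigma$ exactly right decides between $0$ and $2/(q+1)$. As a check on the special case, I will verify that it agrees with a residue or specialization of the principal-series answer at $\delta\to -\eps_\sigma q^{-1/2}$-type parameters. I will also check that the vanishing for $\eps_\sigma=1$ matches the $\epsilon$-dichotomy predicted by local GGP.
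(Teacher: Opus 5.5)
Your proposal is correct and follows the paper's proof essentially step for step. The steps are: conjugation by a similitude of valuation one (the paper's $e_m=\diag(m,1,m,1)$) to pass to $\psi$, $\phi=\mathbf 1_{\OF}$ and the $K_0^J(\p)$-fixed vectors of $\sigma\otimes\omega_{\psi^{-1}}$; the two double-coset vectors $f_1,f_2$ in $\pi^J(\eta)$; the special vector as the kernel of the intertwining operator, with its norm defined via the derivative and Bernstein continuation from $0\le s_0<\frac12$; and the reduction to $I=I_0+I_1+I_2$, evaluated with Knightly--Li and the Macdonald formula.

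One correction concerns where the sign comes from. On the $K_0^J(\p)$-fixed vectors the intertwining matrix depends only on $q^{-2s_0}$, so the kernel vector $f_1-q^{-1}f_2$ is the same for both signs, and no Weil-index sign survives there. Instead $\eps_\sigma$ enters through $\eta_{s_0}(\varpi)=\eps_\sigma q^{-s_0}$, both in $\delta$ and in the two cross terms. These cross terms are multiples of the diagonal term, and together they produce the decisive factor $1-\eps_\sigma$ in the limit $s_0\to\frac12$.
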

\subsection{Initial reductions}
Throughout the rest of Section~\ref{s:local} we assume that $v(m)=1$.
Thus \(m\OF=\p\) and \(|m|=q^{-1}\). In this subsection, we reduce \(\alpha(\pi,\sigma;m)\) to a variant where the representations of the metaplectic and Jacobi groups are simpler, but at the cost of shifting the matrix coefficient of \(\GSp_4\). For any $t \in F^\times$ define
\begin{equation}\label{emdefeq}
        e_t = \diag(t,1,t,1) \in \GSp_4(F).
\end{equation}
For any $h \in \GSp_4(F)$ define
\begin{equation}\label{Phim0defq}
        \Phi^{m}_0(h)
        =
        \Phi_0(e_m^{-1}he_m)
        =
        \frac{\langle\pi(h)v^{m}_0,v^{m}_0\rangle}
             {\langle v^{m}_0,v^{m}_0\rangle},  \text{ where }
        v^{m}_0 = \pi(e_m) v_0.
\end{equation}
\begin{lemma}\label{p:initialredsimpler}
Let \(B_{\sigma}(\p)\) be an orthogonal basis for $V_\sigma(\Gamma_0(\p),1)$. Note that this basis has \(2\) elements if \(\sigma\) is an unramified principal series representation, and has \(1\) element if \(\sigma\) is special. Then we have
\begin{equation}\label{initialredsimplereq1}
\alpha(\pi, \sigma; m)
=
q^2
\sum_{\Lambda \in B_{\sigma}(\p)}
\frac{\alpha(v^{m}_0, \Lambda, \phi; \psi)}
     {\langle \Lambda, \Lambda \rangle \langle \phi, \phi\rangle},
\end{equation}
where
\begin{equation}\label{initialredsimplereq2}
\alpha(v^m_0, \Lambda, \phi; \psi)
=
\int_{\SL_2(F)}\int_{H(F)}
\Phi^{m}_0(ng)
\langle \sigma(g)\Lambda, \Lambda\rangle
\overline{\langle \omega_{\psi}(ng)\phi, \phi\rangle}
\,dn\,dg .
\end{equation}
\end{lemma}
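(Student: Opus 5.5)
The plan is a change of variables in the defining integral \eqref{e:deflocalintfirst}, conjugating by $e_m$, combined with the twisting identity \eqref{SWtwisteq} for the Schr\"odinger--Weil representation and the identification \eqref{VGamma0Gamma0eq} of fixed spaces.

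First, the basis. For $\Lambda\in V_\sigma$, $\sigma'(g)\Lambda=\sigma(g^{m^{-1}})\Lambda$. By \eqref{VGamma0Gamma0eq} with $\chi=1$,
\[
V_{\sigma'}(\Gamma^0(m),\chi_m)=V_\sigma(\Gamma_0(m),1)=V_\sigma(\Gamma_0(\p),1)
\]
as subspaces of $V_\sigma$, with the same inner product. So we may take $B_{\sigma'}(m)=B_\sigma(\p)$ and only need, for each $\Lambda$, an identity
\[
\alpha(v_0,\Lambda,\phi^{(m)};\psi^m)/\langle\phi^{(m)},\phi^{(m)}\rangle=q^2\,\alpha(v_0^m,\Lambda,\phi;\psi)/\langle\phi,\phi\rangle .
\]
Note that $\langle\phi^{(m)},\phi^{(m)}\rangle=\mathrm{vol}(m^{-1}\OF)=q$ and $\langle\phi,\phi\rangle=1$.

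Second, substitute in \eqref{e:deflocalintfirst}. In the embedding $\SL_2\subset\Sp_4$ of Section~\ref{s:local}, conjugation by $e_m=\diag(m,1,m,1)$ acts as follows.
\begin{itemize}
\item On the $\SL_2$-block it acts as $\mat{a}{b}{c}{d}\mapsto\mat{a}{b m^{-1}}{cm}{d}$, i.e.\ essentially $g\mapsto g^{m^{-1}}$ in the notation \eqref{e:twist}. The exact direction of the twist must be checked against the matrices in \eqref{SWtwisteq}.
\item On $H$ it rescales $(x,y,z)$ by $(m^{\pm1},1,m^{\pm1})$.
\end{itemize}
I would write $g=e_m g_1 e_m^{-1}$ and $n=e_m n_1 e_m^{-1}$, choosing the direction so that \eqref{SWtwisteq} applies to $ng=e_m(n_1g_1)e_m^{-1}$. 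The three factors then transform as follows.
\begin{itemize}
\item $\Phi_0(ng)=\Phi_0(e_m n_1g_1e_m^{-1})$. After possibly replacing $m$ by $m^{-1}$ in the conjugation, this is $\Phi_0^m(n_1g_1)$ by \eqref{Phim0defq}. Since $\Phi_0$ is bi-$K$-invariant and $\diag(m,m,m,m)$-invariant (trivial central character), $e_m$ and $e_m^{-1}$ differ from each other by central and $K$ elements, up to the Weyl element relating them. This freedom should absorb any orientation mismatch.
\item $\langle\sigma'(g)\Lambda,\Lambda\rangle=\langle\sigma(g_1)\Lambda,\Lambda\rangle$, because the conjugation exactly undoes the twist $m^{-1}\cdot$ on the metaplectic cover. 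This uses that the automorphism $g\mapsto g^y$ lifts uniquely to $\meta_2(F)$.
\item By \eqref{SWtwisteq}, $\langle\omega_{\psi^m}(ng)\phi^{(m)},\phi^{(m)}\rangle=\langle(\omega_\psi(n_1g_1)\phi)^{(m)},\phi^{(m)}\rangle=|m|^{-1}\langle\omega_\psi(n_1g_1)\phi,\phi\rangle$. The factor $|m|^{-1}=q$ comes from the change of variables $x\mapsto mx$ in the $L^2$ inner product.
\end{itemize}

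Third, the Jacobians. Conjugation preserves Haar measure on the unimodular group $\SL_2(F)$. On $H(F)$, the two coordinates $x,z$ are scaled by $m^{\pm1}$, giving a factor $|m|^{\mp 2}=q^{\pm2}$. The expected bookkeeping is
\[
\frac{q\cdot q^{2}}{q}=q^2,
\]
namely the Schwartz-form factor $q$ and the $H$-Jacobian $q^2$, divided by $\langle\phi^{(m)},\phi^{(m)}\rangle=q$. This matches \eqref{initialredsimplereq1}. Summing over $B_\sigma(\p)$ finishes the proof. The dimension statements are just Lemma~\ref{metasphericallemma}.

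The main obstacle is the sign and orientation bookkeeping. One must match the matrix in \eqref{SWtwisteq} exactly with a conjugation of $n_1g_1$ by $e_m^{\pm1}$, confirm that this direction also produces $\sigma'=m^{-1}\cdot\sigma$ rather than $m\cdot\sigma$, and get the $H$-Jacobian in the right direction, $q^{2}$ rather than $q^{-2}$. I would first verify \eqref{SWtwisteq} as an identity of the form $\omega_{\psi^m}(e_m^{-1}re_m)\phi^{(m)}=(\omega_\psi(r)\phi)^{(m)}$ and fix all conventions from there. Any discrepancy in $\Phi_0$ would then be handled by the $K$-bi-invariance and trivial central character noted above.
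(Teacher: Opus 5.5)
Your proposal is correct and is the paper's proof: you identify $B_{\sigma'}(m)=B_\sigma(\p)$ via \eqref{VGamma0Gamma0eq} and substitute $ng=e_m^{-1}(n_1g_1)e_m$. This is exactly the direction in which \eqref{SWtwisteq} reads $\omega_{\psi^m}(e_m^{-1}re_m)\phi^{(m)}=(\omega_\psi(r)\phi)^{(m)}$, and it simultaneously gives $\sigma'(g)=\sigma(g_1)$, $\Phi_0(ng)=\Phi_0^m(n_1g_1)$ directly from \eqref{Phim0defq}, and $dn=q^2\,dn_1$, so your count $q\cdot q^2/q=q^2$ is right.

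No orientation mismatch ever arises, which matters, because your fallback appeal to bi-$K$-invariance and the trivial central character would not work. Trading $e_m^{-1}$ for $e_m$ via the center and a Weyl element also conjugates the argument by that Weyl element; indeed $h\mapsto\Phi_0(e_mhe_m^{-1})$ and $h\mapsto\Phi_0(e_m^{-1}he_m)$ are genuinely different functions on $J(F)$.
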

\begin{proof} 
By \eqref{VGamma0Gamma0eq}, we may take
$B_{\sigma'}(m)=B_\sigma(\p)$ in \eqref{definitionalpha}.
Now perform the variable transformation
$ng\to e_m^{-1}nge_m$ in the defining integral
\eqref{e:deflocalintfirst}. Observing \eqref{SWtwisteq}, we easily
obtain the asserted formula.
\end{proof}

By Lemma~\ref{p:initialredsimpler}, we are reduced to computing
\[
 \alpha(\pi, \sigma; m) = q^2\sum_{\Lambda \in B_{\sigma}(\p)} \int_{\SL_2(F)}\int_{H(F)}
 \Phi^m_0(ng)\frac{
 \langle \sigma(g)\Lambda, \Lambda\rangle
 \overline{\langle \omega_{\psi}(ng)\phi, \phi\rangle}
 }{\langle \Lambda, \Lambda \rangle \langle \phi, \phi\rangle}\,dn\,dg,
\]
where $\Phi_0^m$ is defined in \eqref{Phim0defq}. We define $K^J=J(\OF)$. Define the subgroup $K_0^{J}(\p)$ of $K^J$ by
\begin{equation}\label{refinedeq5}
 K_0^{J}(\p)=K^J\cap\begin{bsmallmatrix}1&\OF&\OF&\OF\\
 &\OF&\OF&\OF\\&\p&\OF&\OF\\&&&1\end{bsmallmatrix}=\Gamma_0(\p)\ltimes H(\OF).
\end{equation}
Also define the subgroup $K_1^{J}(\p)$ of $J(F)$ by 
\begin{equation}\label{K1Jeq}
 K_1^{J}(\p)=J(F)\cap\begin{bsmallmatrix}1&\p&\OF&\p\\
 &\OF&\p^{-1}&\OF\\&\p&\OF&\p\\&&&1\end{bsmallmatrix}=J(F)\cap e_mKe_m^{-1},
\end{equation}
where $e_m$ was defined in \eqref{emdefeq}. Note that the function $\Phi^m_0$ is bi-invariant under the subgroup $K_1^{J}(\p)$.

Let $\Pi^J$ be the unitary representation of \(J(F)\) defined by \begin{equation}\label{e:defpij}
\Pi^J = \sigma\otimes \omega_{\psi^{-1}}.
\end{equation}
We let $(\Pi^J)^{K_0^{J}(\p)}$ denote the subspace of $\Pi^J$ that is fixed by $K_0^{J}(\p)$.
\begin{lemma}\label{l:ortho}Let $B_\Pi$ be an orthogonal basis of $(\Pi^J)^{K_0^{J}(\p)}$.  Then
\[
\alpha(\pi,\sigma;m)
=
q^{2}
\int_{J(F)}
\Phi^m_0(j)
\sum_{f\in B_\Pi}
\frac{\left\langle \Pi^J(j)f,f\right\rangle}
     {\left\langle f,f\right\rangle}
\,dj .
\]
\end{lemma}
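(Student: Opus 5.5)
We start from Lemma~\ref{p:initialredsimpler} and rewrite the integrand so that it becomes a matrix coefficient of $\Pi^J=\sigma\otimes\omega_{\psi^{-1}}$. The key observation is that $\overline{\langle\omega_\psi(ng)\phi,\phi\rangle}=\langle\omega_{\psi^{-1}}(ng)\bar\phi,\bar\phi\rangle$, and $\bar\phi=\phi=\mathbf 1_{\OF}$. Here $\omega_{\psi^{-1}}$ is realized on $\mathcal S(F)$ as the complex conjugate of $\omega_\psi$, and the genuine cocycles of $\sigma$ and $\omega_{\psi^{-1}}$ cancel. Hence for each $\Lambda\in B_\sigma(\p)$ the product
\[
\langle\sigma(g)\Lambda,\Lambda\rangle\,\overline{\langle\omega_\psi(ng)\phi,\phi\rangle}
\]
equals $\langle\Pi^J(ng)(\Lambda\otimes\phi),\Lambda\otimes\phi\rangle$, a well-defined function on $J(F)$. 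With the product measure on $J(F)=\SL_2(F)\ltimes H(F)$, the double integral $\int_{\SL_2}\int_H\cdots\,dn\,dg$ is then $\int_{J(F)}\cdots\,dj$. It remains to check the order of writing $ng$ against the semidirect product convention. Since $\SL_2$ normalizes $H$ and preserves the Haar measure on $H$ (the action is by the symplectic linear map on $(x,y)$ and trivial on $z$), this is harmless.

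Next we identify $(\Pi^J)^{K_0^J(\p)}$. We claim that $\Lambda\mapsto\Lambda\otimes\phi$ is an isomorphism from $V_\sigma(\Gamma_0(\p),1)$ onto $(\Pi^J)^{K_0^J(\p)}$, and that it sends orthogonal bases to orthogonal bases. This rests on a few inputs.
\begin{itemize}
\item $H(\OF)$-invariance: for $\psi$ of conductor $\OF$, the $H(\OF)$-fixed vectors in $\omega_{\psi^{-1}}$ form the line $\C\,\mathbf 1_{\OF}$. This follows from the explicit Schrödinger action: translations by $\OF$, multiplication by $\psi(\OF x)$-characters, and the center acting by $\psi^{-1}(z)$.
\item Compatibility with $\SL_2(\OF)$: since $q$ is odd, $\SL_2(\OF)\times\{\pm1\}\subset\meta_2(F)$ acts on $\mathbf 1_{\OF}$ by $\varepsilon$ (genuine sign). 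This uses the standard normalization of the Kubota cocycle on $\SL_2(\OF)$, as in \cite{Gelbart1976}.
\end{itemize}
Given these, a vector $\sum\Lambda_i\otimes\phi_i$ is fixed by $H(\OF)$ only if it lies in $V_\sigma\otimes\mathbf 1_{\OF}$. Such a vector $\Lambda\otimes\phi$ is then fixed by $\Gamma_0(\p)$ exactly when $\sigma(\gamma,\varepsilon)\Lambda=\varepsilon\Lambda$, i.e.\ when $\Lambda\in V_\sigma(\Gamma_0(\p),1)$. Finally, $\langle\Lambda\otimes\phi,\Lambda'\otimes\phi\rangle=\langle\Lambda,\Lambda'\rangle\langle\phi,\phi\rangle$, so orthogonality and the normalizing denominators match.

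Summing over $\Lambda\in B_\sigma(\p)$ and inserting the factor $q^2$ from Lemma~\ref{p:initialredsimpler} then gives the formula. Independence of the choice of basis in $B_\Pi$ is automatic.

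The main obstacle is the middle step, where the conventions must be verified. In particular, the $\SL_2(\OF)$ action on $\mathbf 1_{\OF}$ under $\omega_{\psi^{-1}}$ must be exactly the genuine trivial character, with no $\chi_{-1}$ or Weil-index twist. This is where the parity of $q$ and the conductor of $\psi$ enter. I would check it on generators: diagonal elements act by a Hilbert-symbol factor that is trivial for units when $q$ is odd, upper unipotents act by $\psi(bx^2)$, which is trivial on $\OF$, and for $w$ the Fourier transform of $\mathbf 1_{\OF}$ is $\mathbf 1_{\OF}$ times the Weil index, which equals $1$ here.

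Interchanging the sum and the integral requires no justification, since the sum is finite.
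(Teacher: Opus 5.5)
Your proposal is correct and follows essentially the same route as the paper: factor the matrix coefficient of $\Pi^J=\sigma\otimes\omega_{\psi^{-1}}$ using $\overline{\langle\omega_\psi(ng)\phi,\phi\rangle}=\langle\omega_{\psi^{-1}}(ng)\phi,\phi\rangle$. Then identify $(\Pi^J)^{K_0^J(\p)}=V_\sigma(\Gamma_0(\p),1)\otimes\C\,\mathbf 1_{\OF}$ from the facts that $\mathbf 1_{\OF}$ spans the $H(\OF)$-fixed line and is fixed by $\SL_2(\OF)$, and conclude with Lemma~\ref{p:initialredsimpler}. Where the paper simply cites \cite[Lemma~6.3.2]{BS98} for these facts, you verify them directly on generators.
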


\begin{proof}
Since \(q\) is odd and \(\psi\) has conductor \(\OF\), the vector $\phi=\mathbf 1_{\OF}$ is fixed by \(\omega_{\psi^{-1}}(\SL_2(\OF))\) and by
\(\omega_{\psi^{-1}}(H(\OF))\).  Moreover, it spans the line $\mathcal S(F)^{H(\OF)}$; see \cite[Lemma~6.3.2]{BS98}.
It follows that
\[
        (\Pi^J)^{K_0^J(\p)}
        =
        V_\sigma^{\Gamma_0(\p)}\otimes \C\phi .
\]
Since \(v(m)=1\), we have \(\Gamma_0(m)=\Gamma_0(\p)\). Hence, if \(B_\sigma(\p)\) is the orthogonal basis of the \(\Gamma_0(\p)\)-fixed subspace
of \(\sigma\) occurring in Lemma~\ref{p:initialredsimpler}, then $\{\Lambda\otimes\phi\colon\Lambda\in B_\sigma(\p)\}$ is an orthogonal basis of \((\Pi^J)^{K_0^J(\p)}\).
Now writing \(j=ng\), we have
\[
\frac{
\left\langle
\Pi^J(ng)(\Lambda\otimes\phi),\Lambda\otimes\phi
\right\rangle
}{
\left\langle
\Lambda\otimes\phi,\Lambda\otimes\phi
\right\rangle
} =
\frac{
\left\langle \sigma(g)\Lambda,\Lambda\right\rangle
}{
\left\langle \Lambda,\Lambda\right\rangle
}
\cdot
\frac{
\left\langle \omega_{\psi^{-1}}(ng)\phi,\phi\right\rangle
}{
\left\langle \phi,\phi\right\rangle
}.
\]
Using the above and $
        \left\langle \omega_{\psi^{-1}}(ng)\phi,\phi\right\rangle
        =
        \overline{
        \left\langle \omega_{\psi}(ng)\phi,\phi\right\rangle
        }
$
 we get
\[
\begin{aligned}
&\int_{J(F)}
\Phi^m_0(j)
\sum_{f\in B_\Pi}
\frac{\left\langle \Pi^J(j)f,f\right\rangle}
     {\left\langle f,f\right\rangle}
\,dj \\
&\quad =
\sum_{\Lambda\in B_\sigma(\p)}
\int_{\SL_2(F)}\int_{H(F)}
\Phi_0^m(ng)
\frac{
\left\langle \sigma(g)\Lambda,\Lambda\right\rangle
}{
\left\langle \Lambda,\Lambda\right\rangle
}
\overline{
\frac{
\left\langle \omega_\psi(ng)\phi,\phi\right\rangle
}{
\left\langle \phi,\phi\right\rangle
}
}
\,dn\,dg .
\end{aligned}
\]
By \eqref{initialredsimplereq1}, the right-hand side is $q^{-2}\alpha(\pi,\sigma;m)$, as claimed.
\end{proof}
\subsection{Further reductions}
We now analyze the two cases separately. We first consider the case that
$\sigma=\tilde\pi(\chi)$ is an unramified principal series representation with $\chi$ unitary.  Let $f_0$ be the unramified, i.e.~$K^J$-invariant, vector in $\pi^J(\chi)$, normalized by $f_0(1)=1$. By the considerations in~\cite[Section~6.3]{BS98},
\begin{equation}\label{piJsphericaleq}
   f_0(\begin{bsmallmatrix}1&&*&z\\&a&*&*\\&&a^{-1}&\\&&&1\end{bsmallmatrix}
   \begin{bsmallmatrix}1&u\\&1\\&&1&-u\\&&&1\end{bsmallmatrix}\kappa)=
   \begin{cases}
    \psi^{-1}(z)\chi(a)|a|^{3/2}&\text{if }u\in\OF,\\
    0&\text{if }u\notin\OF,
   \end{cases}
\end{equation}
for $\kappa\in K^J$.  Using \eqref{piJinnerproducte1unram}, it follows that
$\langle f_0,f_0\rangle=1$ and
\begin{equation}\label{piJinnerproducte2unram}
   \langle\Pi^J(g)f_0,f_0\rangle=
   \int\limits_{\SL_2(\OF)}\int\limits_\OF
   f_0(\begin{bsmallmatrix}1&u\\&1\\&&1&-u\\&&&1\end{bsmallmatrix}\kappa g)
   \,du\,d\kappa
\end{equation}
for $g\in J(F)$.

Let $w=\mat{}{1}{-1}{}\in\SL_2(\OF)$, identified with the element of $K^J$ defined in~\eqref{weq}. Let $B_1(\OF)$ denote the upper triangular subgroup of $\SL_2(\OF)$.
Let $f_1$ be the function in $\pi^J(\chi)$ supported on $B^J(F)K_0^J(\p)$,
normalized by $f_1(1)=1$, and let $f_2$ be the function supported on
$B^J(F)wK_0^J(\p)$, normalized by $f_2(w)=1$. Then
\[
        f_0=f_1+f_2,
\]
and
\begin{equation}\label{refinedeq8}
        \langle f_1,f_1\rangle=\vol(B_1(\OF)\Gamma_0(\p))=\frac{1}{q+1},
        \qquad
        \langle f_2,f_2\rangle=\vol(B_1(\OF)w\Gamma_0(\p))=\frac{q}{q+1},
        \qquad
        \langle f_1,f_2\rangle=0.
\end{equation}
Put $g_1=1$ and $g_2=w$.  Generalizing \eqref{piJinnerproducte2unram}, we have,
for $i\in\{1,2\}$,
\begin{equation}\label{refinedeq10}
   \langle\Pi^J(g)f_i,f_i\rangle=
   \int\limits_{B_1(\OF)g_i\Gamma_0(\p)}
   \int\limits_\OF
   f_i(\begin{bsmallmatrix}1&u\\&1\\&&1&-u\\&&&1\end{bsmallmatrix}\kappa g)
   \,du\,d\kappa
\end{equation}
for $g\in J(F)$. Define for $i=1,2$,
\begin{equation}\label{refinedeq9}
        \Phi_i^J(g)=
        \frac{\langle\Pi^J(g)f_i,f_i\rangle}
             {\langle f_i,f_i\rangle},
\end{equation}
and
\begin{equation}\label{refinedeq11}
        I(\Phi_0^m,\Phi_i^J)=\int\limits_{J(F)}\Phi_0^m(g)\Phi_i^J(g)\,dg.
\end{equation}
Therefore, taking $B_\Pi=\{f_1,f_2\}$ in Lemma~\ref{l:ortho}, we obtain in this case:
\begin{equation}\label{refinedeq14}
        \alpha(\pi,\sigma;m)
        =
        q^2\left(I(\Phi_0^m,\Phi_1^J)+I(\Phi_0^m,\Phi_2^J)\right),
\end{equation}
and by \eqref{refinedeq10},
\begin{equation}\label{refinedeq12}
 I(\Phi^m_0,\Phi_i^J)=
 \frac{1}{\vol(B_1(\OF)g_i\Gamma_0(\p))}
 \int\limits_{J(F)}\Phi^m_0(g)
 \bigg(\int\limits_{B_1(\OF)g_i\Gamma_0(\p)}\int\limits_\OF
 f_i(\begin{bsmallmatrix}1&u\\&1\\&&1&-u\\&&&1\end{bsmallmatrix}\kappa g)
 \,du\,d\kappa\bigg)\,dg.
\end{equation}

We now treat the case where $\sigma$ is a special representation, i.e., $\sigma=\tilde\sigma_\xi$ with $\xi\in\OF^\times$. The formula~\eqref{piJinnerproducte1unram} is not an
invariant Hermitian inner product on the special representation.  To obtain the correct inner product, first define
\begin{equation}\label{etas0defeq}
        \eta_{s_0}=(\cdot,\xi)|\cdot|^{s_0},
        \qquad \pi_{s_0}^J=\pi^J(\eta_{s_0}).
\end{equation}
Thus $\eta=\eta_{1/2}$. For $f\in \pi_{s_0}^J$ and $f'\in \pi_{-\overline{s_0}}^J$, define the
pairing
\begin{equation}\label{piJinnerproducte1}
   \mathcal P_{s_0}(f,f')=
   \int_{\SL_2(\OF)}\int_F
   f\!\left(
   \begin{bsmallmatrix}1&u\\&1\\&&1&-u\\&&&1\end{bsmallmatrix}\kappa
   \right)
   \overline{
   f'\!\left(
   \begin{bsmallmatrix}1&u\\&1\\&&1&-u\\&&&1\end{bsmallmatrix}\kappa
   \right)}\,du\,d\kappa .
\end{equation}
 (When $s_0\in i\R$, this reduces to the invariant inner product on the associated principal series representation defined earlier.) Let
\[
        A_{s_0}\colon \pi_{s_0}^J\longrightarrow \pi_{-s_0}^J
\]
be the intertwining operator defined by
\begin{equation}\label{special-intertwiner-def}
        (A_{s_0}f)(g)=
        \int_F\int_F
        f\!\left(
        w
        \begin{bsmallmatrix}1&&y\\&1&x&y\\&&1\\&&&1\end{bsmallmatrix}
        g\right)\,dx\,dy,
        \qquad
        w=\begin{bsmallmatrix}1\\&&1\\&-1\\&&&1\end{bsmallmatrix},
\end{equation}
for $\operatorname{Re}(s_0)>0$ and by meromorphic continuation for all $s_0$.
For $i=1,2$, let $f_{i,s_0}\in \pi_{s_0}^J$ be the two $K_0^J(\p)$-fixed vectors with
the same support and normalization as above, and write
\[
        f_i=f_{i,1/2},
        \qquad
        f_i^\vee=f_{i,-1/2}.
\]
Relative to the bases $(f_{1,-s_0},f_{2,-s_0})$ and $(f_{1,s_0},f_{2,s_0})$, it is easy to check that the
matrix of $A_{s_0}$ on the $K_0^J(\p)$-fixed vectors is
\begin{equation}\label{special-intertwiner-matrix}
        M_{s_0}=
        \begin{bmatrix}
        \frac{q^{-2s_0}(1-q^{-1})}{1-q^{-2s_0}} & 1\\[1.2ex]
        q^{-1} & \frac{1-q^{-1}}{1-q^{-2s_0}}
        \end{bmatrix}.
\end{equation}
In particular,
\begin{equation}\label{special-intertwiner-half}
        M_{1/2}=
        \begin{bmatrix}q^{-1}&1\\q^{-1}&1\end{bmatrix},
\end{equation}
and therefore
\[
        A_{1/2}(c_1f_1+c_2f_2)=0
        \qquad\Longleftrightarrow\qquad
        q^{-1}c_1+c_2=0.
\]
Thus the $K_0^J(\p)$-fixed line in the special representation is generated by
\begin{equation}\label{special-newvector}
        f_{\mathrm{sp}}=f_1-q^{-1}f_2.
\end{equation}
To define the invariant inner product on the special representation, put
\[
        \dot A_{1/2}:=-\frac{q-1}{2\log q}
        \left.\frac{d}{ds_0}\right|_{s_0=1/2}A_{s_0}.
\]
The scalar is chosen only for convenience and has no effect on normalized
matrix coefficients.  Note that
$\dot A_{1/2}f_1=f_1^\vee$ and $\dot A_{1/2}f_2=f_2^\vee$, so that
\begin{equation}\label{special-intertwiner-derivative-newvector}
        \dot A_{1/2}f_{\mathrm{sp}}
        =f_1^\vee-q^{-1}f_2^\vee
        =:f_{\mathrm{sp}}^\vee.
\end{equation}
We define the invariant Hermitian inner product on the special representation by
\begin{equation}\label{special-innerproduct-intertwiner}
        \langle f,h\rangle
        :=\mathcal P_{1/2}\bigl(f,\dot A_{1/2}h\bigr),
        \qquad f,h\in\ker(A_{1/2}) \simeq \Pi^J.
\end{equation}
For $0\leq s_0\leq \frac12$, we obtain the following immediately from the support
of $f_{1,s_0}$ and $f_{2,s_0}$:
\begin{equation}\label{special-pairing-gram}
\begin{aligned}
        \mathcal P_{s_0}(f_{1,s_0},f_{1,-s_0})
        &=\operatorname{vol}\bigl(B_1(\OF)\Gamma_0(\p)\bigr)
          =\frac1{q+1},                                    \\
        \mathcal P_{s_0}(f_{2,s_0},f_{2,-s_0})
        &=\operatorname{vol}\bigl(B_1(\OF)w\Gamma_0(\p)\bigr)
          =\frac q{q+1},                                    \\
        \mathcal P_{s_0}(f_{1,s_0},f_{2,-s_0})
        &=\mathcal P_{s_0}(f_{2,s_0},f_{1,-s_0})=0.
\end{aligned}
\end{equation}
Consequently, if we define
\begin{equation}\label{special-flat-sections}
    f_{\mathrm{sp},s_0}
    :=f_{1,s_0}-q^{-1}f_{2,s_0}\in \pi_{s_0}^J,
    \qquad
    f_{\mathrm{sp},s_0}^{\vee}
    :=f_{1,-s_0}-q^{-1}f_{2,-s_0}\in \pi_{-s_0}^J,
\end{equation}
then
\[
\mathcal P_{s_0}(f_{\mathrm{sp},s_0},f_{\mathrm{sp},s_0}^{\vee})
=\frac{1}{q}.
\]
Putting $s_0=\frac12$, we get
\begin{equation}\label{special-newvector-norm}
        \langle f_{\mathrm{sp}},f_{\mathrm{sp}}\rangle
        =\mathcal P_{1/2}(f_1-q^{-1}f_2,f_1^\vee-q^{-1}f_2^\vee)
        =\frac{1}{q}.
\end{equation}
Now Lemma \ref{l:ortho} gives
\begin{equation}\label{refinedeq14sp}
        \alpha(\pi,\sigma;m)
        =q^2 I(\Phi_0^m,\Phi_{\mathrm{sp}}^J),
\end{equation}
where
\[
        \Phi_{\mathrm{sp}}^J(g)
        =
        \frac{
        \langle \Pi^J(g)f_{\mathrm{sp}},f_{\mathrm{sp}}\rangle
        }{
        \langle f_{\mathrm{sp}},f_{\mathrm{sp}}\rangle
        }
        =
        \frac{
        \mathcal P_{1/2}\bigl(\Pi^J(g)f_{\mathrm{sp}},f_{\mathrm{sp}}^\vee\bigr)
        }{
        \mathcal P_{1/2}\bigl(f_{\mathrm{sp}},f_{\mathrm{sp}}^\vee\bigr)
        },
        \qquad
        I(\Phi_0^m,\Phi_{\mathrm{sp}}^J)
        =\int_{J(F)}\Phi_0^m(g)\Phi_{\mathrm{sp}}^J(g)\,dg .
\]
For $0\leq s_0\leq \frac12$, define
\[
        \Phi_{\mathrm{sp},s_0}^J(g)
        :=
        \frac{
        \mathcal P_{s_0}\bigl(\pi_{s_0}^J(g)f_{\mathrm{sp},s_0},
        f_{\mathrm{sp},s_0}^{\vee}\bigr)
        }{
        \mathcal P_{s_0}\bigl(f_{\mathrm{sp},s_0},f_{\mathrm{sp},s_0}^{\vee}\bigr)
        },
        \qquad
        I(\Phi_0^m,\Phi_{\mathrm{sp},s_0}^J)
        :=\int_{J(F)}\Phi_0^m(g)\Phi_{\mathrm{sp},s_0}^J(g)\,dg.
\]
Thus $\Phi_{\mathrm{sp},1/2}^J=\Phi_{\mathrm{sp}}^J$.
For $0\leq s_0<\frac12$, define the two auxiliary cross terms
\begin{equation}\label{special-cross-C21-def}
        \mathcal C_{21,s_0}(\Phi_0^m)
        =
        \int_{J(F)}
        \Phi_0^m(g)
        \mathcal P_{s_0}\bigl(\pi_{s_0}^J(g)f_{2,s_0},f_{1,-s_0}\bigr)
        \,dg
\end{equation}
and
\begin{equation}\label{special-cross-C12-def}
        \mathcal C_{12,s_0}(\Phi_0^m)
        =
        \int_{J(F)}
        \Phi_0^m(g)
        \mathcal P_{s_0}\bigl(\pi_{s_0}^J(g)f_{1,s_0},f_{2,-s_0}\bigr)
        \,dg .
\end{equation}
All the integrals in this range are absolutely convergent.  Hence, using
\eqref{piJinnerproducte1}, the support of $f_{1,-s_0}$ and $f_{2,-s_0}$, and
Fubini's theorem, the first cross term can be expressed as
\begin{equation}\label{special-cross-C21}
\begin{aligned}
\mathcal C_{21,s_0}(\Phi_0^m)
&=
\int_{J(F)}
\Phi_0^m(g)
\bigg(
\int_{B_1(\OF)\Gamma_0(\p)}
\int_\OF
f_{2,s_0}\!\left(
\begin{bsmallmatrix}
1&u&&\\
&1&&\\
&&1&-u\\
&&&1
\end{bsmallmatrix}
\kappa g
\right)
\,du\,d\kappa
\bigg)
\,dg .
\end{aligned}
\end{equation}
Similarly, the second cross term is
\begin{equation}\label{special-cross-C12}
\begin{aligned}
\mathcal C_{12,s_0}(\Phi_0^m)
&=
\int_{J(F)}
\Phi_0^m(g)
\bigg(
\int_{B_1(\OF)w\Gamma_0(\p)}
\int_\OF
f_{1,s_0}\!\left(
\begin{bsmallmatrix}
1&u&&\\
&1&&\\
&&1&-u\\
&&&1
\end{bsmallmatrix}
\kappa g
\right)
\,du\,d\kappa
\bigg)
\,dg .
\end{aligned}
\end{equation}
For the diagonal terms, define, for $i=1,2$ and $0\leq s_0<\frac12$,
analogously to \eqref{refinedeq9},
\begin{equation}\label{diagonaltermspecial}
        \Phi_{i,s_0}^J(g)
        :=
        \frac{
        \mathcal P_{s_0}\bigl(\pi_{s_0}^J(g)f_{i,s_0},f_{i,-s_0}\bigr)
        }{
        \mathcal P_{s_0}(f_{i,s_0},f_{i,-s_0})
        }
        =
        \frac{
        \mathcal P_{s_0}\bigl(\pi_{s_0}^J(g)f_{i,s_0},f_{i,-s_0}\bigr)
        }{
        \operatorname{vol}\bigl(B_1(\OF)g_i\Gamma_0(\p)\bigr)
        }.
\end{equation}
These are given by the same explicit integral formula as in \eqref{refinedeq12}, with $f_i$ replaced by $f_{i,s_0}$. Also define
\begin{equation}\label{refinedeq11-sp}
        I(\Phi_0^m,\Phi_{i,s_0}^J)
        =\int_{J(F)}\Phi_0^m(g)\Phi_{i,s_0}^J(g)\,dg.
\end{equation}
For $0\leq s_0<\frac12$, expanding
$f_{\mathrm{sp},s_0}=f_{1,s_0}-q^{-1}f_{2,s_0}$ and
$f_{\mathrm{sp},s_0}^{\vee}=f_{1,-s_0}-q^{-1}f_{2,-s_0}$, and using
$\mathcal P_{s_0}(f_{\mathrm{sp},s_0},f_{\mathrm{sp},s_0}^{\vee})=q^{-1}$, gives
\begin{equation}\label{specialcom}
\begin{aligned}
I(\Phi_0^m,\Phi_{\mathrm{sp},s_0}^J)
&=
\frac{q}{q+1}I(\Phi_0^m,\Phi_{1,s_0}^J)
+
\frac{1}{q+1}I(\Phi_0^m,\Phi_{2,s_0}^J)\\
&\quad
-
\mathcal C_{21,s_0}(\Phi_0^m)
-
\mathcal C_{12,s_0}(\Phi_0^m).
\end{aligned}
\end{equation}
To pass to the special representation, we use Bernstein continuation for
the normalized local Fourier--Jacobi pairing evaluated on the flat sections
defined above. In the absolutely convergent range \(0\leq s_0<\frac12\),
the resulting complete local integral
$
        I(\Phi_0^m,\Phi_{\mathrm{sp},s_0}^J)
$
is a rational function in \(q^{-s_0}\) and in the Satake parameters of \(\pi\).
The relevant equivariant pairing is unique up to scalar, and its value on
the chosen standard sections fixes this scalar.  At \(s_0=\frac12\), the
absolutely convergent endpoint integral defines the same normalized pairing
on the special constituent; hence it agrees with the regular specialization
of the rational family.\footnote{Alternatively, one can continue the scalar family
$I(\Phi_0^m,\Phi_{\mathrm{sp},s_0}^J)$ directly by showing that, on a Zariski-open locus of Satake
parameters, its
defining integral is meromorphic in a complex neighborhood of
$s_0=\frac12$ and regular there.  Since it agrees on a nonempty open
subset of $\Re(s_0)<\frac12$ with the explicit rational function
computed below, uniqueness of meromorphic continuation identifies the
two functions; specializing the rational function at $s_0=\frac12$ then gives
$I(\Phi_0^m,\Phi_{\mathrm{sp}}^J)$.}   This is the Bernstein-continuation argument used
in \cite[Lemma~4.4]{HX18};
see also \cite[Lemma~5.1]{ShenWS}.  The calculation in the subsequent subsections shows
that outside a finite exceptional set of Satake parameters of $\pi$, the complete rational expression
has no pole at $s_0 = 1/2$.  On this Zariski-open locus of Satake parameters, its regular
specialization is equal to the ordinary one-sided limit
\(s_0\to\frac12^{-}\).  Therefore, for Satake parameters in this locus,
we obtain from \eqref{specialcom}
\begin{equation}\label{special-Isp-expanded-C21}
\begin{aligned}
I(\Phi_0^m,\Phi_{\mathrm{sp}}^J)
&=
\lim_{s_0\to 1/2^-}
\bigg(
\frac{q}{q+1}I(\Phi_0^m,\Phi_{1,s_0}^J)
+
\frac{1}{q+1}I(\Phi_0^m,\Phi_{2,s_0}^J)\\
&\hspace{42mm}
-
\mathcal C_{21,s_0}(\Phi_0^m)
-
\mathcal C_{12,s_0}(\Phi_0^m)
\bigg).
\end{aligned}
\end{equation}
Substituting \eqref{special-Isp-expanded-C21} into
\eqref{refinedeq14sp}, we obtain, on the same Zariski-open locus, in the special representation case,
\begin{equation}\label{refinedeq14sp-cross}
\begin{aligned}
\alpha(\pi,\sigma;m)
&=
\lim_{s_0\to 1/2^-}
\bigg(
\frac{q^3}{q+1}I(\Phi_0^m,\Phi_{1,s_0}^J)
+
\frac{q^2}{q+1}I(\Phi_0^m,\Phi_{2,s_0}^J)\\
&\hspace{42mm}
-q^2\bigl(
\mathcal C_{21,s_0}(\Phi_0^m)
+
\mathcal C_{12,s_0}(\Phi_0^m)
\bigr)
\bigg).
\end{aligned}
\end{equation}
The limits in \eqref{special-Isp-expanded-C21} and
\eqref{refinedeq14sp-cross} are limits of the complete combinations;
they are not to be interpreted as sums of four separate endpoint limits.  The
resulting identity is rational in the Satake parameters and therefore
extends to all Satake parameters by rational continuation; see also
Section~\ref{s:localconclusion}.  At exceptional Satake parameters, the
right-hand side is understood as this rational continuation.
\subsection{The key simplification}

The goal of this subsection is to simplify \eqref{refinedeq14} and \eqref{refinedeq14sp-cross} into more explicit and computable forms.
\begin{proposition}\label{lem:refined-integrals-phim}
\begin{enumerate}
\item Let $\sigma=\tilde\pi(\chi)$  be an unramified principal series representation, with $\chi$ unitary. Then we have the following expression for the diagonal terms
\begin{equation}\label{mrefinedeq6}
\begin{aligned}
I(\Phi_0^m,\Phi_1^J) = I(\Phi_0^m,\Phi_2^J)
&=
\frac{1}{q^2(q+1)(1-q^{-1})}
\int_{F^\times}\int_F\int_F\int_F
\chi(a)|a|^{3/2}\psi(\varpi z)                 \\
&\quad\times
\Phi_0\!\left(
\begin{bsmallmatrix}
1&&&\\
&a&&\\
&&a^{-1}&\\
&&&1
\end{bsmallmatrix}
\begin{bsmallmatrix}
1&&y&z\\
&1&x&y\\
&&1&\\
&&&1
\end{bsmallmatrix}
\right)
\,dx\,dy\,dz\,d^\times a,
\end{aligned}
\end{equation}
Therefore, using \eqref{refinedeq14}, \begin{equation}\label{mrefinedeq7}
\begin{aligned}
\alpha(\pi,\sigma;m)
&=
\frac{2}{(q+1)(1-q^{-1})}
\int_{F^\times}\int_F\int_F\int_F
\chi(a)|a|^{3/2}\psi(\varpi z)                 \\
&\quad\times
\Phi_0\!\left(
\begin{bsmallmatrix}
1&&&\\
&a&&\\
&&a^{-1}&\\
&&&1
\end{bsmallmatrix}
\begin{bsmallmatrix}
1&&y&z\\
&1&x&y\\
&&1&\\
&&&1
\end{bsmallmatrix}
\right)
\,dx\,dy\,dz\,d^\times a.
\end{aligned}
\end{equation}

\item Let $\sigma=\tilde\sigma_\xi$ be a special representation. For \(0\leq s_0<\frac12\), put $\eta_{s_0}=(\cdot,\xi)|\cdot|^{s_0}$.
Then
$I(\Phi_0^m,\Phi_{1,s_0}^J)
        =
        I(\Phi_0^m,\Phi_{2,s_0}^J),
$
and both diagonal terms are given by the expression
\eqref{mrefinedeq6}, with \(\chi\) replaced by \(\eta_{s_0}\).  Moreover,
the cross terms are given by
\begin{equation}\label{mrefinedeq-cross21}
\mathcal C_{21,s_0}(\Phi_0^m)
=
        \frac{q^{s_0+1/2}(\varpi,\xi)}{q+1}
        I(\Phi_0^m,\Phi_{1,s_0}^J),
\end{equation}
\begin{equation}\label{mrefinedeq-cross12}
\mathcal C_{12,s_0}(\Phi_0^m)
=
        \frac{q^{-s_0+1/2}(\varpi,\xi)}{q+1}
        I(\Phi_0^m,\Phi_{1,s_0}^J).
\end{equation}
Consequently, in the special representation case,
\eqref{refinedeq14sp-cross}, \eqref{mrefinedeq-cross21}, and
\eqref{mrefinedeq-cross12} give
\begin{equation}\label{mrefinedeq7-special}
\begin{aligned}
\alpha(\pi,\sigma;m)
&=
\lim_{s_0\to 1/2^-}
\frac{1}{(q+1)(1-q^{-1})}
\left(
1-
\frac{(\varpi,\xi)}{q+1}
\bigl(q^{s_0+1/2}+q^{-s_0+1/2}\bigr)
\right)                                                   \\
&\quad\times
\int_{F^\times}\int_F\int_F\int_F
\eta_{s_0}(a)|a|^{3/2}\psi(\varpi z)
\Phi_0\!\left(
\begin{bsmallmatrix}
1&&&\\
&a&&\\
&&a^{-1}&\\
&&&1
\end{bsmallmatrix}
\begin{bsmallmatrix}
1&&y&z\\
&1&x&y\\
&&1&\\
&&&1
\end{bsmallmatrix}
\right)
\,dx\,dy\,dz\,d^\times a .
\end{aligned}
\end{equation}
\end{enumerate}
\end{proposition}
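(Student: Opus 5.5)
We start from the integral formula \eqref{refinedeq12} for the diagonal terms (and its analogue with $f_{i,s_0}$ in the special case), and unfold the inner integral into the outer one. Write $\kappa g$ as a new variable. For $\kappa\in B_1(\OF)g_i\Gamma_0(\p)$ and $u\in\OF$, the element $\kappa' = n(u)\kappa$ lies in $K^J$. We substitute $g\mapsto \kappa'^{-1}g$ in the outer integral over $J(F)$. Since $\Phi_0^m$ is only $K_1^J(\p)$-bi-invariant, not $K^J$-invariant, this substitution is not free. The plan is therefore to decompose $g = b\,n(u')\,\kappa_0$ with $b\in B^J(F)$, $u'\in F$, $\kappa_0\in K^J$, using an Iwasawa decomposition of $J(F)$ analogous to \eqref{unrameq566}. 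Then $f_i(n(u)\kappa g)$ becomes $\psi^{-1}(z)\eta(a)|a|^{3/2}$ times an indicator on the cosets. This turns $I(\Phi_0^m,\Phi_i^J)$ into an integral of $\Phi_0^m$ against the character $\psi^{-1}(z)\eta(a)|a|^{3/2}$ on $B^J(F)$, integrated over the $K^J$-part restricted to the support of $f_i$.

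The key step is to remove the $K$-variable. Here we use that $\Phi_0^m(h)=\Phi_0(e_m^{-1}he_m)$. Conjugating by $e_m$ sends the relevant piece of $\Gamma_0(\p)\ltimes H(\OF)$, which contains $g_i$ and the $u$-translates, into $K$, where $\Phi_0$ is bi-invariant. We will check directly that $e_m^{-1}K_0^J(\p)e_m\subset K$ when $v(m)=1$. In the other direction, $e_m^{-1}we_m$ is not in $K$, but $e_m^{-1}B_1(\OF)we_m$ can be rewritten as an upper triangular element times $K$. After the change of variables $h = e_m^{-1}b e_m$, the Borel part transforms as
\[
a\mapsto a,\quad x\mapsto \varpi^{\pm1}x,\quad y\mapsto y,\quad z\mapsto \varpi z
\]
(up to units). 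This produces the factor $\psi(\varpi z)$, and the Jacobians of these changes produce the powers of $q$. We then do this separately for $i=1$ and $i=2$. For $i=2$ we first move $w$ across using $\Phi_0(k h k')=\Phi_0(h)$ with $k=e_m^{-1}we_m$ adjusted by a diagonal element. The expected result is that both give the same normalized integral, with the constant $\frac{1}{q^2(q+1)(1-q^{-1})}$. The equality $I(\Phi_0^m,\Phi_1^J)=I(\Phi_0^m,\Phi_2^J)$ should come from the Atkin--Lehner-type element $\mat{}{\varpi^{-1}}{-\varpi}{}$ conjugated by $e_m$, which normalizes $K_1^J(\p)$ and lies in $e_mKe_m^{-1}$ up to center. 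It swaps the supports $B^J\Gamma_0(\p)$ and $B^JwK_0^J(\p)$ and leaves $\Phi_0^m$ invariant. Combining with \eqref{refinedeq14} then gives \eqref{mrefinedeq7}, since $q^2\cdot 2\cdot\frac{1}{q^2(q+1)(1-q^{-1})}$ equals the stated constant.

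For part (2), the diagonal terms are literally the same computation with $\chi$ replaced by $\eta_{s_0}$. Absolute convergence for $0\le s_0<\tfrac12$ justifies Fubini. For the cross terms \eqref{special-cross-C21} and \eqref{special-cross-C12}, the same unfolding applies. Now $f_{2,s_0}$ is evaluated on the coset $B_1(\OF)\Gamma_0(\p)$, so the Atkin--Lehner element is needed to map supports. That element has diagonal part $\mathrm{diag}(\varpi^{-1},\varpi)$, so pulling it out through the transformation law \eqref{piJdefeq} produces a factor $\eta_{s_0}(\varpi^{\mp1})|\varpi|^{\mp3/2}$. Combined with the volume ratio of the two cosets ($1/(q+1)$ versus $q/(q+1)$) and the normalizations, this should yield $q^{\pm s_0+1/2}(\varpi,\xi)/(q+1)$. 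Finally \eqref{mrefinedeq7-special} follows by inserting these into \eqref{refinedeq14sp-cross}, using $\frac{q^3}{q+1}+\frac{q^2}{q+1}=q^2$ and factoring.

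The main obstacle is the bookkeeping in the key step. We must track exactly how $e_m$-conjugation interacts with the $K^J$-coset decomposition and with the Atkin--Lehner swap, and verify that the constants and the sign $(\varpi,\xi)$ emerge correctly. The metaplectic cocycle could contribute a Hilbert-symbol sign when moving the Atkin--Lehner element, and this sign is precisely $(\varpi,\xi)$. I would first verify the $i=1$ case, which has the simplest support, and then derive the others by the swap.
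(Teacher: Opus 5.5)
Your skeleton (unfold, Iwasawa-decompose, use the support and transformation law of $f_i$, conjugate by $e_m$, rescale) is the paper's skeleton. However, the step you call key is false, and that step is where the content of the proposition lies.

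\textbf{The inclusion $e_m^{-1}K_0^J(\p)e_m\subset K$ fails.} The $\SL_2$-part is fine, since $e_m^{-1}\Gamma_0(\p)e_m=\Gamma^0(\p)\subset K$. But let $n(u)$ be the Heisenberg element with $u$ in the $(1,2)$-entry. Then $e_m^{-1}n(u)e_m=n(m^{-1}u)$, which lies in $K$ only for $u\in\p$. This is why $K_1^J(\p)=J(F)\cap e_mKe_m^{-1}$ has $\p$ in that entry. So after unfolding you cannot discard the $\OF$-variables. You are left with finite sums over classes $u,r\in\OF/\p$, plus $s\in\OF/\p$ on the $w$-cell.

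The missing idea is a vanishing argument. The elements $\mat{1}{t}{}{1}$ with $t\in\p^{-1}$ lie in $K_1^J(\p)$ but not in $K^J$. Moving one past $n(u)$ on the right, or past $n(-r)$ on the left, creates a central Heisenberg element. Each class is therefore multiplied by $\psi^{-1}(tu^2)$, respectively $\psi(r^2t)$, so for $i=1$ only $u=0$ and $r=0$ survive. This is exactly the source of the $q^{-2}$ in $\frac{1}{q^2(q+1)(1-q^{-1})}$. The final substitution $z\mapsto-\varpi z$, $x\mapsto-\varpi^{-1}x$ has Jacobian $1$, not a power of $q$. Your plan as written would therefore give $\frac{1}{(q+1)(1-q^{-1})}$, off by $q^2$.

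\textbf{The Atkin--Lehner swap does not work either.} The element $\mat{}{\varpi^{-1}}{-\varpi}{}=e_\varpi we_\varpi^{-1}$ does lie in $K_1^J(\p)$, but it does not normalize $\Gamma_0(\p)$: it conjugates $\mat{a}{b}{c}{d}$ to $\mat{d}{-\varpi^{-2}c}{-\varpi^{2}b}{a}$. Moreover, $\Gamma_0(\p)$ times this element meets both $B\Gamma_0(\p)$ and $Bw\Gamma_0(\p)$, according as $v(c)=1$ or $v(c)\geq 2$. So it does not carry $f_1$ to a multiple of $f_2$.

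In the paper, $I(\Phi_0^m,\Phi_2^J)$ is computed separately, and the pattern is different from $i=1$:
\begin{enumerate}
\item $B_1(\OF)w\Gamma_0(\p)$ is split into $q$ cosets.
\item The $u$-dependence disappears, because $w^{-1}n(u)w$ is a $y$-type element of $K_1^J(\p)$.
\item All $q^2$ summands indexed by $(r,s)$ turn out to be equal.
\item The conjugation by $e_\varpi w$ with $x\mapsto-\varpi x$, $y\mapsto\varpi y$, $z\mapsto-\varpi z$ has Jacobian $q^{-3}$, which balances the volume $q/(q+1)$.
\end{enumerate}
So $I_1=I_2$ is the outcome of two different computations, not a symmetry.

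\textbf{Cross terms.} The factor $q^{-3/2}\eta_{s_0}(\varpi)^{\mp1}$ comes from $e_\varpi^{-1}we_\varpi=w\,\diag(1,\varpi^{-1},\varpi,1)$ inside $\Phi_0$. One drops $w$ by right $K$-invariance, then substitutes $a\mapsto a\varpi^{\mp1}$ and rescales $x,y,z$. This factor combines with the prefactor $\frac{1}{(q+1)^2(1-q^{-1})}$, which again comes from class-killing: only $r=0$ survives in $\mathcal C_{21}$, and only $u=0$ in $\mathcal C_{12}$.

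The sign $(\varpi,\xi)$ enters solely through $\eta_{s_0}(\varpi)=(\varpi,\xi)q^{-s_0}$. No metaplectic cocycle appears, because $\Pi^J$ is realized as an honest representation $\pi^J(\eta_{s_0})$ of $J(F)$. Adding a cocycle sign on top of $\eta_{s_0}(\varpi^{\mp1})$ would double-count it.

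Your final assembly via $\frac{q^3}{q+1}+\frac{q^2}{q+1}=q^2$ is correct.
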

\begin{proof}
i) We use the subgroup \(K_1^J(\p)\) defined in \eqref{K1Jeq}, under which
\(\Phi_0^m\) is left and right invariant.
Starting from \eqref{refinedeq12}, and changing variables
$g\mapsto
        \kappa^{-1}\begin{bsmallmatrix}
        1&-u&&\\
        &1&&\\
        &&1&u\\
        &&&1
        \end{bsmallmatrix}
         g,
$
we obtain
\begin{equation}\label{generalcaseprelim}
I(\Phi_0^m,\Phi_i^J)
=
\frac{1}{\vol(B_1(\OF)g_i\Gamma_0(\p))}
\int_{B_1(\OF)g_i\Gamma_0(\p)}
\int_{\OF}
\int_{J(F)}
\Phi_0^m\!\left(
\kappa^{-1}
\begin{bsmallmatrix}
1&-u&&\\
&1&&\\
&&1&u\\
&&&1
\end{bsmallmatrix}
g
\right)
f_i(g)
\,dg\,du\,d\kappa ,
\end{equation}
where, as before, $g_1=1$ and $g_2=w$ (see \eqref{weq}).
For \(i=1\), we have \(B_1(\OF)\Gamma_0(\p)=\Gamma_0(\p)\).  Since
\(\Phi_0^m\) is left invariant by \(K_1^J(\p)\), and since
\(\Gamma_0(\p)\subset K_1^J(\p)\), while
\[
        \begin{bsmallmatrix}
        1&v&&\\
        &1&&\\
        &&1&-v\\
        &&&1
        \end{bsmallmatrix}
        \in K_1^J(\p)
        \qquad\text{for }v\in\p,
\]
we get
\begin{equation}\label{mrefinedeq1}
        I(\Phi_0^m,\Phi_1^J)
        =
        q^{-1}
        \sum_{r\in \OF/\p}
        \int_{J(F)}
        \Phi_0^m\!\left(
        \begin{bsmallmatrix}
        1&-r&&\\
        &1&&\\
        &&1&r\\
        &&&1
        \end{bsmallmatrix}
        g
        \right)
        f_1(g)\,dg .
\end{equation}
Write the $J(F)$ integral as a nested $\SL_2(F)$ and $H(F)$ integral. Using \eqref{unrameq566}, and the fact that \(f_1\) is supported on
\(B^J(F)K_0^J(\p)\), we get
\[
\begin{aligned}
&\int_{J(F)}
\Phi_0^m\!\left(
\begin{bsmallmatrix}
1&-r&&\\
&1&&\\
&&1&r\\
&&&1
\end{bsmallmatrix}
g
\right)f_1(g)\,dg                                      \\
&=
\frac{\vol(\Gamma_0(\p))}{1-q^{-1}}
\int_{F^\times}\int_F\int_{H(F)}
\Phi_0^m\!\left(
\begin{bsmallmatrix}
1&-r&&\\
&1&&\\
&&1&r\\
&&&1
\end{bsmallmatrix}
\begin{bsmallmatrix}
1&&&\\
&a&&\\
&&a^{-1}&\\
&&&1
\end{bsmallmatrix}
\begin{bsmallmatrix}
1&&&\\
&1&x&\\
&&1&\\
&&&1
\end{bsmallmatrix}
h
\right)                                                  \\
&\hspace{32ex}\times
f_1\!\left(
\begin{bsmallmatrix}
1&&&\\
&a&&\\
&&a^{-1}&\\
&&&1
\end{bsmallmatrix}
\begin{bsmallmatrix}
1&&&\\
&1&x&\\
&&1&\\
&&&1
\end{bsmallmatrix}
h
\right)
\,dh\,dx\,d^\times a .
\end{aligned}
\]
Write
\begin{equation}\label{heisenbergelementeq}
        h=
        \begin{bsmallmatrix}
        1&&y&z\\
        &1&&y\\
        &&1&\\
        &&&1
        \end{bsmallmatrix}
        \begin{bsmallmatrix}
        1&u&&\\
        &1&&\\
        &&1&-u\\
        &&&1
        \end{bsmallmatrix}.
\end{equation}
Using the defining transformation property of \(\pi^J(\eta)\), we get
\[
f_1\!\left(
\begin{bsmallmatrix}
1&&&\\
&a&&\\
&&a^{-1}&\\
&&&1
\end{bsmallmatrix}
\begin{bsmallmatrix}
1&&&\\
&1&x&\\
&&1&\\
&&&1
\end{bsmallmatrix}
\begin{bsmallmatrix}
1&&y&z\\
&1&&y\\
&&1&\\
&&&1
\end{bsmallmatrix}
\begin{bsmallmatrix}
1&u&&\\
&1&&\\
&&1&-u\\
&&&1
\end{bsmallmatrix}
\right)
=
\begin{cases}
\chi(a)|a|^{3/2}\psi^{-1}(z),& u\in\OF,\\
0,&u\notin\OF.
\end{cases}
\]
The \(u\)-integration now becomes, because of the right invariance of \(\Phi_0^m\) by \(K_1^J(\p)\),  a sum over \(u\in\OF/\p\).  We
claim that only the class \(u=0\) contributes.  For \(t\in\p^{-1}\), the
element
\begin{equation}\label{tmatrixeq}
        \begin{bsmallmatrix}
        1&&&\\
        &1&-t&\\
        &&1&\\
        &&&1
        \end{bsmallmatrix}
\end{equation}
belongs to \(K_1^J(\p)\).   Inserting this element to the right of the argument of \(\Phi_0^m\) and using the identity \[
\begin{bsmallmatrix}
1&&y&z\\
&1&&y\\
&&1&\\
&&&1
\end{bsmallmatrix}
\begin{bsmallmatrix}
1&u&&\\
&1&&\\
&&1&-u\\
&&&1
\end{bsmallmatrix}
\begin{bsmallmatrix}
1&&&\\
&1&-t&\\
&&1&\\
&&&1
\end{bsmallmatrix}                                      \\
 =
\begin{bsmallmatrix}
1&&&\\
&1&-t&\\
&&1&\\
&&&1
\end{bsmallmatrix}
\begin{bsmallmatrix}
1&&y-tu&z-tu^2\\
&1&&y-tu\\
&&1&\\
&&&1
\end{bsmallmatrix}
\begin{bsmallmatrix}
1&u&&\\
&1&&\\
&&1&-u\\
&&&1
\end{bsmallmatrix},
\]
we see that each \(u\)-summand is equal to itself multiplied by \(\psi^{-1}(tu^2)\).  
Since \(\psi\) has conductor \(\OF\), this
happens if and only if \(u\in\p\).  So  we get,
\[
\begin{aligned}
&\int_{J(F)}
\Phi_0^m\!\left(
\begin{bsmallmatrix}
1&-r&&\\
&1&&\\
&&1&r\\
&&&1
\end{bsmallmatrix}
g
\right)f_1(g)\,dg                                      \\
&=
\frac{\vol(\Gamma_0(\p))}{q(1-q^{-1})}
\int_{F^\times}\int_F\int_F\int_F
\chi(a)|a|^{3/2}\psi^{-1}(z)                 \\
&\quad\times
\Phi_0^m\!\left(
\begin{bsmallmatrix}
1&-r&&\\
&1&&\\
&&1&r\\
&&&1
\end{bsmallmatrix}
\begin{bsmallmatrix}
1&&&\\
&a&&\\
&&a^{-1}&\\
&&&1
\end{bsmallmatrix}
\begin{bsmallmatrix}
1&&&\\
&1&x&\\
&&1&\\
&&&1
\end{bsmallmatrix}
\begin{bsmallmatrix}
1&&y&z\\
&1&&y\\
&&1&\\
&&&1
\end{bsmallmatrix}
\right)
\,dx\,dy\,dz\,d^\times a .
\end{aligned}
\]
Next, for any $t \in \p^{-1}$, we insert the same element \eqref{tmatrixeq} to the left of the argument of \(\Phi_0^m\) using the left
\(K_1^J(\p)\)-invariance of \(\Phi_0^m\). The identity
\[
\begin{bsmallmatrix}
1&&&\\
&1&t&\\
&&1&\\
&&&1
\end{bsmallmatrix}
\begin{bsmallmatrix}
1&-r&&\\
&1&&\\
&&1&r\\
&&&1
\end{bsmallmatrix}
\begin{bsmallmatrix}
1&&&\\
&a&&\\
&&a^{-1}&\\
&&&1
\end{bsmallmatrix}
\begin{bsmallmatrix}
1&&y&z\\
&1&x&y\\
&&1&\\
&&&1
\end{bsmallmatrix}
=
\begin{bsmallmatrix}
1&-r&&\\
&1&&\\
&&1&r\\
&&&1
\end{bsmallmatrix}
\begin{bsmallmatrix}
1&&&\\
&a&&\\
&&a^{-1}&\\
&&&1
\end{bsmallmatrix}
\begin{bsmallmatrix}
1&&y+a^{-1}rt&z+r^2t\\
&1&x+a^{-2}t&y+a^{-1}rt\\
&&1&\\
&&&1
\end{bsmallmatrix}.
\]
shows that the integral is equal to itself multiplied by $\psi(r^2t)$.
Therefore only the class \(r=0\) contributes to the sum over
\(\OF/\p\), showing that
\begin{equation}
\begin{aligned}
I(\Phi_0^m,\Phi_1^J)
&=
\frac{1}{q^2(q+1)(1-q^{-1})}
\int_{F^\times}\int_F\int_F\int_F
\chi(a)|a|^{3/2}\psi^{-1}(z)                 \\
&\quad\times
\Phi_0^m\!\left(
\begin{bsmallmatrix}
1&&&\\
&a&&\\
&&a^{-1}&\\
&&&1
\end{bsmallmatrix}
\begin{bsmallmatrix}
1&&y&z\\
&1&x&y\\
&&1&\\
&&&1
\end{bsmallmatrix}
\right)
\,dx\,dy\,dz\,d^\times a.
\end{aligned}
\end{equation}
Now, using $\Phi_0^m(g)=\Phi_0(e_{\varpi^{-1}}ge_\varpi)$ (see \eqref{emdefeq}) and  making a further change of variables $z \mapsto -\varpi z$, $x \mapsto -\varpi^{-1}x$ we obtain the expression for  $I(\Phi_0^m,\Phi_1^J)$ in \eqref{mrefinedeq6}.

 For \(i=2\), we use the decomposition
\[
        B_1(\OF)
        \begin{bsmallmatrix}
        1&&&\\
        &&1&\\
        &-1&&\\
        &&&1
        \end{bsmallmatrix}
        \Gamma_0(\p)
        =
        \bigsqcup_{s\in \OF/\p}
        \begin{bsmallmatrix}
        1&&&\\
        &1&s&\\
        &&1&\\
        &&&1
        \end{bsmallmatrix}
        \begin{bsmallmatrix}
        1&&&\\
        &&1&\\
        &-1&&\\
        &&&1
        \end{bsmallmatrix}
        \Gamma_0(\p).
\]
Thus, writing
\[
        \kappa=
        \begin{bsmallmatrix}
        1&&&\\
        &1&s&\\
        &&1&\\
        &&&1
        \end{bsmallmatrix}
        \begin{bsmallmatrix}
        1&&&\\
        &&1&\\
        &-1&&\\
        &&&1
        \end{bsmallmatrix}
        \gamma,
        \qquad \gamma\in \Gamma_0(\p),
\]
using the left \(K_1^J(\p)\)-invariance of \(\Phi_0^m\) and replacing the \(u\)-integration by a sum over \(r\in\OF/\p\), we obtain from~\eqref{generalcaseprelim}
\begin{equation}\label{mrefinedeq2}
\begin{aligned}
I(\Phi_0^m,\Phi_2^J)
&=
q^{-2}
\sum_{r,s\in \OF/\p}
\int_{J(F)}
\Phi_0^m\!\left(
\begin{bsmallmatrix}
1&&&\\
&&-1&\\
&1&&\\
&&&1
\end{bsmallmatrix}
\begin{bsmallmatrix}
1&&&\\
&1&-s&\\
&&1&\\
&&&1
\end{bsmallmatrix}
\begin{bsmallmatrix}
1&-r&&\\
&1&&\\
&&1&r\\
&&&1
\end{bsmallmatrix}
g
\right)
f_2(g)\,dg .
\end{aligned}
\end{equation}
As before, writing the \(J(F)\)-integral as a nested \(\SL_2(F)\) and \(H(F)\)
integral and using the integration formula \eqref{unrameq566} for the
\(\SL_2(F)\)-integral
we get
\[
\begin{aligned}
&\int_{J(F)}
\Phi_0^m\!\left(
\begin{bsmallmatrix}
1&&&\\
&&-1&\\
&1&&\\
&&&1
\end{bsmallmatrix}
\begin{bsmallmatrix}
1&&&\\
&1&-s&\\
&&1&\\
&&&1
\end{bsmallmatrix}
\begin{bsmallmatrix}
1&-r&&\\
&1&&\\
&&1&r\\
&&&1
\end{bsmallmatrix}
g
\right)f_2(g)\,dg    
=
\frac{q}
{(q+1)(1-q^{-1})} \\ & \hspace{10ex}\times
\int_{F^\times}\int_F\int_{H(F)}
\Phi_0^m\!\left(
\begin{bsmallmatrix}
1&&&\\
&&-1&\\
&1&&\\
&&&1
\end{bsmallmatrix}
\begin{bsmallmatrix}
1&&&\\
&1&-s&\\
&&1&\\
&&&1
\end{bsmallmatrix}
\begin{bsmallmatrix}
1&-r&&\\
&1&&\\
&&1&r\\
&&&1
\end{bsmallmatrix}
\begin{bsmallmatrix}
1&&&\\
&a&&\\
&&a^{-1}&\\
&&&1
\end{bsmallmatrix}
\begin{bsmallmatrix}
1&&&\\
&1&x&\\
&&1&\\
&&&1
\end{bsmallmatrix}
h
\begin{bsmallmatrix}
1&&&\\
&&1&\\
&-1&&\\
&&&1
\end{bsmallmatrix}
\right)                                      \\
&\hspace{32ex}\times
f_2\!\left(
\begin{bsmallmatrix}
1&&&\\
&a&&\\
&&a^{-1}&\\
&&&1
\end{bsmallmatrix}
\begin{bsmallmatrix}
1&&&\\
&1&x&\\
&&1&\\
&&&1
\end{bsmallmatrix}
h
\begin{bsmallmatrix}
1&&&\\
&&1&\\
&-1&&\\
&&&1
\end{bsmallmatrix}
\right)
\,dh\,dx\,d^\times a .
\end{aligned}
\]
Above, we have used
$        \vol\!\left(
        B_1(\OF)
        w
        \Gamma_0(\p)\right)
        =
        \frac{q}{q+1}.$
As before we write $h$ in the form~\eqref{heisenbergelementeq}. Observe that
\[
f_2\!\left(
\begin{bsmallmatrix}
1&&&\\
&a&&\\
&&a^{-1}&\\
&&&1
\end{bsmallmatrix}
\begin{bsmallmatrix}
1&&&\\
&1&x&\\
&&1&\\
&&&1
\end{bsmallmatrix}
\begin{bsmallmatrix}
1&&y&z\\
&1&&y\\
&&1&\\
&&&1
\end{bsmallmatrix}
\begin{bsmallmatrix}
1&u&&\\
&1&&\\
&&1&-u\\
&&&1
\end{bsmallmatrix}
\begin{bsmallmatrix}
1&&&\\
&&1&\\
&-1&&\\
&&&1
\end{bsmallmatrix}
\right)
=
\begin{cases}
\chi(a)|a|^{3/2}\psi^{-1}(z),& u\in\OF,\\
0,&u\notin\OF.
\end{cases}
\]
The \(u\)-dependence disappears from the argument of
\(\Phi_0^m\) by right \(K_1^J(\p)\)-invariance, and the \(u\)-integration
contributes \(\vol(\OF)=1\).  Combining this with \eqref{mrefinedeq2}, we obtain
\begin{equation}
\begin{aligned}
I(\Phi_0^m,\Phi_2^J)
&=
\frac{1}{q(q+1)(1-q^{-1})}
\sum_{r,s\in \OF/\p}
\int_{F^\times}\int_F\int_F\int_F
\chi(a)|a|^{3/2}\psi^{-1}(z)                 \\
&\quad\times
\Phi_0^m\!\left(
\begin{bsmallmatrix}
1&&&\\
&&-1&\\
&1&&\\
&&&1
\end{bsmallmatrix}
\begin{bsmallmatrix}
1&&&\\
&1&-s&\\
&&1&\\
&&&1
\end{bsmallmatrix}
\begin{bsmallmatrix}
1&-r&&\\
&1&&\\
&&1&r\\
&&&1
\end{bsmallmatrix}
\begin{bsmallmatrix}
1&&&\\
&a&&\\
&&a^{-1}&\\
&&&1
\end{bsmallmatrix}
\begin{bsmallmatrix}
1&&y&z\\
&1&x&y\\
&&1&\\
&&&1
\end{bsmallmatrix}
\begin{bsmallmatrix}
1&&&\\
&&1&\\
&-1&&\\
&&&1
\end{bsmallmatrix}
\right)
\,dx\,dy\,dz\,d^\times a .
\end{aligned}
\end{equation}
Inserting the element $\begin{bsmallmatrix}
1&&r&\\
&1&&r\\
&&1&\\
&&&1
\end{bsmallmatrix} \in K_1^J(\p)$ on the left and performing the translations $x\mapsto x-\frac{s}{a^2}$, $y\mapsto y-\frac{rs}{a}$, $z\mapsto z-r^2s$ shows that all summands are in fact equal. Hence
\begin{equation}\label{mrefinedeq7-simplified}
\begin{aligned}
I(\Phi_0^m,\Phi_2^J)
&=
\frac{q}{(q+1)(1-q^{-1})}
\int_{F^\times}\int_F\int_F\int_F
\chi(a)|a|^{3/2}\psi^{-1}(z)
\Phi_0^m\!\left(
\begin{bsmallmatrix}
1&-y&0&z\\
& a^{-1}&0&0\\
&-ax&a&ay\\
&&&1
\end{bsmallmatrix}
\right)
\,dx\,dy\,dz\,d^\times a.
\end{aligned}
\end{equation}
Now, using $\Phi_0^m(g)=\Phi_0(w^{-1}e_{\varpi^{-1}}ge_\varpi w)$ (see~\eqref{weq} and~\eqref{emdefeq}) and  making a further change of variables $z \mapsto -\varpi z$, $x \mapsto -\varpi x$, $y \mapsto \varpi y$, we obtain the expression for  $I(\Phi_0^m,\Phi_2^J)$ in \eqref{mrefinedeq6}.

ii) For the special representation case, fix \(0\leq s_0<\frac12\).
The preceding computations of the diagonal terms apply verbatim after
replacing \(\chi\) by \(\eta_{s_0}\) and \(f_i\) by \(f_{i,s_0}\).  They give
the asserted formula for
\(I(\Phi_0^m,\Phi_{1,s_0}^J)=I(\Phi_0^m,\Phi_{2,s_0}^J)\).

We now compute \(\mathcal C_{21,s_0}(\Phi_0^m)\).
As in the computation of \(I(\Phi_0^m,\Phi_{1,s_0}^J)\), starting from~\eqref{special-cross-C21}, we obtain
\begin{equation}\label{cross21-prelim-rsum}
\begin{aligned}
\mathcal C_{21,s_0}(\Phi_0^m)
&=
\frac{1}{q(q+1)}
\sum_{r\in\OF/\p}
\int_{J(F)}
\Phi_0^m\!\left(
\begin{bsmallmatrix}
1&-r&&\\
&1&&\\
&&1&r\\
&&&1
\end{bsmallmatrix}
g
\right)
f_{2,s_0}(g)
\,dg.
\end{aligned}
\end{equation}
Writing the \(J(F)\)-integral as a nested
\(\SL_2(F)\)- and \(H(F)\)-integral, using the  formula~\eqref{unrameq566}, and using the support of \(f_{2,s_0}\), we obtain
\begin{equation}\label{cross21-rsum-before-killing}
\begin{aligned}
\mathcal C_{21,s_0}(\Phi_0^m)
&=
\frac{1}{(q+1)^2(1-q^{-1})}
\sum_{r\in\OF/\p}
\int_{F^\times}\int_F\int_F\int_F
\eta_{s_0}(a)|a|^{3/2}\psi^{-1}(z)                            \\
&\quad\times
\Phi_0^m\!\left(
\begin{bsmallmatrix}
1&-r&&\\
&1&&\\
&&1&r\\
&&&1
\end{bsmallmatrix}
\begin{bsmallmatrix}
1&&&\\
&a&&\\
&&a^{-1}&\\
&&&1
\end{bsmallmatrix}
\begin{bsmallmatrix}
1&&&\\
&1&x&\\
&&1&\\
&&&1
\end{bsmallmatrix}
\begin{bsmallmatrix}
1&&y&z\\
&1&&y\\
&&1&\\
&&&1
\end{bsmallmatrix}
\begin{bsmallmatrix}
1&&&\\
&&1&\\
&-1&&\\
&&&1
\end{bsmallmatrix}
\right)
\,dx\,dy\,dz\,d^\times a .
\end{aligned}
\end{equation}
The same argument as in the derivation of \eqref{mrefinedeq6} shows that
only the class \(r=0\) contributes.  Hence
\begin{equation}\label{cross21-phim}
\begin{aligned}
\mathcal C_{21,s_0}(\Phi_0^m)
&=
\frac{1}{(q+1)^2(1-q^{-1})}
\int_{F^\times}\int_F\int_F\int_F
\eta_{s_0}(a)|a|^{3/2}\psi^{-1}(z)                            \\
&\quad\times
\Phi_0^m\!\left(
\begin{bsmallmatrix}
1&&&\\
&a&&\\
&&a^{-1}&\\
&&&1
\end{bsmallmatrix}
\begin{bsmallmatrix}
1&&y&z\\
&1&x&y\\
&&1&\\
&&&1
\end{bsmallmatrix}
\begin{bsmallmatrix}
1&&&\\
&&1&\\
&-1&&\\
&&&1
\end{bsmallmatrix}
\right)
\,dx\,dy\,dz\,d^\times a .
\end{aligned}
\end{equation}
Passing from \(\Phi_0^m\) to \(\Phi_0\), using the right
\(K\)-invariance of \(\Phi_0\), and making the same harmless sign change
\(x\mapsto -x\), \(z\mapsto -z\) as in the derivation of
\eqref{mrefinedeq6}, produces the factor \(q^{-3/2}\eta_{s_0}(\varpi)^{-1}\).  Comparing with \eqref{mrefinedeq6}
then gives~\eqref{mrefinedeq-cross21}.

We now compute the second cross term.  Starting from
\eqref{special-cross-C12} and making the same change of variables gives
\begin{equation}\label{special-cross-C12-changed}
\begin{aligned}
\mathcal C_{12,s_0}(\Phi_0^m)
&=
\int_{B_1(\OF)w\Gamma_0(\p)}
\int_\OF
\int_{J(F)}
\Phi_0^m\!\left(
\kappa^{-1}
\begin{bsmallmatrix}
1&-u&&\\
&1&&\\
&&1&u\\
&&&1
\end{bsmallmatrix}
g
\right)
f_{1,s_0}(g)
\,dg\,du\,d\kappa .
\end{aligned}
\end{equation}
By the same sequence of steps used in the computation of
\(I(\Phi_0^m,\Phi_{2,s_0}^J)\), we reduce to
\[
\begin{aligned}
\mathcal C_{12,s_0}(\Phi_0^m)
&=
\frac{1}{(q+1)^2(1-q^{-1})}
\int_{F^\times}\int_F\int_F\int_F
\eta_{s_0}(a)|a|^{3/2}\psi^{-1}(z)                            \\
&\quad\times
\Phi_0^m\!\left(
\begin{bsmallmatrix}
1&&&\\
&&-1&\\
&1&&\\
&&&1
\end{bsmallmatrix}
\begin{bsmallmatrix}
1&&&\\
&a&&\\
&&a^{-1}&\\
&&&1
\end{bsmallmatrix}
\begin{bsmallmatrix}
1&&y&z\\
&1&x&y\\
&&1&\\
&&&1
\end{bsmallmatrix}
\right)
\,dx\,dy\,dz\,d^\times a .
\end{aligned}
\]
Finally,
\[
 e_{\varpi^{-1}}
 \begin{bsmallmatrix}
1&&&\\
&&-1&\\
&1&&\\
&&&1
\end{bsmallmatrix}
\begin{bsmallmatrix}
1&&&\\
&a&&\\
&&a^{-1}&\\
&&&1
\end{bsmallmatrix}
\begin{bsmallmatrix}
1&&y&z\\
&1&x&y\\
&&1&\\
&&&1
\end{bsmallmatrix}
 e_\varpi
 =
 \begin{bsmallmatrix}
1&&&\\
&&-1&\\
&1&&\\
&&&1
\end{bsmallmatrix}
\begin{bsmallmatrix}
1&&&\\
&a\varpi^{-1}&&\\
&&a^{-1}\varpi&\\
&&&1
\end{bsmallmatrix}
\begin{bsmallmatrix}
1&&y&\varpi^{-1}z\\
&1&\varpi x&y\\
&&1&\\
&&&1
\end{bsmallmatrix}.
\]
The first factor lies in \(K\), so it may be removed from the argument
of \(\Phi_0\).  With the changes of variables
\[
        a\mapsto a\varpi,
        \qquad
        x\mapsto\varpi^{-1}x,
        \qquad
        z\mapsto\varpi z,
\]
followed by the sign change \(x\mapsto -x\), \(z\mapsto -z\), we obtain
\[
\begin{aligned}
\mathcal C_{12,s_0}(\Phi_0^m)
&=
\frac{q^{-3/2}\eta_{s_0}(\varpi)}
     {(q+1)^2(1-q^{-1})}
\int_{F^\times}\int_F\int_F\int_F
\eta_{s_0}(a)|a|^{3/2}\psi(\varpi z)                 \\
&\quad\times
\Phi_0\!\left(
\begin{bsmallmatrix}
1&&&\\
&a&&\\
&&a^{-1}&\\
&&&1
\end{bsmallmatrix}
\begin{bsmallmatrix}
1&&y&z\\
&1&x&y\\
&&1&\\
&&&1
\end{bsmallmatrix}
\right)
\,dx\,dy\,dz\,d^\times a .
\end{aligned}
\]
Comparing with \eqref{mrefinedeq6} gives
\[
\mathcal C_{12,s_0}(\Phi_0^m)
=
\frac{q^{1/2}\eta_{s_0}(\varpi)}{q+1}
I(\Phi_0^m,\Phi_{1,s_0}^J)
=
\frac{q^{-s_0+1/2}(\varpi,\xi)}{q+1}
I(\Phi_0^m,\Phi_{1,s_0}^J),
\]
which proves \eqref{mrefinedeq-cross12}.

Finally, substituting \eqref{mrefinedeq-cross21} and
\eqref{mrefinedeq-cross12}
into \eqref{refinedeq14sp-cross}
gives \eqref{mrefinedeq7-special}.
\end{proof}
\subsection{Preliminary calculation of the core integral}
We see from \eqref{mrefinedeq7} and \eqref{mrefinedeq7-special} that the calculation of $\alpha(\pi,\sigma;m)$ comes down to an integral of the form
\begin{equation}\label{Idef}
 I:=\frac1{1-q^{-1}}
 \int\limits_{F^\times}\int\limits_F\int\limits_F\int\limits_F
 \eta(a)|a|^{3/2}\,\psi(\varpi z)\,\Phi_0(\begin{bsmallmatrix}1\\&a\\&&a^{-1}\\&&&1\end{bsmallmatrix}\begin{bsmallmatrix}1&&y&z\\&1&x&y\\&&1\\&&&1\end{bsmallmatrix})\,dx\,dy\,dz\,d^\times a.
\end{equation}
Here, $\eta=\chi$ in the principal series case, and $\eta=\eta_{s_0}$ (see~\eqref{etas0defeq}) in the special representation case. To treat both cases simultaneously, assume that $\eta = \eta_0 |\cdot|^{s_0}$, where $\eta_0$ is unitary and $0\leq s_0<\frac12$. In this case the integral in~\eqref{Idef} is absolutely convergent. Let $I_0$ be the part of this integral where~$z\in\OF$. For $r>0$, let $I_r$ be the part of this integral where $v(z)=-r$. Clearly,
\begin{equation}\label{unrameq10}
 I_0=\frac1{1-q^{-1}}\int\limits_{F^\times}\int\limits_F\int\limits_F\eta(a)|a|^{3/2}\,\Phi_0(\begin{bsmallmatrix}1\\&a\\&&a^{-1}\\&&&1\end{bsmallmatrix}\begin{bsmallmatrix}1&&y\\&1&x&y\\&&1\\&&&1\end{bsmallmatrix})\,dx\,dy\,d^\times a.
\end{equation}

\begin{lemma}\label{unramlemma1}
We have
   \begin{align}
    \label{unramlemma1eq2}I_1&=q\int\limits_{F^\times}\int\limits_F\int\limits_F\eta(a)|a|^{3/2}\,\Phi_0(\begin{bsmallmatrix}1\\&a\\&&a^{-1}\\&&&1\end{bsmallmatrix}\begin{bsmallmatrix}1&&y&\varpi^{-1}\\&1&x&y\\&&1\\&&&1\end{bsmallmatrix})\,dx\,dy\,d^\times a,\\
    \label{unramlemma1eq3}I_2&=-\frac{q}{1-q^{-1}}\int\limits_{F^\times}\int\limits_F\int\limits_F\eta(a)|a|^{3/2}\,\Phi_0(\begin{bsmallmatrix}1\\&a\\&&a^{-1}\\&&&1\end{bsmallmatrix}\begin{bsmallmatrix}1&&y&\varpi^{-2}\\&1&x&y\\&&1\\&&&1\end{bsmallmatrix})\,dx\,dy\,d^\times a,
   \end{align}
   and $I_r=0$ for $r\geq3$.
\end{lemma}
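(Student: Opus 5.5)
The idea is to show that, for fixed $a,x,y$, the function $z\mapsto\Phi_0(\cdots)$ depends only on $v(z)$ once $v(z)\le 0$, and in fact only on the class of $v(z)$ among $0,-1,\le -2$. The $z$-integral over each shell is then a Gauss-type sum of $\psi(\varpi z)$.

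\textbf{Step 1: $\Phi_0$ depends only on $z$ up to a unit.} Write the argument of $\Phi_0$ as $t(a)\,n(x,y,z)$. For a unit $u\in\OF^\times$, conjugate by $d_u=\diag(u,1,1,u^{-1})\in K$. This element has similitude $1$, commutes with $t(a)$, and sends $n(x,y,z)$ to $n(ux\cdot?,\dots)$. More precisely, I will combine $d_u$ with $\diag(1,u,u^{-1},1)$-type elements, adjusting $x$ and $y$ by unit factors, to map $n(x,y,z)$ to $n(x',y',uz)$. The substitution $x\mapsto x'$, $y\mapsto y'$ preserves $dx\,dy$ and does not change $\eta(a)$. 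Bi-$K$-invariance of $\Phi_0$ then shows that the $(x,y,a)$-integral
\[
G(z)=\int\!\!\int\!\!\int \eta(a)|a|^{3/2}\,\Phi_0(t(a)n(x,y,z))\,dx\,dy\,d^\times a
\]
depends only on $v(z)$. A simple choice might be $\diag(u,1,u,1)$ times a similitude correction, but similitude changes are harmless since the central character is trivial.

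\textbf{Step 2: $z$ can be translated by $\OF$.} For $z_0\in\OF$, the element $n(0,0,z_0)$ lies in $K$ and commutes with the Heisenberg part up to a shift of $z$. Right-multiplying by it shows $G(z)=G(z+z_0)$. Hence $G$ is constant on $\OF$, which gives the formula for $I_0$.

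\textbf{Step 3: the shells.} For $r\ge1$ we get
\[
I_r=\frac{G(\varpi^{-r})}{1-q^{-1}}\int_{v(z)=-r}\psi(\varpi z)\,dz .
\]
Since $\psi$ has conductor $\OF$, the character $\psi(\varpi\,\cdot)$ has conductor $\p^{-1}$. The shell integral equals $\mathrm{vol}(\p^{-r})-\mathrm{vol}(\p^{-r+1})$ when $r=1$, namely $q-1$, giving $I_1=q\,G(\varpi^{-1})$. It equals $0-q=-q$ when $r=2$, and $0$ when $r\ge3$. This yields exactly the stated constants $q$ and $-q/(1-q^{-1})$, and $I_r=0$ for $r\ge3$.

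\textbf{Main obstacle.} The delicate point is Step 1: choosing an explicit element of $K$ (or of the center times $K$) that rescales $z$ by a unit while keeping $t(a)$ fixed and acting on $(x,y)$ by measure-preserving unit scalings. One also needs to justify splitting the absolutely convergent integral into shells via Fubini; this is fine for $0\le s_0<\tfrac12$.
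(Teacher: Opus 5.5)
Your proposal is correct and is essentially the paper's argument. The paper substitutes $z\mapsto\varpi^{-r}z$ with $z\in\OF^\times$ and $x\mapsto xz$, $y\mapsto yz$, removes the unit from the argument of $\Phi_0$ by $K$-invariance (conjugation by $\diag(1,1,u,u)\in K$), and evaluates $\int_{\OF^\times}\psi(\varpi^{-r+1}z)\,d^\times z$, matching your Steps 1 and 3. Your tentative choice already settles what you call the main obstacle: $\diag(u,1,u,1)$ lies in $K=\GSp_4(\OF)$ with similitude $u$, commutes with $\diag(1,a,a^{-1},1)$, and conjugates the unipotent element with entries $(x,y,z)$ to the one with entries $(u^{-1}x,y,uz)$, so no similitude correction or appeal to the central character is needed.
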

\begin{proof}
We have
\begin{align}\label{unramlemma1eq4}
 &(1-q^{-1})I_r\nonumber\\
 &=\int\limits_{F^\times}\int\limits_{\varpi^{-r}\OF^\times}
 \int\limits_F\int\limits_F\eta(a)|a|^{3/2}\,\psi(\varpi z)\,\Phi_0(\begin{bsmallmatrix}1\\&a\\&&a^{-1}\\&&&1\end{bsmallmatrix}\begin{bsmallmatrix}1&&y&z\\&1&x&y\\&&1\\&&&1\end{bsmallmatrix})\,dx\,dy\,dz\,d^\times a\nonumber\\
  &=q^r\int\limits_{F^\times}\int\limits_{\OF^\times}
  \int\limits_F\int\limits_F\eta(a)|a|^{3/2}\,\psi(\varpi^{-r+1}z)\,
  \Phi_0(\begin{bsmallmatrix}1\\&a\\&&a^{-1}\\&&&1\end{bsmallmatrix}\begin{bsmallmatrix}1&&yz&\varpi^{-r}z\\&1&xz&yz\\&&1\\&&&1\end{bsmallmatrix})\,dx\,dy\,d^\times z\,d^\times a\nonumber\\
  &=q^r\int\limits_{F^\times}\int\limits_{\OF^\times}
  \int\limits_F\int\limits_F\eta(a)|a|^{3/2}\,\psi(\varpi^{-r+1}z)\,
  \Phi_0(\begin{bsmallmatrix}1\\&a\\&&a^{-1}\\&&&1\end{bsmallmatrix}\begin{bsmallmatrix}1&&y&\varpi^{-r}\\&1&x&y\\&&1\\&&&1\end{bsmallmatrix})\,dx\,dy\,d^\times z\,d^\times a.
\end{align}
Now all assertions follow easily.
\end{proof}

By this lemma, we have \[I=I_0+I_1+I_2.\]

Suppose that $\sigma\simeq\tilde\pi(\chi)$ is an unramified principal series
representation, so that $\eta=\chi$. Then the above, together with \eqref{mrefinedeq7} and
\eqref{Idef}, gives us
\begin{equation}\label{alpha-principal-Ir}
        \alpha(\pi,\sigma;m)
        =
        \frac{2}{q+1}\left(I_0+I_1+I_2\right).
\end{equation}

Now suppose that $\sigma\simeq\tilde\sigma_\xi$ is a special representation.
For $0<s_0<\frac12$, let $I(s_0)$ and $I_j(s_0)$ denote the integrals above with $\eta$ replaced by $\eta_{s_0}=(\cdot,\xi)|\cdot|^{s_0}$.
Then \eqref{mrefinedeq7-special} and \eqref{Idef} give
\begin{equation}\label{alpha-special-Ir}
\begin{aligned}
\alpha(\pi,\sigma;m)
&=
\lim_{s_0\to 1/2^-}
\frac{1}{q+1}
\left(
1-
\frac{(\varpi,\xi)}{q+1}
\bigl(q^{s_0+1/2}+q^{-s_0+1/2}\bigr)
\right)
\bigl(I_0(s_0)+I_1(s_0)+I_2(s_0)\bigr).
\end{aligned}
\end{equation}
In the next three subsections, we will compute $I_0$, $I_1$ and $I_2$. The strategy in each case is similar to the one employed in \cite{DPSS20}: Use Proposition~3.2 of~\cite{KnightlyLi2019} and the Macdonald formula to explicitly evaluate the matrix coefficient, and then integrate the resulting quantity by reducing it to a sum.

For integers $\ell,m$ let
\begin{equation}\label{hlmdefeq}
 h(\ell,m)=\begin{bsmallmatrix}\varpi^{\ell+m}\\&\varpi^{\ell+2m}\\&&1\\&&&\varpi^m\end{bsmallmatrix}.
\end{equation}
Recall that $\alpha=\chi_1(\varpi)$, $\beta=\chi_2(\varpi)$, $\gamma=\chi_0(\varpi)$. We also set
\begin{equation}\label{deltadefeq}
 \delta=\eta(\varpi).
\end{equation}
The following is a special case of the Macdonald formula.
\begin{proposition}\label{GSp4macdonald}
 For all non-negative integers $\ell$, $m$, we have
 \begin{equation}\label{GSp4macdonaldeq1}
  \Phi_0(h(\ell,m))=\frac{q^{-(4m+3\ell)/2}\,\gamma^\ell}{1+2q^{-1}+2q^{-2}+2q^{-3}+q^{-4}}\,\sum_{i=1}^8 A_iB_i(\ell,m),
 \end{equation}
 where the quantities $A_i$, $B_i$ are given as follows:
 \begin{equation}\label{GSp4macdonaldeq2}
 \begin{array}{ccccc}
  \toprule
   i&A_i&B_i(\ell,m)&L_i&M_i\\
  \toprule
   1&\frac{1-q^{-1}\alpha^{-1}\beta}{1-\alpha^{-1}\beta}\:\frac{1-q^{-1}\beta^{-1}}{1-\beta^{-1}}\:\frac{1-q^{-1}\alpha^{-1}\beta^{-1}}{1-\alpha^{-1}\beta^{-1}}\:\frac{1-q^{-1}\alpha^{-1}}{1-\alpha^{-1}}&\alpha^{m+\ell}\beta^{\ell}&\alpha\beta&\alpha\\
  \midrule
   2&\frac{1-q^{-1}\alpha\beta^{-1}}{1-\alpha\beta^{-1}}\:\frac{1-q^{-1}\alpha^{-1}}{1-\alpha^{-1}}\:\frac{1-q^{-1}\alpha^{-1}\beta^{-1}}{1-\alpha^{-1}\beta^{-1}}\:\frac{1-q^{-1}\beta^{-1}}{1-\beta^{-1}}&\alpha^{\ell}\beta^{m+\ell}&\alpha\beta&\beta\\
  \midrule
   3&\frac{1-q^{-1}\alpha^{-1}\beta^{-1}}{1-\alpha^{-1}\beta^{-1}}\:\frac{1-q^{-1}\beta}{1-\beta}\:\frac{1-q^{-1}\alpha^{-1}\beta}{1-\alpha^{-1}\beta}\:\frac{1-q^{-1}\alpha^{-1}}{1-\alpha^{-1}}&\alpha^{m+\ell}&\alpha&\alpha\\
  \midrule
  4&\frac{1-q^{-1}\alpha\beta}{1-\alpha\beta}\:\frac{1-q^{-1}\alpha^{-1}}{1-\alpha^{-1}}\:\frac{1-q^{-1}\alpha^{-1}\beta}{1-\alpha^{-1}\beta}\:\frac{1-q^{-1}\beta}{1-\beta}&\alpha^{\ell}\beta^{-m}&\alpha&\beta^{-1}\\
  \midrule
   5&\frac{1-q^{-1}\alpha^{-1}\beta^{-1}}{1-\alpha^{-1}\beta^{-1}}\:\frac{1-q^{-1}\alpha}{1-\alpha}\:\frac{1-q^{-1}\alpha\beta^{-1}}{1-\alpha\beta^{-1}}\:\frac{1-q^{-1}\beta^{-1}}{1-\beta^{-1}}&\beta^{m+\ell}&\beta&\beta\\
  \midrule
  6&\frac{1-q^{-1}\alpha\beta}{1-\alpha\beta}\:\frac{1-q^{-1}\beta^{-1}}{1-\beta^{-1}}\:\frac{1-q^{-1}\alpha\beta^{-1}}{1-\alpha\beta^{-1}}\:\frac{1-q^{-1}\alpha}{1-\alpha}&\alpha^{-m}\beta^{\ell}&\beta&\alpha^{-1}\\
  \midrule
 7&\frac{1-q^{-1}\alpha^{-1}\beta}{1-\alpha^{-1}\beta}\:\frac{1-q^{-1}\alpha}{1-\alpha}\:\frac{1-q^{-1}\alpha\beta}{1-\alpha\beta}\:\frac{1-q^{-1}\beta}{1-\beta}&\beta^{-m}&1&\beta^{-1}\\
  \midrule
  8&\frac{1-q^{-1}\alpha\beta^{-1}}{1-\alpha\beta^{-1}}\:\frac{1-q^{-1}\beta}{1-\beta}\:\frac{1-q^{-1}\alpha\beta}{1-\alpha\beta}\:\frac{1-q^{-1}\alpha}{1-\alpha}&\alpha^{-m}&1&\alpha^{-1}\\
  \bottomrule
 \end{array}
 \end{equation}
\end{proposition}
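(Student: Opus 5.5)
The plan is to derive the statement from Macdonald's general formula for the zonal spherical function. For an unramified principal series with Satake parameter $\chi$ (here $\chi=\chi_1\otimes\chi_2\otimes\chi_0$) and a dominant element $t$ of the torus, Macdonald's formula reads
\[
\Phi_0(t)=\frac{\delta_B^{-1/2}(t)}{Q(q^{-1})}\sum_{w\in W}c(w\chi)\,(w\chi)(t),
\qquad
c(\chi)=\prod_{a>0}\frac{1-q^{-1}\chi(a^\vee(\varpi))^{-1}}{1-\chi(a^\vee(\varpi))^{-1}},
\]
where $W$ is the Weyl group of $\GSp_4$, of order $8$. The Poincaré polynomial is
\[
Q(q^{-1})=\sum_{w\in W}q^{-\ell(w)}=1+2q^{-1}+2q^{-2}+2q^{-3}+q^{-4},
\]
which is the denominator in the statement.

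The first step is to check that $h(\ell,m)$ is dominant for the Borel subgroup of upper triangular matrices in our realization, when $\ell,m\ge0$. Its diagonal is $(\varpi^{\ell+m},\varpi^{\ell+2m},1,\varpi^m)$, with similitude $\lambda=\varpi^{\ell+2m}$. Because of the antidiagonal form $J'$, the simple root values are $\varpi^{-m}$ and $\varpi^{\ell}$, up to the convention for which roots are positive. I will then compute
\[
\delta_B^{-1/2}(h(\ell,m))=q^{-(4m+3\ell)/2}.
\]
This uses the four positive roots of $\GSp_4$ evaluated on $h$, whose valuations sum to $4m+3\ell$ after accounting for the similitude normalization. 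I will also check that the factor $\gamma^\ell$ arises from $\chi_0$ evaluated on the similitude part in the standard Sally--Tadić parametrization $\chi_1\times\chi_2\rtimes\chi_0$, where the character of the torus is
\[
\diag(a,b,cb^{-1},ca^{-1})\mapsto\chi_1(a)\chi_2(b)\chi_0(c).
\]
Matching this to $h(\ell,m)$ gives $a=\varpi^{\ell+m}$, $b=\varpi^{\ell+2m}$ and $c=\varpi^{\ell+2m}$ after the matrix is rearranged into standard order. This bookkeeping is where sign and ordering conventions must be fixed carefully, and I expect it to be the main obstacle: the $w$ in our realization permutes coordinates differently from Sally--Tadić, so I will determine which diagonal entries correspond to $a$ and $b$ by requiring the result to be consistent with the table.

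Next, I will enumerate the eight Weyl elements. Each acts on $(\alpha,\beta)$ by permutations and inversions, and $\gamma$ changes accordingly, for instance $\gamma\mapsto\gamma\alpha$ when $\alpha\mapsto\alpha^{-1}$. I will then factor out $\gamma^\ell$ using $\alpha\beta\gamma^2=1$. For each $w$ I will write out $(w\chi)(h(\ell,m))\gamma^{-\ell}$ and check that it equals $B_i(\ell,m)$. I will then write out $c(w\chi)$ as a product over the four positive coroots and check that it equals $A_i$. The positive coroots evaluate on $\chi$ as ratios among $\alpha,\beta,\alpha\beta$ and their inverses, which explains why $A_i$ has four factors of the form $(1-q^{-1}x)/(1-x)$.

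The columns $L_i$ and $M_i$ serve as bookkeeping: they record $B_i(\ell,m)=L_i^\ell M_i^m$ up to $\gamma$. They can be verified row by row and serve as a consistency check on the correspondence.

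Finally, these formulas are valid for generic $(\alpha,\beta)$, where the denominators are nonzero. For arbitrary tempered parameters I will extend them by continuity, since $\Phi_0(h(\ell,m))$ is a polynomial in the Satake parameters.
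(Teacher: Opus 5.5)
Your route (apply Macdonald's formula, then unwind) is the same as the paper's. The paper simply invokes the Macdonald formula as in \cite{DPSS20} and uses $\alpha\beta\gamma^2=1$ to pull out $\gamma^\ell$. However, the bookkeeping you propose, which you rightly flag as the crux, fails at its first step.

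The paper's realization (antidiagonal $J'$, upper triangular Borel $B$) already has torus $\diag(a,b,cb^{-1},ca^{-1})$, so no rearrangement is needed. Your values $(a,b,c)=(\varpi^{\ell+m},\varpi^{\ell+2m},\varpi^{\ell+2m})$ are those of $h(\ell,m)$ itself. On it, the $B$-positive roots $a/b,\ b^2/c,\ ab/c,\ a^2/c$ take the values $\varpi^{-m},\varpi^{\ell+2m},\varpi^{\ell+m},\varpi^{\ell}$. Three things follow:
\begin{enumerate}
\item $h(\ell,m)$ is \emph{not} $B$-dominant for $m>0$. The simple roots are $a/b$ and $b^2/c$, with valuations $-m$ and $\ell+2m$; $\varpi^\ell$ is the value of the non-simple root $a^2/c$.
\item The valuations sum to $3\ell+2m$, not $3\ell+4m$. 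Hence $\delta_B^{-1/2}(h(\ell,m))=q^{(3\ell+2m)/2}$, and your claimed value $q^{-(4m+3\ell)/2}$ is false.
\item Applying the formula at $h(\ell,m)$ anyway would pair the unpermuted $c$-function, which is exactly $A_1$, with $\chi(h(\ell,m))=\gamma^\ell\alpha^{\ell}\beta^{\ell+m}=\gamma^\ell B_2$. This contradicts the table.
\end{enumerate}

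Your displayed formula also mixes two conventions. With $c$ built from $\chi(a^\vee(\varpi))^{-1}$, the prefactor must be $\delta_B^{+1/2}(t)$ on the chamber $|a(t)|\le 1$. With $\delta_B^{-1/2}(t)$, one needs the chamber $|a(t)|\ge1$ and the non-inverted $c$. Already for $\GL_2$ with parameters $(q^{1/2},q^{-1/2})$, where the spherical function is identically $1$, your version returns $q^{-2}$ at $\diag(1,\varpi^2)$ and $q^{2}$ at $\diag(\varpi^2,1)$. Finally, fixing conventions ``by requiring consistency with the table'' is circular; they must be pinned down independently.

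The repair is short. $\Phi_0$ is bi-$K$-invariant, and the Weyl element $s_1$ swapping $e_1\leftrightarrow e_2$ and $e_3\leftrightarrow e_4$ lies in $\Sp_4(\OF)\subset K$. So $\Phi_0(h(\ell,m))=\Phi_0(t)$ with
\[
t=s_1h(\ell,m)s_1^{-1}=\diag(\varpi^{\ell+2m},\varpi^{\ell+m},\varpi^m,1),\qquad (a,b,c)=(\varpi^{\ell+2m},\varpi^{\ell+m},\varpi^{\ell+2m}).
\]
Now the $B$-positive roots have valuations $m,\ell,\ell+m,\ell+2m\ge0$, and $\delta_B^{1/2}(t)=|a^2b|\,|c|^{-3/2}=q^{-(3\ell+4m)/2}$.

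Macdonald's formula then reads
\[
\Phi_0(t)=Q(q^{-1})^{-1}\delta_B^{1/2}(t)\sum_w c(w\chi)\,(w\chi)(t),\qquad c(\chi)=\prod_x\frac{1-q^{-1}x^{-1}}{1-x^{-1}},
\]
where $x$ runs over the coroot values $\alpha/\beta,\ \beta,\ \alpha,\ \alpha\beta$.
\begin{itemize}
\item The trivial term is $\alpha^{\ell+2m}\beta^{\ell+m}\gamma^{\ell+2m}=\gamma^\ell\alpha^{\ell+m}\beta^{\ell}=\gamma^\ell B_1$, with $c(\chi)=A_1$.
\item The Weyl group is generated by $(\alpha,\beta,\gamma)\mapsto(\beta,\alpha,\gamma)$ and $(\alpha,\beta,\gamma)\mapsto(\alpha,\beta^{-1},\beta\gamma)$. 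It produces the other seven rows exactly; for instance, $(\alpha,\beta^{-1},\beta\gamma)$ gives $\gamma^\ell\alpha^{\ell+m}=\gamma^\ell B_3$ with $c=A_3$.
\end{itemize}

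With this correction, the rest of your plan goes through: the computation of $Q(q^{-1})$, the $L_i,M_i$ check, and the passage to non-generic parameters. For that last step, note that $\Phi_0(h(\ell,m))$ is a Laurent polynomial, not a polynomial, in the Satake parameters.
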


(Note that the above is slightly different from the statement in \cite{DPSS20}. The simplification $\alpha\beta\gamma^2=1$ has been applied, and $\gamma^\ell$ has been factored out.)

The significance of the quantities $L_i$, $M_i$ in table~\eqref{GSp4macdonaldeq2} is that
\begin{equation}\label{BiLiMieq}
 B_i(\ell,m)=L_i^\ell M_i^m.
\end{equation}
Let $K=\GSp_4(\OF)$. The following is a special case of Proposition~3.2 of~\cite{KnightlyLi2019}.
\begin{lemma}\label{KLlemma}
 Let $a\in F^\times$ and $x,y,z\in F$. Let
 \begin{equation}\label{KLlemmaeq1}
  g=\begin{bsmallmatrix}1\\&a\\&&a^{-1}\\&&&1\end{bsmallmatrix}\begin{bsmallmatrix}1&&y&z\\&1&x&y\\&&1\\&&&1\end{bsmallmatrix}.
 \end{equation}
 Define $\ell_1,\ell_2\in\Z$ by
 \begin{align}
  \label{KLlemmaeq2}\ell_1&=\min\{0,v(a),-v(a),v(ax),v(y),v(ay),v(z)\},\\
  \label{KLlemmaeq3}\ell_1+\ell_2&=\min\{0,v(a),-v(a),v(ax),v(y),v(ay),v(z)+v(a),v(z)-v(a),v(a)+v(y^2-xz)\}.
 \end{align}
 Then $\ell_1\leq\ell_2\leq0$ and
 \begin{equation}\label{KLlemmaeq4}
  g\in K\begin{bsmallmatrix}\varpi^{\ell_1}\\&\varpi^{\ell_2}\\&&\varpi^{-\ell_2}\\&&&\varpi^{-\ell_1}\end{bsmallmatrix}K.
 \end{equation}
\end{lemma}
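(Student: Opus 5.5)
The plan is to use the elementary-divisor characterization of the Cartan decomposition, rather than reproducing the general argument of \cite[Proposition~3.2]{KnightlyLi2019}. Since $g\in\Sp_4(F)$, the Cartan decomposition for $\Sp_4$ (equivalently for $\GSp_4$ with trivial similitude) puts $g$ in $K\,\diag(\varpi^{\ell_1},\varpi^{\ell_2},\varpi^{-\ell_2},\varpi^{-\ell_1})\,K$ for a unique pair of integers $\ell_1\le\ell_2\le0$. The first step is to recall that for any $h\in\GL_4(F)$ and $k\le 4$, the fractional ideal generated by all $k\times k$ minors of $h$ is invariant under left and right multiplication by $\GL_4(\OF)\supset K$. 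This follows from the Cauchy--Binet formula, which shows that the minors of $k_1hk_2$ are $\OF$-linear combinations of the minors of $h$, applied in both directions.

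For the diagonal representative $D=\diag(\varpi^{\ell_1},\varpi^{\ell_2},\varpi^{-\ell_2},\varpi^{-\ell_1})$ with $\ell_1\le\ell_2\le 0$, the entry of smallest valuation is $\varpi^{\ell_1}$. The $2\times2$ minor of smallest valuation is $\varpi^{\ell_1+\ell_2}$, because $-\ell_2,-\ell_1\ge0\ge\ell_2$. Hence, by the invariance above, for $g\in KDK$:
\begin{itemize}
\item $\ell_1$ equals the minimum valuation of the entries of $g$;
\item $\ell_1+\ell_2$ equals the minimum valuation of the $2\times2$ minors of $g$.
\end{itemize}
The inequality $\ell_1\le\ell_2\le0$ is then automatic from the Cartan decomposition. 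Alternatively, it can be read off directly, since $g$ has an entry $1$, and the minor from rows $\{1,4\}$ and columns $\{1,4\}$ is $1$.

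Next compute $g$ explicitly:
\[
g=\begin{bsmallmatrix}1&0&y&z\\0&a&ax&ay\\0&0&a^{-1}&0\\0&0&0&1\end{bsmallmatrix}.
\]
Its nonzero entries are $1,a,a^{-1},ax,y,ay,z$, which gives \eqref{KLlemmaeq2}. Then list the nonzero $2\times2$ minors row pair by row pair:
\begin{itemize}
\item rows $\{1,2\}$: $a,\ ax,\ ay,\ -ay,\ -az,\ a(y^2-xz)$;
\item rows $\{1,3\}$: $a^{-1},\ -za^{-1}$;
\item rows $\{1,4\}$: $1,\ y$;
\item rows $\{2,3\}$: $1,\ -y$;
\item rows $\{2,4\}$: $a,\ ax$;
\item rows $\{3,4\}$: $a^{-1}$.
\end{itemize}
Taking valuations gives exactly the set in \eqref{KLlemmaeq3}. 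Note that $v(z)+v(a)$ comes from $az$ and $v(z)-v(a)$ from $z/a$.

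The only step needing care is the minor bookkeeping: no minor may be missed, since a missed minor with smaller valuation would change the minimum. I would double-check this by organizing the minors by column pairs as well. A secondary point is to justify that the Cartan representative of an element of $\Sp_4$ can be taken in this symplectic diagonal form with $\ell_1\le\ell_2\le0$, which I would cite as the standard Cartan decomposition for $\GSp_4$, as in \cite{KnightlyLi2019}.
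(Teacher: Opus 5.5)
Your proof is correct and is essentially the paper's route: the paper simply quotes \cite[Proposition~3.2]{KnightlyLi2019}, which is this same entries/$2\times2$-minors characterization of the Cartan double coset, and your list of entries and minors of the explicit matrix reproduces \eqref{KLlemmaeq2} and \eqref{KLlemmaeq3} exactly; deriving the criterion yourself from Cauchy--Binet plus existence of the Cartan decomposition also gives $\ell_1\le\ell_2\le0$ for free, which is what the paper's subsequent remark addresses. One small correction: your ``alternatively'' aside only yields $\ell_1\le0$ and $\ell_1+\ell_2\le0$, not the full chain; a direct check would instead use that every $2\times2$ minor has valuation $\ge2\ell_1$ and that $\min\{v(z)+v(a),v(z)-v(a)\}\le v(z)$, but your main argument does not need this.
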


\begin{remark}The inequality $\ell_1\leq\ell_2$ is not stated explicitly in Proposition~3.2 of~\cite{KnightlyLi2019}. In fact, this proposition is formulated in such a way that $\ell_1\leq\ell_2$ is a hypothesis. While not obvious, it can be checked directly that the definitions \eqref{KLlemmaeq2}, \eqref{KLlemmaeq3} imply $\ell_1\leq\ell_2$.\end{remark}

We rewrite the double coset in \eqref{KLlemmaeq4} as
\begin{equation}\label{KLlemmaeq4b}
 K\begin{bsmallmatrix}\varpi^{-\ell_2}\\&\varpi^{-\ell_1}\\&&\varpi^{\ell_1}\\&&&\varpi^{\ell_2}\end{bsmallmatrix}K=\varpi^{\ell_1}K\begin{bsmallmatrix}\varpi^{-\ell_1-\ell_2}\\&\varpi^{-2\ell_1}\\&&1\\&&&\varpi^{\ell_2-\ell_1}\end{bsmallmatrix}K
\end{equation}
Hence, instead of \eqref{KLlemmaeq4}, we can say that
\begin{equation}\label{KLlemmaeq4c}
 g\in Z(F)Kh(\ell,m)K\qquad\text{with }\ell=-2\ell_2,\:m=\ell_2-\ell_1.
\end{equation}
Above, $Z(F)$ denotes the center of $\GSp_4(F)$.
\subsection{Calculation of \texorpdfstring{$I_0$}{}}
In this section we will calculate
\begin{equation}\label{I0eq1}
 I_0=\frac1{1-q^{-1}}\int\limits_{F^\times}\int\limits_F\int\limits_F\eta(a)|a|^{3/2}\,\Phi_0(\begin{bsmallmatrix}1\\&a\\&&a^{-1}\\&&&1\end{bsmallmatrix}\begin{bsmallmatrix}1&&y\\&1&x&y\\&&1\\&&&1\end{bsmallmatrix})\,dx\,dy\,d^\times a;
\end{equation}
see \eqref{unrameq10}. We will use the following special case of Lemma~\ref{KLlemma}.
\begin{lemma}\label{KLlemma2}
 Let $a\in F^\times$ and $x,y\in F$. Let
 \begin{equation}\label{KLlemma2eq1}
  g=\begin{bsmallmatrix}1\\&a\\&&a^{-1}\\&&&1\end{bsmallmatrix}\begin{bsmallmatrix}1&&y\\&1&x&y\\&&1\\&&&1\end{bsmallmatrix}.
 \end{equation}
 Define $\ell_1,\ell_2\in\Z$ by
 \begin{align}
  \label{KLlemma2eq2}\ell_1&=\min\{0,v(a),-v(a),v(ax),v(y),v(ay)\},\\
  \label{KLlemma2eq3}\ell_1+\ell_2&=\min\{0,v(a),-v(a),v(ax),v(y),v(ay),v(a)+v(y^2)\}.
 \end{align}
 Then $\ell_1\leq\ell_2\leq0$ and
 \begin{equation}\label{KLlemma2eq4}
  g\in Z(F)Kh(\ell,m)K\qquad\text{with }\ell=-2\ell_2,\:m=\ell_2-\ell_1.
 \end{equation}
\end{lemma}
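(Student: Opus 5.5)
This is the special case $z=0$ of Lemma~\ref{KLlemma}, so we plan to specialize that lemma and then convert its conclusion into the form \eqref{KLlemma2eq4} via the rewriting \eqref{KLlemmaeq4b}. The argument has three steps.

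First, apply Lemma~\ref{KLlemma} to $g$ as in \eqref{KLlemmaeq1} with $z=0$. Then $v(z)=+\infty$, so every term in the minima \eqref{KLlemmaeq2} and \eqref{KLlemmaeq3} that involves $v(z)$ drops out:
\begin{itemize}
\item in \eqref{KLlemmaeq2}, the term $v(z)$ disappears;
\item in \eqref{KLlemmaeq3}, the terms $v(z)\pm v(a)$ disappear;
\item also in \eqref{KLlemmaeq3}, $v(a)+v(y^2-xz)$ becomes $v(a)+v(y^2)$, interpreted as $+\infty$ when $y=0$.
\end{itemize}
These are exactly the definitions \eqref{KLlemma2eq2} and \eqref{KLlemma2eq3}. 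Hence Lemma~\ref{KLlemma} gives $\ell_1\le\ell_2\le0$ and
\[
g\in K\,\diag(\varpi^{\ell_1},\varpi^{\ell_2},\varpi^{-\ell_2},\varpi^{-\ell_1})\,K.
\]

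Second, convert this double coset to the form involving $h(\ell,m)$. Conjugating by the Weyl element in $K$ that swaps the first and second coordinates (and correspondingly the third and fourth) shows that the double coset equals the one in \eqref{KLlemmaeq4b}. Factoring out the central element $\varpi^{\ell_1}$ gives
\[
K\,\diag(\varpi^{-\ell_1-\ell_2},\varpi^{-2\ell_1},1,\varpi^{\ell_2-\ell_1})\,K.
\]

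Third, compare this with the definition \eqref{hlmdefeq}. Setting $\ell=-2\ell_2$ and $m=\ell_2-\ell_1$, we check
\[
\ell+m=-\ell_1-\ell_2,\qquad \ell+2m=-2\ell_1,\qquad m=\ell_2-\ell_1,
\]
so the diagonal element above is precisely $h(\ell,m)$. This yields \eqref{KLlemma2eq4}, i.e.\ \eqref{KLlemmaeq4c} in this special case. Moreover, $\ell,m\ge0$ follows from $\ell_1\le\ell_2\le0$, so Proposition~\ref{GSp4macdonald} applies to $h(\ell,m)$.

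The only delicate point is the convention $v(0)=+\infty$ (when $y=0$ or $x=0$), together with the inequality $\ell_1\le\ell_2$. That inequality is inherited directly from Lemma~\ref{KLlemma} and its accompanying remark, so no separate argument is needed here.
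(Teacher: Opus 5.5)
Your proposal is correct and follows the paper, which obtains Lemma~\ref{KLlemma2} directly as the case $z=0$ of Lemma~\ref{KLlemma} and then uses the rewriting \eqref{KLlemmaeq4b}--\eqref{KLlemmaeq4c}. One small slip: conjugating by the permutation $1\leftrightarrow2$, $3\leftrightarrow4$ only produces $\diag(\varpi^{\ell_2},\varpi^{\ell_1},\varpi^{-\ell_1},\varpi^{-\ell_2})$, which is the inverse of the element in \eqref{KLlemmaeq4b}; you should instead use the permutation matrix of $(1\,3)(2\,4)$, which lies in $K$ with multiplier $-1$, i.e.\ you must also apply the inversion coming from the long Weyl element.
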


We now divide the domain of integration into ten regions, according to the following table. For each region the quantities $\ell$ and $m$ are given according to Lemma~\ref{KLlemma2}:
\begin{equation}\label{I0eq3}
 \begin{array}{ccccccc}
  \toprule
   &&&&&\ell&m\\
  \midrule
   1&\scriptstyle{v(a)\geq0}&\scriptstyle{v(x)\geq-2v(a)}&\scriptstyle{v(y)\geq-v(a)}&&0&v(a)\\
  \midrule
   2&&&\scriptstyle{v(y)<-v(a)}&&-2v(a)-2v(y)&v(a)\\
  \midrule
   3&&\scriptstyle{v(x)<-2v(a)}&\scriptstyle{v(y)\geq v(a)+v(x)}&\scriptstyle{2v(y)\geq v(x)}&0&-v(a)-v(x)\\
  \midrule
   4&&&&\scriptstyle{2v(y)<v(x)}&2v(x)-4v(y)&2v(y)-2v(x)-v(a)\\
  \midrule
   5&&&\scriptstyle{v(y)<v(a)+v(x)}&&-2v(a)-2v(y)&v(a)\\
  \midrule
   6&\scriptstyle{v(a)<0}&\scriptstyle{v(x)\geq0}&\scriptstyle{v(y)\geq0}&&0&-v(a)\\
  \midrule
   7&&&\scriptstyle{v(y)<0}&&-2v(y)&-v(a)\\
  \midrule
   8&&\scriptstyle{v(x)<0}&\scriptstyle{v(y)\geq v(x)}&\scriptstyle{2v(y)\geq v(x)}&0&-v(a)-v(x)\\
  \midrule
   9&&&&\scriptstyle{2v(y)<v(x)}&2v(x)-4v(y)&2v(y)-2v(x)-v(a)\\
  \midrule
   10&&&\scriptstyle{v(y)<v(x)}&&-2v(y)&-v(a)\\
  \bottomrule
 \end{array}
\end{equation}
Now we calculate all ten pieces. We denote by $I_{0,j}$ the piece of $I_0$ corresponding to the $j$-th case. The computations are straightforward. We give full details only for Cases 1 and $2$; the remaining cases are similar.

\vspace{2ex}
\noindent\textbf{Case 1:}
\begin{align}\label{I0eqq1}
(1-q^{-1}) I_{0,1}&=\int\limits_{\substack{F^\times\\v(a)\geq0}}\:\int\limits_{\substack{F\\v(y)\geq-v(a)}}\:\int\limits_{\substack{F\\v(x)\geq-2v(a)}}\eta(a)|a|^{3/2}\,\Phi_0(h(0,v(a)))\,dx\,dy\,d^\times a\nonumber\\
 &=\int\limits_{\substack{F^\times\\v(a)\geq0}}\eta(a)|a|^{-3/2}\,\Phi_0(h(0,v(a)))\,d^\times a\nonumber\\
 &=\sum_{m=0}^\infty\:\int\limits_{\varpi^m\OF^\times}\eta(a)|a|^{-3/2}\,\Phi_0(h(0,m))\,d^\times a\nonumber\\
 &=(1-q^{-1})\sum_{m=0}^\infty\delta^mq^{3m/2}\,\Phi_0(h(0,m))\nonumber\\
 &\stackrel{\eqref{GSp4macdonaldeq1}}{=}\frac{1-q^{-1}}{1+2q^{-1}+2q^{-2}+2q^{-3}+q^{-4}}\sum_{m=0}^\infty\delta^mq^{-m/2}\,\sum_{i=1}^8 A_iB_i(0,m)\nonumber\\
 &\stackrel{\eqref{BiLiMieq}}{=}\frac{1-q^{-1}}{1+2q^{-1}+2q^{-2}+2q^{-3}+q^{-4}}\sum_{m=0}^\infty\delta^mq^{-m/2}\,\sum_{i=1}^8 A_iM_i^m\nonumber\\
 &=\frac{1-q^{-1}}{1+2q^{-1}+2q^{-2}+2q^{-3}+q^{-4}}\,\sum_{i=1}^8 A_i\frac1{1-\delta M_iq^{-1/2}}.
\end{align}

\vspace{2ex}
\noindent\textbf{Case 2:}
\begin{align}\label{I0eqq2}
(1-q^{-1})  I_{0,2}&=\int\limits_{\substack{F^\times\\v(a)\geq0}}\:\int\limits_{\substack{F\\v(y)<-v(a)}}\:\int\limits_{\substack{F\\v(x)\geq-2v(a)}}\eta(a)|a|^{3/2}\,\Phi_0(h(-2v(a)-2v(y),v(a)))\,dx\,dy\,d^\times a\nonumber\\
 &=\int\limits_{\substack{F^\times\\v(a)\geq0}}\:\int\limits_{\substack{F\\v(y)<-v(a)}}\:\eta(a)|a|^{-1/2}\,\Phi_0(h(-2v(a)-2v(y),v(a)))\,dy\,d^\times a\nonumber\\
 &=\sum_{m=0}^\infty\:\sum_{j=m+1}^\infty\:\int\limits_{\varpi^m\OF^\times}\:\int\limits_{\varpi^{-j}\OF^\times}\:\eta(a)|a|^{-1/2}\,|y|\,\Phi_0(h(-2v(a)-2v(y),v(a)))\,d^\times y\,d^\times a\nonumber\\
 &=(1-q^{-1})^2\,\sum_{m=0}^\infty\:\sum_{j=m+1}^\infty\:\:\delta^m\,q^{j+m/2}\,\Phi_0(h(2j-2m,m))\nonumber\\
 &=(1-q^{-1})^2\,\sum_{m=0}^\infty\:\sum_{j=1}^\infty\:\:\delta^m\,q^{j+3m/2}\,\Phi_0(h(2j,m))\nonumber\\
 &\stackrel{\eqref{GSp4macdonaldeq1}}{=}\frac{(1-q^{-1})^2}{1+2q^{-1}+2q^{-2}+2q^{-3}+q^{-4}}\,\sum_{m=0}^\infty\:\sum_{j=1}^\infty\:\delta^m\,q^{-2j-m/2}\,\gamma^{2j}\sum_{i=1}^8 A_iB_i(2j,m)\nonumber\\
 &\stackrel{\eqref{BiLiMieq}}{=}\frac{(1-q^{-1})^2}{1+2q^{-1}+2q^{-2}+2q^{-3}+q^{-4}}\,\sum_{i=1}^8 A_i\sum_{m=0}^\infty\:\sum_{j=1}^\infty\:\delta^m\,q^{-2j-m/2}\,\gamma^{2j}L_i^{2j}M_i^m\nonumber\\
 &=\frac{(1-q^{-1})^2\,q^{-2}\,\gamma^2}{1+2q^{-1}+2q^{-2}+2q^{-3}+q^{-4}}\,\sum_{i=1}^8 A_i\frac{L_i^2}{(1-\delta M_iq^{-1/2})(1-\gamma^2L_i^2q^{-2})}.
\end{align}

\vspace{2ex}
\noindent\textbf{Case 3:} In this case we similarly get
\[(1-q^{-1})I_{0,3}=\frac{(1-q^{-1})^2\,q^{-1}}{1+2q^{-1}+2q^{-2}+2q^{-3}+q^{-4}}\,\sum_{i=1}^8 A_i\frac{M_i(M_i+1)}{(1-\delta M_iq^{-1/2})(1-M_i^2q^{-1})}.
\]

\vspace{2ex}
\noindent\textbf{Case 4:} In this case we get
\[(1-q^{-1})I_{0,4}=\frac{(1-q^{-1})^3\,q^{-1}\,\gamma^2}{1+2q^{-1}+2q^{-2}+2q^{-3}+q^{-4}}\sum_{i=1}^8 A_i\frac{L_i^2}{(1-\delta M_iq^{-1/2})(1-\gamma^2L_i^2q^{-1})(1-M_i^2q^{-1})}.\]

\vspace{2ex}
\noindent\textbf{Case 5:} In this case we get
\[(1-q^{-1})
 I_{0,5}=\frac{(1-q^{-1})^3\,q^{-3}\,\gamma^4}{1+2q^{-1}+2q^{-2}+2q^{-3}+q^{-4}}\sum_{i=1}^8 A_i\frac{L_i^4}{(1-\delta M_iq^{-1/2})(1-\gamma^2L_i^2q^{-1})(1-\gamma^2L_i^2q^{-2})}.
\]

\vspace{2ex}
\noindent\textbf{Case 6:} In this case we get
\[(1-q^{-1})
 I_{0,6}=\frac{(1-q^{-1})q^{-1/2}\,\delta^{-1}}{1+2q^{-1}+2q^{-2}+2q^{-3}+q^{-4}}\sum_{i=1}^8 A_i\frac{M_i}{1-\delta^{-1}M_iq^{-1/2}}.\]

\vspace{2ex}
\noindent\textbf{Case 7:} In this case we get
\[(1-q^{-1})
 I_{0,7}=\frac{(1-q^{-1})^2\,q^{-5/2}\,\gamma^2\,\delta^{-1}}{1+2q^{-1}+2q^{-2}+2q^{-3}+q^{-4}}\sum_{i=1}^8 A_i\:\frac{L_i^2M_i}{(1-\delta^{-1}M_iq^{-1/2})(1-\gamma^2L_i^2q^{-2})}.
\]

\vspace{2ex}
\noindent\textbf{Case 8:} In this case we get
\[(1-q^{-1})
 I_{0,8}=\frac{(1-q^{-1})^2\,q^{-3/2}\,\delta^{-1}}{1+2q^{-1}+2q^{-2}+2q^{-3}+q^{-4}}\sum_{i=1}^8 A_i\frac{M_i^2(M_i+1)}{(1-\delta^{-1}M_iq^{-1/2})(1-M_i^2q^{-1})}.
\]

\vspace{2ex}
\noindent\textbf{Case 9:} In this case we get
\begin{align*}
(1-q^{-1})
 I_{0,9}&=\frac{(1-q^{-1})^3\,q^{-3/2}\,\gamma^2\,\delta^{-1}}{1+2q^{-1}+2q^{-2}+2q^{-3}+q^{-4}}\\
 &\qquad\sum_{i=1}^8 A_i\frac{L_i^2M_i}{(1-\delta^{-1}M_iq^{-1/2})(1-\gamma^2L_i^2q^{-1})(1-M_i^2q^{-1})}.
\end{align*}

\vspace{2ex}
\noindent\textbf{Case 10:} In this case we get\begin{align*}(1-q^{-1})
 I_{0,10}
 &=\frac{(1-q^{-1})^3\,q^{-7/2}\,\delta^{-1}\,\gamma^4}{1+2q^{-1}+2q^{-2}+2q^{-3}+q^{-4}} \\&\sum_{i=1}^8 A_i\frac{M_iL_i^4}{(1-\delta^{-1}M_iq^{-1/2})(1-\gamma^2L_i^2q^{-1})(1-\gamma^2L_i^2q^{-2})}.
\end{align*}
\subsection{Calculation of \texorpdfstring{$I_1$}{}}
In this section we will calculate
\begin{equation}\label{I1eq1}
 I_1=q\int\limits_{F^\times}\int\limits_F\int\limits_F\eta(a)|a|^{3/2}\,\Phi_0(\begin{bsmallmatrix}1\\&a\\&&a^{-1}\\&&&1\end{bsmallmatrix}\begin{bsmallmatrix}1&&y&\varpi^{-1}\\&1&x&y\\&&1\\&&&1\end{bsmallmatrix})\,dx\,dy\,d^\times a.
\end{equation} We will use the following special case of Lemma~\ref{KLlemma}.
\begin{lemma}\label{KLlemma3}
 Let $a\in F^\times$ and $x,y\in F$. Let
 \begin{equation}\label{KLlemma3eq1}
  g=\begin{bsmallmatrix}1\\&a\\&&a^{-1}\\&&&1\end{bsmallmatrix}\begin{bsmallmatrix}1&&y&\varpi^{-1}\\&1&x&y\\&&1\\&&&1\end{bsmallmatrix}.
 \end{equation}
 Define $\ell_1,\ell_2\in\Z$ by
 \begin{align}
  \label{KLlemma3eq2}\ell_1&=\min\{-1,v(a),-v(a),v(ax),v(y),v(ay)\},\\
  \label{KLlemma3eq3}\ell_1+\ell_2&=\min\{0,v(a)-1,-v(a)-1,v(ax),v(y),v(ay),v(a)+v(y^2-x\varpi^{-1})\}.
 \end{align}
 Then $\ell_1\leq\ell_2\leq0$ and
 \begin{equation}\label{KLlemma3eq4}
  g\in Z(F)Kh(\ell,m)K\qquad\text{with }\ell=-2\ell_2,\:m=\ell_2-\ell_1.
 \end{equation}
\end{lemma}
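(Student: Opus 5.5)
The plan is to specialize the general Knightly--Li criterion, Lemma~\ref{KLlemma}, to $z=\varpi^{-1}$, exactly as Lemma~\ref{KLlemma2} specializes it to $z=0$. We take $g$ as in \eqref{KLlemma3eq1}, so in the notation of Lemma~\ref{KLlemma} we have $z=\varpi^{-1}$ and $v(z)=-1$. We then substitute into \eqref{KLlemmaeq2} and \eqref{KLlemmaeq3} and simplify.

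First, \eqref{KLlemmaeq2} reads
\[
\ell_1=\min\{0,v(a),-v(a),v(ax),v(y),v(ay),-1\}.
\]
The entry $0$ is redundant next to $-1$, so we may drop it, which gives exactly \eqref{KLlemma3eq2}. Second, \eqref{KLlemmaeq3} reads
\[
\ell_1+\ell_2=\min\{0,v(a),-v(a),v(ax),v(y),v(ay),v(a)-1,-v(a)-1,v(a)+v(y^2-x\varpi^{-1})\}.
\]
The terms $v(a)-1$ and $-v(a)-1$ are strictly smaller than $v(a)$ and $-v(a)$ respectively. Hence $v(a)$ and $-v(a)$ can be deleted from the minimum. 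The surviving list is precisely the one in \eqref{KLlemma3eq3}, with $y^2-xz$ becoming $y^2-x\varpi^{-1}$.

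Lemma~\ref{KLlemma} then gives $\ell_1\le\ell_2\le0$ and the double coset \eqref{KLlemmaeq4}. The final step is the central rewriting \eqref{KLlemmaeq4b}, which converts \eqref{KLlemmaeq4} into \eqref{KLlemmaeq4c}, namely $g\in Z(F)Kh(\ell,m)K$ with $\ell=-2\ell_2$ and $m=\ell_2-\ell_1$. This is the assertion \eqref{KLlemma3eq4}.

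There is no real obstacle: the only content is the bookkeeping of which entries of each minimum are dominated. The one point needing care is that the inequality $\ell_1\le\ell_2$ is inherited from Lemma~\ref{KLlemma}, and hence from the Remark following it, rather than reproved here.
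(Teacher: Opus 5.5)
Your proposal is correct and is the paper's own argument: the paper gives no separate proof, presenting this lemma simply as the special case $z=\varpi^{-1}$ of Lemma~\ref{KLlemma}. Your dropping of the dominated entries ($0$ in the first minimum, $\pm v(a)$ in the second) and the central rewriting \eqref{KLlemmaeq4b}--\eqref{KLlemmaeq4c} are exactly the steps that specialization requires.
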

We now divide the domain of integration into 15 regions, according to the following table. For each region the quantities $\ell$ and $m$ are given according to Lemma~\ref{KLlemma3}:
\begin{equation}\label{I1eq3}
 \begin{array}{cccccccc}
  \toprule
   &&&&&&\ell&m\\
  \toprule
   1&\scriptstyle{v(a)\geq1}&\scriptstyle{v(x)\geq-2v(a)}&\scriptstyle{v(y)\geq-v(a)}&&&\scriptstyle{2}&\scriptstyle{v(a)-1}\\
  \midrule
   2&&&\scriptstyle{v(y)<-v(a)}&&&\scriptstyle{-2v(a)-2v(y)}&\scriptstyle{v(a)}\\
  \midrule
   3a&&\scriptstyle{v(x)<-2v(a)}&\scriptstyle{v(y)\geq v(a)+v(x)}&\scriptstyle{2v(y)\geq v(x)}&&\scriptstyle{2}&\scriptstyle{-v(a)-v(x)-1}\\
  \midrule
   3b&&&&\scriptstyle{2v(y)=v(x)-1}&\scriptstyle{\frac{y^2\varpi}x-1\in\OF^\times}&\scriptstyle{2}&\scriptstyle{-v(a)-v(x)-1}\\
  \midrule
   3c&&&&&\scriptstyle{\frac{y^2\varpi}x-1\in\p}&\scriptstyle{0}&\scriptstyle{-v(a)-v(x)}\\
  \midrule
   4&&&&\scriptstyle{2v(y)\leq v(x)-2}&&\scriptstyle{2v(x)-4v(y)}&\scriptstyle{2v(y)-2v(x)-v(a)}\\
  \midrule
   5&&&\scriptstyle{v(y)<v(a)+v(x)}&&&\scriptstyle{-2v(a)-2v(y)}&\scriptstyle{v(a)}\\
  \midrule
   6a&\scriptstyle{v(a)=0}&\scriptstyle{v(x)\geq0}&\scriptstyle{v(y)\geq0}&&&\scriptstyle{0}&\scriptstyle{1}\\
  \midrule
   6b&\scriptstyle{v(a)\leq-1}&\scriptstyle{v(x)\geq0}&\scriptstyle{v(y)\geq0}&&&\scriptstyle{2}&\scriptstyle{-v(a)-1}\\
  \midrule
   7&\scriptstyle{v(a)\leq0}&\scriptstyle{v(x)\geq0}&\scriptstyle{v(y)<0}&&&\scriptstyle{-2v(y)}&\scriptstyle{-v(a)}\\
  \midrule
   8a&&\scriptstyle{v(x)<0}&\scriptstyle{v(y)\geq v(x)}&\scriptstyle{2v(y)\geq v(x)}&&\scriptstyle{2}&\scriptstyle{-v(a)-v(x)-1}\\
  \midrule
   8b&&&&\scriptstyle{2v(y)=v(x)-1}&\scriptstyle{\frac{y^2\varpi}x-1\in\OF^\times}&\scriptstyle{2}&\scriptstyle{-v(a)-v(x)-1}\\
  \midrule
   8c&&&&&\scriptstyle{\frac{y^2\varpi}x-1\in\p}&\scriptstyle{0}&\scriptstyle{-v(a)-v(x)}\\
  \midrule
   9&&&&\scriptstyle{2v(y)\leq v(x)-2}&&\scriptstyle{2v(x)-4v(y)}&\scriptstyle{2v(y)-2v(x)-v(a)}\\
  \midrule
   10&&&\scriptstyle{v(y)<v(x)}&&&\scriptstyle{-2v(y)}&\scriptstyle{-v(a)}\\
  \bottomrule
 \end{array}
\end{equation}
Now we calculate all fifteen pieces. We denote by $I_{1,j}$ the piece of $I_1$ corresponding to the $j$-th case. We only show the details for $I_{1,1}$; the other cases are similar.

\vspace{2ex}
\noindent\textbf{Case 1:}
\begin{align}\label{I1eqq1}
 I_{1,1}&=q\int\limits_{\substack{F^\times\\v(a)\geq1}}\:\int\limits_{\substack{F\\v(y)\geq-v(a)}}\:\int\limits_{\substack{F\\v(x)\geq-2v(a)}}\eta(a)|a|^{3/2}\,\Phi_0(h(2,v(a)-1))\,dx\,dy\,d^\times a\nonumber\\
 &=q\int\limits_{\substack{F^\times\\v(a)\geq1}}\eta(a)|a|^{-3/2}\,\Phi_0(h(2,v(a)-1))\,d^\times a\nonumber\\
 &=q\sum_{m=1}^\infty\:\int\limits_{\varpi^m\OF^\times}\eta(a)|a|^{-3/2}\,\Phi_0(h(2,v(a)-1))\,d^\times a\nonumber\\
 &=q\sum_{m=1}^\infty\:\int\limits_{\varpi^m\OF^\times}\delta^mq^{3m/2}\,\Phi_0(h(2,m-1))\,d^\times a\nonumber\\
 &=q(1-q^{-1})\sum_{m=1}^\infty\delta^mq^{3m/2}\,\Phi_0(h(2,m-1))\nonumber\\
 &\stackrel{\eqref{GSp4macdonaldeq1}}{=}\frac{(1-q^{-1})}{1+2q^{-1}+2q^{-2}+2q^{-3}+q^{-4}}\sum_{m=1}^\infty\delta^mq^{-m/2}\,\gamma^2\sum_{i=1}^8 A_iB_i(2,m-1)\nonumber\\
 &=\frac{q(1-q^{-1})q^{-1}\,\gamma^2}{1+2q^{-1}+2q^{-2}+2q^{-3}+q^{-4}}\sum_{i=1}^8 A_i\sum_{m=1}^\infty(\delta q^{-1/2})^m\,B_i(2,m-1)\nonumber\\
 &\stackrel{\eqref{BiLiMieq}}{=}\frac{(1-q^{-1})\,\gamma^2}{1+2q^{-1}+2q^{-2}+2q^{-3}+q^{-4}}\sum_{i=1}^8 A_i\sum_{m=1}^\infty(\delta q^{-1/2})^m\,L_i^2M_i^{m-1}\nonumber\\
 &=\frac{q(1-q^{-1})q^{-3/2}\,\gamma^2\delta}{1+2q^{-1}+2q^{-2}+2q^{-3}+q^{-4}}\sum_{i=1}^8 A_i\frac{L_i^2}{1-\delta M_iq^{-1/2}}.
\end{align}

\vspace{2ex}
\noindent\textbf{Case 2:} In this case we get
\[I_{1,2}=\frac{q(1-q^{-1})^2q^{-5/2}\gamma^2\delta}{1+2q^{-1}+2q^{-2}+2q^{-3}+q^{-4}}\sum_{i=1}^8 A_i\frac{L_i^2M_i}{(1-\delta M_iq^{-1/2})(1-\gamma^2L_i^2q^{-2})}.\]

\vspace{2ex}
\noindent\textbf{Case 3a:} In this case we get
\[I_{1,3a}=\frac{q(1-q^{-1})^2\,q^{-5/2}\,\gamma^2\delta}{1+2q^{-1}+2q^{-2}+2q^{-3}+q^{-4}}\sum_{i=1}^8 A_i\frac{L_i^2M_i(M_i+1)}{(1-\delta M_iq^{-1/2})(1-M_i^2q^{-1})}.\]

\vspace{2ex}
\noindent\textbf{Case 3b:} In this case we get
\[I_{1,3b}=\frac{q(1-q^{-1})^2(1-2q^{-1})q^{-3/2}\gamma^2\delta}{1+2q^{-1}+2q^{-2}+2q^{-3}+q^{-4}}\sum_{i=1}^8 A_i\frac{L_i^2M_i}{(1-\delta M_i q^{-1/2})(1-M_i^2q^{-1})}.
\]

\vspace{2ex}
\noindent\textbf{Case 3c:} In this case we get
\[I_{1,3c}=\frac{q(1-q^{-1})^2q^{-3/2}\delta}{1+2q^{-1}+2q^{-2}+2q^{-3}+q^{-4}}\sum_{i=1}^8 A_i\frac{M_i^2}{(1-\delta M_i q^{-1/2})(1-M_i^2q^{-1})}.
\]

\vspace{2ex}
\noindent\textbf{Case 4:} In this case we get
\[I_{1,4}=\frac{q(1-q^{-1})^3\,q^{-5/2}\,\gamma^4\,\delta}{1+2q^{-1}+2q^{-2}+2q^{-3}+q^{-4}}\sum_{i=1}^8 A_i\frac{L_i^4M_i}{(1-\delta M_iq^{-1/2})(1-\gamma^2L_i^2q^{-1})(1-M_i^2q^{-1})}.\]

\vspace{2ex}
\noindent\textbf{Case 5:} In this case we get
\[I_{1,5}=\frac{q(1-q^{-1})^3q^{-7/2}\gamma^4\delta}{1+2q^{-1}+2q^{-2}+2q^{-3}+q^{-4}}\sum_{i=1}^8 A_i\frac{L_i^4M_i}{(1-\delta M_iq^{-1/2})(1-\gamma^2L_i^2q^{-1})(1-\gamma^2L_i^2q^{-2})}.\]

\vspace{2ex}
\noindent\textbf{Case 6a:} In this case we get
 \[I_{1,6a}=\frac{q(1-q^{-1})\,q^{-2}}{1+2q^{-1}+2q^{-2}+2q^{-3}+q^{-4}}\,\sum_{i=1}^8 A_iM_i.\]

\vspace{2ex}
\noindent\textbf{Case 6b:} In this case we get
\[I_{1,6b}=\frac{q(1-q^{-1})q^{-3/2}\gamma^2\delta^{-1}}{1+2q^{-1}+2q^{-2}+2q^{-3}+q^{-4}}\sum_{i=1}^8 A_i\frac{L_i^2}{1-\delta^{-1}M_iq^{-1/2}}.
\]

\vspace{2ex}
\noindent\textbf{Case 7:} In this case we get
\[I_{1,7}=\frac{q(1-q^{-1})^2\,q^{-2}\gamma^2}{1+2q^{-1}+2q^{-2}+2q^{-3}+q^{-4}}\sum_{i=1}^8 A_i\frac{L_i^2}{(1-\delta^{-1}M_iq^{-1/2})(1-\gamma^2L_i^2q^{-2})}.
\]

\vspace{2ex}
\noindent\textbf{Case 8a:} In this case we get
\[I_{1,8a}=\frac{q(1-q^{-1})^2\,q^{-2}\,\gamma^2}{1+2q^{-1}+2q^{-2}+2q^{-3}+q^{-4}}\,\sum_{i=1}^8 A_i\frac{L_i^2(M_i+1)}{(1-\delta^{-1}M_iq^{-1/2})(1-M_i^2q^{-1})}.
\]

\vspace{2ex}
\noindent\textbf{Case 8b:} In this case we get
\[I_{1,8b}=\frac{q(1-q^{-1})^2(1-2q^{-1})q^{-1}\gamma^2}{1+2q^{-1}+2q^{-2}+2q^{-3}+q^{-4}}\sum_{i=1}^8 A_i\frac{L_i^2}{(1-\delta^{-1}M_iq^{-1/2})(1-M_i^2q^{-1})}.
\]

\vspace{2ex}
\noindent\textbf{Case 8c:} In this case we get
\[I_{1,8c}=\frac{q(1-q^{-1})^2\,q^{-1}}{1+2q^{-1}+2q^{-2}+2q^{-3}+q^{-4}}\sum_{i=1}^8 A_i\frac{M_i}{(1-\delta^{-1}M_iq^{-1/2})(1-M_i^2q^{-1})}.
\]

\vspace{2ex}
\noindent\textbf{Case 9:} In this case we get
\[I_{1,9}=\frac{(1-q^{-1})^3\,q^{-1}\,\gamma^4}{1+2q^{-1}+2q^{-2}+2q^{-3}+q^{-4}}\sum_{i=1}^8 A_i\frac{L_i^4}{(1-\delta^{-1}M_iq^{-1/2})(1-\gamma^2L_i^2q^{-1})(1-M_i^2q^{-1})}.
\]

\vspace{2ex}
\noindent\textbf{Case 10:} In this case we get
\[I_{1,10}=\frac{q(1-q^{-1})^3\,q^{-3}\,\gamma^4}{1+2q^{-1}+2q^{-2}+2q^{-3}+q^{-4}}\sum_{i=1}^8 A_i\frac{L_i^4}{(1-\delta^{-1}M_iq^{-1/2})(1-\gamma^2L_i^2q^{-1})(1-\gamma^2L_i^2q^{-2})}.
\]
\subsection{Calculation of \texorpdfstring{$I_2$}{}}
 In this section we will calculate
\begin{equation}\label{I2eq1}
 I_2=-\frac{q}{1-q^{-1}}\int\limits_{F^\times}\int\limits_F\int\limits_F\eta(a)|a|^{3/2}\,\Phi_0(\begin{bsmallmatrix}1\\&a\\&&a^{-1}\\&&&1\end{bsmallmatrix}\begin{bsmallmatrix}1&&y&\varpi^{-2}\\&1&x&y\\&&1\\&&&1\end{bsmallmatrix})\,dx\,dy\,d^\times a;
\end{equation}
see \eqref{unramlemma1eq3}. We will use the following special case of Lemma~\ref{KLlemma}.
\begin{lemma}\label{KLlemma4}
 Let $a\in F^\times$ and $x,y\in F$. Let
 \begin{equation}\label{KLlemma4eq1}
  g=\begin{bsmallmatrix}1\\&a\\&&a^{-1}\\&&&1\end{bsmallmatrix}\begin{bsmallmatrix}1&&y&\varpi^{-2}\\&1&x&y\\&&1\\&&&1\end{bsmallmatrix}.
 \end{equation}
 Define $\ell_1,\ell_2\in\Z$ by
 \begin{align}
  \label{KLlemma4eq2}\ell_1&=\min\{-2,v(a),-v(a),v(ax),v(y),v(ay)\},\\
  \label{KLlemma4eq3}\ell_1+\ell_2&=\min\{v(a)-2,-v(a)-2,v(ax),v(y),v(ay),v(a)+v(y^2-x\varpi^{-2})\}.
 \end{align}
 Then $\ell_1\leq\ell_2\leq0$ and
 \begin{equation}\label{KLlemma4eq4}
  g\in Z(F)Kh(\ell,m)K\qquad\text{with }\ell=-2\ell_2,\:m=\ell_2-\ell_1.
 \end{equation}
\end{lemma}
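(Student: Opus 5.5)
The plan is to deduce Lemma~\ref{KLlemma4} directly from Lemma~\ref{KLlemma}, specialized to $z=\varpi^{-2}$, so that $v(z)=-2$. Substituting into \eqref{KLlemmaeq2} gives
\[
\ell_1=\min\{0,v(a),-v(a),v(ax),v(y),v(ay),-2\}.
\]
Since $-2<0$, the entry $0$ is redundant, and this is exactly \eqref{KLlemma4eq2}. Similarly, \eqref{KLlemmaeq3} becomes
\[
\ell_1+\ell_2=\min\{0,v(a),-v(a),v(ax),v(y),v(ay),v(a)-2,-v(a)-2,v(a)+v(y^2-x\varpi^{-2})\}.
\]
The terms $v(a)$ and $-v(a)$ dominate $v(a)-2$ and $-v(a)-2$, so they can be dropped. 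For the term $0$, note that one of $v(a)-2$ and $-v(a)-2$ is at most $-2<0$. Hence $0$ is never the minimum and can be dropped as well, which yields \eqref{KLlemma4eq3}.

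The inequalities $\ell_1\le\ell_2\le0$ come directly from Lemma~\ref{KLlemma}. The double coset statement \eqref{KLlemma4eq4}, with $\ell=-2\ell_2$ and $m=\ell_2-\ell_1$, follows from \eqref{KLlemmaeq4} by the rewriting \eqref{KLlemmaeq4b}, exactly as in \eqref{KLlemmaeq4c}. Concretely, the matrix $\diag(\varpi^{\ell_1},\varpi^{\ell_2},\varpi^{-\ell_2},\varpi^{-\ell_1})$ is $K$-conjugate, via a Weyl element permuting the diagonal entries, to $\diag(\varpi^{-\ell_2},\varpi^{-\ell_1},\varpi^{\ell_1},\varpi^{\ell_2})$. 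This in turn equals $\varpi^{\ell_1}h(-2\ell_2,\ell_2-\ell_1)$, and the scalar factor lies in $Z(F)$.

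There is no real obstacle. The only care needed is the bookkeeping of which entries of the minimum are redundant once $v(z)=-2$. In particular, one should keep the list in \eqref{KLlemma4eq3} literally as stated and drop only the entries shown above to be dominated.
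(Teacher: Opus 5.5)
Your proposal is correct and follows the paper's route: the paper presents Lemma~\ref{KLlemma4} as the specialization of Lemma~\ref{KLlemma} to $z=\varpi^{-2}$, followed by the rewriting \eqref{KLlemmaeq4b}--\eqref{KLlemmaeq4c}. Your bookkeeping of which entries of the minima become redundant is accurate: the $0$ drops out of both lists, and $\pm v(a)$ drop out of the second.
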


We now divide the domain of integration into 25 regions, according to the following table. For each region the quantities  $\ell$ and $m$ are given by:

\begin{equation}\label{I2eq4}\renewcommand{\arraycolsep}{0.8ex}
 \begin{array}{cccccccc}
  \toprule
   &&&&&&\ell&m\\
  \toprule
   1a&\scriptstyle{v(a)\geq1}&\scriptstyle{v(x)\geq-2v(a)}&\scriptstyle{v(y)\geq-v(a)}&&\scriptstyle{v(a)\geq2}&\scriptstyle{4}&\scriptstyle{v(a)-2}\\
  \midrule
  1b&&&&&\scriptstyle{v(a)=1}&\scriptstyle{2}&\scriptstyle{1}\\
  \midrule
   2&&&\scriptstyle{v(y)<-v(a)}&&&\scriptstyle{-2v(y)-2v(a)}&\scriptstyle{v(a)}\\
    \midrule
   3a&&\scriptstyle{v(x)<-2v(a)}&\scriptstyle{v(y)\geq v(a)+v(x)}&\scriptstyle{2v(y)\geq v(x)-1}&&\scriptstyle{4}&\scriptstyle{-v(a)-v(x)-2}\\
  \midrule
   3b&&&&\scriptstyle{2v(y)=v(x)-2}&\scriptstyle{\frac{y^2\varpi^2}x-1\in\OF^\times}&\scriptstyle{4}&\scriptstyle{-v(a)-v(x)-2}\\
  \midrule
   3c&&&&&\scriptstyle{\frac{y^2\varpi^2}x-1\in\varpi\OF^\times}&\scriptstyle{2}&\scriptstyle{-v(a)-v(x)-1}\\
   \midrule
   3d&&&&&\scriptstyle{\frac{y^2\varpi^2}x-1\in\p^2}&\scriptstyle{0}&\scriptstyle{-v(a)-v(x)}\\
  \midrule
   4&&&&\scriptstyle{2v(y)\leq v(x)-3}&&\scriptstyle{2v(x)-4v(y)}&\scriptstyle{2v(y)-2v(x)-v(a)}\\
  \midrule
   5&&&\scriptstyle{v(y)<v(a)+v(x)}&&&\scriptstyle{-2v(a)-2v(y)}&\scriptstyle{v(a)}\\
  \midrule
   6a&\scriptstyle{v(a)=0}&\scriptstyle{v(x)\geq0}&\scriptstyle{v(y)\geq-1}&&&\scriptstyle{0}&\scriptstyle{2}\\
\midrule
6aa&&\scriptstyle{v(x)=-1}&\scriptstyle{v(y)\geq-1}&&&\scriptstyle{2}&\scriptstyle{1}\\
\midrule
6aaa&&\scriptstyle{v(x)\geq-1}&\scriptstyle{v(y)\leq-2}&&&\scriptstyle{-2v(y)}&\scriptstyle{0}\\
\midrule
   6b&\scriptstyle{v(a)=-1}&\scriptstyle{v(x)\geq0}&\scriptstyle{v(y)\geq0}&&&\scriptstyle{2}&\scriptstyle{1}\\
  \midrule
  6bb&&\scriptstyle{v(x)=-1}&\scriptstyle{v(y)\geq-1}&&&\scriptstyle{4}&\scriptstyle{0}\\
  \midrule
   6bbb&&&\scriptstyle{v(y)\leq-2}&&&\scriptstyle{-2v(y)}&\scriptstyle{1}\\
  \midrule
   6c&\scriptstyle{v(a)\leq-2}&\scriptstyle{v(x)\geq0}&\scriptstyle{v(y)\geq0}&&&\scriptstyle{4}&\scriptstyle{-v(a)-2}\\
  \midrule
  6cc&&\scriptstyle{v(x)=-1}&\scriptstyle{v(y)\geq-1}&&&\scriptstyle{4}&\scriptstyle{-v(a)-1}\\
  \midrule
  6ccc&&&\scriptstyle{v(y)\leq-2}&&&\scriptstyle{-2v(y)}&\scriptstyle{-v(a)}\\
  \midrule
   7&\scriptstyle{v(a)\leq-1}&\scriptstyle{v(x)\geq0}&\scriptstyle{v(y)\leq-1}&&&\scriptstyle{-2v(y)}&\scriptstyle{-v(a)}\\
  \midrule
   8a&\scriptstyle{v(a)\leq0}&\scriptstyle{v(x)<-1}&\scriptstyle{v(y)\geq v(x)}&\scriptstyle{2v(y)\geq v(x)-1}&&\scriptstyle{4}&\scriptstyle{-v(a)-v(x)-2}\\
  \midrule
   8b&&&&\scriptstyle{2v(y)=v(x)-2}&\scriptstyle{\frac{y^2\varpi^2}x-1\in\OF^\times}&\scriptstyle{4}&\scriptstyle{-v(a)-v(x)-2}\\
  \midrule
   8c&&&&&\scriptstyle{\frac{y^2\varpi^2}x-1\in\varpi\OF^\times}&\scriptstyle{2}&\scriptstyle{-v(a)-v(x)-1}\\
     \midrule
   8d&&&&&\scriptstyle{\frac{y^2\varpi^2}x-1\in\p^2}&\scriptstyle{0}&\scriptstyle{-v(a)-v(x)}\\
  \midrule
   9&&&&\scriptstyle{2v(y)\leq v(x)-3}&&\scriptstyle{2v(x)-4v(y)}&\scriptstyle{2v(y)-2v(x)-v(a)}\\
  \midrule
   10&&&\scriptstyle{v(y)<v(x)}&&&\scriptstyle{-2v(y)}&\scriptstyle{-v(a)}\\
  \bottomrule
 \end{array}
\end{equation}

Now we calculate all twenty-five pieces. We denote by $I_{2,j}$ the piece of $I_2$ corresponding to the $j$-th case. We show the full calculations only for the first case, the other cases being similar.

\vspace{2ex}
\noindent\textbf{Case 1a:}
\begin{align}\label{I2eqq1a}
(1-q^{-1}) I_{2,1a}&=-q\int\limits_{\substack{F^\times\\v(a)\geq2}}\:\int\limits_{\substack{F\\v(y)\geq-v(a)}}\:\int\limits_{\substack{F\\v(x)\geq-2v(a)}}\eta(a)|a|^{3/2}\,\Phi_0(h(4,v(a)-2))\,dx\,dy\,d^\times a\nonumber\\
 &=-q\int\limits_{\substack{F^\times\\v(a)\geq2}}\eta(a)|a|^{-3/2}\,\Phi_0(h(4,v(a)-2))\,d^\times a\nonumber\\
 &=-q\sum\limits_{m=2}^\infty \int\limits_{\varpi^m\OF^\times}\eta(a)|a|^{-3/2}\,\Phi_0(h(4,v(a)-2))\,d^\times a\nonumber\\
 &=-q(1-q^{-1})\sum\limits_{m=2}^\infty \delta^mq^{3m/2}\,\Phi_0(h(4,m-2))\,\nonumber\\
 &\stackrel{\eqref{GSp4macdonaldeq1}}{=}\frac{-q(1-q^{-1})}{1+2q^{-1}+2q^{-2}+2q^{-3}+q^{-4}}\sum_{m=2}^\infty\:\delta^{m}q^{-m/2-2}\gamma^4\sum_{i=1}^8 A_iB_i(4,m-2)\nonumber\\
 &\stackrel{\eqref{BiLiMieq}}{=}\frac{-q^{-1}(1-q^{-1})\gamma^4}{1+2q^{-1}+2q^{-2}+2q^{-3}+q^{-4}}\sum_{i=1}^8 A_i L_i^4 M_i^{-2}\sum_{m=2}^\infty\:\big(\delta q^{-1/2}\,M_i\big)^m\nonumber\\
 &=\frac{-q^{-2}(1-q^{-1})\delta^2\gamma^4}{1+2q^{-1}+2q^{-2}+2q^{-3}+q^{-4}}\sum_{i=1}^8 A_i \frac{L_i^4}{1-\delta q^{-1/2}\,M_i}.
 \end{align}

 \vspace{2ex}
\noindent\textbf{Case 1b:} In this case we get
\[I_{2,1b}=\frac{-q^{-5/2}\gamma^2\delta}{1+2q^{-1}+2q^{-2}+2q^{-3}+q^{-4}} \sum_{i=1}^8 A_i L_i^2 M_i.
\]

  \vspace{2ex}
\noindent\textbf{Case 2:} In this case we get
\[I_{2,2}=\frac{-q^{-3/2}(1-q^{-1})\delta \gamma^2}{1+2q^{-1}+2q^{-2}+2q^{-3}+q^{-4}} \sum_{i=1}^8 A_i \frac{M_iL_i^2}{(1-\delta q^{-1/2}M_i)(1-q^{-2}\gamma^2L_i^2)}.
\]

  \vspace{2ex}
\noindent\textbf{Case 3a:} In this case we get
\[I_{2,3a}=\frac{-q^{-3/2} \delta \gamma^4(1-q^{-1})}{1+2q^{-1}+2q^{-2}+2q^{-3}+q^{-4}}\sum_{i=1}^8 A_i \frac{L_i^4(1+q^{-1}M_i)}{(1-q^{-1}M_i^2)(1-\delta q^{-1/2}M_i)}.
\]

   \vspace{2ex}
\noindent\textbf{Case 3b:} In this case we get
\[I_{2,3b}= \frac{-(1-q^{-1})(1-2q^{-1})\gamma^4q^{-3/2}\delta}{1+2q^{-1}+2q^{-2}+2q^{-3}+q^{-4}} \sum_{i=1}^8 A_i \frac{L_i^4M_i}{(1-q^{-1}M_i^2)(1-\delta q^{-1/2}M_i)}.\]

    \vspace{2ex}
\noindent\textbf{Case 3c:} In this case we get
\[ I_{2,3c}=\frac{-(1-q^{-1})^2\gamma^2\delta q^{-3/2}}{1+2q^{-1}+2q^{-2}+2q^{-3}+q^{-4}} \sum_{i=1}^8 A_i \frac{L_i^2M_i^2}{(1-q^{-1}M_i^2)(1-\delta q^{-1/2}M_i)}.\]

     \vspace{2ex}
\noindent\textbf{Case 3d:} In this case we get
\[I_{2,3d}=\frac{-(1-q^{-1})\delta q^{-3/2}}{1+2q^{-1}+2q^{-2}+2q^{-3}+q^{-4}} \sum_{i=1}^8 A_i \frac{M_i^3}{(1-q^{-1}M_i^2)(1-\delta q^{-1/2}M_i)}.
\]

     \vspace{2ex}
\noindent\textbf{Case 4:} In this case we get
\[I_{2,4}= \frac{-q^{-5/2}(1-q^{-1})^2\delta \gamma^6}{1+2q^{-1}+2q^{-2}+2q^{-3}+q^{-4}} \sum_{i=1}^8 A_i \frac{L_i^6M_i}{(1-\delta q^{-1/2}M_i)(1-q^{-1}M_i^2)(1-q^{-1}\gamma^2L_i^2)}.\]

\vspace{2ex}
\noindent\textbf{Case 5:} In this case we get
\[I_{2,5}=\frac{-(1-q^{-1})^2q^{-5/2}\gamma^4\delta}{1+2q^{-1}+2q^{-2}+2q^{-3}+q^{-4}}\sum_{i=1}^8 A_i\frac{L_i^4M_i}{(1-\delta M_iq^{-1/2})(1-\gamma^2L_i^2q^{-1})(1-\gamma^2L_i^2q^{-2})}.
\]

\vspace{2ex}
\noindent\textbf{Case 6a:} In this case we get
\[I_{2,6a}= \frac{-q^{-2}}{1+2q^{-1}+2q^{-2}+2q^{-3}+q^{-4}} \sum\limits_{i=1}^8A_iM_i^2.
\]

 \vspace{2ex}
\noindent\textbf{Case 6aa:} In this case we get
\[
 I_{2,6aa}= \frac{-q^{-2}(1-q^{-1})\gamma^2}{1+2q^{-1}+2q^{-2}+2q^{-3}+q^{-4}} \sum\limits_{i=1}^8A_i L_i^2M_i.
\]

 \vspace{2ex}
\noindent\textbf{Case 6aaa:} In this case we get
\[I_{2,6aaa}= \frac{-q^{-2}(1-q^{-1})\gamma^4}{1+2q^{-1}+2q^{-2}+2q^{-3}+q^{-4}} \sum\limits_{i=1}^8A_i \frac{L_i^4}{1-q^{-2}\gamma^2L_i^2}.
\]

\vspace{2ex}
\noindent\textbf{Case 6b:} In this case we get
\[I_{2,6b}= \frac{-q^{-5/2}\delta^{-1}\gamma^2}{1+2q^{-1}+2q^{-2}+2q^{-3}+q^{-4}} \sum\limits_{i=1}^8A_iL_i^2M_i.
\]

\vspace{2ex}
\noindent\textbf{Case 6bb:} In this case we get
\[I_{2,6bb}= \frac{-q^{-3/2}(1-q^{-1})\delta^{-1}\gamma^4}{1+2q^{-1}+2q^{-2}+2q^{-3}+q^{-4}} \sum\limits_{i=1}^8A_iL_i^4.
\]

  \vspace{2ex}
\noindent\textbf{Case 6bbb:} In this case we get

 \[I_{2,6bbb}= \frac{-q^{-5/2}(1-q^{-1})^2\gamma^4 \delta^{-1}}{1+2q^{-1}+2q^{-2}+2q^{-3}+q^{-4}} \sum\limits_{i=1}^8A_i \frac{L_i^4M_i}{1-q^{-2}\gamma^2L_i^2}.
\]

 \vspace{2ex}
\noindent\textbf{Case 6c:} In this case we get
\[I_{2,6c}=\frac{-q^{-2}\gamma^4\delta^{-2}}{1+2q^{-1}+2q^{-2}+2q^{-3}+q^{-4}}\sum_{i=1}^8 A_i\frac{L_i^4}{1-\delta^{-1}M_iq^{-1/2}}.
\]

 \vspace{2ex}
\noindent\textbf{Case 6cc:} In this case we get
\[I_{2,6cc}= \frac{-q^{-2}(1-q^{-1})\gamma^4\delta^{-2}}{1+2q^{-1}+2q^{-2}+2q^{-3}+q^{-4}}  \sum_{i=1}^8 A_i \frac{L_i^4M_i}{1-\delta^{-1}q^{-1/2}M_i}.
\]

\vspace{2ex}
\noindent\textbf{Case 6ccc:} In this case we get
\[I_{2,6ccc}=\frac{-q^{-3}(1-q^{-1})^2\delta^{-2}\gamma^4}{1+2q^{-1}+2q^{-2}+2q^{-3}+q^{-4}} \sum_{i=1}^8 A_i \frac{L_i^4M_i^2}{(1-\delta^{-1}q^{-1/2}M_i)(1-q^{-2}\gamma^2 L_i^2)}.
\]

 \vspace{2ex}
\noindent\textbf{Case 7:} In this case we get
\[I_{2,7}=\frac{-(1-q^{-1})\,q^{-3/2} \delta^{-1}\gamma^2}{1+2q^{-1}+2q^{-2}+2q^{-3}+q^{-4}}\sum_{i=1}^8 A_i\frac{L_i^2M_i}{(1-\delta^{-1}M_iq^{-1/2})(1-\gamma^2L_i^2q^{-2})}.
\]

\vspace{2ex}
\noindent\textbf{Case 8a:} In this case we get
\[I_{2,8a}=\frac{-(1-q^{-1})\,q^{-2}\,\gamma^4}{1+2q^{-1}+2q^{-2}+2q^{-3}+q^{-4}}\,\sum_{i=1}^8 A_i\frac{L_i^4(M_i+1)}{(1-\delta^{-1}M_iq^{-1/2})(1-M_i^2q^{-1})}.
\]

\vspace{2ex}
\noindent\textbf{Case 8b:} In this case we get
\[I_{2,8b}=\frac{-(1-q^{-1})(1-2q^{-1})q^{-1}\gamma^4}{1+2q^{-1}+2q^{-2}+2q^{-3}+q^{-4}}\sum_{i=1}^8 A_i\frac{L_i^4}{(1-\delta^{-1}M_iq^{-1/2})(1-M_i^2q^{-1})}.
\]

\vspace{2ex}
\noindent\textbf{Case 8c:} In this case we get
\[I_{2,8c}=\frac{-(1-q^{-1})^2q^{-1}\gamma^2}{1+2q^{-1}+2q^{-2}+2q^{-3}+q^{-4}}\sum_{i=1}^8 A_i\frac{L_i^2M_i}{(1-\delta^{-1}M_iq^{-1/2})(1-M_i^2q^{-1})}.
\]

\vspace{2ex}
\noindent\textbf{Case 8d:} In this case we get
\[I_{2,8d}=\frac{-(1-q^{-1})q^{-1}}{1+2q^{-1}+2q^{-2}+2q^{-3}+q^{-4}}\sum_{i=1}^8 A_i\frac{M_i^2}{(1-\delta^{-1}M_iq^{-1/2})(1-M_i^2q^{-1})}.
\]

\vspace{2ex}
\noindent\textbf{Case 9:} In this case we get
\[I_{2,9}=\frac{-(1-q^{-1})^2\,q^{-2}\,\gamma^6}{1+2q^{-1}+2q^{-2}+2q^{-3}+q^{-4}}\sum_{i=1}^8 A_i\frac{L_i^6}{(1-\delta^{-1}M_iq^{-1/2})(1-\gamma^2L_i^2q^{-1})(1-M_i^2q^{-1})}.
\]

\vspace{2ex}
\noindent\textbf{Case 10:} In this case we get
\[I_{2,10}=\frac{-(1-q^{-1})^2\,q^{-3}\,\gamma^6}{1+2q^{-1}+2q^{-2}+2q^{-3}+q^{-4}}\sum_{i=1}^8 A_i\frac{L_i^6}{(1-\delta^{-1}M_iq^{-1/2})(1-\gamma^2L_i^2q^{-1})(1-\gamma^2L_i^2q^{-2})}.
\]
\subsection{Local \texorpdfstring{$L$}{}-factors and summary of calculations}\label{s:localconclusion}
By adding up the various expressions and simplifying with help from Mathematica\footnote{We have also verified this symbolic computation using ChatGPT 5.5 Pro.} we obtain:
\begin{equation}\label{summaryeq22}
I=\frac{(1+\alpha)^2(1+\beta)^2}{\alpha \beta} \cdot\frac{(1-q^{-1})}{1+2q^{-1}+2q^{-2}+2q^{-3}+q^{-4}}\cdot\frac{N_1\cdot N_2}{D_1\cdot D_2}
\end{equation}
with
\[N_1=(1-\alpha q^{-1})(1-\alpha^{-1}q^{-1})(1-\beta q^{-1})(1-\beta^{-1}q^{-1}),\]
\[N_2=(1-\alpha\beta q^{-1})(1-\alpha^{-1}\beta q^{-1})(1-\alpha\beta^{-1} q^{-1})(1-\alpha^{-1}\beta^{-1}q^{-1}),\]
\[D_1=(1-\alpha\delta q^{-1/2})(1-\alpha^{-1}\delta q^{-1/2})(1-\beta\delta q^{-1/2})(1-\beta^{-1}\delta q^{-1/2}),\]
\[D_2=(1-\alpha\delta^{-1} q^{-1/2})(1-\alpha^{-1}\delta^{-1} q^{-1/2})(1-\beta\delta^{-1} q^{-1/2})(1-\beta^{-1}\delta^{-1} q^{-1/2}),\]

Next, we explicitly write down the local $L$-factors $L(1, \pi, \Ad)$, $L_{\psi}(1,\sigma, \Ad)$, and $L_{\psi}(\frac{1}{2}, \pi\times\sigma)$ appearing in \eqref{e:mainlocaldef}, and use this to complete the proof of Theorem \ref{t:mainlocal}. Observe that
\begin{equation}\label{summaryeq8}
 L(1,\pi,\mathrm{Ad})^{-1}=(1-q^{-1})^2\cdot N_1\cdot N_2,
\end{equation}
which is a degree-$10$ $L$-factor. Recall that the standard (degree-$5$) $L$-factor is given by
\begin{equation}\label{summaryeq9}
 L(s,\pi,\rho_5)^{-1}=(1-q^{-s})(1-\alpha q^{-s})(1-\alpha^{-1} q^{-s})(1-\beta q^{-s})(1-\beta^{-1} q^{-s}).
\end{equation}
The $L$-factors involving $\sigma$ will depend on whether $\sigma$ is an unramified principal series representation or a special representation. We first deal with the case where $\sigma$ is an unramified principal series $\sigma\simeq \tilde\pi(\chi)$. Recall that $\delta = \chi(\varpi)$. The Waldspurger lift of $\sigma$ to $\PGL_2(F)$  is the unramified principal series representation with Satake parameters $\delta$, $\delta^{-1}$; see \cite[Th\'eor\`eme~1]{Waldspurger1991}. Therefore, by~\eqref{Lpsieq},
\begin{align}\label{summaryeq10}
 L_\psi(s,\pi\times \sigma)^{-1}&=(1-\delta q^{-s})(1-\alpha\delta q^{-s})(1-\alpha^{-1}\delta q^{-s})(1-\beta\delta q^{-s})(1-\beta^{-1}\delta q^{-s})\nonumber\\
 &\times(1-\delta^{-1} q^{-s})(1-\alpha\delta^{-1} q^{-s})(1-\alpha^{-1}\delta^{-1} q^{-s})(1-\beta\delta^{-1} q^{-s})(1-\beta^{-1}\delta^{-1} q^{-s}),
\end{align}
and
\begin{equation}\label{summaryeq11}
 L_\psi(s,\sigma,\Ad)^{-1}=(1-\delta^2q^{-s})(1-q^{-s})(1-\delta^{-2} q^{-s}).
\end{equation}
Using \eqref{e:mainlocaldef}, \eqref{mrefinedeq7}, \eqref{Idef} and the above, we obtain in the unramified principal series case \begin{align*}\alpha^{\#}(\pi, \sigma; m)&= (1-q^{-2})(1-q^{-4}) \frac{L(1, \pi, \Ad)L_{\psi}(1,\sigma, \Ad)}{L_{\psi}(\frac{1}{2}, \pi\times\sigma)}  \alpha(\pi, \sigma; m) \\ &= \frac{2(1-q^{-2})(1-q^{-4}) }{q+1} \frac{L(1, \pi, \Ad)L_{\psi}(1,\sigma, \Ad)}{L_{\psi}(\frac{1}{2}, \pi\times\sigma)} \cdot I \\&=\frac{2}{q+1}
\cdot
\frac{(\alpha^{-1/2}+\alpha^{1/2})^2(\beta^{-1/2}+\beta^{1/2})^2}
{(1+\delta q^{-1/2})(1+\delta^{-1}q^{-1/2})}.
\end{align*}

Next, we consider the case when $\sigma\simeq\tilde\sigma_\xi$ (with $\xi \in \OF^\times$) is a special representation.  In this case, put $\eps_\sigma:=(\varpi,\xi)\in\{\pm1\}$. Taking $\eta = \eta_{s_0}$, we see that $\delta = \eta_{s_0}(\varpi) = q^{-s_0} \eps_\sigma$. Hence
\eqref{summaryeq22} gives, for \(0<s_0<\frac12\),
\[
\begin{aligned}
I(s_0)
&=
\frac{(1+\alpha)^2(1+\beta)^2}{\alpha\beta}
\frac{1-q^{-1}}
     {1+2q^{-1}+2q^{-2}+2q^{-3}+q^{-4}}
\frac{N_1N_2}{D_1(s_0)D_2(s_0)},
\end{aligned}
\]
where
\[
\begin{aligned}
D_1(s_0)
&=
(1-\alpha\eps_\sigma q^{-s_0-1/2})
(1-\alpha^{-1}\eps_\sigma q^{-s_0-1/2})\\
&\qquad\times
(1-\beta\eps_\sigma q^{-s_0-1/2})
(1-\beta^{-1}\eps_\sigma q^{-s_0-1/2}),
\\[1ex]
D_2(s_0)
&=
(1-\alpha\eps_\sigma q^{s_0-1/2})
(1-\alpha^{-1}\eps_\sigma q^{s_0-1/2})\\
&\qquad\times
(1-\beta\eps_\sigma q^{s_0-1/2})
(1-\beta^{-1}\eps_\sigma q^{s_0-1/2}).
\end{aligned}
\]
Substituting this expression into \eqref{alpha-special-Ir}, we obtain
\[
\begin{aligned}
\alpha(\pi,\sigma;m)
&=
\lim_{s_0\to1/2^-}
\frac{1}{q+1}
\left(
1-
\frac{\eps_\sigma}{q+1}
\bigl(q^{s_0+1/2}+q^{-s_0+1/2}\bigr)
\right)\\
&\quad\times
\frac{(1+\alpha)^2(1+\beta)^2}{\alpha\beta}
\frac{1-q^{-1}}
     {1+2q^{-1}+2q^{-2}+2q^{-3}+q^{-4}}
\frac{N_1N_2}{D_1(s_0)D_2(s_0)}.
\end{aligned}
\]

Put
\[
\begin{aligned}
D_{1,\mathrm{sp}}
&=
(1-\alpha\eps_\sigma q^{-1})
(1-\alpha^{-1}\eps_\sigma q^{-1})
(1-\beta\eps_\sigma q^{-1})
(1-\beta^{-1}\eps_\sigma q^{-1}),\\
D_{2,\mathrm{sp}}
&=
(1-\alpha\eps_\sigma)
(1-\alpha^{-1}\eps_\sigma)
(1-\beta\eps_\sigma)
(1-\beta^{-1}\eps_\sigma).
\end{aligned}
\]
For Satake parameters in a Zariski-open set, we have
\[
        D_{1,\mathrm{sp}}D_{2,\mathrm{sp}}\neq0.
\]
Then \(D_i(s_0)\) is nonzero for \(s_0\) sufficiently close to
\(\frac12\), and
\[
        \lim_{s_0\to1/2^-}D_1(s_0)=D_{1,\mathrm{sp}},
        \qquad
        \lim_{s_0\to1/2^-}D_2(s_0)=D_{2,\mathrm{sp}}.
\]
Moreover,
\[
\lim_{s_0\to1/2^-}
\left(
1-
\frac{\eps_\sigma}{q+1}
\bigl(q^{s_0+1/2}+q^{-s_0+1/2}\bigr)
\right)
=
1-\eps_\sigma.
\]
 It follows that
\begin{equation}\label{summaryeq-special-alpha}
\begin{aligned}
\alpha(\pi,\sigma;m)
&=
\frac{1-\eps_\sigma}{q+1}
\frac{(1+\alpha)^2(1+\beta)^2}{\alpha\beta}
\frac{1-q^{-1}}
     {1+2q^{-1}+2q^{-2}+2q^{-3}+q^{-4}}
\frac{N_1N_2}{D_{1,\mathrm{sp}}D_{2,\mathrm{sp}}}.
\end{aligned}
\end{equation}

The preceding identity has been established on the Zariski-open locus
of generic Satake parameters.  Since the expression obtained from
\eqref{summaryeq22} is rational in the Satake parameters, the identity
extends as a rational identity to arbitrary Satake parameters.  At
exceptional parameters, the right-hand side of
\eqref{summaryeq-special-alpha} is therefore understood by simplifying
the complete rational expression first and then specializing, filling
in any removable singularities by rational continuation.

The Waldspurger lift of \(\sigma\) to \(\PGL_2(F)\) is the Steinberg
representation when \(\eps_\sigma=1\), and its unramified quadratic
twist when \(\eps_\sigma=-1\).  Accordingly,
\begin{align}
\label{summaryeq10sp}
L_\psi(s,\pi\times\sigma)^{-1}
&=
(1-\eps_\sigma q^{-s-1/2})
(1-\alpha\eps_\sigma q^{-s-1/2})
(1-\alpha^{-1}\eps_\sigma q^{-s-1/2})\nonumber\\
&\quad\times
(1-\beta\eps_\sigma q^{-s-1/2})
(1-\beta^{-1}\eps_\sigma q^{-s-1/2}),
\end{align}
and
\begin{equation}\label{summaryeq11sp}
        L_\psi(s,\sigma,\Ad)^{-1}=1-q^{-s-1}.
\end{equation}
Using \eqref{e:mainlocaldef} and
\eqref{summaryeq-special-alpha}, we obtain
\begin{align*}
\alpha^{\#}(\pi,\sigma;m)
&=
(1-q^{-2})(1-q^{-4})
\frac{L(1,\pi,\Ad)L_\psi(1,\sigma,\Ad)}
     {L_\psi(\frac12,\pi\times\sigma)}
\alpha(\pi,\sigma;m)\\
&=
\frac{(1-\eps_\sigma)(q-\eps_\sigma)}
     {(q+1)^2}
\frac{(1+\alpha)^2(1+\beta)^2}
     {\alpha\beta D_{2,\mathrm{sp}}}.
\end{align*}
If \(\eps_\sigma=1\), the preceding expression is zero.  If
\(\eps_\sigma=-1\), then
\[
D_{2,\mathrm{sp}}
=
(1+\alpha)(1+\alpha^{-1})
(1+\beta)(1+\beta^{-1})
=
\frac{(1+\alpha)^2(1+\beta)^2}{\alpha\beta},
\]
and hence $\alpha^{\#}(\pi,\sigma;m)=\frac{2}{q+1}$. Thus, in both cases,
\begin{equation}\label{summaryeq-special-final}
        \alpha^{\#}(\pi,\sigma;m)
        =
        \frac{1-\eps_\sigma}{q+1}
        =
        \begin{cases}
        0,&\eps_\sigma=1,\\[1ex]
        \dfrac{2}{q+1},&\eps_\sigma=-1.
        \end{cases}
\end{equation}
This completes the proof of Theorem~\ref{t:mainlocal}.

\section{Global results}In this final section, we return to a global setup. We first set the scene and prove some basic results on theta series, half-integral weight modular forms and their adelizations, and the classical interpretation of the adelic Fourier--Jacobi period for Siegel cusp forms. We then compute the archimedean factor in the refined Gan–Gross–Prasad conjecture in our setup. Finally, we prove our main global theorems and derive several applications.
\subsection{Global notations} Let \(\A\) denote the ring of adeles of \(\Q\). Put $J_2=\mat{0}{I_2}{-I_2}{0}$. For every commutative ring \(R\) with identity, define
\[
\GSp_4(R)
=
\left\{
g\in\GL_4(R)\colon
{}^{t}\!gJ_2g=\nu(g)J_2
\text{ for some }\nu(g)\in R^\times
\right\},
\]
and
\[
G(R)=\Sp_4(R)
:=
\ker\!\left(\nu\colon\GSp_4(R)\longrightarrow R^\times\right).
\]
Let \(H\subset G\) be the Heisenberg group defined by
\[
H(R)
=
\left\{
h(\lambda,\mu,\kappa)
:=
\begin{bsmallmatrix}
1&&&\mu\\
\lambda&1&\mu&\kappa\\
&&1&-\lambda\\
&&&1
\end{bsmallmatrix}
:
\lambda,\mu,\kappa\in R
\right\}.
\]
We identify \(h(\lambda,\mu,\kappa)\) with
\((\lambda,\mu,\kappa)\).  In these coordinates,
\[
(\lambda,\mu,\kappa)(\lambda',\mu',\kappa')
=
\bigl(
\lambda+\lambda',
\mu+\mu',
\kappa+\kappa'+\lambda\mu'-\mu\lambda'
\bigr).
\]
We identify \(\SL_2\) with the subgroup of \(G\) given by
\[
\SL_2\longrightarrow G,
\qquad\mat{a}{b}{c}{d}
\longmapsto
\begin{bsmallmatrix}
a&&b&\\
&1&&\\
c&&d&\\
&&&1
\end{bsmallmatrix}.
\]
The Jacobi group is
\[
 J=\SL_2\cdot H\simeq\SL_2\ltimes H.
\]
We write \(g(\lambda,\mu,\kappa)\) for
\[
g\,h(\lambda,\mu,\kappa),
\qquad
g\in\SL_2.
\]
Let \(\meta_2(\A)\) be the metaplectic double cover of \(\SL_2(\A)\),
and put
\[
\widetilde J(\A)=\meta_2(\A)\ltimes H(\A).
\]
Note that the matrix realization of \(G\), \(H\), and \(J\) defined above differs from the one
used in Section~\ref{s:local}.

We write
\[
\xi(s)
=
\pi^{-s/2}\Gamma\!\left(\frac{s}{2}\right)\zeta(s)
\]
for the completed Riemann zeta function.  All our measures on adelic
groups are the corresponding Tamagawa measures.

\subsection{Theta series}
Let \(\mathcal S(\A)\) denote the Schwartz space on \(\A\).  Its
canonical inner product is
\[
\langle \phi,\phi'\rangle
:=
\int_\A \phi(x)\overline{\phi'(x)}\,dx,
\]
where \(dx\) is the standard measure on \(\A\).

Fix the standard character
\(\psi=\prod_v\psi_v:\Q\backslash\A\to\C^\times\), characterized by
\[
\psi_\infty(x)=e^{2\pi i x},
\qquad
\operatorname{cond}(\psi_p)=\Z_p
\quad (p<\infty).
\]
Let \(m\) be a positive odd squarefree integer, and define
\[
\psi^m(x)=\psi(mx).
\]
Let
\(\phi^{(m)}=\bigotimes_v\phi_v^{(m)}\in\mathcal S(\A)\), where
\begin{equation}\label{phi-defn}
\phi_v^{(m)}(x)
=
\begin{cases}
\mathbf 1_{\Z_p}(x),
    & v=p<\infty,\ p\nmid 2m,\\
\mathbf 1_{p^{-1}\Z_p}(x),
    & v=p<\infty,\ p\mid 2m,\\
e^{-2\pi m x^2},
    & v=\infty.
\end{cases}
\end{equation}
A straightforward calculation gives
\begin{equation}\label{l:defcm}
\langle \phi^{(m)},\phi^{(m)}\rangle=\sqrt m.
\end{equation}

Let $\omega_{\psi^m}$ be the Schr\"odinger--Weil (oscillator) representation of
$\widetilde{J}(\A) = \meta_2(\A) \ltimes H(\A)$ with central character $\psi^m$,
realized on the Schwartz space $\mathcal S(\A)$. When convenient, we will consider $\omega_{\psi^m}$ a projective representation of~$J(\A)$. For $r \in \widetilde{J}(\A)$,
we define the theta function
$$
\Theta_{\psi^m}(r,\phi^{(m)}) := \sum_{x\in\Q}\bigl(\omega_{\psi^m}(r)\phi^{(m)}\bigr)(x).
$$
For $\tau = x+iy \in \H$ and $z=u+iv \in \C$, set $g_{\tau,z}:=\mat{1}{x}{}{1} \mat{\sqrt{y}}{}{}{1/\sqrt{y}}(\frac v{\sqrt{y}}, \frac u{\sqrt{y}}, 0) \in J(\R)$. We use the same notation $g_{\tau,z}$ for the element of
$\widetilde J(\A)$ whose finite components are trivial and whose
archimedean component is the lift of $g_{\tau,z}$ obtained by taking
the sign $+1$ on its $\SL_2(\R)$-component. Recall the classical theta series on $\H \times \C$:
\begin{equation}\label{e:thetafunctions}
\theta_m(\tau,z) = \sum_{r\in\Z} e^{\frac{2\pi i r^2\tau}{4m}}e^{2\pi i r z} = \sum_{\alpha\;(\text{mod }2m)}\theta_{m,\alpha}(\tau,z).
\end{equation}
where
\begin{equation}\label{e:thetafunctionsa}
 \theta_{m, \alpha}(\tau, z) = \sum_{r \equiv \alpha\;(\text{mod }2m)} e^{\frac{2 \pi i r^2 \tau}{4m}} e^{2 \pi ir z}.
\end{equation}
For $\mat{a}{b}{c}{d} \in \SL_2(\R), (\lambda, \mu, \kappa) \in H(\R)$ and $(\tau,z)\in \H \times \C$ define the automorphy factor by
\begin{equation}\label{automorphy-factor-defn}
j_{\frac 12, m}(\mat{a}{b}{c}{d}(\lambda, \mu, \kappa), (\tau, z))={\rm exp}\big(-2 \pi i m(\kappa - \frac{c(z-\lambda \tau+\mu)^2}{c\tau+d}+\lambda^2\tau+2\lambda z+\lambda\mu)\big)j_{\frac 12}(\mat{a}{b}{c}{d}, \tau),
\end{equation}
where
$$j_{\frac 12}(\mat{a}{b}{c}{d}, \tau) =\begin{cases}\sqrt{d} & \text{ if } c=0,\:d>0;\\ -\sqrt{d} & \text{ if } c=0,\:d<0;\\(c\tau+d)^{\frac 12} & \text{ if } c \neq 0.\end{cases}$$
An easy calculation shows that
$$j_{\frac 12, m}(g_{\tau,z}, (i,0)) = y^{-1/4} e^{-2 \pi i m\big(\frac{uv}{y}+i\frac{v^2}y\big)}.$$

\begin{proposition}\label{p:adelizetheta}
 With the above notations,
$$ \Theta_{\psi^m}(g_{\tau,z} , \phi^{(m)}) = j_{\frac 12,m}(g_{\tau,z}, (i,0))^{-1} \theta_m(\tau, z).$$
\end{proposition}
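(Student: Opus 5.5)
Since $g_{\tau,z}$ has trivial finite components, $\Theta_{\psi^m}(g_{\tau,z},\phi^{(m)})=\sum_{x\in\Q}\phi^{(m)}_f(x)\,\bigl(\omega_{\psi_\infty^m}(g_{\tau,z})\phi_\infty^{(m)}\bigr)(x)$. The finite part $\phi^{(m)}_f=\prod_p\phi^{(m)}_p$ is supported on the $x\in\Q$ with $x\in\Z_p$ for $p\nmid 2m$ and $x\in p^{-1}\Z_p$ for $p\mid 2m$, i.e.\ on $x\in\frac1{2m}\Z$, where it equals $1$. So the sum runs over $x=r/(2m)$, $r\in\Z$. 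It therefore suffices to compute $\omega_{\psi_\infty^m}(g_{\tau,z})\phi^{(m)}_\infty$ explicitly.

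Write $g_{\tau,z}=n(x_0)\,t(\sqrt y)\,(\lambda,\mu,0)$ with $x_0=\Re\tau$, $\lambda=v/\sqrt y$, $\mu=u/\sqrt y$. Use the standard Schr\"odinger model formulas for the character $\psi^m_\infty$ (quadratic form $mx^2$). The Heisenberg element $(\lambda,\mu,\kappa)$ acts by a translation by $\lambda$ (in the present coordinates, up to sign) times a phase $\psi^m(\kappa+2x\mu+\lambda\mu)$ or similar. The diagonal element $t(a)$ acts by $|a|^{1/2}\phi(ax)$, with the Weil index trivial for $a>0$ and sign $+1$. The unipotent $n(b)$ acts by multiplication by $\psi^m(bx^2)$. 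Applying these in order to $e^{-2\pi m x^2}$ gives
\[
\bigl(\omega(g_{\tau,z})\phi_\infty\bigr)(x)=y^{1/4}\,e^{2\pi i m\tau(x+\lambda')^2}\cdot e^{2\pi i m(\text{linear term in }x)}\cdot(\text{phase}),
\]
where $\lambda'$ is an explicit affine expression in $v/y$. Completing the square should give the form $y^{1/4}e^{-2\pi i m(uv/y+iv^2/y)}e^{2\pi i m x^2\tau}e^{4\pi i m x z}$. Setting $x=r/(2m)$ turns $e^{2\pi i m x^2\tau+4\pi i mxz}$ into $e^{2\pi i r^2\tau/4m}e^{2\pi i r z}$. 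Summing over $r$ then gives $\theta_m(\tau,z)$ times $y^{1/4}e^{2\pi i m(uv/y+iv^2/y)}=j_{\frac12,m}(g_{\tau,z},(i,0))^{-1}$, using the stated value of $j_{\frac12,m}(g_{\tau,z},(i,0))$.

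The main obstacle is bookkeeping. We must fix the exact Schr\"odinger model for $H$ in the coordinates $h(\lambda,\mu,\kappa)$ of this section, namely which of $\lambda,\mu$ translates and which multiplies by a character, together with the sign conventions and the normalization of the central character as $\psi^m$. An error there would produce $\theta_m(\tau,-z)$ or the conjugate phase. To guard against this, I would first check the Heisenberg action on the one-parameter subgroups against the group law $(\lambda,\mu,\kappa)(\lambda',\mu',\kappa')$ stated above. I would then verify the resulting phase against the automorphy factor \eqref{automorphy-factor-defn} evaluated at $c=0$, which is itself built from the same action. Since $\theta_m$ is even in $r$, a sign slip in $z$ is harmless in the sum, and this gives a consistency check.
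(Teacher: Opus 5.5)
Your proposal follows the paper's route: the finite components restrict the sum to $\eta\in\frac1{2m}\Z$, and then the explicit Schr\"odinger-model formulas for $n(x)$, $a(\sqrt y)$ and $(\lambda,\mu,\kappa)$ are applied. The Heisenberg formula you guessed, $\psi^m(\kappa+\lambda\mu+2\mu\xi)f(\xi+\lambda)$, is exactly the one the paper uses, and it is consistent with the stated group law.

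One small slip: the phase in your intermediate display should be $e^{+2\pi i m(uv/y+iv^2/y)}$, not $e^{-2\pi i m(uv/y+iv^2/y)}$. It comes from $\psi^m(uv/y)$ together with the factor $e^{-2\pi m v^2/y}$ in $e^{-2\pi m(\sqrt y\eta+v/\sqrt y)^2}$, and that Gaussian is simply expanded, so no completing of the square is needed.
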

\begin{proof}
We first recall the relevant formulas for the oscillator representation
at the archimedean place.  Let
\[
n(b)=\mat{1}{b}{0}{1},
\qquad
a(t)=\mat{t}{0}{0}{t^{-1}},
\qquad t>0.
\]
For $f\in \mathcal S(\R)$ and $\xi\in \R$,
\begin{align}
 \omega_{\psi_\infty^m}(n(b))f(\xi)
 &=
 \psi_\infty^m(b\xi^2)f(\xi)
 =
 e^{2\pi i m b\xi^2}f(\xi),\\
 \omega_{\psi_\infty^m}(a(t))f(\xi)
 &=
 t^{1/2}f(t\xi),
\end{align}
and, for $(\lambda,\mu,\kappa)\in H(\R)$,
\[
\omega_{\psi_\infty^m}((\lambda,\mu,\kappa))f(\xi)
=
\psi_\infty^m(\kappa+\lambda\mu+2\mu\xi)f(\xi+\lambda)
=
e^{2\pi i m(\kappa+\lambda\mu+2\mu\xi)}f(\xi+\lambda).
\]
Now, let $\eta \in \Q$. We have
\begin{align*}
\omega_{\psi_\infty^m}(g_{\tau,z})\phi_\infty^{(m)}(\eta) &= \omega_{\psi_\infty^m}(\mat{1}{x}{}{1} \mat{\sqrt{y}}{}{}{1/\sqrt{y}}(\frac v{\sqrt{y}}, \frac u{\sqrt{y}}, 0))\phi_\infty^{(m)}(\eta) \\
&= \psi_\infty^m(x\eta^2) \omega_{\psi_\infty^m}(\mat{\sqrt{y}}{}{}{1/\sqrt{y}}(\frac v{\sqrt{y}}, \frac u{\sqrt{y}}, 0))\phi_\infty^{(m)}(\eta) \\
&= y^{1/4}  \psi_\infty^m(x\eta^2) \psi_\infty^m(2 \eta u+\frac{vu}{y})\phi_\infty^{(m)}(\sqrt{y}\eta+\frac v{\sqrt{y}}) \\
&= y^{1/4} e^{2 \pi i m\big(\frac{vu}y+i\frac{v^2}y\big)} e^{2 \pi i m\tau\eta^2} e^{2 \pi i m z(2\eta)} \\
&= j_{\frac 12, m}(g_{\tau,z}, (i,0))^{-1} e^{2 \pi i m\tau\eta^2} e^{2 \pi i m z(2\eta)}.
\end{align*}
Note that, for $\eta \in \Q$ we have
$$\phi_p^{(m)}(\eta) \neq 0 \Leftrightarrow \eta \in \begin{cases} \Z_p & \text{ if } p \nmid 2m;\\ \frac 1p \Z_p & \text{ if } p | 2m.\end{cases}$$
Hence
\begin{align*}
\Theta_{\psi^m}(g_{\tau,z} , \phi^{(m)}) &=  j_{\frac 12, m}(g_{\tau,z}, (i,0))^{-1} \sum\limits_{\eta \in \frac 1{2m}\Z} e^{2 \pi i m\tau\eta^2} e^{2 \pi i m z(2\eta)} \\
&= j_{\frac 12, m}(g_{\tau,z}, (i,0))^{-1} \sum\limits_{\eta \in \Z} e^{\frac{2 \pi i \tau \eta^2}{4m}} e^{2 \pi i  z \eta} \\
&= j_{\frac 12,m}(g_{\tau,z}, (i,0))^{-1} \theta_m(\tau, z),
\end{align*}
as required.
\end{proof}

\subsection{Siegel modular forms and representations}\label{s:classicalrep}
For the rest of the paper, fix an even integer $k \ge 4$. We recall some well-known properties of Siegel modular forms of degree 2; we refer the reader to \cite{asgsch} for proofs and further details. Let $S_k(\Sp_4(\Z))$ denote the space of Siegel cusp forms of weight $k$ and full level. For any $F \in S_k(\Sp_4(\Z))$, define the Petersson inner product
\begin{equation}\label{eqn:petersson-def}
 \langle F, F\rangle = \int\limits_{\Sp_4(\Z) \bs \H_2} |F(Z)|^2 (\det Y)^{k - 3}\,dX\,dY.
\end{equation}
Recall that \(G=\Sp_4\).  For any $F \in S_k(\Sp_4(\Z))$, we let $\Psi_F \in L^2(G(\Q) \bs G(\A))$ be the automorphic form obtained by adelizing $F$. We define $\langle \Psi_F, \Psi_F \rangle  =  \int_{G(\Q) \bs G(\A)} |\Psi_F(g)|^2\,dg$, where we use the Tamagawa measure. By a routine calculation we have \begin{equation}\label{e:peterssonsiegel}
 \frac{\langle F, F\rangle}{2 \xi(2)\xi(4)} =  \frac{\langle F, F\rangle}{\vl(\Sp_4(\Z)\bs \H_2)} = \frac{\langle \Psi_F , \Psi_F \rangle}{\vl(G(\Q) \bs G(\A))} = \langle \Psi_F, \Psi_F \rangle.
 \end{equation}
Suppose that $F \in S_k(\Sp_4(\Z))$ is an eigenfunction of the local Hecke algebra at all finite primes. Then, by Theorem 3.1 of \cite{NPS13}, $\Psi_F$ generates an irreducible cuspidal automorphic representation~$\pi'_F$ of $G(\A)$. Furthermore, $\Psi_F$ extends to an automorphic form on $\GSp_4(\A)$ of trivial central character, which generates an irreducible automorphic cuspidal representation $\pi_F$ of $\GSp_4(\A)$. For our purposes, it makes little difference whether we work with $\pi'_F$ or $\pi_F$ because it will not change the periods or the $L$-functions relevant for us.

We have an orthogonal (with respect to Petersson inner product) direct sum decomposition
$$
 S_k(\Sp_4(\Z)) =S_k(\Sp_4(\Z))^{\rm  SK}  \oplus S_k(\Sp_4(\Z))^{ \rm T}$$ where  $S_k(\Sp_4(\Z))^{\rm  SK}$ is the span of the Saito--Kurokawa lifts. In the sequel we will assume that our forms $F$ are Hecke eigenforms lying in $S_k(\Sp_4(\Z))^{ \rm T}$; by a deep theorem of Weissauer \cite{weissram}, this means that $\pi_F$ is tempered at all primes.

Each $F \in S_k(\Sp_4(\Z))$ has the Fourier expansion
\[
F(Z)=\sum_S a(F,S)e^{2\pi i\,\Tr(SZ)}
\]
 where the Fourier coefficients $a(F,S)$ are indexed by matrices $S$ of the form
\begin{equation}\label{e:matrixformmain}
 S=\mat{a}{b/2}{b/2}{c},\qquad a,b,c\in\Z, \qquad a>0, \qquad \disc(S) := b^2 - 4ac < 0,
 \end{equation}
and the Fourier--Jacobi expansion
\begin{equation}\label{e:fj}
F(Z)=\sum_{r>0}f_r(\tau,z)e^{2\pi i r\tau'}.
\end{equation}
Here \(Z=\mat{\tau}{z}{z}{\tau'}\).

\subsection{Half-integral weight forms and their adelizations}
For a finite prime~$p$ and $\mat{a}{b}{c}{d}\in\SL_2(\Q_p)$ define
\begin{equation}\label{slocaldefeq}
 s_p(\mat{a}{b}{c}{d})=\begin{cases}
                     (c,d)_p&\text{if $cd\neq0$ and $v_p(c)$ is odd},\\
                     1&\text{otherwise},
                    \end{cases}
\end{equation}
where $(\cdot,\cdot)_p$ is the local Hilbert symbol. For $g=(g_p)_p\in\SL_2(\Q)$ let
\begin{equation}\label{sglobaldefeq}
 s_\A(g)=\prod_{p<\infty}s_p(g).
\end{equation}
By \cite[Proposition~2.15]{Gelbart1976} the map $i:\SL_2(\Q)\to\meta_2(\A)$ given by
\[i(g) = (g, s_\A(g))\]
is a homomorphism. Via $i$ we consider $\SL_2(\Q)$ a subgroup of~$\meta_2(\A)$.

For $\xi \in \Q^\times$ and $g \in\meta_2(\A)$, we define $g^\xi$ by the global equivalent of \eqref{e:twist}.

For $(g, \eps) \in\meta_2(\R)$ with $g =\mat{a}{b}{c}{d}$, $\eps \in \{\pm1\}$, $z \in \H$, define the metaplectic automorphy factor by
\[
 J((g, \eps), z) = \eps\sqrt[*]{cz+d},
\]
where the branch of the square root is $\sqrt[*]{r e^{i\theta}}=\sqrt{r}e^{i\theta/2}$ for $r>0$ and $\theta\in[-\pi,\pi)$.

Let \(\Gamma\) be a finite-index subgroup of either \(\Gamma_0(4)\) or
\(\Gamma^0(4)\).
For
\[
g=\mat{a}{b}{c}{d}\in \Gamma,
\qquad z\in\H,
\]
define the classical half-integral weight automorphy factor \cite{GS73, GS87} by
\begin{equation}\label{e:thetamultiplier}
\widetilde j(g,z)
=
\left(\frac{c}{d} \right) \eps_d^{-1} \sqrt{cz+d}
\end{equation}
where now the branch of the square root is $\sqrt{r e^{i\theta}}=\sqrt{r}e^{i\theta/2}$ for $r>0$ and $\theta\in(-\pi,\pi]$, and
\[\eps_d = \begin{cases} 1 & \text{ if } d \equiv 1 \pmod 4 \\ i & \text{ if } d \equiv 3 \pmod 4
\end{cases}.\]

For a holomorphic function \(f:\H\to\C\) and $g \in \Gamma$, denote
\[
(f|_{k-\frac12} g)(\tau)
=
\widetilde j( g,\tau)^{-2k+1}f(g\tau).
\]

When $\Gamma$ is contained in $\Gamma_0(N)$ or $\Gamma^0(N)$, and \(\chi\) is a Dirichlet character
modulo \(N\), we  regard \(\chi\) as a character of \(\Gamma\) by $
\chi\!\left(\mat{a}{b}{c}{d}\right)=\chi(d)$. For any unitary character $\chi$ of $\Gamma$, we define \(M_{k-\frac12}(\Gamma,\chi)\) to be the space of holomorphic
functions \(f:\H\to\C\) satisfying
\[
f|_{k-\frac12}\gamma
=
\chi(\gamma)f
\qquad
\text{for all } \gamma\in\Gamma,
\]
and which are holomorphic at every cusp of \(\Gamma\).

We define \(S_{k-\frac12}(\Gamma,\chi)\) to be the subspace of
\(M_{k-\frac12}(\Gamma,\chi)\) consisting of those forms which vanish at
every cusp. When \(\chi\) is the trivial character, we write
\[
M_{k-\frac12}(\Gamma)
=
M_{k-\frac12}(\Gamma,\mathbf 1),
\qquad
S_{k-\frac12}(\Gamma)
=
S_{k-\frac12}(\Gamma,\mathbf 1).
\]
For  $h_1, h_2 \in S_{k-\frac12}(\Gamma, \chi)$, the Petersson inner product is normalized as follows: $$\langle h_1, h_2\rangle = [\SL_2(\Z): \Gamma]^{-1} \int_{\Gamma \bs \H}h_1(z) \overline{h_2(z)} y^{k - \frac12} \frac{dx dy}{y^2}.$$

Given $h \in S_{k-\frac12}(\Gamma, \chi)$, we say that an automorphic form $\Lambda_h \in L^2(\SL_2(\Q)\bs\meta_2(\A))$ is the adelization of $h$ if
\begin{equation}\label{adelizationeq}
 \Lambda_h(i(\gamma) (g_\infty, \eps)) =  J((g_\infty, \eps), i)^{-2k+1}h(g_\infty i)
\end{equation}
for $\gamma \in \SL_2(\Q)$, $(g_\infty, \eps) \in\meta_2(\R)$.
Because $i(\SL_2(\Q))\meta_2(\R)$ is dense in $\meta_2(\A)$, each $h \in S_{k-\frac12}(\Gamma, \chi)$ has a unique adelization $\Lambda_h$. Assuming that $\Gamma=\Gamma^0(N)$ (resp.~$\Gamma=\Gamma_0(N)$) with $N=\prod p^{n_p}$, the adelization has the property that, for \underline{odd} primes $p$ and for $\mat{a}{b}{c}{d}\in\Gamma^0(p^{n_p}\Z_p)$ (resp.~$\mat{a}{b}{c}{d}\in\Gamma_0(p^{n_p}\Z_p)$) and $\varepsilon\in\{\pm1\}$,
\begin{equation}\label{adelizationinvarianceeq}
 \Lambda_h(g(\mat{a}{b}{c}{d},\varepsilon))=\eps \chi_p(a)\Lambda_h(g)
\end{equation}
for all $g\in\meta_2(\A)$. Here, $\chi_p$ is the local component of the adelization of the Dirichlet character $\chi$. (There is an analogous property for $p=2$, which we will not need.)

It is easy to check that if $h \in S_{k-\frac12}(\Gamma, \chi)$ has adelization $\Lambda_h$, then
\begin{equation}\label{e:peterssonhalfint}
 \frac{\langle h, h\rangle}{\pi/3} =  \frac{\langle h, h\rangle}{\vl(\SL_2(\Z)\bs \H)} = \frac{\langle \Lambda_h , \Lambda_h \rangle}{\vl(\SL_2(\Q) \bs \SL_2(\A))} = \langle \Lambda_h, \Lambda_h \rangle.
\end{equation}
Recall here that \[\langle \Lambda_h, \Lambda_h \rangle = \int_{\SL_2(\Q) \bs \SL_2(\A)} |\Lambda_h(g)|^2 dg, \] and that all our adelic groups are equipped with the Tamagawa measure.

Now, let $m$ be an odd, positive, squarefree integer. We recall that the Kohnen plus space $S_{k-\frac12}^{+}(\Gamma_0(4m))$ is the subspace of $S_{k-\frac12}(\Gamma_0(4m))$ consisting of those forms $h(z)=\sum_{n\geq 1}a_h(n)e^{2\pi inz}$ such that $a_h(n)=0$ unless $n\equiv 0,3\pmod{4}$ (recall that $k$ is even). Similarly, let $S_{k-\frac12}^{+}(\Gamma^0(4m),\chi_m)$ be the subspace of $S_{k-\frac12}(\Gamma^0(4m),\chi_m)$ consisting of forms 
$h(z)=\sum_{n\geq 1}a_h(n)e^{2\pi inz/(4m)}$
such that $a_h(n)=0$ unless $n\equiv 0,3\pmod{4}$.

Let $h \in S^+_{k-\frac12}(\Gamma_0(4m))$. Suppose that $h'$ is given by $h'(\tau) = h(\frac{\tau}{4m})$. The following lemma is the analog of the local fact~\eqref{VGamma0Gamma0eq}.
\begin{lemma}\label{lemma:twistquad} We have $$h' \in S^+_{k-\frac12}(\Gamma^0(4m), \chi_m),$$ where $\chi_m$ is the Dirichlet character mod $4m$ defined by $\chi_m(a) = \Bigl(\frac{m}{a}\Bigr)$ for an integer relatively prime to $4m$.
\end{lemma}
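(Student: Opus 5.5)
The plan is to verify directly that $h'(\tau)=h(\tau/4m)$ transforms correctly under every $\gamma\in\Gamma^0(4m)$, and then check cuspidality and the plus-space condition.

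\textbf{Conjugation.} Put $\delta=\mat{1}{}{}{4m}$, so that $\delta\tau=\tau/4m$. For $\gamma=\mat{a}{b}{c}{d}\in\Gamma^0(4m)$ set $\gamma'=\delta\gamma\delta^{-1}=\mat{a}{b/4m}{4mc}{d}$. Since $4m\mid b$, we have $\gamma'\in\Gamma_0(4m)$. Then
\[
h'(\gamma\tau)=h(\delta\gamma\tau)=h(\gamma'\delta\tau).
\]
Because $h\in S_{k-\frac12}(\Gamma_0(4m))$, this equals $\widetilde j(\gamma',\tau/4m)^{2k-1}h(\tau/4m)$.

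\textbf{Comparing automorphy factors.} The core of the proof is comparing $\widetilde j(\gamma',\tau/4m)$ with $\widetilde j(\gamma,\tau)$. The square-root parts agree, since $4mc\cdot\tau/4m+d=c\tau+d$. The factor $\eps_d$ is common to both. So the two factors differ only by
\[
\left(\tfrac{4mc}{d}\right)\Big/\left(\tfrac{c}{d}\right)=\left(\tfrac{4m}{d}\right)=\left(\tfrac{m}{d}\right),
\]
using $\left(\frac{4}{d}\right)=1$, which holds since $d$ is odd. Raising to the power $2k-1$ leaves the sign unchanged, so
\[
h'|_{k-\frac12}\gamma=\left(\tfrac{m}{d}\right)h'.
\]

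\textbf{Identifying the character.} Since $ad\equiv1\pmod{4m}$ (as $4m\mid b$), we get $\left(\frac{m}{d}\right)=\left(\frac{m}{a}\right)=\chi_m(\gamma)$. This matches the convention $\chi(\gamma)=\chi(d)$ up to this identification. We must also check that $\chi_m$ is a genuine Dirichlet character mod $4m$. For $m$ odd and squarefree, $a\mapsto\left(\frac{m}{a}\right)$ (Jacobi symbol) is well defined on odd $a$ prime to $m$ by quadratic reciprocity, with a $\pm$ factor depending on $m\bmod 4$ that is handled by the modulus $4$.

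This step is the main obstacle: the Kronecker symbol in $\widetilde j$ must be handled carefully. In particular, the convention $\left(\frac{c}{d}\right)$ for negative $d$ or $c=0$ needs checking. I would verify the edge cases $c=0$ and $d<0$ separately; in those cases $\left(\frac{4mc}{d}\right)$ and $\left(\frac{c}{d}\right)$ still differ by $\left(\frac{m}{|d|}\right)$, which again equals $\left(\frac{m}{d}\right)$ with the Shimura convention.

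\textbf{Holomorphy, cuspidality and the plus condition.} Holomorphy and vanishing at cusps transfer, because $\delta$ maps cusps to cusps and $h$ vanishes at all cusps. Finally, $h'(\tau)=\sum a_h(n)e^{2\pi i n\tau/4m}$. The plus-space condition ($a_h(n)=0$ unless $n\equiv0,3\pmod4$) carries over verbatim to the definition of $S^+_{k-\frac12}(\Gamma^0(4m),\chi_m)$. This completes the proof of Lemma~\ref{lemma:twistquad}.
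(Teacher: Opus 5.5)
Your proposal is correct and follows essentially the same route as the paper: conjugate $\gamma\in\Gamma^0(4m)$ to $\gamma'=\mat{a}{b/4m}{4mc}{d}\in\Gamma_0(4m)$, apply the transformation law of $h$ at $\tau/4m$, and observe that $\widetilde j(\gamma',\tau/4m)$ and $\widetilde j(\gamma,\tau)$ differ only by $\left(\frac{4mc}{d}\right)/\left(\frac{c}{d}\right)=\chi_m(d)$, because $d$ is odd. Your extra checks (the sign conventions for $d<0$ or $c=0$, and holomorphy and vanishing at the cusps) cover details the paper leaves implicit. Your detour through $\left(\frac{m}{a}\right)$ is harmless but unnecessary, since the paper's convention already evaluates $\chi_m$ at $d$.
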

\begin{proof}
Let \(\delta=\mat{a}{b}{c}{d}\in\Gamma^0(4m)\), and put
$\delta^\#=\mat{a}{b/4m}{4mc}{d}\in\Gamma_0(4m)$.
Then
\[
h'(\delta\tau)=h(\delta^\#(\tau/4m))
=\widetilde j(\delta^\#,\tau/4m)^{2k-1} h(\tau/4m) = \widetilde j(\delta^\#,\tau/4m)^{2k-1} h'(\tau)
\]
and
\[
\widetilde j(\delta^\#,\tau/4m)
=\eps_d^{-1}\Bigl(\frac{4mc}{d}\Bigr)(c\tau+d)^{1/2}
=\chi_m(d)\widetilde j(\delta,\tau),
\]
since \(d\) is odd.  So $h'|_{k-\frac12}\delta=\chi_m(d)h'$. The fact that $h'$ has the required vanishing of Fourier coefficients is immediate.
\end{proof}

Let \(\Lambda_h\) be the adelization of \(h\). Put
\[
\qquad
\beta=\mat{(4m)^{-1/2}}{0}{0}{(4m)^{1/2}}\in \SL_2(\R).
\]

\begin{lemma}\label{lem:adelizationlambda'}
 The adelization $\Lambda_{h'}$ of \(h'\) is given by
 \begin{equation}\label{e:reladels}
  \Lambda_{h'}(g)=(4m)^{\frac k2-\frac14}\Lambda_h\bigl(g^{(4m)^{-1}}(\beta, 1)\bigr)
 \end{equation}
 for $g\in\meta_2(\A)$.
\end{lemma}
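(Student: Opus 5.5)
The plan is to use uniqueness of adelization. Define $\Lambda'(g):=(4m)^{\frac k2-\frac14}\Lambda_h\bigl(g^{(4m)^{-1}}(\beta,1)\bigr)$ for $g\in\meta_2(\A)$. Since $i(\SL_2(\Q))\meta_2(\R)$ is dense in $\meta_2(\A)$ and both sides are continuous, it suffices to check two things. First, $\Lambda'$ is left invariant under $i(\SL_2(\Q))$. Second, $\Lambda'$ satisfies \eqref{adelizationeq} for $h'$ on $\meta_2(\R)$, namely
\[
\Lambda'((g_\infty,\eps))=J((g_\infty,\eps),i)^{-2k+1}h'(g_\infty i).
\]

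For left invariance, take $\gamma\in\SL_2(\Q)$. The automorphism $g\mapsto g^{\xi}$ with $\xi=(4m)^{-1}\in\Q^\times$ is defined adelically, so it restricts on $\SL_2(\Q)$ to conjugation by $\diag(\xi,1)$. The one point to verify is that the global lift respects the splitting:
\[
i(\gamma)^{\xi}=i(\gamma^{\xi}).
\]
I would check this using the explicit cocycle and the definition of $s_\A$, by showing that $\prod_v$ of the local sign changes is $1$ via Hilbert reciprocity. Alternatively, it follows from the uniqueness of the splitting of $\SL_2(\Q)$, since $\SL_2(\Q)$ is perfect. Granting this, we get $\Lambda'(i(\gamma)g)=\Lambda'(g)$, because $(i(\gamma)g)^\xi=i(\gamma^\xi)g^\xi$ and $\Lambda_h$ is left $i(\SL_2(\Q))$-invariant.

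For the archimedean identity, let $g\in\meta_2(\R)$. The twist $g^{\xi}$ at infinity is conjugation by $\diag(\xi,1)$, and up to the center this equals conjugation by $\beta$. Indeed,
\[
\diag(\xi,1)=\xi^{1/2}\,\beta\quad\text{(as matrices with }\xi>0\text{)},
\]
so $g^\xi=\beta g\beta^{-1}$ in $\SL_2(\R)$. I must check that the metaplectic lifts match, i.e.\ $(g,\eps)^\xi=(\beta,1)(g,\eps)(\beta,1)^{-1}$. Both sides are continuous automorphisms of $\meta_2(\R)$ lifting the same map and agreeing at the identity, so they coincide because $\meta_2(\R)$ is connected. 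Hence
\[
\Lambda'((g,\eps))=(4m)^{\frac k2-\frac14}\Lambda_h\bigl((\beta,1)(g,\eps)\bigr).
\]
Applying \eqref{adelizationeq} for $h$ gives
\[
(4m)^{\frac k2-\frac14}\,J\bigl((\beta,1)(g,\eps),i\bigr)^{-2k+1}\,h(\beta g i).
\]
Since $\beta g i=g i/(4m)$, this is $h'(gi)$. It remains to compare automorphy factors. Because $\beta$ is diagonal with positive entries, the cocycle is trivial for products $(\beta,1)(g,\eps)$, so
\[
J((\beta,1)(g,\eps),i)=J((\beta,1),gi)\,J((g,\eps),i),\qquad J((\beta,1),z)=(4m)^{1/4}.
\]
Then $(4m)^{\frac14(-2k+1)}$ exactly cancels the prefactor $(4m)^{\frac k2-\frac14}$, which proves the identity.

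The main obstacle is the compatibility of the twist with the splitting $i$, i.e.\ $i(\gamma)^\xi=i(\gamma^\xi)$, together with the correct cocycle bookkeeping at infinity, in particular the factorization of $J$ along $(\beta,1)$. For the first, I would invoke uniqueness of the splitting over the perfect group $\SL_2(\Q)$. For the second, I would directly compute with the Kubota cocycle for $\beta$ diagonal with positive entries. Finally, since the adelization is unique, $\Lambda'=\Lambda_{h'}$.
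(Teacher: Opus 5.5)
Your proposal is correct and follows essentially the same route as the paper. You check agreement on the dense set $i(\SL_2(\Q))\meta_2(\R)$ using that the twist by $(4m)^{-1}$ preserves $\SL_2(\Q)$. At infinity you then use $(g_\infty,\eps)^{(4m)^{-1}}(\beta,1)=(\beta g_\infty,\eps)$ and $J((\beta g_\infty,\eps),i)=(4m)^{1/4}J((g_\infty,\eps),i)$, so that the power of $4m$ cancels. The one difference is that you justify steps the paper simply asserts: perfectness of $\SL_2(\Q)$ for compatibility with the splitting $i$, connectedness of $\meta_2(\R)$ for identifying the twist with conjugation by $(\beta,1)$, and triviality of the cocycle for the positive diagonal $\beta$.
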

\begin{proof}
Let $\Lambda'$ be the function on the right hand side of~\eqref{e:reladels}. It suffices to check $\Lambda_{h'}=\Lambda'$ on
\(i(\SL_2(\Q))\meta_2(\R)\). Since $m$ is positive, twisting by $(4m)^{-1}$ is an automorphism of $\meta_2(\A)$ preserving~$\SL_2(\Q)$. Hence, for \(g=i(\gamma)(g_\infty,\eps)\),
\[
\Lambda'(g)
=
(4m)^{\frac{k}{2}-\frac14}
\Lambda_h\bigl((g_\infty,\eps)^{(4m)^{-1}}(\beta,1)\bigr).
\]
We have 
\[
(g_\infty,\eps)^{(4m)^{-1}}(\beta,1)=(\beta g_\infty,\eps).
\]
Hence
\[
\Lambda'(g)
=
(4m)^{\frac{k}{2} - \frac14}
J((\beta g_\infty,\eps),i)^{-(2k-1)}
h(\beta g_\infty i).
\]
Since
\[
\beta g_\infty i=\frac{g_\infty i}{4m},
\qquad
J((\beta g_\infty,\eps),i)
=
(4m)^{1/4}J((g_\infty,\eps),i),
\]
this becomes
\[
\Lambda'(g)
=
J((g_\infty,\eps),i)^{-(2k-1)}
h'(g_\infty i).
\]
By \eqref{adelizationeq}, the right hand side equals $\Lambda_{h'}(g_\infty,\varepsilon)=\Lambda_{h'}(g)$, proving the lemma.
\end{proof}
For later use, we note the following relation of Petersson norms, which follows from an elementary computation, together with \eqref{e:peterssonhalfint}.
\begin{equation}\label{e:peterssonhafintfinal}
 \frac{\pi}{3} \langle \Lambda_{h'}, \Lambda_{h'} \rangle = \langle h', h' \rangle = (4m)^{k-1/2} \langle h, h \rangle =  \frac{\pi}{3} (4m)^{k-1/2} \langle \Lambda_h, \Lambda_h \rangle.
\end{equation}

Now suppose that $\Lambda_h$ generates an irreducible genuine automorphic representation  $\sigma_h$ of $\meta_2(\A)$.
In \eqref{msigmaeq} we defined the twist of a local representation by a non-zero element of the local field. The twist of a global representation by a non-zero rational element is defined analogously. It follows from Lemma~\ref{lem:adelizationlambda'} that $\Lambda_{h'}$ generates the irreducible automorphic representation
\begin{equation}\label{e:twistrep}
 \sigma_h^{(m)}:= (m^{-1})\cdot \sigma_h \simeq m\cdot \sigma_h
\end{equation}
of $\meta_2(\A)$. 
It follows from \eqref{adelizationinvarianceeq} that, for each prime $p\mid m$,
\[
 \Lambda_{h'}\bigl(g(\gamma_p,\eps)\bigr)=\eps\chi_{m,p}(a)\Lambda_{h'}(g),\qquad\gamma_p=\mat{a}{b}{c}{d}\in\Gamma^0(m\Z_p),\;\eps\in\{\pm1\},
\]
for \(g\in\meta_2(\A)\). Here, \(\chi_{m,p}=(\cdot,m)_p\) is the unique nontrivial quadratic character of \(\Z_p^\times\). Note that $p$ is odd because $m$ is.

For any non-trivial global character $\psi'$ of $\Q\backslash\A$, Waldspurger has defined a lifting $\wald_{\psi'}$ from the set of automorphic representations of $\meta_2(\A)$ that are not ``elementary theta series'' to the set of automorphic representations of $\GL_2(\A)$ with trivial central character; see~\cite[Section~VI.2]{Waldspurger1991}. The global lifting is compatible with the local liftings considered earlier.

The representation generated by $\overline{\Lambda_h}$ is the complex conjugate representation $\overline{\sigma_h}$, which, since $\sigma_h$ is unitary, is isomorphic to the contragredient representation  $\sigma_h^\vee$:
\[
 \overline{\sigma_h}\simeq\sigma_h^\vee\simeq(-1)\cdot\sigma_h.
\]
The next lemma explains the relationship between the  Waldspurger lifting of $\sigma_h$ and the classical Shimura lift of $h$.
\begin{lemma}\label{lem:shimura-waldspurger-convention}
Let $\sigma_h$ be the genuine automorphic representation generated by
the adelization $\Lambda_h$ of
$h\in S^+_{k-\frac12}(\Gamma_0(4m))$, and let $\pi_{0,h}$ be the
automorphic representation of $\PGL_2(\A)$ generated by a classical
Shimura lift of $h$.  Then
\begin{equation}\label{waldshimuraeq}
\pi_{0,h} \simeq  \wald_\psi(\overline{\sigma_h}) \simeq \wald_{\psi^{-1}}(\sigma_h).
\end{equation}
\end{lemma}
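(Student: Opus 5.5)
The plan is to prove the second isomorphism first, since it is purely formal. Then I will identify $\wald_{\psi^{-1}}(\sigma_h)$ with the Shimura lift place by place, using strong multiplicity one for $\PGL_2$.

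\emph{Step 1: the second isomorphism.} We have $\overline{\sigma_h}\simeq(-1)\cdot\sigma_h$. Waldspurger's correspondence satisfies the twisting rule $\wald_{\psi^{a}}(\sigma)=\wald_{\psi}(a^{-1}\cdot\sigma)$ for $a\in\Q^\times$, up to the normalization convention of \cite[Section~VI.2]{Waldspurger1991}. Taking $a=-1$ gives $\wald_\psi(\overline{\sigma_h})\simeq\wald_{\psi^{-1}}(\sigma_h)$. Locally, this rule says that the lift depends on the pair $(\sigma_v,\psi_v)$ only through the twisted pair. It follows directly from the definitions, because the metaplectic principal series $\tilde\pi(\chi)$ and the Weil representation both transform in this way under $g\mapsto g^{a}$.

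\emph{Step 2: almost all places.} Fix a prime $p\nmid 4m$. Both $\pi_{0,h}$ and $\wald_{\psi^{-1}}(\sigma_h)$ are then unramified at $p$, so by strong multiplicity one it suffices to match Satake parameters at such $p$. I will take $h$ to be a Hecke eigenform, with eigenvalue $\lambda_p$ under $T(p^2)$. Shimura's theorem, in Kohnen's normalization for the plus space, says that the Shimura lift $f$ has $T(p)$-eigenvalue $\lambda_p$, so its Satake parameters are $\{\delta,\delta^{-1}\}$ with $p^{k-3/2}(\delta+\delta^{-1})=\lambda_p$. On the adelic side, $\sigma_{h,p}\simeq\tilde\pi(\chi)$ with $\chi$ unramified. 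The Hecke operator $T(p^2)$ corresponds to the spherical Hecke algebra of $\meta_2(\Q_p)$ acting on the $\SL_2(\Z_p)\times\{\pm1\}$-genuine-fixed vector, and its eigenvalue is an explicit multiple of $\chi(p)+\chi(p)^{-1}$. The factor involved is a Weil index $\gamma_{\psi_p}$ together with a Hilbert-symbol normalization; this is where the choice of $\psi^{\pm1}$ enters. By \cite[Th\'eor\`eme~1]{Waldspurger1991}, $\wald_{\psi_p^{\pm1}}(\tilde\pi(\chi))$ is the unramified principal series with parameters $\chi(p)^{\pm1}$, possibly twisted by $(\cdot,-1)_p$ depending on the character. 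Comparing the two sides should show that $\psi^{-1}$ is the character for which the parameters match exactly. The sign $(-1,p)_p$ has to be tracked here, and the shift by $(-1)\cdot$ between $\psi$ and $\psi^{-1}$ should absorb it.

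\emph{Step 3: conclusion.} Two cuspidal representations of $\PGL_2(\A)$ that agree at almost all places are isomorphic, which proves \eqref{waldshimuraeq}. I need not treat $p\mid 4m$ separately. However, the equality of archimedean components is a useful consistency check: $\wald_{\psi_\infty^{-1}}$ applied to the holomorphic discrete series of weight $k-\tfrac12$ should give the discrete series of weight $2k-2$. The character $\psi_\infty^{-1}$ is needed here because $\overline{\sigma_h}$ has antiholomorphic components.

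\emph{Main obstacle.} The hard part is the sign bookkeeping in Step 2. This means pinning down the exact relationship between the classical $T(p^2)$ eigenvalue and $\chi(p)+\chi(p)^{-1}$ in Waldspurger's conventions for $\tilde\pi(\chi)$, including the factor $\gamma_{\psi_p}$ and the Hilbert symbol $(-1,p)$. I would first test the convention by checking the archimedean component as above, and then fix the $p$-adic normalization by comparing the Kohnen--Zagier/Waldspurger formula for $|a_h(|D|)|^2$ with $L(1/2,f\otimes\chi_D)$.
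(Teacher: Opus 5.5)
Your Step~1 is correct and is a legitimate variant of the paper's argument. The paper gets the second isomorphism from $\wald_\psi(\overline{\sigma_h})\simeq\overline{\wald_{\psi^{-1}}(\sigma_h)}$ together with $\overline{\pi_{0,h}}\simeq\pi_{0,h}$ (unitary, trivial central character). You use $\overline{\sigma_h}\simeq(-1)\cdot\sigma_h$ and $\wald_{\psi^{a}}(\sigma)\simeq\wald_\psi(a^{-1}\cdot\sigma)$ instead.

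The gap is in Step~2, which carries the whole content of the lemma and which you leave at ``comparing the two sides should show''. Your own twisting rule, combined with $(-1)\cdot\tilde\pi(\chi)\simeq\tilde\pi(\chi\chi_{-1})$, gives $\wald_{\psi^{-1}}(\sigma_h)\simeq\wald_\psi(\sigma_h)\otimes\chi_{-1}$, where $\chi_{-1}=\prod_v(\cdot,-1)_v$ is the quadratic character of $\Q(i)/\Q$. So there are exactly two candidates, both cuspidal on $\PGL_2(\A)$, and at each $p\nmid 4m$ their Satake parameters agree only up to the sign $\bigl(\frac{-1}{p}\bigr)$. The lemma says precisely that the Shimura lift is the $\psi^{-1}$ candidate, and nothing in your proposal decides this. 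The remark that the shift by $(-1)\cdot$ ``should absorb'' the sign only restates Step~1, which relates the two candidates without choosing between them.

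Your archimedean test cannot decide it either. We have $\chi_{-1,\infty}=\mathrm{sgn}$, and the weight $2k-2$ discrete series is invariant under twisting by $\mathrm{sgn}$, so $\wald_{\psi_\infty}(\sigma_{h,\infty})$ and $\wald_{\psi_\infty^{-1}}(\sigma_{h,\infty})$ coincide. In particular, $\psi_\infty^{-1}$ is not ``needed'' because of antiholomorphic components.

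What is missing is the actual sign computation: relating the classical $T(p^2)$-eigenvalue to the local parameter of $\sigma_{h,p}$ for a fixed additive character. The decisive input is the quadratic character $\bigl(\frac{(-1)^{k-1}n}{p}\bigr)$ in the middle term of Shimura's formula for $T(p^2)$ in weight $k-\frac12$. Since $k$ is even, this is $\bigl(\frac{-n}{p}\bigr)$. You must match it against the Whittaker function of the spherical vector of $\tilde\pi(\chi)$, using that $a_h(n)$ is the $\psi^n$-Whittaker coefficient of $\Lambda_h$; that matching singles out $\psi^{(-1)^{k-1}}=\psi^{-1}$.

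This is exactly the bookkeeping in \cite{BaruchMao07}. The paper's proof simply cites \cite[Section~10.6]{BaruchMao07}, where the relevant character is $\psi_S(x)=\psi((-1)^{k-1}x)=\psi^{-1}(x)$. Your fallback via the Kohnen--Zagier formula would work only after importing Waldspurger's formula in adelic form with a fixed $\psi$-normalization, which is essentially the Baruch--Mao theorem again. As it stands, Step~2 is a plan whose unresolved step is the lemma itself.
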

\begin{proof}
This follows from the calculations in \cite{BaruchMao07}. More precisely, 
 the character denoted
there by $\psi_S$ is
$
 \psi_S(x)=\psi((-1)^{k-1} x)=\psi^{-1}(x),
$ since $k$ is even,
and \cite[Section~10.6]{BaruchMao07} gives
\[
 \pi_{0,h} \simeq \wald_{\psi^{-1}}(\sigma_h).
\]
Since $\pi_{0,h}$ is unitary and has trivial central character, we also have
\[
 \wald_\psi(\overline{\sigma_h})
 \simeq
 \overline{\wald_{\psi^{-1}}(\sigma_h)} \simeq \pi_{0, h}.
\]
\end{proof}

Properties of the local theta correspondence give $\wald_{\psi_p^m}(m\cdot\rho_p)=\wald_{\psi_p}(\rho_p)$ for any irreducible admissible representation $\rho_p$ of $\meta_2(\Q_p)$ that is not an even Weil representation. It therefore follows from \eqref{e:twistrep}, \eqref{waldshimuraeq}, and strong multiplicity one for $\GL_2$ that
\begin{equation}\label{waldequalityeq}
 \wald_{\psi^m}(\overline{\sigma_h^{(m)}})
 =\wald_\psi(\overline{\sigma_h})=\pi_{0,h}.
\end{equation}

We say that a genuine irreducible cuspidal automorphic representation $\sigma$ of $\meta_2(\A)$ occurs in $S^+_{k-\frac{1}{2}}(\Gamma_0(4m))$ if there exists a cusp form in $S^+_{k-\frac{1}{2}}(\Gamma_0(4m))$ whose adelization generates $\sigma$. We refer to the span of all such cusp forms as the $\sigma$-isotypic subspace of  $S^+_{k-\frac12}(\Gamma_0(4m))$ and denote this subspace by $\mathcal V_\sigma$.  We have a direct sum decomposition
\[
 S^+_{k-\frac12}(\Gamma_0(4m))  = \bigoplus_\sigma \mathcal V_\sigma,
\]
where $\sigma$ ranges over the (distinct) representations occurring in $S^+_{k-\frac{1}{2}}(\Gamma_0(4m))$. Given an element $h$ of  $S^+_{k-\frac{1}{2}}(\Gamma_0(4m))$, it lies inside a single $\mathcal V_\sigma$ if and only if it is an eigenform for the Hecke operators $T(n^2)$ for all $(n, 4m)=1$.

\subsection{Newforms and orthogonal Hecke bases of \texorpdfstring{$S^+_{k-\frac12}(\Gamma_0(4m))$}{}}\label{s:heckebases}

For each genuine cuspidal automorphic representation $\sigma$ of $\meta_2(\A)$ that occurs in $S^+_{k-\frac{1}{2}}(\Gamma_0(4m))$, we now construct an orthogonal basis $\B_\sigma$ for the subspace $\mathcal V_\sigma$ of $S^+_{k-\frac12}(\Gamma_0(4m))$.

Let $\sigma\cong\otimes\sigma_p$, where $\sigma_p$ is an irreducible, admissible representation of $\meta_2(\Q_p)$. Note that $\sigma_\infty$ is a holomorphic discrete series representation of lowest weight $k-\frac12$, that $\sigma_p$ is an unramified principal series representation at all finite primes not dividing $m$, and that at the primes dividing $m$ it is either an unramified principal series representation or a special representation. We fix distinguished vectors at each place, as follows. Recall the spaces $V_{\sigma_p}(\Gamma,\chi)$ defined by the condition~\eqref{VGammachieq}.
\begin{itemize}
 \item At the archimedean place, let $v_{\infty}$ be a vector of weight $k-1/2$.
 \item At places $p\mid m$ for which $\sigma_p$ is an unramified principal series representation, let~$v_p^{(1)}$ and~$v_p^{(2)}$ be an orthogonal basis of the $2$-dimensional space $V_{\sigma_p}(\Gamma_0(p\Z_p),1)$.
 \item At places $p\mid m$ for which $\sigma_p$ is a special representation, let $v_p^{(1)}$ be a vector spanning the $1$-dimensional space $V_{\sigma_p}(\Gamma_0(p\Z_p),1)$.
 \item At odd primes $p\nmid m$, let $v_p^{(1)}$ be a vector spanning the $1$-dimensional space $V_{\sigma_p}(\SL_2(\Z_p),1)$.
 \item For $p=2$ the definition of the spaces $V_{\sigma_p}(\Gamma,\chi)$ has to be modified. Let $V_{\sigma_2}(\Gamma_0(4\Z_2),1)$ be the space of vectors $v$ in the space of $\sigma_2$ with the following properties:
  \begin{alignat*}{2}
   \sigma_2(\mat{1}{b}{}{1},\varepsilon)v&=\varepsilon v&\qquad&\text{for }b\in\Z_2,\\
   \sigma_2(\mat{a}{}{}{a^{-1}},\varepsilon)v&=\varepsilon\delta_{\psi_2}(a) v&\qquad&\text{for }a\in\Z_2^\times,\\
   \sigma_2(\mat{1}{}{c}{1},\varepsilon)v&=\varepsilon v&\qquad&\text{for }c\in4\Z_2,
  \end{alignat*}
  Here, $\delta_{\psi_2}(a)$ is the Weil constant. One can show by calculations in the induced models that $\dim V_{\sigma_2}(\Gamma_0(4\Z_2),1)=2$. The plus space condition singles out a $1$-dimensional subspace of $V_{\sigma_2}(\Gamma_0(4\Z_2),1)$, and we let $v_2^{(1)}$ be a vector spanning this subspace.
\end{itemize}
Let $C(\sigma)$ be the product of all the primes $p$ where $\sigma_p$ is special; note that $C(\sigma)$ divides $m$. For each positive integer $i$ such that $i$ divides $\frac{m}{C(\sigma)}$, we define a vector $v^{(i)}_{\sigma}$ in the space of $\otimes'_{v<\infty} \sigma_v$ by
$$
 v^{(i)}_{\sigma} = \big(\otimes_{p|i} v_{\sigma_p}^{(2)}\big)\otimes\big(\otimes_{p\nmid i} v_{\sigma_p}^{(1)}\big).
$$
For each such $v^{(i)}_{\sigma}$, let $\Lambda_{\sigma, i}$ be the automorphic form in the space of $\sigma$ corresponding to $v_\infty\otimes v^{(i)}_{\sigma}$. By de-adelization, there is a unique cusp form $h_{\sigma, i} \in S^+_{k-\frac12}(\Gamma_0(4m))$ whose adelization is $\Lambda_{\sigma, i}$. Define 
\begin{equation}\label{Bsigmaieq}
 \B_\sigma = \left\{h_{\sigma, i}\colon i\ge 1, i\mid\frac{m}{C(\sigma)}\right\}.
\end{equation}
Then $\B_\sigma$ is a set of vectors in $S^+_{k-\frac12}(\Gamma_0(4m))$ that form an orthogonal basis of $\mathcal V_\sigma$. Note that
\[
 |\B_\sigma| = \dim(\mathcal V_\sigma) = 2^{\omega(m/C(\sigma))},
\]
where $\omega$ counts the number of prime divisors. If $C(\sigma)=m$, then $\B_\sigma$ is a singleton set consisting of a newform in $S^+_{k-\frac12}(\Gamma_0(4m))$; if  $C(\sigma)< m$ then all elements of $\B_\sigma$ are oldforms.

Now, define the set
\begin{equation}\label{Bmdefeq}
 \B_m = \bigcup_\sigma \B_\sigma,
\end{equation}
where $\sigma$ ranges over the set of all the cuspidal automorphic representations of $\meta_2(\A)$ that occur in $S^+_{k-\frac12}(\Gamma_0(4m))$. Then $\B_m$ gives us an orthogonal basis of $S^+_{k-\frac12}(\Gamma_0(4m))$. Given $h \in \B_m$, we let $\sigma_h$ denote the automorphic representation of $\meta_2(\A)$ generated by the adelization $\Lambda_h$ of $h$. Furthermore we define $h' \in S^+_{k-\frac12}(\Gamma^0(4m), \chi_m)$ by 
\[
 h'(\tau) =  h(\frac{\tau}{4m}),
\]
and we let $\Lambda_{h'}$ denote the adelization of $h'$.  Note that $\Lambda_{h'}$ generates the representation $\sigma_h^{(m)}$ defined in \eqref{e:twistrep}.

For each $h \in \B_m$, the automorphic forms $\Lambda_h$ and $\Lambda_{h'}$ are related by \eqref{e:reladels}. The form $\Lambda_{h'}$ corresponds to a factorizable vector in $\sigma_h^{(m)}$, and at each prime dividing $m$ its local component lies in $V_{m\cdot\sigma_{h,p}}(\Gamma^0(p\Z_p),\chi_{m,p})$. Since $\chi_{m,p}$ is real-valued, $\overline{\Lambda_{h'}}$ is a factorizable vector in $\overline{\sigma_h^{(m)}}$, with local component in
\[
 V_{m\cdot\overline{\sigma_{h,p}}}(\Gamma^0(p\Z_p),\chi_{m,p}).
\]

By \eqref{waldshimuraeq} and \eqref{waldequalityeq}, the representation $\pi_{0,h}$ generated by the Shimura lift of $h$ is
\[
 \pi_{0,h}=\wald_\psi(\overline{\sigma_h})
 =\wald_{\psi^m}(\overline{\sigma_h^{(m)}}).
\]
Note that $\pi_{0,h}$ is special at every prime dividing $C(\sigma_h)$, spherical at every prime not dividing $C(\sigma_h)$, and a holomorphic discrete series representation of weight $2k-2$ at infinity.
\subsection{Classical interpretation of Fourier--Jacobi periods}
Let $F$ be in $S_k(\Sp_4(\Z))$ with adelization~$\Psi_F$. Let  $\Lambda \in L^2(\SL_2(\Q)\bs \meta_2(\A))$ be an automorphic form on the metaplectic group, and let~$m$ be an odd, squarefree, positive integer. In the present data, the global Fourier--Jacobi period \eqref{e:deffjinto} is
\begin{equation}\label{FJP}
\mathcal{FJ}_{\psi^m}(\Psi_F,\Lambda,\phi^{(m)})
:=\int_{\SL_2(\Q)\bs \SL_2(\A)}\int_{H(\Q)\backslash H(\A)}
\Psi_F(ng)\Lambda(g)\overline{\Theta_{\psi^m}(ng,\phi^{(m)})}\,dn\,dg,
\end{equation}
where we use the Tamagawa measure on $\SL_2(\Q)\bs \SL_2(\A)$ and the standard measure on $H(\Q)\backslash H(\A)$. The main aim of this subsection is to give an interpretation
of the global Fourier--Jacobi period defined by \eqref{FJP}
in the classical language.

Recall that the Fourier--Jacobi coefficients (see \eqref{e:fj}) of $F$ have the usual theta decomposition
\begin{equation}\label{e:thetadecomp}
f_m(\tau,z) = \sum_{\alpha\;(\text{mod }2m)}f_{m,\alpha}(\tau)\theta_{m,\alpha}(\tau,z).
\end{equation}
with $\theta_{m,\alpha}$ defined in \eqref{e:thetafunctionsa}. Set
\begin{equation}\label{def:fm}
f_m(\tau)=f_{m,F}(\tau)=\sum_{\alpha\;(\text{mod }2m)}f_{m,\alpha}(4m\tau).
\end{equation} Then we know~\cite[Thm. 5.6]{EZ85} that $f_m \in S^+_{k-\frac12}(\Gamma_0(4m))$.
 We prove the following proposition.
\begin{proposition}\label{p:classicaladelicperiods}
 Let $m$ be an odd squarefree integer. Let $F \in S_k(\Sp_4(\Z))$ with adelization $\Psi_F$.
 Let  $f_m \in S^+_{k-\frac12}(\Gamma_0(4m))$ be given by \eqref{def:fm}.
  Let $h \in S^+_{k-\frac12}(\Gamma_0(4m))$. Define $$h'(\tau) = h(\frac{\tau}{4m}),$$ so that $h' \in S^+_{k-\frac12}(\Gamma^0(4m), \chi_m)$, and let $\Lambda'$ be the adelization of $h'$. Then we have
\[\mathcal{FJ}_{\psi^m}(\Psi_F, \overline{\Lambda'}, \phi^{(m)}) = e^{-2 \pi m} (4m)^{k-1} \xi(2)^{-1} \langle f_m, h\rangle.\]
\end{proposition}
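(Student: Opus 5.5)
The approach is the standard unfolding argument: convert the adelic period into an integral over $\SL_2(\Z)\bs\H$ times the Heisenberg torus $(\Z\bs\R)^2$, and then apply the theta decomposition \eqref{e:thetadecomp}.

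\textbf{Step 1 (strong approximation).} The integrand of \eqref{FJP} is left invariant under $\SL_2(\Q)H(\Q)$. At each finite prime it is right invariant under the compact open subgroup $\Gamma^0(m\Z_p)\ltimes H(\Z_p)$ for odd $p$, and under a suitable compact open subgroup at $p=2$. The three factors transform as follows:
\begin{itemize}
\item $\Psi_F$ is invariant under $\Sp_4(\Z_p)$.
\item $\overline{\Lambda'}$ transforms by $\chi_{m,p}(a)$ and the genuine sign, by \eqref{adelizationinvarianceeq}.
\item $\overline{\Theta_{\psi^m}(\cdot,\phi^{(m)})}$ transforms by the conjugate genuine sign and by $\chi_{m,p}(a)$, since $\phi^{(m)}_p=\mathbf 1_{p^{-1}\Z_p}$ is an eigenvector for $\Gamma^0(p)$ under $\omega_{\psi_p^m}$ via \eqref{SWtwisteq}.
\end{itemize}
The signs cancel and the characters combine to $\chi_{m,p}^2=1$. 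Using $\SL_2(\A)=\SL_2(\Q)\SL_2(\R)\prod_p K_p$ and $H(\A)=H(\Q)H(\R)H(\hat\Z)$, the period reduces to a constant multiple of
\[
\int_{\Gamma\bs \SL_2(\R)}\int_{H(\Z)\bs H(\R)}\Psi_F(ng_\infty)\overline{\Lambda'}(g_\infty)\overline{\Theta}(ng_\infty)\,dn\,dg_\infty .
\]
Here $\Gamma$ is the congruence subgroup cut out by these local conditions. I will track the volume constant through the Tamagawa normalization, which gives $\vol(\SL_2(\Q)\bs\SL_2(\A))=1$ and $\vol(\SL_2(\Z)\bs\H)=\pi/3$; this is the source of the factor $\xi(2)^{-1}$. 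The integrand is right $\SO(2)$-invariant, because the weights $k$, $-(k-\tfrac12)$ and $-\tfrac12$ cancel. So it descends to $\Gamma\bs\H\times(\Z\bs\R)^2$ in the coordinates $g_{\tau,z}$.

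\textbf{Step 2 (classical translation).} I will write each factor at $g_{\tau,z}$ classically:
\begin{itemize}
\item $\Psi_F(g_{\tau,z})$ equals $F$ at $Z=\mat{\tau}{z}{z}{\tau'}$ times an automorphy factor, with $\tau'$ coming from the $\kappa$-variable.
\item $\Lambda'(g_{\tau,z})=y^{k/2-1/4}h'(\tau)$, by \eqref{adelizationeq}.
\item $\Theta_{\psi^m}$ is given by Proposition~\ref{p:adelizetheta}.
\end{itemize}
Integrating over $\kappa\in\Z\bs\R$ picks out the $m$-th Fourier--Jacobi coefficient $f_m(\tau,z)$, because $\Theta_{\psi^m}$ has central character $\psi^m$. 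The automorphy factors multiply to $y^{k-1/2}e^{-4\pi m v^2/y}$ times $e^{-2\pi m}$, and the $e^{-2\pi m}$ arises from evaluating at the base point $\tau'=i$. Substituting \eqref{e:thetadecomp} and integrating over $z$ in a fundamental domain of $\Z^2$ uses the orthogonality
\[
\int\theta_{m,\alpha}\overline{\theta_{m,\beta}}\,e^{-4\pi m v^2/y}\,du\,dv=\delta_{\alpha\beta}\sqrt{y/4m}.
\]
Since $\theta_m=\sum_\beta\theta_{m,\beta}$, this leaves $\sum_\alpha f_{m,\alpha}(\tau)$, which is $f_m(\tau/4m)$ by \eqref{def:fm}.

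\textbf{Step 3 (rescaling).} The result is an integral of the form $\int_{\Gamma\bs\H} f_m(\tau/4m)\overline{h'(\tau)}\,y^{k-1/2}\,d\mu$. The substitution $\tau\mapsto4m\tau$ turns it into a Petersson integral over $\Gamma_0(4m)\bs\H$, and the index normalization in the definition of $\langle\cdot,\cdot\rangle$ absorbs the volumes. Collecting powers of $4m$ from the weight $y^{k-1/2}$, the measure, and the theta orthogonality should yield $(4m)^{k-1}$.

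The main obstacle is the exact bookkeeping of constants: the $p=2$ compatibility between the plus space vector, $\phi_2^{(m)}=\mathbf 1_{2^{-1}\Z_2}$, and the Weil constant, together with the Tamagawa volume factors that produce $\xi(2)^{-1}$. For $p=2$, I would verify directly that the classical function $\Psi_F\overline{\Lambda'}\overline{\Theta}$ is $\Gamma^0(4m)$-invariant on $\H\times\C$. This already follows classically from Lemma~\ref{lemma:twistquad} and the transformation law of $\theta_m$, and it avoids any local analysis at $2$.
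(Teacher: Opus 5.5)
Your route is the paper's route. You reduce by strong approximation to $\Gamma^0(4m)\bs\H$ times the Heisenberg torus, write the three factors classically, let the $\kappa$-integral pick out the $m$-th Fourier--Jacobi coefficient together with $e^{-2\pi m}$, apply the theta orthogonality $\sqrt{y/4m}$, and rescale $\tau\mapsto4m\tau$ to get $(4m)^{k-1/2}(4m)^{-1/2}=(4m)^{k-1}$. Your local invariance check is what the paper's appeal to ``right invariance'' tacitly uses. There is a minor slip in Step~2: the three factors contribute $y^{k}$, and $d\lambda\,d\mu=y^{-1}du\,dv$, so $y^{k-1/2}$ only appears after the theta orthogonality. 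Your final form in Step~3 is nevertheless correct.

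The real problem is the constant, exactly where you locate the difficulty. The two volumes you quote give the factor $1/\vl(\SL_2(\Z)\bs\H)=3/\pi$. Since $\xi(2)=\pi/6$, this equals $\tfrac12\xi(2)^{-1}$, not $\xi(2)^{-1}$. Put differently: the Tamagawa measure is $\xi(2)^{-1}$ times the product measure with $\vl(\SL_2(\Z_p))=\vl(\SO(2))=1$. But for a right $\SO(2)$-invariant $\Phi$ one has $\int_{\Gamma^0(4m)\bs\SL_2(\R)}\Phi\,dg=\tfrac12\int_{\Gamma^0(4m)\bs\H}\Phi\,y^{-2}\,dx\,dy$. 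This is because $-1\in\Gamma^0(4m)$ acts trivially on $\H$, so the generic fibre is $\{\pm1\}\bs\SO(2)$, of volume $\tfrac12$. The paper's proof passes directly to $\xi(2)\,\mathcal{FJ}=[\SL_2(\Z):\Gamma^0(4m)]^{-1}\int_{\Gamma^0(4m)\bs\H}\cdots$ and drops this $\tfrac12$. That omission is the only reason $\xi(2)^{-1}$ appears.

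The paper's own normalization \eqref{e:peterssonhalfint}, $\langle\Lambda_h,\Lambda_h\rangle=\tfrac3\pi\langle h,h\rangle$, which is reused in \eqref{e:second}, does include the $\tfrac12$. Applying the proof's recipe to $|\Lambda_h|^2$ would instead give $\tfrac6\pi\langle h,h\rangle$. So if you carry out the bookkeeping you propose consistently, you will obtain
\[
\mathcal{FJ}_{\psi^m}(\Psi_F,\overline{\Lambda'},\phi^{(m)})=\tfrac{3}{\pi}\,e^{-2\pi m}(4m)^{k-1}\langle f_m,h\rangle=\tfrac12\,\xi(2)^{-1}e^{-2\pi m}(4m)^{k-1}\langle f_m,h\rangle ,
\]
which is half the displayed value. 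Your sentence ``this is the source of the factor $\xi(2)^{-1}$'' is therefore false as written. No correct execution of your argument reaches the stated constant, which appears to be off by a factor of $2$. Unless something compensates elsewhere, this would change the constant in Theorem~\ref{t:global} by a factor of $4$.
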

\begin{proof}
Using the right invariance of the functions within the integral of \eqref{FJP}, we can replace the integral over $H(\Q)\backslash H(\A)$ by an integral over
$$
n =
\begin{bsmallmatrix}
1 &&& \mu\\
\lambda & 1 & \mu & \kappa\\
&&1&-\lambda\\
&&& 1
\end{bsmallmatrix},\quad\phantom{m}\lambda, \mu, \kappa \in \R/\Z,
$$
and replace the integral over $\SL_2(\Q)\backslash \SL_2(\A)$ by an integral over
$$
g =
\begin{bsmallmatrix}
\sqrt{y} &&x/\sqrt{y}&\\
& 1 &&\\
&&1/\sqrt{y}&\\
&&& 1
\end{bsmallmatrix},\quad x+iy \in \Gamma^0(4m)\backslash \H.
$$
Put
$$
 \tau = x+iy, \ z= \lambda \tau + \mu = \lambda x +\mu +i\lambda y, \ \tau'=  (\lambda^2 x +\lambda\mu+\kappa)+i(\lambda^2y + 1),
$$
so that in the integrals above, the variable $\tau$ ranges over $\Gamma^0(4m)\backslash \H$ and the variable $z$ ranges over $\C/(\Z + \tau\Z)$.
From the definitions (see~\eqref{adelizationeq}) we get
\[
\begin{aligned}
\Psi_F(ng)
&=
y^{k/2}
F(\mat{\tau}{z}{z}{\tau'}),\\
\overline{\Lambda'}(g)
&=
y^{\frac{k}{2}-\frac14}\,\overline{h'(\tau)}.
\end{aligned}
\]
Using Proposition~\ref{p:adelizetheta}, we have
$$
\Theta_{\psi^m}(ng, \phi^{(m)})
~=~
y^\frac{1}{4} e^{2 \pi i m(\kappa+\lambda^2x+\lambda\mu)}e^{-2\pi m \lambda^2y}\theta_m(\tau,z).
$$
Therefore, using the fact that the Tamagawa measure on $\SL_2(\A)$ is $\xi(2)^{-1}$ times the usual measure, we obtain 
\begin{align*}
&\xi(2) \mathcal{FJ}_{\psi^m}(\Psi_F, \overline{\Lambda'}, \phi^{(m)})\\
&\qquad= [\SL_2(\Z): \Gamma^0(4m)]^{-1}\int_{\Gamma^0(4m)\backslash \H}\int_0^1\int_0^1\int_{\R/\Z}F(\mat{\tau}{z}{z}{\tau'})\,\overline{h'(\tau)\theta_m(\tau, z)}y^k \\
&\hspace{40ex}\times e^{-2 \pi i m(\kappa+\lambda^2 x+\lambda\mu)}e^{-2\pi m \lambda^2y}
d\kappa\,\frac{d\lambda\,d\mu\,dx\,dy}{y^3}.
\end{align*}
Now inserting the Fourier--Jacobi expansion \eqref{e:fj}
we get
\begin{align*}
\xi(2) \mathcal{FJ}_{\psi^m}(\Psi_F, \overline{\Lambda'}, \phi^{(m)})
&=
[\SL_2(\Z): \Gamma^0(4m)]^{-1}\int_{\Gamma^0(4m)\backslash \H}\int_0^1\int_0^1
\sum_{r=1}^\infty f_r(\tau, z) \overline{h'(\tau)\theta_m(\tau, z)}y^k \\
& \times e^{2 \pi i (r-m)(\lambda^2 x+\lambda\mu)}
e^{-2\pi((m+r)\lambda^2y+r)}
\left(\int_{\R/\Z} e^{2 \pi i (r-m)\kappa} d\kappa \right)
\frac{d\lambda\,d\mu\,dx\,dy}{y^2} \\
  &=[\SL_2(\Z): \Gamma^0(4m)]^{-1}e^{-2 \pi m}\int_{\Gamma^0(4m)\backslash \H}\int_0^1\int_0^1 f_m(\tau, z) \overline{h'(\tau)\theta_m(\tau, z)}y^k\\
  &\hspace{40ex}\times e^{-4\pi m\lambda^2y}\,\frac{d\lambda\,d\mu\,dx\,dy}{y^2}\\
  &=[\SL_2(\Z): \Gamma^0(4m)]^{-1}e^{-2 \pi m}\int_{\Gamma^0(4m)\backslash \H}\int_{\C/\Z+\tau\Z} f_m(\tau, z) \overline{h'(\tau)\theta_m(\tau, z)}y^k\\
  &\hspace{40ex}\times e^{-4\pi m(y')^2/y}\,\frac{dx'\,dy'\,dx\,dy}{y^3},
\end{align*}
where we change variables to write $z = x' + i y'$.
Now, substituting the expression \eqref{e:thetafunctions} for $\theta_m(\tau, z)$, the expression \eqref{e:thetadecomp} for $f_m(\tau, z)$ and using the formula
\[
 \int_{z = x'+iy' \in \C/\Z+\tau\Z}\theta_{m, i}(\tau, z) \overline{\theta_{m, j}(\tau, z)}e^{-4\pi m(y')^2/y} dx' dy' = \sqrt{\frac{y}{4m}}\delta_{i,j}
\]
(see \cite[p.~60--61]{EZ85}), we get
\[
 \xi(2) \mathcal{FJ}_{\psi^m}(\Psi_F, \overline{\Lambda'}, \phi^{(m)}) = \frac{e^{-2 \pi m}}{2 \sqrt{m}} \Big\langle \sum_{\alpha\;(\text{mod }2m)}f_{m, \alpha}, h' \Big\rangle = e^{-2 \pi m} (4m)^{k-1} \langle f_m, h\rangle,
\]
as required.
\end{proof}

\subsection{Archimedean calculations}
Let $\pi_\infty$ be the holomorphic discrete series representation of $\Sp_4(\R)$ of scalar weight $k$, and let $\sigma_\infty$ be the holomorphic discrete series representation of $\meta_2(\R)$ of weight $k-\frac12$. Let $\Psi_\infty$ (resp., $\Lambda_\infty$) be the lowest weight vector in $\pi_\infty$ (resp., $\sigma_\infty$). Let $\phi^{(m)}_\infty$ be as in~\eqref{phi-defn}. In this subsection, we evaluate the integral
\begin{align}\label{arch-int}
&\alpha_\infty(\Psi_\infty,\overline{\Lambda_\infty},\phi_\infty^{(m)};\psi_\infty^m)\nonumber\\
&\qquad=
\int_{\SL_2(\R)}\int_{H(\R)}
\langle \pi_\infty(ng)\Psi_\infty, \Psi_\infty\rangle
\overline{\langle \sigma_\infty(g)\Lambda_\infty, \Lambda_\infty\rangle}
\overline{\langle \omega_{\psi_\infty^m}(ng)\phi_\infty^{(m)}, \phi_\infty^{(m)}\rangle}
\,dn\,dg.
\end{align}
Here $\overline{\Lambda_\infty}$ is a vector in the antiholomorphic contragredient representation $\overline{\sigma_\infty}$.
\begin{lemma}\label{archlemma}
 With the above notations,
 \begin{equation}\label{Arch-int-answer}
 \frac{\alpha_\infty(\Psi_\infty,\overline{\Lambda_\infty},\phi_\infty^{(m)};\psi_\infty^m)} {\langle \Psi_\infty, \Psi_\infty\rangle \langle \Lambda_\infty, \Lambda_\infty \rangle \langle \phi_\infty^{(m)}, \phi_\infty^{(m)} \rangle}
 = \frac{(4 \pi)^{k+1}  e^{-4m\pi}m^{k-2}}{(2k-3)\Gamma(k)}.
\end{equation}
\end{lemma}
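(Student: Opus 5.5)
Our plan is to evaluate the archimedean integral \eqref{arch-int} by writing each matrix coefficient in closed form in the classical coordinates $(\tau,z)$ of $J(\R)/\SO(2)$. We then integrate first over the Heisenberg variables, obtaining a Gaussian integral, and afterwards over the upper half-plane, obtaining a Beta-type integral.

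\textbf{Step 1: the three matrix coefficients.} For the scalar weight $k$ lowest weight vector, the normalized matrix coefficient is
\[
\frac{\langle \pi_\infty(g)\Psi_\infty,\Psi_\infty\rangle}{\langle \Psi_\infty,\Psi_\infty\rangle}
=\det\Bigl(\tfrac{g\cdot iI_2-\overline{iI_2}}{2i}\Bigr)^{-k}\det(\operatorname{Im}(g\cdot iI_2))^{k/2},
\]
up to the usual automorphy factor phase. We take $g=ng_{\tau,z}$ with $g\cdot iI_2=\mat{\tau}{z}{z}{\tau'}$, exactly as in the proof of Proposition~\ref{p:classicaladelicperiods}. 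Similarly, the normalized coefficient of $\Lambda_\infty$ is $\bigl(\frac{\tau+i}{2i}\bigr)^{-(k-\frac12)}y^{(k-\frac12)/2}$. We use its complex conjugate, since the statement involves $\overline{\Lambda_\infty}$.

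For the Gaussian $\phi^{(m)}_\infty$, the formulas in the proof of Proposition~\ref{p:adelizetheta} give $\omega_{\psi^m_\infty}(g_{\tau,z})\phi_\infty^{(m)}$ explicitly. The inner product with $\phi^{(m)}_\infty$ is a one-dimensional Gaussian integral, and it yields
\[
\bigl(\tfrac{\tau+i}{2i}\bigr)^{-1/2}y^{1/4}\exp(\text{quadratic in }u,v,\kappa),
\]
normalized by $\langle\phi,\phi\rangle=(4m)^{-1/2}$.

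\textbf{Step 2: unfold the measure.} We write $dg\,dn$ as $y^{-2}dx\,dy$ times the Lebesgue measure on $(\lambda,\mu,\kappa)$, using the normalization fixed in the introduction. After this, the integrand becomes a product of the following pieces: a power of $y$, a power of $(\tau+i)$ and its conjugate, and a function of the Heisenberg coordinates. The Heisenberg dependence is a holomorphic–antiholomorphic pairing with Gaussian decay.

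\textbf{Step 3: integrate over $H(\R)$.} The $\kappa$-integral is a Fourier transform in the central variable. The Siegel coefficient depends on $\kappa$ through $\det(\cdot)^{-k}$ with $\tau'$ linear in $\kappa$, and the theta coefficient contributes $e^{-2\pi i m\kappa}$. The $\kappa$-integral is therefore the classical integral
\[
\int_\R (\kappa+A)^{-k}e^{-2\pi i m\kappa}\,d\kappa=\frac{(-2\pi i)^k m^{k-1}}{\Gamma(k)}e^{2\pi i mA}\quad(\operatorname{Im}A>0).
\]
This accounts for the factors $\Gamma(k)^{-1}$ and $m^{k-1}$ in the answer. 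The choice of base point produces an $e^{-2\pi m}$ twice, which gives the factor $e^{-4\pi m}$. The remaining $(\lambda,\mu)$-integral is Gaussian and produces a power of $y$ and of $m^{-1}$.

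\textbf{Step 4: integrate over $\SL_2(\R)$.} What is left is an integral of the form
\[
\int_{\H}\Bigl|\tfrac{\tau+i}{2i}\Bigr|^{-(2k-2)}y^{k-1}\frac{dx\,dy}{y^2}.
\]
This is the norm of the weight $2k-2$ spherical coefficient of $\SL_2(\R)$, and it equals $\frac{4\pi}{2k-3}$, which explains the factor $(2k-3)^{-1}$. We then collect all powers of $2$, $\pi$ and $m$ against the target $\frac{(4\pi)^{k+1}e^{-4m\pi}m^{k-2}}{(2k-3)\Gamma(k)}$.

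We expect the main obstacle to be Step 3. One must keep track of the phase conventions consistently: the conjugations, the sign of $\psi^m$, and the metaplectic square-root branches, so that the holomorphic and antiholomorphic parts pair up and do not cancel to zero. One must also justify exchanging the order of integration, since the $\kappa$-integral converges only conditionally for small $k$, although $k\ge4$ helps here. We would check the final constant against the case $k=4$, $m=1$ by an independent numerical sanity check of the powers of $2$.
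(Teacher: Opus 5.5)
Your skeleton is the paper's: a Fourier transform in the central variable, then a Gaussian in the other two Heisenberg variables, then an elementary $\SL_2(\R)$ integral. Your Step 1 coefficients and your $\kappa$-formula are correct; on $g=\diag(e^t,1,e^{-t},1)$ they reproduce the formulas the paper takes from \cite[Lemmas~5.1--5.3]{HX18}.

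The gap is Step 4. Since the lemma is nothing but an explicit constant, the error is not cosmetic. The integral you write, $\int_{\H}\left|\frac{\tau+i}{2i}\right|^{-(2k-2)}y^{k-1}\frac{dx\,dy}{y^2}$, is the squared norm of the weight $k-1$ coefficient. It equals $4\pi\int_0^\infty\cosh(t)^{2-2k}\sinh(2t)\,dt=\frac{4\pi}{k-2}$, not $\frac{4\pi}{2k-3}$, so it does not account for the factor $(2k-3)^{-1}$ at all.

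The weight count is what goes wrong. The Siegel coefficient has weight $k$ and the conjugated theta coefficient has weight $-\frac12$. So after the $H(\R)$-integration you are left with
\[
\frac{2^{2k-1}\pi^k m^{k-2}e^{-4\pi m}}{\Gamma(k)}\times\bigl(\text{normalized weight } k-\tfrac12 \text{ coefficient}\bigr),
\]
which equals $\frac{2^{2k-1}\pi^k m^{k-2}e^{-4\pi m}}{\Gamma(k)}\cosh(t)^{-k+\frac12}$ on the diagonal. Pairing this with the conjugated $\sigma_\infty$ coefficient gives
\[
\int_{\H}\left|\frac{\tau+i}{2i}\right|^{-(2k-1)}y^{k-\frac12}\,\frac{dx\,dy}{y^2}=4\pi\int_0^\infty\cosh(t)^{1-2k}\sinh(2t)\,dt=\frac{8\pi}{2k-3},
\]
which is the inverse formal degree of the weight $k-\frac12$ discrete series. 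With your value $\frac{4\pi}{2k-3}$ you would land on $2^{2k+1}\pi^{k+1}$ instead of $(4\pi)^{k+1}$. Since you never track the powers of $2$ and $\pi$ (you propose to collect them against the target), the proposal does not establish the stated constant.

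To close the gap, use bi-$K$-invariance of the integrand to restrict to $g=\diag(e^t,1,e^{-t},1)$ with density $4\pi\sinh(2t)\,dt$, as the paper does. There, after the $\kappa$-integral, the phases $e^{\pm 2\pi i m ab\frac{e^{2t}-1}{e^{2t}+1}}$ coming from the two coefficients cancel exactly. The remaining Gaussian is then real: $\int\!\!\int e^{-4\pi m\frac{a^2e^{2t}+b^2}{1+e^{2t}}}\,da\,db=\frac{\cosh t}{2m}$. In your Iwasawa coordinates with $x\neq0$ you would instead face a complex quadratic form coming from $z^2/(\tau+i)$.

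On the diagonal every power of $2$, $\pi$ and $m$ is explicit:
\begin{itemize}
\item the $\kappa$-integral gives $\frac{(8\pi e^t)^k m^{k-1}}{\Gamma(k)(1+e^{2t})^k}$;
\item the theta normalization gives $\sqrt{2}\,e^{t/2}(1+e^{2t})^{-1/2}$;
\item the Gaussian gives $\frac{\cosh t}{2m}e^{-4\pi m}$;
\item the $t$-integral gives $\frac{8\pi}{2k-3}$.
\end{itemize}
Together these produce exactly $\frac{(4\pi)^{k+1}e^{-4m\pi}m^{k-2}}{(2k-3)\Gamma(k)}$.
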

\begin{proof}
Writing
$$
 n = \begin{bsmallmatrix}1&&&b\\a&1&b&c\\&&1&-a\\&&&1\end{bsmallmatrix}, \quad g=\begin{bsmallmatrix}e^t\\&1\\&&e^{-t}\\&&&1\end{bsmallmatrix},
$$
we can write the integral \eqref{arch-int} as (see p.~132 of \cite{HX18})
\begin{align*}
& 4 \pi \int\limits_0^\infty \int\limits_{-\infty}^\infty \int\limits_{-\infty}^\infty \int\limits_{-\infty}^\infty \langle \pi_\infty(ng)\Psi_\infty, \Psi_\infty\rangle
\overline{\langle \sigma_\infty(g)\Lambda_\infty, \Lambda_\infty\rangle}
\overline{\langle \omega_{\psi_\infty^m}(ng)\phi_\infty^{(m)}, \phi_\infty^{(m)}\rangle} \sinh(2t) \,dc\,da\,db\,dt.
\end{align*}
By Lemmas 5.1, 5.2 and 5.3 of \cite{HX18}, we have
\begin{align*}
\frac{\langle \pi_\infty(ng)\Psi_\infty, \Psi_\infty\rangle}{\langle \Psi_\infty, \Psi_\infty\rangle} &= \Big(\frac{4e^t}{a^2e^{2t}+b^2+2+2e^{2t}+i\big(ab(e^{2t}-1)-c(e^{2t}+1)\big)}\Big)^{k},\\
\frac{\langle \sigma_\infty(g)\Lambda_\infty, \Lambda_\infty\rangle}{\langle \Lambda_\infty, \Lambda_\infty\rangle} &=\cosh(t)^{-k+\frac 12},\\
\frac{\langle \omega_{\psi_\infty^m}(ng)\phi_\infty^{(m)}, \phi_\infty^{(m)}\rangle}{\langle  \phi_\infty^{(m)}, \phi_\infty^{(m)}\rangle}&= \frac{e^{\frac t2-2\pi m\frac{e^{2t}a^2+b^2}{1+e^{2t}}-2\pi i m(ab\frac{e^{2t}-1}{e^{2t}+1}-c)}}{((1+e^{2t})/2)^{1/2}}.
\end{align*}
Note that, in the statement of Lemma 5.3 of \cite{HX18}, there is a typo and the denominator is missing. We will first do the $c$-integral. We have
\begin{align*}
& \int\limits_{-\infty}^\infty \langle \pi_\infty(ng)\Psi_\infty, \Psi_\infty\rangle e^{-2 \pi i mc} dc \\
& \qquad \qquad \qquad = \int\limits_{-\infty}^\infty \Big(\frac{4e^t}{a^2e^{2t}+b^2+2+2e^{2t}+i\big(ab(e^{2t}-1)-c(e^{2t}+1)\big)}\Big)^{k} e^{-2 \pi i mc} dc \\
& \qquad \qquad \qquad = \big(\frac m{1+e^{2t}}\big)^{k} \int\limits_{-\infty}^\infty \Big(\frac{4e^t}{m\frac{a^2e^{2t}+b^2+2+2e^{2t}}{1+e^{2t}}+i\big(mab\frac{e^{2t}-1}{1+e^{2t}}-mc\big)}\Big)^{k} e^{-2 \pi i mc} dc \\
& \qquad \qquad \qquad = \frac{m^{k-1}}{(1+e^{2t})^{k}} \int\limits_{-\infty}^\infty \Big(\frac{4e^t}{m\frac{a^2e^{2t}+b^2+2+2e^{2t}}{1+e^{2t}}+i\big(mab\frac{e^{2t}-1}{1+e^{2t}}-c\big)}\Big)^{k} e^{-2 \pi i c} dc \\
& \qquad \qquad \qquad = \frac{(8\pi e^t)^{k}}{\Gamma(k)} \frac{m^{k-1}}{(1+e^{2t})^{k}} e^{-2 \pi m\frac{a^2e^{2t}+b^2+2+2e^{2t}}{1+e^{2t}} - 2 \pi i mab\frac{e^{2t}-1}{1+e^{2t}}}.
\end{align*}
In the last integral, we have used Formula 3.382 (7) of Gradshteyn and Ryzhik
\cite{grad}. Now the inner integral over $a$ and $b$ is given by
\begin{align*}
& \frac{(8\pi e^t)^{k}}{\Gamma(k)} \frac{m^{k-1}}{(1+e^{2t})^{k}} \frac{e^{\frac t2}}{2^{-1/2}(1+e^{2t})^{1/2}} \int\limits_{-\infty}^\infty \int\limits_{-\infty}^\infty e^{-4 \pi m\frac{a^2e^{2t}+b^2+1+e^{2t}}{1+e^{2t}}} \,da\,db \\
&\qquad= \frac{2^{2k-1}\pi^{k}e^{-4\pi m}m^{k-2}}{\Gamma(k)}\cosh(t)^{-k+\frac 12}.
\end{align*}
Here, we have used the standard formula $\int_{-\infty}^\infty e^{-x^2}dx = \sqrt{\pi}$, and a suitable change of variables. (Note that in \cite{HX18}, the author has $4^{k+1/2}$ instead of $4^k$.) The remaining $t$-integral is elementary, giving the formula~\eqref{Arch-int-answer}.
\end{proof}
\subsection{Main results}
Recall that, for each irreducible automorphic representation $\sigma$ of $\meta_2(\A)$ occurring in $S^+_{k-\frac12}(\Gamma_0(4m))$, we constructed in Section~\ref{s:heckebases} an orthogonal basis $\B_\sigma \subset S^+_{k-\frac12}(\Gamma_0(4m))$ of the $\sigma$-isotypic subspace $\mathcal V_\sigma$. We now prove our main theorem.

\begin{theorem}\label{t:global}
Let $k$ be a positive, even integer, let $m$ be a positive, odd, squarefree integer, and let $F \in S_k(\Sp_4(\Z))$ be a Hecke eigenform that is not a Saito--Kurokawa lift.\footnote{The smallest such $k$ is 20.}Let $f_m \in S^+_{k-\frac12}(\Gamma_0(4m))$ be given by \eqref{def:fm}.
Let $\sigma$ be an irreducible automorphic representation of $\meta_2(\A)$ occurring in $S^+_{k-\frac12}(\Gamma_0(4m))$ and put $C = C(\sigma)$, so that $C\mid m$.  Let $\pi_0$ be the irreducible automorphic representation of $\GL_2(\A)$ occurring in $S_{2k-2}(\Gamma_0(m))$ that is associated to $\sigma$ via the classical Shimura--Waldspurger correspondence; precisely, by \eqref{waldshimuraeq},
\[
 \pi_0=\wald_\psi(\overline{\sigma}).
\]
The arithmetic conductor satisfies $C(\pi_0)=C(\overline{\sigma})=C(\sigma)=C$.

For each prime $p$, let $\alpha_p, \alpha_p^{-1}, \beta_p, \beta_p^{-1}$ be the Satake parameters of $\pi_F$ at $p$. For each prime $p\nmid C$, let    $\delta_p$, $\delta_p^{-1}$ be the Satake parameters of  $\pi_0$  at $p$. For each prime $p\mid C$, let $w_p$ be the local Atkin--Lehner eigenvalue of $\pi_0$  at $p$.

Assume the truth of Conjecture~\ref{c:GGPconj}. Then
$$\sum_{h \in \B_\sigma}\frac{|\langle f_m, h \rangle|^2}{\langle h, h \rangle} = \langle F, F \rangle \frac{\pi^{k+5}}{3(2k-3) \Gamma(k)} \cdot \frac{L(1/2, \pi_F\times\pi_0)}{L(1, \pi_F, \Ad)L(1, \pi_0, \Ad)} \prod_{p\mid m} r_p(F, \pi_0).$$
Above, the global $L$-functions do not include the archimedean $L$-factors, and the quantities $r_p(F, \pi_0)$ are given by
\[
 r_p(F, \pi_0) = \begin{cases}\frac{2}{p+1}\cdot
 \frac{(\alpha_p^{-1/2}+\alpha_p^{1/2})^2(\beta_p^{-1/2}+\beta_p^{1/2})^2}
{(1+\delta_p p^{-1/2})(1+\delta_p^{-1}p^{-1/2})}&\text{ if } p\nmid C, \\
\frac{2}{p+1} &\text{ if } p\mid C \text{ and } w_p = 1, \\
0 &\text{ if } p\mid C \text{ and } w_p = -1.\end{cases}
\]
\end{theorem}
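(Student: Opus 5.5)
We apply Conjecture~\ref{c:GGPconj} once for each $h\in\B_\sigma$, with $\Psi=\Psi_F$, $\Lambda=\overline{\Lambda_{h'}}$ (a factorizable vector in $\overline{\sigma_h^{(m)}}$), and $\phi=\phi^{(m)}$, and then sum over $h$.

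\textbf{Step 1: the left-hand side.} By Proposition~\ref{p:classicaladelicperiods},
\[
|\mathcal{FJ}_{\psi^m}|^2=e^{-4\pi m}(4m)^{2k-2}\xi(2)^{-2}|\langle f_m,h\rangle|^2 .
\]
For the norms we use \eqref{e:peterssonsiegel} for $\langle\Psi_F,\Psi_F\rangle$, and \eqref{e:peterssonhafintfinal} together with \eqref{e:peterssonhalfint}, which give $\langle\Lambda_{h'},\Lambda_{h'}\rangle=\frac{3}{\pi}(4m)^{k-1/2}\langle h,h\rangle$. Finally \eqref{l:defcm} gives $\langle\phi^{(m)},\phi^{(m)}\rangle=\sqrt m$. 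Thus the left side of \eqref{R-GGP} is an explicit multiple of $|\langle f_m,h\rangle|^2/(\langle F,F\rangle\langle h,h\rangle)$.

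\textbf{Step 2: the global $L$-values.} By \eqref{waldequalityeq}, $\wald_{\psi^m}(\overline{\sigma_h^{(m)}})=\pi_0$. Hence the $L$-values in \eqref{R-GGP} are $\Lambda(1/2,\pi_F\times\pi_0)/(\Lambda(1,\pi_F,\Ad)\Lambda(1,\pi_0,\Ad))$. We take $\beta=1$, since $F$ is not Saito--Kurokawa and $\pi_F$ is tempered by Weissauer; endoscopic Yoshida-type lifts do not occur at full level. We then separate out the archimedean $L$-factors.

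\textbf{Step 3: the local factors and the sum over $\B_\sigma$.} For $p\nmid 2m$ we have $\alpha_p^\#=1$ by Xue. At $p=2$ the data are unramified in the appropriate sense ($\phi_2=\mathbf 1_{2^{-1}\Z_2}$ pairs with $\psi^m$ and with the plus-space vector). This is where we must check that the normalized local integral is $1$, possibly after twisting by $e_2$ exactly as in Lemma~\ref{p:initialredsimpler}; I expect this $p=2$ bookkeeping to be the main obstacle. At $p\mid m$ the local vectors of the $h\in\B_\sigma$ run over an orthogonal basis of $V_{m\cdot\overline{\sigma_p}}(\Gamma^0(p),\chi_m)$. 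Because the vectors factorize, summing over $\B_\sigma$ turns the product of local integrals into $\prod_{p\mid m}\alpha^\#(\pi_{F,p},\overline{\sigma_p};m)$. Theorem~\ref{t:mainlocal} evaluates each factor; we identify $\delta$ with the Satake parameter of $\pi_{0,p}=\wald_\psi(\overline{\sigma_p})$ and $\eps_\sigma$ with $-w_p$ (Steinberg versus its unramified twist). This gives $r_p(F,\pi_0)$.

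\textbf{Step 4: the archimedean factor and assembly.} Lemma~\ref{archlemma} gives the raw archimedean integral. We divide it by $\zeta_\infty(2)\zeta_\infty(4)L_\infty(1/2)/(L_\infty(1,\Ad)L_\infty(1,\Ad))$ and recombine these with the completed $L$-functions to get finite $L$-values. We then insert $C_G$, the constant relating the Tamagawa measure on $\SL_2(\A)$ to $\prod_v dg_v$; the Tamagawa measure equals $\xi(2)^{-1}$ times the standard measure, up to the normalization of $dg_\infty$. The $m$-powers should cancel: $m^{2k-2}$ on the left against $m^{k-1/2}\cdot m^{1/2}\cdot m^{k-2}\cdot m$, where the last factor $m$ comes from the $q^2$ and the normalization in \eqref{definitionalpha}. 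The factor $e^{-4\pi m}$ cancels as well. The remaining constants should collapse to $\pi^{k+5}/(3(2k-3)\Gamma(k))$. Carefully tracking powers of $2$, $\pi$ and $m$ here is routine but error-prone.
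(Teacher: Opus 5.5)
Your overall route is the paper's. You apply Conjecture~\ref{c:GGPconj} to $(\Psi_F,\overline{\Lambda_{h'}},\phi^{(m)})$ and convert with Proposition~\ref{p:classicaladelicperiods} and the norm relations \eqref{e:peterssonsiegel}, \eqref{e:peterssonhafintfinal}, \eqref{l:defcm}. You then use Xue at $p\nmid 2m$, Theorem~\ref{t:mainlocal} at $p\mid m$ (your identification $\eps_\sigma=-w_p$ is right), and Lemma~\ref{archlemma} at $\infty$.

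\textbf{The gap at $p=2$.} The normalized local factor there is \emph{not} $1$, so the check you plan would fail. The paper takes this factor from Xue's $2$-adic computation for the Kohnen plus-space vector \cite[Proposition~4.3 and Section~6]{HX18}, which gives $\alpha_2^{\#}(\Psi_2,\overline{\Lambda_{h',2}},\phi_2^{(m)};\psi_2^m)=\tfrac12$.

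This value also cannot be reached by an $e_2$-twist as in Lemma~\ref{p:initialredsimpler}. All of Section~\ref{s:local} assumes odd residue characteristic: the splitting of $\meta_2$ over $\SL_2(\OF)$, the invariance of $\mathbf 1_{\OF}$ under $\omega_\psi(\SL_2(\OF))$, and the description of the relevant vectors by $\Gamma_0(\p)$-invariance with trivial character. None of this holds at $2$. There, the plus-space line inside the two-dimensional space $V_{\sigma_2}(\Gamma_0(4\Z_2),1)$ is cut out by a different condition.

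The factor $\tfrac12$ is exactly what the stated constant needs. With $2^{-\beta}\alpha_2^\#=\tfrac14$ and $C_G=\xi(2)^{-1}$, the right side of \eqref{R-GGP} becomes
$\tfrac14\zeta(2)\zeta(4)\cdot\frac{2^6\pi^{k+1}e^{-4\pi m}(4m)^{k-2}}{(2k-3)\Gamma(k)}\cdot\frac{L(1/2,\pi_F\times\pi_0)}{L(1,\pi_F,\Ad)L(1,\pi_0,\Ad)}\prod_{p\mid m}\alpha_p^\#$.
Multiply this by the norm ratio $(4m)^k/(8\xi(2)^2\xi(4))$ and by $e^{4\pi m}(4m)^{2-2k}\xi(2)^2$. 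What remains is $2\zeta(2)\zeta(4)\pi^{k+1}/(\xi(4)(2k-3)\Gamma(k))=\pi^{k+5}/(3(2k-3)\Gamma(k))$. With $\alpha_2^\#=1$ you would get twice the stated constant.

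\textbf{The $m$-bookkeeping.} No extra factor $m$ comes from the $q^2$ in Lemma~\ref{p:initialredsimpler} or from the normalization in \eqref{definitionalpha}. Both are internal to the definition of $\alpha^\#(\pi,\sigma;m)$, which Theorem~\ref{t:mainlocal} identifies with $r_p(F,\pi_0)$ exactly. The local norms $\langle\phi_p^{(m)},\phi_p^{(m)}\rangle=p$ are already contained in $\langle\phi^{(m)},\phi^{(m)}\rangle=\sqrt m$. Your listed exponents sum to $2k-1$, not $2k-2$. The correct cancellation is $m^{2k-2}$ against $m^{k-1/2}\cdot m^{1/2}\cdot m^{k-2}$ alone.
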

\begin{proof}
Using Proposition~\ref{p:classicaladelicperiods}, we see that for each $h \in \B_\sigma$ we have
\begin{equation}\label{e:first}|\langle f_m, h\rangle|^2 = e^{4 \pi m} (4m)^{2-2k} \xi(2)^2\,|\mathcal{FJ}_{\psi^m}(\Psi_F, \overline{\Lambda_{h'}}, \phi^{(m)})|^2. \end{equation}
For each $h \in \B_\sigma$, let $\Lambda_{h'}$ be the adelization of $h'$, as in Section~\ref{s:heckebases}. The form $\Lambda_{h'}$ generates $\sigma_h^{(m)}$, whereas the form that occurs in \eqref{e:first}, namely $\overline{\Lambda_{h'}}$, generates the contragredient representation~$\overline{\sigma_h^{(m)}}$. Complex conjugation preserves the Petersson norm, so \eqref{l:defcm}, \eqref{e:peterssonsiegel}, and \eqref{e:peterssonhafintfinal} give
\begin{equation}\label{e:second}\frac{\langle \Psi_F , \Psi_F \rangle\langle\Lambda_{h'} , \Lambda_{h'}\rangle\langle \phi^{(m)}, \phi^{(m)} \rangle}{\langle F , F\rangle\langle h, h \rangle} = \frac{1}{8 \xi^2(2) \xi(4)} (4m)^{k}.\end{equation}

Now assume the truth of Conjecture~\ref{c:GGPconj}, applied to the representation $\sigma=\overline{\sigma_h^{(m)}}$ that contains the vector $\Lambda = \overline{\Lambda_{h'}}$. By \eqref{waldequalityeq},
\[
 L_{\psi^m}(s,\pi_F\times\overline{\sigma_h^{(m)}})=L(s,\pi_F\times\pi_0),
 \qquad
 L_{\psi^m}(s,\overline{\sigma_h^{(m)}},\Ad)=L(s,\pi_0,\Ad).
\]
The constant $C_G$ in~\eqref{R-GGP} equals $\xi(2)^{-1}$. The local integral $\alpha_p^{\#}(\Psi_p, \overline{\Lambda_{h',p}}, \phi^{(m)}_p; \psi_p^m)$ is equal to 1 at all finite places $v=p$ that do not divide $2m$ by \cite{HX17}. At $v=2$, the calculation of \cite[Proposition~4.3 and Section~6]{HX18} is already made with the conjugated metaplectic vector and gives
\[
 \alpha_2^{\#}(\Psi_2,\overline{\Lambda_{h',2}},\phi^{(m)}_2;\psi_2^m)=\frac12.
\]
 At $v=\infty$, we have by \eqref{Arch-int-answer}, $\frac{\alpha_\infty(\Psi_\infty, \overline{\Lambda_{h',\infty}}, \phi^{(m)}_\infty; \psi_\infty^m)} {\langle \Psi_\infty, \Psi_\infty\rangle \langle \Lambda_{h',\infty}, \Lambda_{h',\infty} \rangle \langle \phi_\infty^{(m)}, \phi_\infty^{(m)} \rangle}
 = \frac{2^6 \pi^{k+1} e^{-4m\pi}(4m)^{k-2}}{(2k-3)\Gamma(k)}$. The parameter $\beta$ in~\eqref{R-GGP} equals~$1$ since $F$ has full level and is not a
Saito--Kurokawa lift, and so the degree-five standard transfer of $\pi_F$
to $\GL_5(\A)$ is cuspidal. So \eqref{R-GGP} becomes
\begin{equation}\label{e:idggp}\begin{split}
&\frac{|\mathcal{FJ}_{\psi^m}(\Psi_F, \overline{\Lambda_{h'}}, \phi^{(m)})|^2}{\langle \Psi_F, \Psi_F\rangle
\langle \Lambda_{h'}, \Lambda_{h'}\rangle \langle \phi^{(m)}, \phi^{(m)}\rangle}
\\&= \zeta(2) \zeta(4) \frac{2^4 \pi^{k+1} e^{-4m\pi}(4m)^{k-2}}{(2k-3)\Gamma(k)} \cdot
\frac{L(1/2, \pi_F\times\pi_0)}{L(1, \pi_F, \Ad)L(1, \pi_0, \Ad)}
\times
\prod_{p\mid m}\alpha_p^{\#}(\Psi_p, \overline{\Lambda_{h',p}}, \phi^{(m)}_p; \psi_p^m). \end{split}
\end{equation}
Multiplying together \eqref{e:first}, \eqref{e:second} and \eqref{e:idggp} gives
 \begin{equation}\label{e:idggp2}\frac{|\langle f_m, h \rangle|^2}{\langle h, h \rangle } = \langle F, F \rangle \frac{\pi^{k+5}}{3(2k-3) \Gamma(k)} \cdot
\frac{L(1/2, \pi_F\times\pi_0)}{L(1, \pi_F, \Ad)L(1, \pi_0, \Ad)}
\times
\prod_{p\mid m}\alpha_p^{\#}(\Psi_p, \overline{\Lambda_{h',p}}, \phi^{(m)}_p; \psi_p^m).
\end{equation}
Therefore, observing \eqref{Bsigmaieq},
\begin{align}
&\sum_{h \in \B_\sigma}\frac{|\langle f_m, h \rangle|^2}{\langle h, h \rangle }= \sum_{i\mid\frac{m}{C(\sigma)}}\frac{|\langle f_m, h_{\sigma, i} \rangle|^2}{\langle h_{\sigma, i}, h_{\sigma, i} \rangle }\nonumber\\
&\qquad=  \langle F, F \rangle \frac{\pi^{k+5}}{3(2k-3) \Gamma(k)}
\frac{L(1/2, \pi_F\times\pi_0)}{L(1, \pi_F, \Ad)L(1, \pi_0, \Ad)}
\times
\sum_{i\mid\frac{m}{C(\sigma)}}\prod_{p\mid m}\alpha_p^{\#}(\Psi_p, \overline{\Lambda_{h_{\sigma,i}',p}}, \phi^{(m)}_p; \psi_p^m)\nonumber\\
&\qquad=\langle F, F \rangle \frac{\pi^{k+5}}{3(2k-3) \Gamma(k)}
\frac{L(1/2, \pi_F\times\pi_0)}{L(1, \pi_F, \Ad)L(1, \pi_0, \Ad)}
\times
\prod_{p\mid m}\alpha_p^{\#}(\pi_{F,p}, \overline{\sigma_p}; m),
\end{align}
The last factor above, for each $p|m$, is exactly the quantity defined in \eqref{e:mainlocaldef} with the representation~$\overline{\sigma_p}$. The result follows from Theorem~\ref{t:mainlocal}.
\end{proof}

\begin{corollary}\label{c:mainglobal}
 Keep the notations and assumptions of Theorem~\ref{t:global}, and define the quantity $r_m(F, \pi_0) := \prod_{p\mid m}r_p(F, \pi_0)$. Then
$$\frac{\langle f_m, f_m \rangle}{\langle F, F \rangle} =  \frac{\pi^{k+5}}{3(2k-3) \Gamma(k)}  \sum_{\pi_0} r_m(F, \pi_0) \frac{L(1/2, \pi_F\times\pi_0)}{L(1, \pi_F, \Ad)L(1, \pi_0, \Ad)},$$ where $\pi_0$ traverses the set of irreducible cuspidal automorphic representations of $\GL_2(\A)$ that occur in $S_{2k-2}(\Gamma_0(m))$.
\end{corollary}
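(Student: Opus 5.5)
The plan is to deduce Corollary~\ref{c:mainglobal} from Theorem~\ref{t:global} by Parseval over the orthogonal basis $\B_m$ of $S^+_{k-\frac12}(\Gamma_0(4m))$. By \cite[Thm.~5.6]{EZ85}, $f_m$ lies in this space. Since $\B_m=\bigcup_\sigma\B_\sigma$ is an orthogonal basis (Section~\ref{s:heckebases}), we have
\[
\langle f_m,f_m\rangle=\sum_{h\in\B_m}\frac{|\langle f_m,h\rangle|^2}{\langle h,h\rangle}=\sum_\sigma\sum_{h\in\B_\sigma}\frac{|\langle f_m,h\rangle|^2}{\langle h,h\rangle},
\]
where $\sigma$ runs over the genuine cuspidal representations occurring in $S^+_{k-\frac12}(\Gamma_0(4m))$. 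Applying Theorem~\ref{t:global} to each inner sum gives
\[
\langle F,F\rangle\,\frac{\pi^{k+5}}{3(2k-3)\Gamma(k)}\,\frac{L(1/2,\pi_F\times\pi_0)}{L(1,\pi_F,\Ad)L(1,\pi_0,\Ad)}\,r_m(F,\pi_0),
\qquad \pi_0=\wald_\psi(\overline\sigma).
\]
Dividing by $\langle F,F\rangle$ then reduces everything to re-indexing the sum over $\sigma$ as a sum over $\pi_0$.

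The key step is to show that $\sigma\mapsto\wald_\psi(\overline\sigma)$ is a bijection from the set of $\sigma$ occurring in the plus space onto the set of cuspidal $\pi_0$ occurring in $S_{2k-2}(\Gamma_0(m))$.

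\emph{Well-definedness and injectivity.} By Lemma~\ref{lem:shimura-waldspurger-convention}, the image is the representation generated by the Shimura lift. This is a cuspidal representation of conductor $C(\sigma)\mid m$ and weight $2k-2$, hence it occurs in $S_{2k-2}(\Gamma_0(m))$ via a newform of level $C$ and its oldforms. Injectivity follows from Waldspurger's theory: each fiber of $\wald_\psi$ is determined by local data. Alternatively, one can use Kohnen's theorem that the plus space at level $4m$, for $m$ odd squarefree, satisfies multiplicity one and is Hecke-isomorphic to $S_{2k-2}(\Gamma_0(m))$ under a suitable Shimura lift.

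\emph{Surjectivity.} I expect this to be the main obstacle. Kohnen's newform theory shows that every newform $f$ of level $C\mid m$ with $w_p=1$ for all $p\mid C$ corresponds to a newform in the plus space of level $4C$. The remaining newforms, those with some $w_p=-1$, may not lift to the plus space. For such $\pi_0$, however, $r_p(F,\pi_0)=0$ at any $p$ with $w_p=-1$, so these terms vanish. Including them in the sum over $\pi_0$ is therefore harmless, and the formula holds whether or not they have a preimage.

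Putting these together, the sum over $\sigma$ equals the sum over all $\pi_0$ occurring in $S_{2k-2}(\Gamma_0(m))$, which proves the corollary. It remains to match notation with Theorem~\ref{t:mainintro}: write $C=C(\pi_0)$ and $\pi_0=\pi_f$ with $f\in\B^{\new}_{2k-2}(C)$. Then $r_m(F,\pi_0)=r_{f,m,C}$, since both are defined by the same product over $p\mid m/C$ and $p\mid C$.
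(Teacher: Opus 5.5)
Your argument is correct and is the paper's proof. It uses Parseval over the orthogonal basis $\B_m=\bigcup_\sigma\B_\sigma$, applies Theorem~\ref{t:global} to each $\sigma$, and reindexes via the bijection $\sigma\mapsto\wald_\psi(\overline{\sigma})$ onto the $\pi_0$ occurring in $S_{2k-2}(\Gamma_0(m))$.

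The one difference is your hedge on surjectivity, which is unnecessary. Kohnen's Hecke isomorphism for the plus space of odd squarefree level, which you already cite for injectivity, covers newforms with some $w_p=-1$ as well, so the paper simply invokes the bijection; your fallback via $r_p(F,\pi_0)=0$ is nonetheless also valid.
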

\begin{proof}
Using orthogonality, with $\B_m$ as in~\eqref{Bmdefeq} we have \begin{equation}\label{e:id1} \langle f_m, f_m \rangle =  \sum_{h \in \B_m}\frac{|\langle f_m, h\rangle|^2}{\langle h, h\rangle}=   \sum_\sigma \sum_{h \in \B_\sigma}\frac{|\langle f_m, h\rangle|^2}{\langle h, h\rangle}.
\end{equation}
By the Shimura--Waldspurger correspondence for the Kohnen plus space of squarefree level, the map $\sigma\mapsto\wald_\psi(\overline{\sigma})=\pi_0$ is a bijection between the irreducible genuine automorphic
representations occurring in
$S^+_{k-\frac12}(\Gamma_0(4m))$ and the irreducible cuspidal
automorphic representations of $\GL_2(\A)$ occurring in
$S_{2k-2}(\Gamma_0(m))$.  Hence the outer sum over $\sigma$ in \eqref{e:id1} may be
reindexed as a sum over~$\pi_0$, and  the result follows from Theorem~\ref{t:global}.
\end{proof}

\subsection{Upper bounds on Petersson inner products and Fourier coefficients}
In this subsection, we show how our main result, together with the Generalized Riemann Hypothesis (GRH), implies strong upper bounds on Petersson inner products of half-integral weight forms and Fourier coefficients of Siegel cusp forms.

\begin{proposition}Let $k$ be a positive even integer, let $m$ be a positive odd squarefree integer, and let $F \in S_k(\Sp_4(\Z))$ be a Hecke eigenform that is not a Saito--Kurokawa lift.  Let $f_m \in S^+_{k-\frac12}(\Gamma_0(4m))$ be given by \eqref{def:fm}. Let $h \in S^+_{k-\frac12}(\Gamma_0(4m))$ be an eigenform for the Hecke operators $T(n^2)$ for all $(n, 4m)=1$. Assume Conjecture \ref{c:GGPconj} and also assume GRH for the $L$-functions appearing in Theorem~\ref{t:global}. Then
\[
|\langle f_m,h\rangle|
\ll_\epsilon
m^{-1/2}(km)^\epsilon
\langle F,F\rangle^{1/2}
\langle h,h\rangle^{1/2}
\frac{\pi^{k/2}}{k^{1/2}\Gamma(k)^{1/2}}.
\]
\end{proposition}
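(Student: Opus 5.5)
The plan is to reduce to the case where $h$ lies in a single isotypic subspace $\mathcal V_\sigma$ and then to apply Theorem~\ref{t:global}. Since $h$ is an eigenform of all $T(n^2)$ with $(n,4m)=1$, it lies in one $\mathcal V_\sigma$. Write $\pi_0=\wald_\psi(\overline{\sigma})$ and $C=C(\sigma)$. Because $\B_\sigma$ is an orthogonal basis of $\mathcal V_\sigma$, Cauchy--Schwarz (Bessel's inequality for the orthogonal projection of $f_m$ onto $\mathcal V_\sigma$) gives
\[
\frac{|\langle f_m,h\rangle|^2}{\langle h,h\rangle}\le \sum_{h_1\in\B_\sigma}\frac{|\langle f_m,h_1\rangle|^2}{\langle h_1,h_1\rangle}.
\]
By Theorem~\ref{t:global}, the right-hand side equals
\[
\langle F,F\rangle\,\frac{\pi^{k+5}}{3(2k-3)\Gamma(k)}\cdot\frac{L(1/2,\pi_F\times\pi_0)}{L(1,\pi_F,\Ad)L(1,\pi_0,\Ad)}\prod_{p\mid m}r_p(F,\pi_0).
\]
It then suffices to show that the last two factors are $\ll_\epsilon m^{-1}(km)^\epsilon$. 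Taking square roots, and using $\pi^{5}/(3(2k-3))\ll k^{-1}$, gives the claimed bound.

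For the local factors, temperedness gives $|\alpha_p|=|\beta_p|=|\delta_p|=1$; the first two come from Weissauer's theorem, and the last holds because $\pi_0$ is a holomorphic newform, by Deligne. Then $(\alpha_p^{-1/2}+\alpha_p^{1/2})^2(\beta_p^{-1/2}+\beta_p^{1/2})^2$ has absolute value at most $16$. The denominator $(1+\delta_pp^{-1/2})(1+\delta_p^{-1}p^{-1/2})=|1+\delta_pp^{-1/2}|^2$ is at least $(1-3^{-1/2})^2$, since $p\ge3$. Hence in every case $r_p(F,\pi_0)\le A/(p+1)$ for an absolute constant $A$. It follows that $\prod_{p\mid m}r_p\le A^{\omega(m)}/m\ll_\epsilon m^{-1+\epsilon}$.

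For the $L$-values, under GRH one has the standard bounds $L(1/2,\pi_F\times\pi_0)\ll_\epsilon \mathfrak C^{\epsilon}$ and $L(1,\pi_F,\Ad)^{-1},\,L(1,\pi_0,\Ad)^{-1}\ll_\epsilon \mathfrak C^{\epsilon}$, where $\mathfrak C$ is the relevant analytic conductor. Since $F$ has full level and $\pi_0$ has level $C\mid m$ and weight $2k-2$, each analytic conductor is polynomial in $km$, so each ratio is $\ll_\epsilon (km)^\epsilon$. Here I would note that only the finite $L$-functions appear, so the archimedean factors contribute no extra dependence on $k$; that dependence is already recorded explicitly in $\pi^{k+5}/((2k-3)\Gamma(k))$.

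The main obstacle is uniformity in $k$ in the GRH bounds. The upper bound for the central value and the lower bounds for $L(1,\Ad)$ must hold with conductor dependence only of the form $(km)^\epsilon$ for both $\pi_F$ and $\pi_0$. For this I would invoke the standard GRH consequences (Littlewood-type bounds for $\log L$ at the edge of, and inside, the critical strip) for the degree-10 Rankin--Selberg $L$-function, the degree-10 adjoint $L$-function of $\pi_F$, and the adjoint of $\pi_0$, assuming their standard analytic properties. The nonvanishing of $L(1,\pi_F,\Ad)$ is quoted in the introduction from \cite{PSS14}.
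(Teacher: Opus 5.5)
Your proof is correct and is essentially the paper's argument: you bound $|\langle f_m,h\rangle|^2/\langle h,h\rangle$ by the $\B_\sigma$-sum in Theorem~\ref{t:global}, then use GRH for the $L$-value ratio and temperedness to get $\prod_{p\mid m}r_p(F,\pi_0)\ll_\epsilon m^{-1+\epsilon}$. The only difference is that you treat a general $h\in\mathcal V_\sigma$ directly by Bessel's inequality, whereas the paper first reduces to $h\in\B_\sigma$ using $\dim\mathcal V_\sigma\ll_\epsilon m^\epsilon$ at the cost of an extra $m^\epsilon$, so your version is marginally cleaner.
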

\begin{proof}
Since \(h\) is an eigenform for the Hecke operators \(T(n^2)\) for all
\((n,4m)=1\), there is a genuine, irreducible, cuspidal automorphic
representation \(\sigma\) of \(\meta_2(\A)\) such that
\(h\in \mathcal V_\sigma\). Recall that
$\dim \mathcal V_\sigma
\ll_\epsilon m^\epsilon.$ 
Thus, after changing \(\epsilon\), it would suffice to prove the result
for \(h\in\B_\sigma\), and we assume this henceforth.

By Theorem~\ref{t:global}, 
\[\frac{|\langle f_m,h\rangle|^2}{\langle h, h \rangle} \le 
\langle F,F\rangle
\frac{\pi^{k+5}}{3(2k-3)\Gamma(k)}
\frac{L(1/2,\pi_F\times\pi_0)}
     {L(1,\pi_F,\Ad)L(1,\pi_0,\Ad)}
\prod_{p\mid m}r_p(F,\pi_0).
\]
Under GRH, we have
\[
\frac{L(1/2,\pi_F\times\pi_0)}
     {L(1,\pi_F,\Ad)L(1,\pi_0,\Ad)}
\ll_\epsilon (km)^\epsilon.
\]
We next bound the local factors. By temperedness, the Satake parameters
\(\alpha_p,\beta_p,\delta_p\) have absolute value one. Thus, if
\(p\mid m\), then $r_p(F,\pi_0)\ll \frac{1}{p}$ and consequently,
\[
\prod_{p\mid m}r_p(F,\pi_0)
\ll_\epsilon m^{-1+\epsilon}.
\]
Combining these estimates with Theorem~\ref{t:global}, and using
$
\frac{\pi^{k+5}}{3(2k-3)\Gamma(k)}
\ll
\frac{\pi^k}{k\Gamma(k)},
$
we obtain the desired result (after replacing \(\epsilon\) by \(2\epsilon\)).
\end{proof}
Next, recall that the $n$th Fourier coefficient of $f_m$ is equal to $\sum_{\substack{0 \le r \le 2m-1 \\ r^2 \equiv -n \pmod{4m}}} a\left(F, \mat{\frac{n+r^2}{4m}}{\frac{r}{2}}{\frac{r}{2}}{m} \right)$. If $m=1$, the sum above consists of only one term. If $m=p$ is a prime, then the sum consists of 0, 1, or 2 terms, depending on whether $\left(\frac{-n}{p}\right)$ equals -1, 0 or 1. But in the case there are  two terms, these two Fourier coefficients of $F$ are equal because the corresponding symmetric matrices are equivalent under the action of $\GL_2(\Z)$ (note that $k$ is even). By combining this observation with the previous proposition we obtain the following bound on the Fourier coefficients of $F$.
\begin{proposition}\label{p:FCbound}Let $k$ be a positive even integer, and let $F \in S_k(\Sp_4(\Z))$ be a Hecke eigenform that is not a Saito--Kurokawa lift. Assume Conjecture \ref{c:GGPconj} and GRH. Then, for all $S=\mat{a}{b/2}{b/2}{m}$ with $m$ equal to 1 or equal to an odd prime, we have 
\[\frac{|a(F, S)|}{\langle F, F\rangle^{1/2}} \ll_{\epsilon} (km)^{\epsilon} m^{1/2} k^{3/4} \frac{(4 \pi)^k}{\Gamma(k)} (\det(S))^{\frac{k}{2} - \frac34 + \epsilon}.\]
\end{proposition}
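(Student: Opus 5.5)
The plan is to bound $a(F,S)$ through the half-integral weight form $f_m$ of \eqref{def:fm}, using a Waldspurger-type bound for its Fourier coefficients and the previous proposition for $\langle f_m,h\rangle$. Write $S=\mat{a}{b/2}{b/2}{m}$ and $n=4am-b^2=4\det S$. By \eqref{fmdef} and the observation just made, $a(F,S)$ equals the $n$-th Fourier coefficient $c_{f_m}(n)$ of $f_m$ if there is one term. If there are two terms they are equal, and $c_{f_m}(n)=2a(F,S)$. So it suffices to bound $|c_{f_m}(n)|$ by the right-hand side, with $n\asymp\det S$.

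First I expand $f_m$ in the orthogonal basis $\B_m$ of Section~\ref{s:heckebases}:
\[
f_m=\sum_{h\in\B_m}\frac{\langle f_m,h\rangle}{\langle h,h\rangle}h,
\qquad
|c_{f_m}(n)|\le\sum_{h\in\B_m}\frac{|\langle f_m,h\rangle|}{\langle h,h\rangle^{1/2}}\cdot\frac{|c_h(n)|}{\langle h,h\rangle^{1/2}}.
\]
By the previous proposition, each factor $|\langle f_m,h\rangle|/\langle h,h\rangle^{1/2}$ is $\ll (km)^\epsilon m^{-1/2}\langle F,F\rangle^{1/2}\pi^{k/2}k^{-1/2}\Gamma(k)^{-1/2}$. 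For $|c_h(n)|^2/\langle h,h\rangle$, with $-n$ a discriminant, I use the explicit Waldspurger/Kohnen--Zagier formula for the Kohnen plus space of squarefree level $4m$ (Kohnen's generalization). For $n$ fundamental this gives roughly
\[
\frac{|c_h(n)|^2}{\langle h,h\rangle}\asymp \frac{\Gamma(k-1)}{\pi^{k-1}}\,n^{k-3/2}\,\frac{L(1/2,\pi_0\otimes\chi_{-n})}{\langle f,f\rangle},
\]
up to factors $2^{\omega(m)}$ and a constant. For non-fundamental $n=n_0f^2$, I extend this by the standard Hecke relation $c_h(n_0f^2)=c_h(n_0)\sum_{d\mid f}\mu(d)\chi_{-n_0}(d)d^{k-2}\lambda(f/d)$, where Deligne gives $\lambda(f/d)\ll(f/d)^{k-3/2+\epsilon}$. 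That costs only $f^{\epsilon}$ extra beyond $n^{k/2-3/4}$. Under GRH, $L(1/2,\pi_0\otimes\chi_{-n})\ll (kmn)^\epsilon$, and $\langle f,f\rangle\gg$ the adjoint value $L(1,\pi_0,\Ad)\Gamma(2k-2)(4\pi)^{-(2k-2)}m^{1-\epsilon}$, again using GRH for the lower bound on $L(1,\Ad)$.

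Combining these, each term is about
\[
\langle F,F\rangle^{1/2}m^{-1/2}\frac{\pi^{k/2}}{k^{1/2}\Gamma(k)^{1/2}}\Bigl(\frac{\Gamma(k-1)(4\pi)^{2k-2}}{\pi^{k-1}\Gamma(2k-2)\,m}\Bigr)^{1/2} n^{k/2-3/4}(kmn)^\epsilon.
\]
Summing over $|\B_m|\ll km$ basis elements gives the factor $m\cdot m^{-1/2}m^{-1/2}\cdot m^{1/2}$-type bookkeeping. I expect the final $m^{1/2}$ in the statement to come exactly from $|\B_m|\asymp km$ against $m^{-1}$ from the two square roots, with an extra $m^{1/2}$ from the level normalization of $\langle h,h\rangle$ in the Kohnen formula. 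Stirling then gives $\Gamma(k-1)/\Gamma(2k-2)\approx \Gamma(k)^{-1}2^{-2k}k^{1/2}\cdot$const. Together with the $k$ from the basis size this should produce the factor $k^{3/4}(4\pi)^k/\Gamma(k)$.

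The main obstacle is precise bookkeeping in the Kohnen--Waldspurger formula at level $4m$ with our normalization of Petersson products (index-normalized) and for oldforms in $\B_\sigma$. For oldforms I would check that the coefficient formula for $h_{\sigma,i}$ differs from the newform case only by bounded local factors at $p\mid m$, obtained from the explicit local vectors $v_p^{(1)},v_p^{(2)}$. If this is too delicate, the fallback is to apply Cauchy--Schwarz on $\mathcal V_\sigma$: bound $\sum_{h\in\B_\sigma}|c_h(n)|^2/\langle h,h\rangle$ by a local Waldspurger period at each $p\mid m$, which is $O(1)$ since $m$ is $1$ or a prime. Tracking the exact powers of $k$ and $m$ is routine once the formula is fixed.
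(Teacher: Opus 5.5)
Your proposal is correct and follows essentially the same route as the paper. You bound $|a(F,S)|$ by the $n$-th coefficient of $f_m$ with $n=4\det S$, expand $f_m$ in the orthogonal basis $\B_m$, and control $|\langle f_m,h\rangle|/\langle h,h\rangle^{1/2}$ by the preceding proposition. You control $|a_h(n)|/\langle h,h\rangle^{1/2}$ by the explicit Waldspurger--Kohnen formula (the paper cites Baruch--Mao and J\"a\"asaari--Lester--Saha) together with GRH, the Hecke relations for non-fundamental $n$, and $|\B_m|\ll km$, and you finish with the duplication formula and Stirling. Your two compensating factors $m^{\mp1/2}$, coming from $\langle f,f\rangle$ and from the index normalization of $\langle h,h\rangle$, agree with the paper's per-form bound for $|a_h(n)|/\langle h,h\rangle^{1/2}$, which carries no power of $m$; so the final $m^{1/2}$ is just $|\B_m|\ll km$ times the $m^{-1/2}$ from the preceding proposition.
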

\begin{proof}
Write
\[
f_m(z)=\sum_{n\geq 1}a_m(n)e^{2 \pi i nz}
\]
and put \(n=4\det(S)\). By the observation preceding the proposition,
\[
|a(F,S)|\leq |a_m(n)|.
\]
Expanding \(f_m\) in the orthogonal basis \(\B_m\), we obtain
\[
|a_m(n)|
\leq
\sum_{h\in\B_m}
\frac{|\langle f_m,h\rangle|}{\langle h,h\rangle^{1/2}}
\frac{|a_h(n)|}{\langle h,h\rangle^{1/2}}.
\]
By the preceding proposition,
\[
\frac{|\langle f_m,h\rangle|}
     {\langle h,h\rangle^{1/2}}
\ll_\epsilon
m^{-1/2}(km)^\epsilon
\langle F,F\rangle^{1/2}
\frac{\pi^{k/2}}
     {k^{1/2}\Gamma(k)^{1/2}}.
\]
On the other hand, Waldspurger's theorem \cite{Waldspurger85} in the explicit  form of
\cite[Theorem~10.1]{BaruchMao07} (see also \cite[Proposition~2.2]{JLS23}), together with GRH and the standard
Hecke relations for Fourier coefficients whose indices differ by a
square (see \cite[Lemma~2.1]{JLS23}) give
\[
\sum_{h\in\B_m}
\frac{|a_h(n)|}{\langle h,h\rangle^{1/2}}
\ll_\epsilon
(kmn)^\epsilon km\,
\frac{2^{k-1}\pi^{k/2+1/4}}
     {\Gamma(k-\frac12)^{1/2}}
n^{\frac{k}{2}-\frac34}.
\]
Here we have also used the usual dimension estimate
\[
\dim S^+_{k-\frac12}(\Gamma_0(4m))\ll km.
\]
Combining the preceding estimates gives
\[
\frac{|a(F,S)|}{\langle F,F\rangle^{1/2}}
\ll_\epsilon
(kmn)^\epsilon m^{1/2}
\frac{2^{k-1}\pi^{k+1/4}k^{1/2}}
     {\Gamma(k)^{1/2}\Gamma(k-\frac12)^{1/2}}
n^{\frac{k}{2}-\frac34}.
\]
Since \(n=4\det(S)\), the duplication formula and Stirling's formula
give
\[
\frac{2^{k-1}\pi^{k+1/4}k^{1/2}}
     {\Gamma(k)^{1/2}\Gamma(k-\frac12)^{1/2}}
4^{\frac{k}{2}-\frac34}
\ll
\frac{k^{3/4}(4\pi)^k}{\Gamma(k)}.
\]
This completes the proof.
\end{proof}
Now suppose that $S$ is primitive. Then we know that it represents infinitely many primes (in fact, it represents a positive proportion of primes; see \cite[Theorem 1 (i)]{iwanprime}).  Put~${\min}_{\pr} S=1$ if $S$ represents~$1$, and let ${\min}_{\pr} S$ be the smallest odd prime represented by $S$ otherwise, so that\footnote{Under GRH, it can be shown that for ``typical" $S$, we have $\min_{\pr}^{}S \ll_\epsilon (\det S)^{1/2+\epsilon}$; see \cite[Corollary~1.4]{SardariIdealClass}.} ${\min}_{\pr} S < \infty$. We can replace $S$ by a matrix $S'$ in its $\SL_2(\Z)$-equivalence such that $a(F,S) = a(F, S')$ and the bottom right entry of $S'$ equals ${\min}_{\pr} S$. Therefore, Proposition \ref{p:FCbound} may be rephrased as \begin{equation}\label{e:FCboundnew}\frac{|a(F, S)|}{\langle F, F\rangle^{1/2}} \ll_{\epsilon} k^{\epsilon} ({\min}_{\pr} S)^{1/2+\epsilon} k^{3/4} \frac{(4 \pi)^k}{\Gamma(k)} (\det(S))^{\frac{k}{2} - \frac34 + \epsilon}.\end{equation}
The  bound \eqref{e:FCboundnew} may be viewed as conditional progress toward a deep conjecture of Resnikoff and Saldana \cite{res-sald}, which  predicts that\begin{equation}\label{e:ressald}|a(F,S)| \ll_{F,\epsilon} |\det(S)|^{\frac{k}2 -\frac{3}4 + \epsilon}. \end{equation} 
It is interesting to compare the conditional bound \eqref{e:FCboundnew} with the conditional bound obtained in \cite[Corollary 1.2]{CMS23}, where it was shown, assuming GRH, that \[\frac{|a(F,S)|}{\langle F, F\rangle^{1/2}} \ll_\epsilon k^\eps k^{1/2} \frac{(4\pi)^k}{\Gamma(k)} \det(S)^{\frac{k-1}2+\epsilon}.\]
The bound in \cite{CMS23} relied on the GGP identity for Bessel periods, in contrast to the bound \eqref{e:FCboundnew}, which uses the GGP identity for Fourier--Jacobi periods.

The best currently known unconditional bound toward \eqref{e:ressald} is due to Kohnen \cite{WK93}, who proved that $|a(F,S)| \ll_{F, \epsilon} |\det(S)|^{\frac{k}2 - \frac{13}{36}+\eps}.$ Kohnen's result relies on the following bound:
\[|a(F,S)| \ll_{F, \epsilon} (\min S)^{5/18 + \epsilon} \det(S)^{\frac{k}{2} - \frac{1}{2} + \epsilon},
\] where $\min S$ denotes the smallest integer represented by $S$. We also refer to a recent paper of Assing \cite{assing25} for some related results.
\subsection{Non-vanishing}
We end with an application of our main result to non-vanishing of central $L$-values.
\begin{proposition}\label{p:nonvanishing}
Let $k$ be a positive even integer, and let $F \in S_k(\Sp_4(\Z))$ be a Hecke eigenform that is not a Saito--Kurokawa lift.  Assume Conjecture~\ref{c:GGPconj}.  Then, for each positive, odd, squarefree integer $m$ such that the form
$f_m\in S^+_{k-\frac12}(\Gamma_0(4m))$ defined in \eqref{def:fm} is
nonzero, there exists a classical newform $g$ of weight $2k-2$,
trivial character, and level dividing $m$ such that
\[
L\!\left(\tfrac12,\pi_F\times\pi_g\right)\ne0.
\]
\end{proposition}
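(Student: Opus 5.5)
The plan is to read off the non-vanishing directly from Corollary~\ref{c:mainglobal}. Fix $m$ odd, positive and squarefree with $f_m\neq0$. Then $\langle f_m,f_m\rangle>0$, and $\langle F,F\rangle>0$ since $F\neq0$. The constant $\frac{\pi^{k+5}}{3(2k-3)\Gamma(k)}$ is positive and finite. So Corollary~\ref{c:mainglobal}, which requires Conjecture~\ref{c:GGPconj}, shows that
\[
\sum_{\pi_0} r_m(F,\pi_0)\,\frac{L(1/2,\pi_F\times\pi_0)}{L(1,\pi_F,\Ad)L(1,\pi_0,\Ad)}\neq 0,
\]
where $\pi_0$ runs over the finitely many cuspidal representations occurring in $S_{2k-2}(\Gamma_0(m))$. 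A nonzero finite sum has at least one nonzero term, so some $\pi_0$ has $r_m(F,\pi_0)\neq0$ and $L(1/2,\pi_F\times\pi_0)\neq0$.

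Next, I would check that the denominators are harmless, so that the nonzero term really comes from the numerator. $L(1,\pi_F,\Ad)$ is finite and nonzero by the remark after Theorem~\ref{t:mainintro}, which cites \cite{PSS14}. $L(1,\pi_0,\Ad)$ is finite and nonzero by the standard non-vanishing of adjoint $L$-functions of $\GL_2$ at the edge $s=1$ (Gelbart--Jacquet together with Jacquet--Shalika). The finite quantities $r_p(F,\pi_0)$ are given by Theorem~\ref{t:global}.

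Finally, I would identify $\pi_0$ with a classical newform. Each $\pi_0$ occurring in $S_{2k-2}(\Gamma_0(m))$ is $\pi_g$ for a unique newform $g$ of weight $2k-2$, trivial character, and level $C(\pi_0)=C\mid m$; this is Atkin--Lehner theory, recorded in Theorem~\ref{t:global}. Setting $g$ to be this newform gives $L(1/2,\pi_F\times\pi_g)\neq0$. Note that the statement uses the finite part of the $L$-function. This is exactly the $L$ appearing in Corollary~\ref{c:mainglobal}, so no archimedean factors need to be handled.

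The only step that needs care is confirming that no term in the corollary is an indeterminate form. This means ensuring that the local factors $L_p(1/2,\pi_{F,p}\times\pi_{0,p})$ contained in $L(1/2,\ldots)$ have no poles, so that ``nonzero term'' really means a nonzero central value. Temperedness of $\pi_F$ (Weissauer) and of $\pi_0$ (Deligne) at finite primes gives this, since all local $L$-factors are then holomorphic and nonvanishing for $\operatorname{Re}s>0$.
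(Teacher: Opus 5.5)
Your proposal is correct and follows the paper's proof, which simply reads the claim off Corollary~\ref{c:mainglobal}: since $\langle f_m,f_m\rangle>0$, some term of the finite sum over $\pi_0$ is nonzero, and the corresponding $\pi_0=\pi_g$ comes from a newform $g$ of level $C(\pi_0)\mid m$. Your last paragraph is unnecessary, and its reasoning is slightly off: finiteness of $L(1/2,\pi_F\times\pi_0)$ is a global statement, coming from cuspidality of the degree-five transfer of $\pi_F$, not from holomorphy of the local factors, and it is already built into the corollary, so the argument is unaffected.
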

\begin{proof}This is immediate from Corollary \ref{c:mainglobal}.
\end{proof}

\begin{remark}\label{r:nonvanishing}It is known (see, e.g., Section 5.2 of \cite{JLS23}) that the set of primes $p$ such that $f_p \neq 0$ has \emph{positive density}. Proposition \ref{p:nonvanishing} implies that for \emph{each} such prime $p$, there is a newform $g \in S_{2k-2}(\Gamma_0(m))$ with $m \in \{1, p\}$ with $L(1/2, \pi_F \times \pi_g) \neq 0$.
\end{remark}

\begin{remark}\label{r:manickam}A recent theorem of Manickam \cite[Theorem~1]{manickam}
asserts that the first Fourier--Jacobi coefficient of \(F\) is nonzero. 
This claim, if true, would imply, by \cite[Theorem~5.4]{EZ85} and the previous proposition, that
$
L\!\left(\tfrac12,\pi_F\times\pi_g\right)\neq 0
$
for some Hecke eigenform \(g\in S_{2k-2}(\SL_2(\Z))\). This would be a remarkable nonvanishing result: the family has size \(\asymp k\), while the analytic conductor is \(\asymp k^6\).

Unfortunately, there appears to be a gap on page 410 of \cite{manickam}. The identity
\begin{equation}\label{e:incorrect}C_\psi(D,r)=a_{\psi|Z_p}(|D|)\end{equation} given there
is incompatible with the Eichler--Zagier map \(Z_p\) (defined correctly on page 406).  Indeed,
if \(D=r^2-4np<0\) and \((r,p)=1\), then the two square roots of \(D\)
modulo \(2p\) are \(r\) and \(-r\), and hence the correct identity (using that $k$ is even) is 
\begin{equation}\label{e:correct}
      2C_\psi(D,r)=a_{\psi|Z_p}(|D|).
\end{equation}

Once this correction is made, the two equations compared at the bottom of page 410 are identical, and their subtraction yields only \(0=0\). Hence the claimed contradiction, and therefore the proof of \cite[Theorem~1]{manickam}, does not follow.\footnote{When we communicated this issue to Manickam, he responded that the map used in his proof is a normalized Eichler--Zagier isomorphism on the newspace, obtained in his earlier work with Ramakrishnan \cite{manickram}. However, this does not appear to resolve the issue; the normalization required to make \eqref{e:incorrect} hold amounts to replacing \(Z_p\) by \(\frac12 Z_p\), which also divides the coefficient corresponding to \(p^2D\) by \(2\). Thus the two equations being compared on page 410 remain identical. See also \cite[Footnote~4]{AnambyDas}, where it is noted that the factors \(R_D\) were omitted from the relevant theorem of Manickam--Ramakrishnan \cite{manickram}.}
\end{remark}

\bibliography{FHS}{}

\begin{thebibliography}{10}

\bibitem{AnambyDas}
Pramath Anamby and Soumya Das.
\newblock Jacobi forms, {S}aito-{K}urokawa lifts, their {P}ullbacks and
  sup-norms on average.
\newblock {\em Res. Math. Sci.}, 10(1):Paper No. 14, 52, 2023.

\bibitem{asgsch}
Mahdi Asgari and Ralf Schmidt.
\newblock Siegel modular forms and representations.
\newblock {\em Manuscripta Math.}, 104(2):173--200, 2001.

\bibitem{assing25}
Edgar Assing.
\newblock New {B}ounds for {F}undamental {F}ourier coefficients of {S}iegel
  modular forms.
\newblock {\em Int. Math. Res. Not. IMRN}, (14):rnaf224, 2025.

\bibitem{BaruchMao07}
Ehud~Moshe Baruch and Zhengyu Mao.
\newblock Central value of automorphic {$L$}-functions.
\newblock {\em Geom. Funct. Anal.}, 17(2):333--384, 2007.

\bibitem{BS98}
Rolf Berndt and Ralf Schmidt.
\newblock {\em Elements of the representation theory of the {J}acobi group}.
\newblock Modern Birkh\"auser Classics. Birkh\"auser/Springer Basel AG, Basel,
  1998.
\newblock [2011 reprint of the 1998 original] [MR1634977].

\bibitem{BLX26}
Paul Boisseau, Weixiao Lu, and Hang Xue.
\newblock The global gan--gross--prasad conjecture for fourier--jacobi periods
  on unitary groups iii: Proof of the main theorems.
\newblock {\em arXiv:2601.01738}, 2026.

\bibitem{brown07}
Jim Brown.
\newblock An inner product relation on {S}aito-{K}urokawa lifts.
\newblock {\em Ramanujan J.}, 14(1):89--105, 2007.

\bibitem{Bump1997}
Daniel Bump.
\newblock {\em Automorphic forms and representations}, volume~55 of {\em
  Cambridge Studies in Advanced Mathematics}.
\newblock Cambridge University Press, Cambridge, 1997.

\bibitem{CMS23}
F\'elicien Comtat, Jolanta Marzec-Ballesteros, and Abhishek Saha.
\newblock Bounds on {F}ourier coefficients and global sup-norms for {S}iegel
  cusp forms of degree 2.
\newblock {\em J. Lond. Math. Soc. (2)}, 111(3), 2025.

\bibitem{DPSS20}
Martin Dickson, Ameya Pitale, Abhishek Saha, and Ralf Schmidt.
\newblock Explicit refinements of {B}\"{o}cherer's conjecture for {S}iegel
  modular forms of squarefree level.
\newblock {\em J. Math. Soc. Japan}, 72(1):251--301, 2020.

\bibitem{EZ85}
Martin Eichler and Don Zagier.
\newblock {\em The theory of {J}acobi forms}, volume~55 of {\em Progress in
  Mathematics}.
\newblock Birkh\"auser Boston Inc., Boston, MA, 1985.

\bibitem{ggp}
Wee~Teck Gan, Benedict Gross, and Dipendra Prasad.
\newblock Symplectic local root numbers, central critical {$L$} values, and
  restriction problems in the representation theory of classical groups.
\newblock {\em Ast\'erisque}, (346):1--109, 2012.
\newblock Sur les conjectures de Gross et Prasad. I.

\bibitem{Gelbart1976}
Stephen~S. Gelbart.
\newblock {\em Weil's representation and the spectrum of the metaplectic
  group}, volume 530 of {\em Lecture Notes in Mathematics}.
\newblock Springer-Verlag, Berlin-New York, 1976.

\bibitem{grad}
I.~S. Gradshteyn and I.~M. Ryzhik.
\newblock {\em Table of integrals, series, and products}.
\newblock Elsevier/Academic Press, Amsterdam, seventh edition, 2007.
\newblock Translated from the Russian, Translation edited and with a preface by
  Alan Jeffrey and Daniel Zwillinger, With one CD-ROM (Windows, Macintosh and
  UNIX).

\bibitem{II10}
Atsushi Ichino and Tamotsu Ikeda.
\newblock On the periods of automorphic forms on special orthogonal groups and
  the {G}ross-{P}rasad conjecture.
\newblock {\em Geom. Funct. Anal.}, 19(5):1378--1425, 2010.

\bibitem{iwanprime}
Henryk Iwaniec.
\newblock Primes represented by quadratic polynomials in two variables.
\newblock {\em Acta Arith.}, 24:435--459, 1973/74.
\newblock Collection of articles dedicated to Carl Ludwig Siegel on the
  occasion of his seventy-fifth birthday, V.

\bibitem{JLS23}
Jesse J\"a\"asaari, Stephen Lester, and Abhishek Saha.
\newblock On fundamental {F}ourier coefficients of {S}iegel cusp forms of
  degree 2.
\newblock {\em J. Inst. Math. Jussieu}, 22(4):1819--1869, 2023.

\bibitem{Klingen1990}
Helmut Klingen.
\newblock {\em Introductory lectures on {S}iegel modular forms}, volume~20 of
  {\em Cambridge Studies in Advanced Mathematics}.
\newblock Cambridge University Press, Cambridge, 1990.

\bibitem{KnightlyLi2019}
Andrew Knightly and Charles Li.
\newblock On the distribution of {S}atake parameters for {S}iegel modular
  forms.
\newblock {\em Doc. Math.}, 24:677--747, 2019.

\bibitem{KS89}
W.~Kohnen and N.-P. Skoruppa.
\newblock A certain {D}irichlet series attached to {S}iegel modular forms of
  degree two.
\newblock {\em Invent. Math.}, 95(3):541--558, 1989.

\bibitem{WK93}
Winfried Kohnen.
\newblock Estimates for {F}ourier coefficients of {S}iegel cusp forms of degree
  two.
\newblock {\em Compositio Math.}, 87(2):231--240, 1993.

\bibitem{manickam}
M.~Manickam.
\newblock On the first {F}ourier-{J}acobi coefficient of {S}iegel modular forms
  of degree two.
\newblock {\em J. Number Theory}, 219:404--411, 2021.

\bibitem{manickram}
M.~Manickam and B.~Ramakrishnan.
\newblock On {S}himura, {S}hintani and {E}ichler-{Z}agier correspondences.
\newblock {\em Trans. Amer. Math. Soc.}, 352(6):2601--2617, 2000.

\bibitem{NPS13}
Hiro-aki Narita, Ameya Pitale, and Ralf Schmidt.
\newblock Irreducibility criteria for local and global representations.
\newblock {\em Proc. Amer. Math. Soc.}, 141(1):55--63, 2013.

\bibitem{Pit19}
Ameya Pitale.
\newblock {\em Siegel modular forms}, volume 2240 of {\em Lecture Notes in
  Mathematics}.
\newblock Springer, Cham, 2019.
\newblock A classical and representation-theoretic approach.

\bibitem{PSS14}
Ameya Pitale, Abhishek Saha, and Ralf Schmidt.
\newblock Transfer of {S}iegel cusp forms of degree 2.
\newblock {\em Mem. Amer. Math. Soc.}, 232(1090):vi+107, 2014.

\bibitem{res-sald}
Howard Resnikoff and R.~L. Saldana.
\newblock Some properties of {F}ourier coefficients of {E}isenstein series of
  degree two.
\newblock {\em J. Reine Angew. Math.}, 265:90--109, 1974.

\bibitem{AS13}
Abhishek Saha.
\newblock Siegel cusp forms of degree 2 are determined by their fundamental
  {F}ourier coefficients.
\newblock {\em Math. Ann.}, 355(1):363--380, 2013.

\bibitem{SS13}
Abhishek Saha and Ralf Schmidt.
\newblock {Y}oshida lifts and simultaneous non-vanishing of dihedral twists of
  modular ${L}$-functions.
\newblock {\em J. London Math. Soc.}, 88:251--270, 2013.

\bibitem{SallyTadic1993}
Paul~J. Sally, Jr. and Marko Tadi\'c.
\newblock Induced representations and classifications for {${\rm GSp}(2,F)$}
  and {${\rm Sp}(2,F)$}.
\newblock {\em M\'em. Soc. Math. France (N.S.)}, (52):75--133, 1993.

\bibitem{SardariIdealClass}
Naser~T. Sardari.
\newblock The least prime ideal in a given ideal class.
\newblock {\em arXiv:1802.06193}, 2018.

\bibitem{ShenWS}
Xin Shen.
\newblock The {W}hittaker-{S}hintani functions for symplectic groups.
\newblock {\em Int. Math. Res. Not. IMRN}, (21):5769--5831, 2014.

\bibitem{GS73}
Goro Shimura.
\newblock On modular forms of half integral weight.
\newblock {\em Ann. of Math. (2)}, 97:440--481, 1973.

\bibitem{GS87}
Goro Shimura.
\newblock On {H}ilbert modular forms of half-integral weight.
\newblock {\em Duke Math. J.}, 55(4):765--838, 1987.

\bibitem{Waldspurger85}
Jean-Loup Waldspurger.
\newblock Sur les valeurs de certaines fonctions {$L$} automorphes en leur
  centre de sym\'etrie.
\newblock {\em Compositio Math.}, 54(2):173--242, 1985.

\bibitem{Waldspurger1991}
Jean-Loup Waldspurger.
\newblock Correspondances de {S}himura et quaternions.
\newblock {\em Forum Math.}, 3(3):219--307, 1991.

\bibitem{weissram}
Rainer Weissauer.
\newblock {\em Endoscopy for {${\rm GSp}(4)$} and the cohomology of {S}iegel
  modular threefolds}, volume 1968 of {\em Lecture Notes in Mathematics}.
\newblock Springer-Verlag, Berlin, 2009.

\bibitem{HX17}
Hang Xue.
\newblock Refined global {G}an-{G}ross-{P}rasad conjecture for
  {F}ourier-{J}acobi periods on symplectic groups.
\newblock {\em Compos. Math.}, 153(1):68--131, 2017.

\bibitem{HX18}
Hang Xue.
\newblock Fourier-{J}acobi periods of classical {S}aito-{K}urokawa lifts.
\newblock {\em Ramanujan J.}, 45(1):111--139, 2018.

\end{thebibliography}
\bibliographystyle{plain}

\end{document}